\def\restriction#1#2{\mathchoice
              {\setbox1\hbox{${\displaystyle #1}_{\scriptstyle #2}$}
              \restrictionaux{#1}{#2}}
              {\setbox1\hbox{${\textstyle #1}_{\scriptstyle #2}$}
              \restrictionaux{#1}{#2}}
              {\setbox1\hbox{${\scriptstyle #1}_{\scriptscriptstyle #2}$}
              \restrictionaux{#1}{#2}}
              {\setbox1\hbox{${\scriptscriptstyle #1}_{\scriptscriptstyle #2}$}
              \restrictionaux{#1}{#2}}}
\def\restrictionaux#1#2{{#1\,\smash{\vrule height .8\ht1 depth .85\dp1}}_{\,#2}}
\newcommand{\limplies}{\rightarrow}
\newcommand{\elext}{\succcurlyeq}
\newcommand{\elsub}{\preccurlyeq}
\DeclareMathOperator{\Aut}{Aut}
\DeclareMathOperator{\bdn}{bdn}
\DeclareMathOperator{\tp}{\mathsf{tp}}
\DeclareMathOperator{\qftp}{\mathsf{qftp}}
\DeclareMathOperator{\Th}{Th}
\DeclareMathOperator{\Sig}{Sig}
\DeclareMathOperator{\M}{M}
\newcommand{\Acal}{\ensuremath{\mathcal{A}}}
\newcommand{\Ccal}{\ensuremath{\mathcal{C}}}
\newcommand{\Fcal}{\ensuremath{\mathcal{F}}}
\newcommand{\Hcal}{\ensuremath{\mathcal{H}}}
\newcommand{\Ical}{\ensuremath{\mathcal{I}}} 
\newcommand{\Jcal}{\ensuremath{\mathcal{J}}} 
\newcommand{\Lcal}{\ensuremath{\mathcal{L}}}
\newcommand{\Mcal}{\ensuremath{\mathcal{M}}}
\newcommand{\Ncal}{\ensuremath{\mathcal{N}}}
\newcommand{\Vcal}{\ensuremath{\mathcal{V}}}
\newcommand{\Wcal}{\ensuremath{\mathcal{W}}}
\newcommand{\Csf}{\ensuremath{\mathsf{C}}}
\newcommand{\Hsf}{\ensuremath{\mathsf{H}}}
\newcommand{\Msf}{\ensuremath{\mathsf{M}}}
\newcommand{\Nsf}{\ensuremath{\mathsf{N}}}
\newcommand{\Psf}{\ensuremath{\mathsf{P}}}
\newcommand{\Zsf}{\ensuremath{\mathsf{Z}}}
\newcommand{\Abb}{\ensuremath{\mathbb{A}}}
\newcommand{\Fbb}{\ensuremath{\mathbb{F}}}
\newcommand{\Ibb}{\ensuremath{\mathbb{I}}} 
\newcommand{\Jbb}{\ensuremath{\mathbb{J}}}
\newcommand{\Mbb}{\ensuremath{\mathbb{M}}}
\newcommand{\Nbb}{\ensuremath{\mathbb{N}}}
\newcommand{\Rbb}{\ensuremath{\mathbb{R}}}
\newcommand{\Zbb}{\ensuremath{\mathbb{Z}}}
\newcommand{\eq}{\ensuremath{\mathsf{eq}}}
\newcommand{\st}{\ensuremath{\mathsf{st}}}
\newcommand{\ust}{\ensuremath{\mathsf{ust}}}
\let\str\CMcal
\newcommand\cla\EuScript
\let\lang\CMcal
\newcommand{\lL}{\lang{L}}
\let\graph\mathrm
\newcommand{\gG}{\graph{C}}
\let\type\mathfrak
\newcommand{\tE}{\type{E}}
\let\sp\CMcal
\newcommand{\vV}{\sp{V}}
\newcommand{\vW}{\sp{W}}
\newcommand{\sA}{\str{A}}
\newcommand{\sB}{\str{B}}
\newcommand{\sM}{\str{M}}
\newcommand{\sN}{\str{N}}
\newcommand{\sS}{\str{S}}
\newcommand{\sI}{\str{I}}
\newcommand{\sJ}{\str{J}}
\newcommand{\cC}{\cla{C}}
\newcommand{\cK}{\cla{K}}
\newcommand{\NOT}{\mathrm{N}}
\newcommand{\codingcla}[1]{\cC_{#1}}
\newcommand{\ncodingcla}[1]{\NOT\codingcla{#1}}
\newcommand{\dom}{\mathsf{dom}}
\newcommand{\und}{\mathsf{u}}
\colorlet{PierreColor}{cyan}
\colorlet{ArisColor}{red}
\DeclareMathOperator{\Age}{\mathsf{Age}}
\DeclareMathOperator{\Flim}{\mathsf{Flim}}
\theoremstyle{plain}
\newtheorem{theorem}{Theorem}[section]
\newtheorem{thmx}{Theorem}
\newtheorem{lemma}[theorem]{Lemma}
\newtheorem{proposition}[theorem]{Proposition}
\newtheorem{fact}[theorem]{Fact}
\newtheorem{corollary}[theorem]{Corollary}
\theoremstyle{definition}
\newtheorem{definition}[theorem]{Definition}
\newtheorem{conjecture}[theorem]{Conjecture}
\newtheorem{notation}[theorem]{Notation}
\newtheorem*{notation*}{Notation}
\newtheorem{question}[theorem]{Question}
\newtheorem*{question*}{Question}
\newtheorem{example}[theorem]{Example}
\newtheorem*{examples}{Examples}
\theoremstyle{remark}
\newtheorem{remark}[theorem]{Remark}
\newtheorem*{remark*}{Remark}
\newtheorem{claim}{Claim}
\newtheorem{poc}{Proof of Claim}
\newenvironment{claimproof}{%
	\begin{poc}
	}{%
	\hfill$\blacktriangleleft$
	\end{poc}
	}
\definecolor{black}{rgb}{0,0,0}
\colorlet{savedColor}{.}
\crefname{subsection}{subsection}{subsections}
\crefname{subsubsection}{subsubsection}{subsubsections}
\crefname{figure}{figure}{figures}
\Crefname{figure}{Figure}{Figures}
\Crefname{thmx}{Theorem}{Theorems}%
\let\oldtocsection=\tocsection
\let\oldtocsubsection=\tocsubsection
\renewcommand{\tocsection}[2]{\hspace{0em}\oldtocsection{#1}{#2}}
\renewcommand{\tocsubsection}[2]{\hspace{1em}\oldtocsubsection{#1}{#2}}
\title[Murphy's Law for 2-Nilpotent Groups]{Mekler's Construction and Murphy's Law for 2-Nilpotent Groups}
\author{Blaise Boissonneau}
\address{Blaise Boissonneau \\ Heinrich Heine University Düsseldorf, Faculty of Mathematics and Natural Sciences, Universitätsstr.~1, 40225 Düsseldorf, Germany}
\email{blaise.boissonneau@hhu.de}
\author{Aris Papadopoulos}
\address{Aris Papadopoulos\\ Department of Mathematics, University of Maryland, College Park, MD 20742-4015, USA}
\email{aris@umd.edu}
\thanks{A.P. was partially supported through a Leeds Doctoral Scholarship from the University of Leeds.}
\author{Pierre Touchard}
\address{Pierre Touchard \\ Institut f\"ur Algebra, Technische Universit\"at Dresden, 01062 Dresden, Germany and KU Leuven, Department of Mathematics, B-3001 Leuven, Belgium}
\email{pierre.touchard@tu-dresden.de}
\thanks{P.T. was partially supported by KU Leuven through IF C16/23/010.}
\date{\today}
\begin{document}

\begin{abstract}
    Mekler's construction is a powerful technique for building purely algebraic structures from combinatorial ones. Its power lies in the fact that it allows various model-theoretic tameness properties of the combinatorial structure to transfer to the algebraic one. In this paper, we push this ideology much further, describing a broad class of properties that transfer through Mekler's construction. This technique subsumes many well-known results and opens avenues for many more.

    As a straightforward application of our methods, we (1) obtain transfer principles for stably embedded pairs of Mekler groups and (2) construct strictly $\mathsf{NFOP}_k$ pure groups for all $k\in\Nbb_{>2}$. We also answer a question of Chernikov and Hempel on transfer of burden.
\end{abstract}

\maketitle
\setlength{\parskip}{4pt}

\section{Introduction}

\underline{Tame classes of groups}. Model theorists have a good understanding of the definable structure of modules and, in particular, of abelian groups. It is well known that any module is \emph{stable}, i.e. it does not ``code'' a linear order. In broader terms, this means that definable sets in pure abelian groups are ``combinatorially tame''. It could be reasonable to think that other natural classes of groups are also ``tame''. However, as shown by Mekler \cite{Mek81}, already the class $\mathbb{G}_{2,p}$ of $2$-nilpotent groups of prime exponent $p$ -- certainly the easiest class of non-abelian groups that one can think of -- can witness all sorts of model-theoretic behaviours; this was extended by many subsequent works \cite{Bau02,CH18,Ahn20}, many of which are directly related to Shelah's classification programme.

\underline{Classification Theory}. An essential aspect of Shelah's classification theory is, very roughly speaking, based on the idea that the presence/absence of simple combinatorial data (e.g. linear orders, random graphs, and more) can give us a lot of information on the ``complexity'' of a given theory (e.g. stability, NIP, and more). 

The dichotomies given by the presence/absence of combinatorial configurations allow model theorists to divide (hence the name \emph{``dividing lines''}) the class of all first-order theories into smaller regions, which can then be studied in more detail. Since the various regions of the model-theoretic universe are defined using combinatorial data, it is often the case that building combinatorial examples inhabiting each region is a much easier task than building \emph{purely} algebraic ones. 

Mekler's construction is now a classical technique for building a purely algebraic structure (a 2-nilpotent group) from a purely combinatorial one (a ``nice'' graph). We will review the basic ideas of Mekler's construction in \Cref{subsec:Mekler}, but for now, the reader should keep in mind that this construction is done in a way that has been shown to preserve the presence (and absence) of various kinds of combinatorial data, such as, for instance, $\lambda$-stability \cite{Mek81}, non-independence property (of any arity $n\in\Nbb$), the tree property of the second kind \cite{CH18}. See \Cref{thm:KnownTransfers} for an exhaustive (to our knowledge) list of properties preserved by Mekler's construction. 

Hodges somewhat ironically referred to this fact as \href{https://en.wikipedia.org/wiki/Murphy%27s_law}{\emph{Murphy's law}} 
for 2-nilpotent groups. Far from being a negative fact, Murphy's law for 2-nilpotent groups justifies the rather reassuring hope that even the most subtle combinatorial property, if satisfied by an ad-hoc combinatorial example, is probably also satisfied by a pure $2$-nilpotent group.

\underline{Main results of the paper}. Our paper aims to push the ideas discussed in the preceding paragraphs further by developing a method that allows us to prove general transfer principles for groups obtained via Mekler's construction. In a precise sense, we show that Mekler's construction preserves the presence/absence of \emph{all} interesting combinatorial data characterised through generalised indiscernibles (all these concepts will be explained in detail later in the paper). Let us call a \emph{Mekler group} any group elementarily equivalent to a group obtained by Mekler's construction. The first main result of this paper is the following:

\begin{thmx}
    Let $\Ical$ be a Ramsey structure, and $\Jcal$ a reduct of $\Ical$. Let $\Msf$ be a Mekler group and $\gG$ its associated nice graph. Then, the following are equivalent:
    \begin{enumerate}
        \item $\gG$ collapses $\Ical$-indiscernibles (resp. to $\Jcal$-indiscernibles).
        \item $\Msf(\gG)$ collapses $\Ical$-indiscernibles (resp. to $\Jcal$-indiscernibles).
    \end{enumerate}
\end{thmx}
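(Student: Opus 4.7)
The plan is to prove the two directions separately, with $(2) \Rightarrow (1)$ being routine and $(1) \Rightarrow (2)$ requiring the main technical input.

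For the direction $(2) \Rightarrow (1)$, I would use the fact (classical in Mekler's setup) that the nice graph $\gG$ is interpretable, without parameters, in the Mekler group $\Msf(\gG)$: vertices and adjacency are distinguished by commutator and centraliser data. Consequently, any $\Ical$-indiscernible sequence in $\gG$ can be transported into $\Msf(\gG)$ via this interpretation, so the collapse hypothesis on $\Msf(\gG)$ forces $\Jcal$-indiscernibility, and this descends back to $\gG$ because the interpretation is $\emptyset$-definable.

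For the direction $(1) \Rightarrow (2)$, the strategy has three steps. Start with an arbitrary $\Ical$-indiscernible sequence $(\bar{m}_i)_{i \in \Ical}$ in $\Msf(\gG)$ whose $\Jcal$-indiscernibility we want to deduce. \emph{Step A (joint extraction).} For each $\bar{m}_i$, read off its ``graph skeleton'' $\bar{s}_i$ in $\gG$ --- its support together with the extra combinatorial data (basis choices, exponents, commutator coefficients) needed to write $\bar{m}_i$ in Mekler normal form. Using the Ramsey property of $\Ical$ together with a standard extraction/modelling argument, pass to an EM-equivalent $\Ical$-indiscernible sequence $(\bar{m}'_i)$ such that the accompanying sequence of skeletons $(\bar{s}'_i)$ is itself $\Ical$-indiscernible in $\gG$. \emph{Step B (apply the hypothesis).} By the collapse assumption on $\gG$, the sequence $(\bar{s}'_i)$ is $\Jcal$-indiscernible in $\gG$. \emph{Step C (transfer back to the group).} Invoke a ``type through support'' structure theorem for Mekler groups: the type $\tp(\bar{m}'_i / C)$ (over suitable parameters $C$) is completely determined by $\bar{s}'_i$ together with uniform algebraic data that is constant along the sequence. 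This upgrades $\Jcal$-indiscernibility of $(\bar{s}'_i)$ to $\Jcal$-indiscernibility of $(\bar{m}'_i)$, and hence of $(\bar{m}_i)$ by EM-equivalence.

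The hard part is clearly Step C. Existing Mekler-style isotypicity lemmas in the literature are tailored to specific dividing lines (NIP, NTP$_2$, etc.), whereas here we need a uniform version strong enough to handle an \emph{arbitrary} Ramsey index $\Ical$ and an \emph{arbitrary} reduct $\Jcal$. This forces a careful identification of precisely which graph-theoretic invariants are relevant to the type of a tuple in $\Msf(\gG)$, and a demonstration that every distinction between types can be traced back to a distinction at the level of skeletons. A secondary technical point, which must not be overlooked, is making the joint extraction of Step A rigorous: the extraction has to produce simultaneous $\Ical$-indiscernibility of $(\bar{m}'_i)$ and $(\bar{s}'_i)$ while keeping the EM-type of $(\bar{m}_i)$ unchanged, and this is exactly where the Ramsey assumption on $\Ical$ must be deployed cleanly.
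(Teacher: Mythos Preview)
Your outline is correct and matches the paper's approach closely. The paper formalises your ``witnessing skeleton'' idea via a notion of one indiscernible sequence \emph{witnessing} the indiscernibility of another (the original is a $\emptyset$-definable function of the witness over some parameters), and proves that every $\Ical$-indiscernible in $\Msf$ is witnessed by a sequence $(\alpha_i,\beta_i,\gamma_i,\delta_i)_i$ lying in a full transversal, which in turn is $\Ical$-indiscernible in $\Msf$ if and only if $([\alpha_i]_\sim, h(\beta_i))_i$ is $\Ical$-indiscernible in $\gG$; your Step~C is exactly this biconditional, and the paper obtains it not from an ad~hoc isotypicity lemma but from a three-stage relative quantifier elimination $\Msf \to \Fcal(G) \to (\vV,+,0,R) \to \gG^{\mathrm{eq}}$ developed over an entire section.

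One small point: the paper's ``witnessed by'' formulation is slightly cleaner than your EM-equivalence detour, because once $\bar m_i = \bar f(\bar b_i,\bar b)$ with $(\bar b_i)_i$ shown to be $\Jcal$-indiscernible over $\bar b$, the $\Jcal$-indiscernibility of $(\bar m_i)_i$ is immediate and no passage to an auxiliary EM-equivalent sequence is needed. The paper also gives a second, more Chernikov--Hempel-style proof avoiding the RQE machinery, but only under the extra hypothesis that $\Ical$ has \emph{specific collapsing} to $\Jcal$; your remark that existing isotypicity lemmas are tailored to particular dividing lines is exactly why that second proof cannot cover the general statement.
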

Ramsey structures\footnote{For a precise definition, see e.g. \cite{Nes05}.} are suitable indexing structures, and in particular, indiscernible sequences indexed by a Ramsey structure satisfy desirable properties similar to (ordered)-indiscernible sequences. A structure $\sM$ collapses $\Ical$-indiscernibles if all $\Ical$-indiscernible sequences in $\sM$ are indiscernible in a strictly stronger way (see \cref{def:indiscernible collapsing}). 
In particular, this theorem shows that the class $\mathbb{G}_{2,p}$ does not lie in the tame side of any dividing lines characterised by the collapsing of generalised indiscernibles.

This result is proved in \Cref{subsec:Collapse}. As we have already mentioned, \Cref{thm:TransferCollapsing} generalises many previously known results, and we briefly explain this in \Cref{subsec:Collapse}. In this paper, a novel use of this theorem is the construction of examples of $\mathsf{NFOP}_k$ pure groups for all $k\in\Nbb_{>2}$, confirming an expectation of Abd Aldaim, Conant, and Terry \cite[Remark~2.13]{AACT23}.

We also take the occasion to give a negative answer to a problem of Chernikov and Hempel \cite[Problem~5.8]{CH18} in \Cref{subsubsec:burden}, where we observe that if $\gG$ is any infinite nice graph, then its Mekler group $\Msf(\gG)$ has arbitrarily large finite burden.

The method we developed also allows us to study pairs of Mekler groups and, in particular, to state a transfer principle for \emph{stably embedded} pairs of models. A stably embedded pair $(\Mcal, \Mcal')$ consists of an elementary extension $\Mcal \preccurlyeq \Mcal'$ where all types over $\Mcal$ realised in $\Mcal'$ are definable over $\Mcal$.

\begin{thmx}
    Let $\Msf\preccurlyeq \Msf'$ be an elementary extension of Mekler groups of respective graphs $\gG$ and $\gG'$. 
    \begin{enumerate}
        \item $\Msf$ is stably embedded in $\Msf'$ if and only if $\gG$ is stably embedded in $\gG'$. 
        \item $\Msf$ is uniformly stably embedded in $\Msf'$ if and only if $\gG$ is uniformly stably embedded in $\gG'$.
    \end{enumerate}
\end{thmx}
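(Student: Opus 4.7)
The plan is to combine two features of Mekler's construction. First, the graph $\gG$ is $\emptyset$-interpretable in its Mekler group $\Msf(\gG)$ in a way that is \emph{pure}: the structure induced on $\gG$ by $\Msf(\gG)$ coincides with the pure graph structure, a classical consequence of Mekler's analysis. Second, every formula of the Mekler group language, applied to a tuple, can be reduced, uniformly in parameters, to information about the graph structure on certain ``basis data'' associated with the tuple. The first fact applies uniformly in the elementary pair $\Msf \preccurlyeq \Msf'$ because the interpretation uses $\emptyset$-definable formulas; the second is implicit in earlier analyses of Mekler groups (including those underpinning the first theorem of this paper) and forms the technical heart of the transfer.

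For the forward directions of both (1) and (2), suppose $\Msf$ is (uniformly) stably embedded in $\Msf'$. Any subset $X \subseteq \gG^n$ definable in $\gG'$ with parameters from $\gG'$ lifts through the interpretation to a group-definable subset of $\Msf^k$ with parameters from $\Msf'$. The hypothesis on the group pair provides a defining formula over parameters from $\Msf$, and purity then rewrites it as a graph-definable set over parameters from $\gG$. In the uniform case the whole translation depends only on the original graph formula, giving uniform stable embeddedness of $\gG$ in $\gG'$.

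For the converse directions, suppose $\gG$ is (uniformly) stably embedded in $\gG'$. Given a tuple $\overline{b}$ from $\Msf'$ and a group formula $\varphi(\overline{x};\overline{y})$, the reduction expresses $\varphi(\Msf;\overline{b})$, uniformly in $\overline{b}$, as a set defined by a graph formula in the basis data of $\overline{b}$ over the basis data of $\Msf$. Applying (uniform) stable embeddedness of the graph pair yields a defining schema over $\gG$; pushing this schema back through the interpretation produces the desired defining formula over $\Msf$. The main obstacle will be setting up this reduction with the tight uniformity required for part (2). A plain parameter-by-parameter reduction is essentially present in the standard analysis of Mekler groups, but for the uniform statement one must verify that both the extracted basis data and the reduced formulas depend only on $\varphi$ and not on the specific $\overline{b}$, which should follow from a careful treatment of canonical forms in $\Msf(\gG)$.
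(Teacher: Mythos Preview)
Your forward direction is correct and matches the paper's (implicit) argument: $\gG$ is a pure $\emptyset$-interpretable sort of $\Msf$, and (uniform) stable embeddedness passes to $\Msf^{\eq}$, hence restricts to the graph pair. The paper records this $\eq$-transfer as a fact and then argues via chains of biconditionals rather than isolating the easy direction.

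The gap is in the converse. You posit a single reduction expressing $\varphi(\Msf;\bar b)$ as a graph formula in ``basis data'' of $\bar b$, uniformly in $\bar b$, and say this is ``implicit in earlier analyses of Mekler groups.'' It is not: producing such a reduction is precisely the new technical content of the paper, and the paper explicitly declines to give a direct group-to-graph relative quantifier elimination, remarking that the intermediate structures do not eliminate finite imaginaries and a one-shot language would be unwieldy. Instead the argument runs through a three-stage pipeline
\[
\Msf \;\longrightarrow\; \mathcal{F}(G) \;\longrightarrow\; (G/\Zsf,\cdot,R) \;\longrightarrow\; \gG,
\]
with a separate relative QE at each stage (Section~3) and then a separate stable-embeddedness transfer at each stage (Propositions~\ref{prop:RSEFG}, \ref{prop:RSEVR}, \ref{prop:RSEC}); Theorem~B is literally the concatenation of these three equivalences. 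Each stage involves real work that your sketch does not anticipate: at the $G\to\mathcal{F}(G)$ step one must split a term $t(\bar x,\bar b)$ into central and non-central pieces by anchoring at a witness $a_0\in G$ with $t(a_0,\bar b)\in\Zsf$; at the $(G/\Zsf,\cdot,R)\to\gG$ step the paper introduces a \emph{best approximation} $a_0\in V$ of $b\in V'$ and proves the support functions decompose as $S_{n,m}(t(a)+b)=S_{k,l}(b-a_0)\sqcup S_{n-k,m-l}(t(a)+a_0)$. The uniformity you flag as ``the main obstacle'' is handled at each stage by observing that the finitely many case distinctions (does such an $a_0$ exist? what is the minimal $(k,l)$?) can themselves be encoded by parameters in the smaller model. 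Your outline becomes a proof only after one builds this machinery, at which point it is the paper's proof.
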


This theorem is proved in \Cref{subsec:SE}. To our knowledge, this is the first transfer principle for Mekler groups for a property that depends on the model itself and not just on its theory. This suggests that other model-theoretic properties of Mekler groups, depending on the models, can be reduced to the study of their graphs.

To prove \Cref{thm:TransferCollapsing} and \Cref{thm:RSEMekler}, we give a new \emph{relative quantifier elimination} result from the Mekler group $\Msf(\gG)$ down to its nice graph $\gG$ (modulo finite imaginaries), which we believe is of independent interest. This occupies all of \Cref{sec:RQE}, and is then used in \Cref{sec:transfers} to prove the main theorems. 

We also give an alternative and faster proof of \Cref{thm:TransferCollapsing} in \Cref{sec:alternative}, under the additional (but mild) hypothesis of \emph{specific collapsing} (see \Cref{def:indiscernible collapsing}). The two proofs are very different in nature, and we consider them both to be of interest. Finally, in \Cref{sec:OpenQuestions}, we conclude with open questions about transfers for Mekler groups, which we think could be answered by an adaptation of our methods.

\underline{Related results.} The results and methods presented in \cite{EMRS25}, which build in part on the work of Mekler, represent a significant breakthrough in the model-theoretic study of nilpotent groups of finite exponent. The authors successfully employ the Lazard correspondence as a model-theoretic bridge between the theory of Lie algebras and the study of nilpotent groups of prime exponent. Among several classification results, they show that $\mathbf{G}_{c,p}$, the Fra\"issé limit of the class of Lazard groups of exponent $p$ and nilpotency class $c$ equipped with predicates for the Lazard series, is a strictly NIP$_c$ pure group. Furthermore, they show that $\mathbf{G}_{2,p}$ is NFOP$_2$ (see \cite[§3.3]{EMRS25}). 
Our approach in the present paper is rather different, as we restrict to the class of $2$-nilpotent groups, and do not fix any particular complete theory of groups. However, the complementary points of view with \cite{EMRS25} may suggest further development: can the various results concerning $\mathbb{G}^L_{2,p}$ be encapsulated in a more general transfer principle? Or, can Mekler's construction be adapted to groups of higher nilpotency class? We do not address these questions in this paper, leaving them for potential (ambitious) future work.
\clearpage
\setlength{\parskip}{2pt}

\tableofcontents

\setlength{\parskip}{4pt}

\underline{Notation.} Throughout this paper, we assume familiarity with first-order logic and basic model theory (types, saturation, monster models). All of this material can be found in \cite[Chapters~1-5]{TZ12}. Our notation is either standard or explained. We denote by $\lL_{\mathsf{grp}}$ the language of groups with inverse $\{\cdot,^{-1},1\}$, often identified with $\{+,-,0\}$, in the context of abelian groups. Not all functions mentioned in the remainder of the paper will be defined everywhere; to remedy this, for notational convenience, languages include by default a constant symbol $\und$, standing for \emph{``undetermined''}. This will be used (in the obvious way) for functions whose actual domain is smaller than the base set.  

\section{Preliminaries}

To keep this paper self-contained, we will start our preliminaries by giving a brief exposition of Mekler's construction in \Cref{subsec:Mekler}. This construction originates from \cite{Mek81}. We will then briefly review some basic facts around generalised indiscernibles and Ramsey structures in \Cref{subsec:generalised-indiscernibles}. Specific examples of generalised indiscernibles (and how collapses thereof characterise model-theoretic dividing lines) will also be discussed in more detail in \Cref{sub:mt-div}.

\subsection{Mekler's construction}\label{subsec:Mekler}

A standard reference for Mekler's construction is \cite[Appendix~A.3]{Hod93}, and we will closely follow the notation and terminology given there. Several refinements (and some corrections) to statements of \cite{Hod93} were given by Chernikov and Hempel in \cite[Section~2]{CH18}, and we will include some of them here as well. This \namecref{subsec:Mekler} is purposefully laconic, and we refer the reader to both \cite{Hod93} and \cite{CH18} for more information (and, in particular, proofs).

\begin{definition}\label{def:nicegraph}
    A graph\footnote{A \emph{graph} is a structure in a relational language with a single binary relation $E$ which is assumed to be irreflexive and symmetric.} $\gG=(V,E)$ is called \emph{nice} if it satisfies the following properties:
    \begin{enumerate}
        \item\label{item:nice1} $|V|\geq 2$;
        \item\label{item:nice2} For any two distinct vertices $v_1,v_2\in V$ there is some vertex $v\in V\setminus\{v_1,v_2\}$ such that $\{v_1,v\}\in E$ and $\{v_2,v\}\not\in E$;
        \item\label{item:nice3} There are no triangles or squares in the graph.
    \end{enumerate}
\end{definition}
\begin{example}
Natural examples that we will consider are infinite, but it can be useful to think of finite ones. The simplest, and only, example with at most $5$ vertices is the pentagon, and can be used as a building block to build larger finite examples.
\begin{center}
    \begin{tikzpicture}
    \begin{scope}[shift={(-3,0)}]
        
  \draw \foreach \a in {1,...,5} {
      (\a*72:0.5cm)  -- (\a*72+72:0.5cm) 
    };
    \foreach \a in {1,...,5} {
    \fill (\a*72:0.5cm) circle(1pt);     
    }
    
    \end{scope}
     \begin{scope}[scale=0.5]    
     \draw \foreach \a in {1,...,5} {
      (-\a*72:0.5cm)  -- (-\a*72+72:0.5cm) 
    };
    \foreach \a in {1,...,5} {
    \fill (-\a*72:0.5cm) circle(1pt);     
    }

     \draw \foreach \a in {0,...,4} {
      (2,0)+(\a*72+36:0.5cm)--+(108+\a*72:0.5cm) 
    };
    \foreach \a in {0,...,4} {
    \fill (2,0)+(\a*72+36:0.5cm) circle(1pt);   
    }

  \draw (0.5,0)--(0.8,0.3)--(1.2,0.3)--(1.5,0);
   \fill (0.8,0.3) circle(1pt);     
   \fill (1.2,0.3) circle(1pt);
   \foreach \j in {1,...,4}{
    \foreach \i in {0,...,4}{
        \fill (\j*72:1.5cm)+(\i*72+36:0.5cm) circle(1pt);
        \draw (\j*72:1.5cm)+(\i*72+36:0.5cm) -- +(108+\i*72:0.5cm); };
        \draw (\j*72:1.5cm)+(-180+\j*72:0.5cm)-- (\j*72:0.5cm);
}
 \end{scope}  
\end{tikzpicture}
\end{center}

\end{example}

Conditions (\labelcref{item:nice1})-(\labelcref{item:nice3}) above are mild conditions on the graph, in the following sense:
\begin{fact}\label{fact:everything-is-nice}
    Any structure in a finite relational language is bi-interpretable with a nice graph.
\end{fact}

This is a well-known and folklore fact, and many different proofs exist (e.g.~ \cite[Theorem~5.5.1, Exercise~5.5.9]{Hod93}) so, we have opted not to include our own.

For any graph $\gG$ and odd prime $p$, we write $\Msf(\gG)$ for the $2$-nilpotent group of exponent $p$ which is generated freely in the variety of $2$-nilpotent groups of exponent $p$ by the vertices of $\gG$, with only relations those imposing that two generators commute if and only if they are connected by an edge in $\gG$.

\begin{definition}    
    If $\gG$ is a nice graph, we call $\Msf(\gG)$ \emph{the Mekler group of $\gG$}. More generally, we say that a group is \emph{a Mekler group} if it is elementarily equivalent to a group $\Msf(\gG)$, for a nice graph $\gG$.
\end{definition}

\begin{example}
    Consider the pentagon  $\Psf$, and the Mekler group $G \coloneq \Msf(\Psf)$. One can show that $G$ is a special $p$-group, with centre $\Zsf:= C_p^5$ and quotient $G/\Zsf:= C_p^5$, where $C_p$ is the cyclic group of $p$ elements.
\end{example}
 An axiomatisation of the theory of Mekler groups can be found in \cite[Appendix~A.3]{Hod93}. We now recall some standard terminology.

For the remainder of this section, fix a nice graph $\gG$ and let $\Msf(\gG)$ be the $2$-nilpotent group of exponent $p$, constructed as above. We write $\Zsf$ for the centre of $\Msf(\gG)$. 

\begin{definition}[Equivalence relations $\sim$ and $\approx$]
We define the following two equivalence relations on $\Msf(\gG)$:
\begin{enumerate}
    \item \emph{Centraliser}: $g\sim h$ if and only if $\Csf(g) = \Csf(h)$, where $\Csf(g)$ denotes the centraliser of $g$.
    \item \emph{Powers modulo centre}: $g\approx h$ if and only if there is some $c\in\Zsf$ and some $\alpha\in\{1,\dots,p-1\}$ such that $h = g^\alpha c$.
\end{enumerate}
\end{definition}

\begin{remark}[{\cite[Lemma~A.3.3]{Hod93}}]
    The equivalence relation $\approx$ refines $\sim$, that is, for all $g,h\in \Msf(\gG)$, if $g\approx h$ then $g\sim h$. 
\end{remark}

\begin{definition}[Types and isolation]
Let $g\in \Msf(\gG)\setminus Z$ and $q\in\Nbb$.

$g$ is said to be of \emph{type $q$} if $[g]_\sim$ splits into exactly $q$-many $\approx$-classes.

$g$ is called \emph{isolated} if every non-central element that commutes with $g$ is $\approx$-equivalent to $g$.

If $g$ is of type $q\in\Nbb$, it is said to have \emph{type $q^\iota$} if it is isolated and \emph{type $q^\nu$} otherwise.
\end{definition}

By convention, central elements are excluded from this definition.

\begin{remark}
    For all $q\in\Nbb$, each of the sets of elements of type $q$, $q^\iota$ and $q^\nu$ in $\Msf(\gG)$ are $\emptyset$-definable, since $\sim$ and $\approx$ are $\emptyset$-definable.
\end{remark}

\begin{fact}[{\cite[Lemmas~A.3.2,~A.3.6-A.3.10]{Hod93}}]\label{Fact:Hod}
\
    \begin{enumerate}
        \item Every element of $\Msf(\gG)$ can be written in the form $a_0^{\alpha_0}\cdots a_{m-1}^{\alpha_{m-1}}z$ where $m\in \mathbb{N}$, $a_i\in \gG$ distinct, $\alpha_i\in \{1,\dots,p-1\}$ and $z\in \Zsf$.
        \item The derived subgroup $[\Msf(\gG),\Msf(\gG)]$ is the centre $\Zsf$.
        \item Elements of the form $a^\alpha z$ where $a\in \gG,\alpha \in \{1,\dots,p-1\}, z\in \Zsf$, are of type $1^\nu$.
        \item Elements of the form  $a_0^{\alpha_0}a_1^{\alpha_1}z$ where $\alpha_i\in \{1,\dots,p-1\}$, $a_i \in C$ are distinct and connected, are of type $p-1$.
        \item Elements of the form $g\coloneq a_0^{\alpha_0}\cdots a_{m-1}^{\alpha_{m-1}}z$ where $m\geq 2$, $\alpha_i\in \{1,\dots,p-1\}$, and $a_i$ distinct and connected to a \emph{unique} vertex $a\in \gG$, are of type $p$. This vertex $a$ is called the \emph{handle} of $g$. 
        \item Elements of the form $a_0^{\alpha_0}\cdots a_{m-1}^{\alpha_{m-1}}z$ with distinct vertices $a_i\in C$ and where no vertex in $\gG$ is connected to all $a_i$'s are of type $1^\iota$.
    \end{enumerate}
\end{fact}

In particular, non-central elements of $\Msf(\gG)$ can only be of type $p^\nu$, $(p-1)^\nu$, $1^\nu$ or $1^\iota$, thus, we will suppress the superscript $\nu$, when discussing elements of type $p^\nu$ or type $(p-1)^\nu$. Moreover, elements of type $p-1$ are exactly those elements that can be written as the product of two $\sim$-inequivalent and connected elements of type $1^\nu$.

\begin{notation}
    We denote by $\tE^\nu$, $\tE^p$ and $\tE^\iota$ the set of elements of type respectively $1^\nu$, $p$ and $1^\iota$.  Those subsets of $\Msf(\gG)$ are definable in the language of groups.
\end{notation}

\begin{definition}[Graph $\Gamma$]\label{def:GraphGroup}
    Let $X$ be a subset of a group $G$, such that $X$ is closed under the equivalence relation ${\sim}$. We define $\Gamma(X)$ to be the set $X/{\sim}$ equipped with a graph relation $E$ given by $[a]_{\sim} E [b]_{\sim}$ if and only if $a$ and $b$ commute in $G$. 
\end{definition}

By \cite[Theorem~A.3.10]{Hod93}, the graph $\gG$ is interpreted by $(\Gamma(\tE^\nu),E)$, which is an imaginary sort of $\Msf$. The main goal of this paper is to describe (generalised) indiscernible sequences in a Mekler group
(recall that, in this paper, a Mekler group is a group elementarily equivalent to $\Msf(\gG)$, for some nice graph $\gG$.).
To this end, we need a general description of the elements of $\Msf$. Notice that \Cref{Fact:Hod} fails to give a description of elements in all Mekler groups. Indeed, by compactness, we should expect elements which cannot be written as a finite product of elements of type $1^\nu$. We will construct \emph{independent} sequences of elements that can be extended to a transversal (definition below). 
First, we clarify our notion of independence:

\begin{definition}[Independence]\label{def:independence}
    Let $G$ be a group of exponent $p$. Let $\tE$ be a subset of $G$ and $\bar{a}$ a tuple of elements in $G$. We say that $\bar{a}$ is independent over $\tE$ if for all $\lL_{\mathsf{grp}}$-terms $t(\bar{x}, \bar{y})$ and elements $b\in \tE$, if $G\models t(\bar{a},\bar{b})=1$, then $G\models (\forall x)~t(\bar{x},\bar{b})=1$.
\end{definition}

When the group $G$ is abelian (of exponent $p$), this notion of independence coincides with linear independence if we view $G$ as an $\mathbb{F}_p$-vector space. 

\begin{definition}[Transversal]
    Let $\Msf=(G,\cdot,1)$ be a Mekler group. A \emph{transversal of $\Msf$} is a set $X$ which can be written as the union of three disjoint sets $X^\nu,X^p$, and $X^\iota$ where:
     \begin{itemize}
         \item $X^\nu$ is a subset of $\tE^\nu$ independent over $\Zsf$ and maximal for this property.
         \item $X^p$ is a subset of $\tE^p$ independent over $\braket{\Zsf,\tE^\nu}$ and maximal for this property.
         \item $X^\iota$ is a subset of $\tE^\iota$ independent over $\braket{\Zsf,\tE^\nu,\tE^p}$ and maximal for this property.
     \end{itemize}
\end{definition}

Every Mekler group $\Msf$ admits a transversal $X$, and all elements of $\Msf$ can be written as a finite product 
\[
    a_1^{r_1}\cdots a_n^{r_n} g_1^{s_1} \cdots g_m^{s_m} w_1^{t_1} \cdots w_k^{t_k} z,
\]
where $a_i\in X^\nu$, $g_i \in X^p$, $w_i \in X^\iota, z\in \Zsf$ pairwise distinct and $r_i,s_i,t_i$ are in $\{1,\dots,p-1\}$. This description is unique once we have fixed an order on the elements of $X$. We therefore often see $X$ as a tuple rather than a subset.

\begin{fact}[{\cite[Lemma~2.7]{CH18}}]\label{Fact:CH}
    For every small (possibly infinite) tuple $x$, there is a partial type $\phi(x)$ such that $\Msf \models \phi(x)$ if and only if $x$ can be extended to a transversal of $\Msf$.  
\end{fact}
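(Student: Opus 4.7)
The plan is to construct the partial type $\phi(x)$ as the union of two families of $\emptyset$-definable conditions, and then verify the equivalence directly, the main point being a Zorn's lemma argument in the converse direction.

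First, for each coordinate $i$ of $x = (x_1, \dots, x_n)$, the partial type $\phi(x)$ contains the formula stating that $x_i$ has one of the admissible types, namely $x_i \in \tE^\nu \lor x_i \in \tE^p \lor x_i \in \tE^\iota$. This is first-order thanks to the remark following the definition of isolation, which tells us that each $\tE^*$ is $\emptyset$-definable. Second, for each partition $(I^\nu, I^p, I^\iota)$ of $\{1, \dots, n\}$, and each potential dependence witness---that is, an $\lL_{\mathsf{grp}}$-term $t(\bar y)$ together with a choice of the set from which the auxiliary tuple $\bar b$ is to be drawn, namely $\Zsf$, $\Zsf \cup \tE^\nu$, or $\Zsf \cup \tE^\nu \cup \tE^p$, depending on which part of the transversal the subtuple is to belong to---$\phi(x)$ contains the conditional formula saying: if the $x_i$'s realise this partition, then no such witness $\bar b$ exists. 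Each such conditional formula is first-order because all of its ingredients are $\emptyset$-definable.

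For the biconditional, one direction is immediate: if $x$ is part of a transversal, $\phi(x)$ holds by the definitions of transversal and of independence. For the converse, suppose $\Msf \models \phi(x)$: the typing conditions determine a partition $(I^\nu, I^p, I^\iota)$, and the independence conditions ensure that the corresponding subtuples $x^\nu, x^p, x^\iota$ are independent over the respective sets $\Zsf$, $\langle \Zsf, \tE^\nu \rangle$, and $\langle \Zsf, \tE^\nu, \tE^p \rangle$. Since the given notion of independence is of finite character, Zorn's lemma allows us to enlarge $x^\nu, x^p, x^\iota$ successively to maximal independent subsets $X^\nu \subseteq \tE^\nu$, $X^p \subseteq \tE^p$, $X^\iota \subseteq \tE^\iota$; together these form a transversal containing $x$.

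The main subtlety, and the place where one must be careful, is that the independence condition on a subtuple of $x$ depends a priori on which partition of the indices we are in; this is bypassed by including conditional formulas for each partition separately, exploiting the fact that $\phi$ is permitted to be a partial type rather than a single formula.
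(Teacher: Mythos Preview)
The paper does not actually prove this statement: it is recorded as a \emph{Fact} with a citation to \cite[Lemma~2.7]{CH18} and no argument is given. So there is no ``paper's own proof'' to compare against; I will simply assess your argument.

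Your proof is essentially correct. The key observations are all in place: the sets $\tE^\nu,\tE^p,\tE^\iota$ are $\emptyset$-definable; independence of a finite tuple over a set of the form $\Zsf\cup\tE^\nu$ (etc.) is expressible by a scheme of first-order formulas (one for each candidate term and each length of witness tuple); and independence over the generated subgroup $\langle\Zsf,\tE^\nu\rangle$ is equivalent to independence over the generating set $\Zsf\cup\tE^\nu$, since any element of the former is already a term in elements of the latter. The Zorn's lemma step for extending to a maximal independent set is routine, as independence is of finite character.

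Two minor remarks. First, the detour through ``all partitions $(I^\nu,I^p,I^\iota)$'' is unnecessary: the sets $\tE^\nu,\tE^p,\tE^\iota$ are pairwise disjoint, so once the typing formulas hold the partition is forced. You could instead, for each finite subtuple and each nontrivial exponent vector, include the single formula ``if these coordinates are all of type $\star$, then the corresponding product is not realised in the relevant definable set'', ranging over $\star\in\{1^\nu,p,1^\iota\}$. This is what the paper itself does later, in \Cref{Lem:TransversalQFDefinable}, where the independence conditions are written using the predicates $A_{n,m}$. Second, the statement allows $x$ to be any \emph{small} (possibly infinite) set, whereas you wrote $x=(x_1,\dots,x_n)$; your argument adapts without change by letting $\phi$ range over all finite subtuples, but you should say so explicitly.
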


This description of elements of $\Msf$ (which is usual in the literature of Mekler groups) can be made more precise: Let $X^\zeta$ be a subset of $\Zsf$ independent over $\braket{\tE^\nu,\tE^p,\tE^\iota}$, and maximal with this property. Then an element $z$ of the centre can be written as:
\[
    \prod_{x,y \in X} [ x,y]^{n_{x,y}} \prod_{z \in X^\zeta} z^{n_z}, 
\]
where $n_z,n_{x,y}\in \{0,\dots,p-1\}$ are almost all trivial. Again, we have uniqueness if we fix orders on $X^\zeta$ and $X$ (and if we consider the lexicographic order on $X^2$). This follows easily from \cite[Axiom~8]{Hod93}.

Thus, every element of $\Msf$ can be written in the form:
\[
    \prod_{x \in X}x^{n_x} \prod_{x,y \in X} [ x,y]^{n_{x,y}} \prod_{z \in X^\zeta} z^{n_z}. 
\]

We call $X^\nu X^p X^\iota X^\zeta$ a \emph{full transversal} of $\Msf$.

\subsection{Generalised indiscernibles}\label{subsec:generalised-indiscernibles}

Throughout the paper, if $\sS$ is a first-order structure, we let $\dom(\sS)$ denote the domain of $\sS$. In this section, we fix a language $\mathcal{L}$ and a countable language $\lL^\prime$. Let $T$ be a $\mathcal{L}$-structure with a monster model $\Mbb$ and let $\sI$ be an $\lL^\prime$-structure. 

\begin{definition}[Generalised Indiscernibles]\label{def:generalised-indiscernibles}
    Given an $\sI$-indexed sequence of tuples $ (\bar a_i)_{i\in\sI}$ from $\Mbb$, and a small subset $A$ of $\Mbb$, $(\bar a_i)_{i\in\sI}$ is called an \emph{$\sI$-indiscernible sequence over $A$}, if for all positive integers $n$ and all sequences $i_1,\dots,i_n,j_1,\dots,j_n$ from $\sI$ we have that if:
    \[
        \qftp_\sI^{\lL^\prime}(i_1,\dots,i_n) = \qftp_\sI^{\lL^\prime}(j_1,\dots,j_n)
    \]
   then 
    \[
        \tp(\bar a_{i_1},\dots,\bar a_{i_n}/A) =  \tp(\bar a_{j_1},\dots,\bar a_{j_n}/A).
    \]
    If $A=\emptyset$,  $(\bar a_i)_{i\in\Ical}$ is called an \emph{$\sI$-indiscernible sequence}.
\end{definition} 

We will always assume that the indexing structure $\sI$ is a Ramsey structure, i.e. the Fraïssé limit of a \emph{Ramsey class}. The crucial fact about sequences indexed by Ramsey structures is that the conclusion of \emph{the standard lemma} \cite[Lemma~5.1.3]{TZ12} holds for them. More precisely, we have the following theorem, originally due to Scow, \cite{Sco15}, later generalised in \cite{MP23}:

\begin{theorem}[Generalised Standard Lemma]\label{thm:genstandlem}
    Let $\lL^\prime$ be a first-order language and $\sI$ an infinite, locally finite $\lL^\prime$-structure. Then, the following are equivalent:
    \begin{enumerate}
        \item $\Age(\sI)$ is a Ramsey class.\footnote{Recall, the \emph{age} of a structure $\sI$ is the class of all its finitely generated substructures, denoted here by $\Age(\sI)$.}
        \item $\sI$-indexed indiscernibles have the modelling property.
    \end{enumerate}    
\end{theorem}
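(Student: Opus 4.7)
The plan is to prove the two implications separately, adapting the classical argument for the standard lemma -- where Ramsey's theorem for $\Nbb$ is replaced by the Ramsey property of $\Age(\sI)$ -- following the strategy of Scow.

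For $(1) \Rightarrow (2)$, I would fix an arbitrary $\sI$-indexed sequence $(\bar{a}_i)_{i \in \sI}$ of tuples from a monster $\mM \models T$ and produce an $\sI$-indiscernible sequence $(\bar{b}_i)_{i \in \sI}$ that is EM-based on it. By compactness, this reduces to the following finitary statement: for every finite $\Delta \subseteq \lL(T)$ and every finite substructure $\sI_0 \subseteq \sI$, there exists arbitrarily large $\sK \in \Age(\sI)$ and an embedding $\sK \hookrightarrow \sI$ on which the colouring $c\colon \binom{\sK}{\sI_0} \to \{\Delta\text{-types in }\mM\}$, defined by $c(\iota) = \tp^\Delta\big((\bar{a}_{\iota(i)})_{i \in \sI_0}\big)$, becomes constant on each $\qftp_\sI^{\lL^\prime}$-class of embeddings. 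This is a finite colouring (as $\Delta$ is finite and $|\sI_0|$ is bounded), so the Ramsey property of $\Age(\sI)$ supplies the required monochromatic copies; local finiteness of $\sI$ then ensures that these finite pieces glue, via a standard diagonal-compactness argument, into an indiscernible sequence indexed by all of $\sI$.

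For $(2) \Rightarrow (1)$, I would argue contrapositively. Suppose $\Age(\sI)$ fails Ramsey: there exist $\sA, \sB \in \Age(\sI)$ with $\sA$ embedding into $\sB$, and $k \geq 2$, such that every $\sC \in \Age(\sI)$ admits a ``bad'' colouring $\chi_\sC\colon \binom{\sC}{\sA} \to k$ with no monochromatic copy of $\sB$. By a standard compactness/ultraproduct argument, using that $\sI$ is the Fraïssé limit of $\Age(\sI)$, these can be collated into a single $k$-colouring $\chi\colon \binom{\sI^*}{\sA} \to k$ on some $\sI^* \equiv \sI$, still with no monochromatic copy of $\sB$. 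Expand the language to $\lL^\prime \cup \{P_0, \dots, P_{k-1}\}$ by adding predicates naming the colour classes of $\chi$, and consider the $\sI^*$-indexed sequence of singletons $(i)_{i \in \sI^*}$ in this expanded structure. Applying the modelling property yields an $\sI^*$-indiscernible sequence $(\bar{b}_i)_{i \in \sI^*}$ EM-based on $(i)_{i \in \sI^*}$. Indiscernibility forces the encoded colour of a copy of $\sA$ to depend only on its quantifier-free $\lL^\prime$-type; since all copies of $\sA$ inside a fixed copy of $\sB$ share the same $\qftp_\sI^{\lL^\prime}$, this would yield a monochromatic copy of $\sB$ in $(\bar{b}_i)_{i \in \sI^*}$, and EM-basedness transports this back to a monochromatic copy of $\sB$ for $\chi$ itself -- contradicting our assumption.

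The main obstacle will be the setup of the reverse direction: one must design the encoded sequence so that the modelling property's conclusions genuinely translate the failure of Ramsey into a contradiction. This hinges on working in a sufficiently rich language where the colouring is first-order expressible, and on controlling how EM-basedness interacts with the colour predicates -- in particular, the encoding must guarantee that a monochromatic configuration in the indiscernible sequence pulls back to one in the original coloured structure. The forward direction is more routine, but still requires a careful compactness step to stitch the Ramsey-monochromatic finite pieces into a single sequence indexed by all of $\sI$.
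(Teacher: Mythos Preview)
The paper does not actually prove this theorem: it is stated in the preliminaries as a known result, attributed to Scow \cite{Sco15} and its generalisation in \cite{MP23}, and then used as a black box throughout. So there is no ``paper's own proof'' to compare against.

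That said, your sketch is essentially the standard Scow argument and is broadly on target. Two points deserve attention. First, you invoke ``$\sI$ is the Fra\"iss\'e limit of $\Age(\sI)$'' in the reverse direction, but the theorem as stated only assumes $\sI$ is infinite and locally finite --- homogeneity is not given. You do not actually need it: compactness alone produces the bad colouring on some $\sI^\ast$ with $\Age(\sI^\ast)=\Age(\sI)$. Second, and relatedly, hypothesis (2) gives you the modelling property for $\sI$-indexed sequences, not $\sI^\ast$-indexed ones. You should either argue that the modelling property depends only on $\Age(\sI)$ (which is true and not hard), or arrange $\sI\preccurlyeq\sI^\ast$, restrict the coloured sequence to $\sI$, apply modelling there, and observe that a monochromatic copy of $\sB$ already lives inside $\sI$. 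Your key combinatorial claim --- that all embeddings of $\sA$ into $\sI$ share the same quantifier-free type once an enumeration of $\sA$ is fixed --- is correct and is exactly what makes the contradiction go through.
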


The precise definitions of a \emph{Ramsey class} and of the \emph{modelling property} will be omitted, but we will recall all the other tools we need in this paper. For a proper introduction to these concepts, we direct the reader to \cite{Bodirsky_2015,Sco15}. We only recall the notion of sequence locally based on another:

\begin{definition}[Locally based]
    Let $\Mcal$ be an $\Lcal$-structure. Given indexing structures $\Ical$ and $\Ibb$ in the same language, an $\Ibb$-indexed sequence $(\bar a_i:i\in\dom(\Ibb))$ in $\Mcal$ is said \emph{(locally) based} on $(\bar b_i:i\in\dom(\Ical))$ if for any finite set $\Delta$ of $\Lcal$-formulas and any finite tuple $(i_1,\dots,i_n)$ from $\Ibb$ there is a finite tuple $(j_1,\dots,j_n)$ from $\Ical$ such that:
    \begin{itemize}
        \item $\qftp_\Ibb(i_1,\dots,i_n) = \qftp_\Ical(j_1,\dots,j_n)$.
        \item $\tp^\Delta(\bar a_{i_1},\dots,\bar a_{i_n})=\tp^\Delta(\bar b_{j_1},\dots,\bar b_{j_n})$.
    \end{itemize}
\end{definition}

\begin{definition}[Reduct]
      Let $\sI$ and $\sJ$ be structures on the same domain. Then $\sJ$ is a \emph{(first-order) reduct} \emph{of $\sI$} if every relation and every function in $\sJ$ is definable in $\sI$ without parameters. We say that $\sJ$ is a \emph{strict reduct} if it is a reduct of $\sI$, and $\sJ$ and $\sI$ are not interdefinable.
\end{definition}

\begin{definition}\label{def:indiscernible collapsing}
    Let $\sI$ and $\sJ$ be two structures and assume that $\sJ$ is a reduct of $\sI$. A structure $\sM$ \emph{collapses indiscernibles from $\sI$ to $\sJ$} if every $\sI$-indiscernible sequence in $\sM$ is a $\sJ$-indiscernible sequence. A complete theory $T$ \emph{collapses indiscernibles from $\sI$ to $\sJ$} if the monster model collapses indiscernibles from $\sI$ to $\sJ$. We say that $T$ \emph{collapses $\sI$-indiscernibles} if it collapses any $\sI$-indiscernible sequence $(a_i)_{i \in \sI}$ to $\sJ$-indiscernible sequence, where $\sJ$ is some strict reduct of $\sI$ (depending on the sequence $(a_i)_{i \in \sI}$).
\end{definition}

\subsection{Model-theoretic dividing lines}\label{sub:mt-div}

\subsubsection{$\cK$-configurations}
The notion of $\cK$-configurations originated from the work of Guingona and Hill \cite{GH19} and was further developed in \cite{GPS23}. In this section, we will fix a relational first-order language $\lL'$ with signature $\Sig(\lL')$ and an arbitrary first-order language $\lL$. We abusively use the symbol $\lL$ to also denote the set of (parameter-free) $\lL$-formulas. Throughout, we will use $\cK$ to denote a class of finite $\lL'$-structures closed under isomorphism.

\begin{definition}[$\cK$-configuration, {\cite[Definition 5.1]{GPS23}}]\label{def: K-config}
    An $\lL$-structure $\sM$ \emph{admits (or codes) a $\cK$-configuration} if there are a positive integer $n$, a function $I:\Sig(\lL') \rightarrow \lL$ and a sequence of functions $(f_A: A\in \cK)$ such that: 
    \begin{enumerate}
        \item for all $A\in \cK$, $f_A:A \rightarrow \sM^n$,
        \item for all $R\in \Sig(\lL')$, for all $A\in \cK$, for all $a\in A^{\mathsf{arity}(R)}$,
            \[
                A \models R(a) \Leftrightarrow \sM \models I(R)(f_A(a)).
            \]    
    \end{enumerate}
        
    A theory $T$ \emph{admits a $\cK$-configuration} if there is a model $\sM\models T$ which admits a $\cK$-configuration. We denote by $\codingcla{\cK}$ the class of theories that admit a $\cK$-configuration, and by $\ncodingcla{\cK}$ the class of theories that \emph{do not} admit a $\cK$-configuration.
\end{definition}

\begin{example} \ 
    \begin{itemize}
        \item The dense linear order $(\mathbb{Q},<)$ codes (but does not interprets) the discrete order $(\mathbb{N}, < , P_1)$ where $P_1:=\{(n,n+1)\}$.
        Indeed consider an increasing sequence $(a_i)_{i\in \mathbb{N}}$. An $\Age(\mathbb{N})$-configuration is given by:
        \begin{enumerate}
            \item for $J\subseteq \mathbb{N}$, $f_J: n\in J \mapsto (a_n,a_{n+1}) \in \mathbb{Q}^2$,
            \item $I(<)(\mathbb{Q}^2) =  \{ (a,b), (a',b') :   a<a' \} $ and $I(P)(\mathbb{Q}^2)=\{ (a,b), (a',b') :   a=b' \} $. 
        \end{enumerate}
        \item A theory $T$ is stable if and only if it does not admit a $\mathsf{DLO}$-configuration. In other words, if and only if it does not code a linear order.
        \item A theory $T$ is NIP if and only if it does not admit a $\mathsf{RG}$-configuration, where $\mathsf{RG}$ is the random graph.
        \end{itemize}
\end{example}

The following theorem originally appeared in \cite[Theorem 3.14]{GH19} with the additional assumption that $\Age(\sI)$ is a Fraïssé class with the \emph{strong amalgamation property}. Later, in \cite{MPT23}, it was extended to remove this assumption, and this is the version we present below.

\begin{theorem}[{\cite[Theorem 3.3]{MPT23}}]\label{thm:Ramsey class collapse}
    Let $\sI$ be an $\aleph_0$-categorical Fraïssé limit of a Ramsey class. Then the following are equivalent for a theory $T$:
        \begin{enumerate}
            \item\label{item:T noncoding} $T\in \ncodingcla{\sI}$.
            \item\label{item:T collapses} $T$ collapses $\sI$-indiscernibles.
        \end{enumerate}
\end{theorem}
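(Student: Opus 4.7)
The plan is to prove both directions directly, with the two key tools being the modelling property for $\sI$-indexed indiscernibles (which applies since $\sI$ is the Fraïssé limit of a Ramsey class) and $\aleph_0$-categoricity of $\sI$ (so that $\emptyset$-definable sets in $\sI$ coincide with $\Aut(\sI)$-invariant ones).

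For $(1) \Rightarrow (2)$, I argue the contrapositive. Suppose $T$ admits an $\sI$-configuration $(I, (f_A)_{A \in \Age(\sI)})$ in some $\sM \models T$. A compactness argument assembles the family $(f_A)_A$ into a single map $f : \sI \to \mM^n$ in a monster $\mM \models T$ satisfying
\[
    \sI \models R(\bar i) \iff \mM \models I(R)(f(\bar i))
\]
for every $R \in \Sig(\lL')$ and every tuple $\bar i$ from $\sI$ (any finite fragment of the target partial type only mentions finitely many indices, which span a finite $A \in \Age(\sI)$, and $f_A$ witnesses that fragment). Applying the generalised standard lemma to $(f(i))_{i \in \sI}$ yields an $\sI$-indiscernible $(\bar a_i)_{i \in \sI}$ locally based on $(f(i))$; local basing, applied to each formula $I(R)$, transports the coding displayed above verbatim to $(\bar a_i)$. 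Finally, for any strict reduct $\sJ$ of $\sI$ one has $\Aut(\sJ) \supsetneq \Aut(\sI)$, so ultrahomogeneity of $\sI$ yields tuples $\bar i, \bar j$ of equal $\sJ$-quantifier-free type with $\sI \models R(\bar i) \wedge \neg R(\bar j)$ for some atomic $R \in \Sig(\lL')$; then $I(R)$ separates $\bar a_{\bar i}$ from $\bar a_{\bar j}$, so $(\bar a_i)$ is not $\sJ$-indiscernible.

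For $(2) \Rightarrow (1)$, I fix an $\sI$-indiscernible sequence $(\bar a_i)_{i \in \sI}$ witnessing non-collapse, i.e., one that is not $\sJ$-indiscernible for any strict reduct $\sJ$ of $\sI$. For each $\lL$-formula $\phi(\bar x_1, \ldots, \bar x_k)$, define $R_\phi \subseteq \sI^k$ by $R_\phi(\bar i) \iff \mM \models \phi(\bar a_{\bar i})$. By $\sI$-indiscernibility, $R_\phi$ is $\Aut(\sI)$-invariant, hence $\emptyset$-definable in $\sI$ by $\aleph_0$-categoricity, so the structure $\sJ^*$ on $\sI$ with primitive relations $(R_\phi)_\phi$ is a reduct of $\sI$. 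By construction $(\bar a_i)$ is $\sJ^*$-indiscernible, so the hypothesis forces $\sJ^*$ and $\sI$ to be interdefinable. In particular, each atomic $R \in \Sig(\lL')$ is $\emptyset$-definable in $\sJ^*$; since the family $\{R_\phi\}_\phi$ is closed under Boolean combinations (via the corresponding Boolean combinations of $\lL$-formulas), we may write $R = R_{\phi_R}$ for some $\lL$-formula $\phi_R$. Setting $I(R) := \phi_R$ and, for each $A \in \Age(\sI)$, $f_A(a) := \bar a_{\iota_A(a)}$ for some fixed embedding $\iota_A : A \hookrightarrow \sI$, the required equivalence $A \models R(\bar a) \iff \mM \models I(R)(f_A(\bar a))$ is immediate from the definition of $R_{\phi_R}$.

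The main obstacle is the modelling-property step in the forward direction: I need to simultaneously (i) assemble the $(f_A)_A$ into a single map $f$ via compactness, and (ii) verify that local basing preserves the coding property for every atomic $R \in \Sig(\lL')$ at once. A further subtle point in the backward direction is that non-interdefinability of $\sJ^*$ with $\sI$ must be refuted at the level of \emph{atomic} $\lL'$-relations rather than arbitrary definable ones; this is precisely what ultrahomogeneity of $\sI$ provides in the identification $R = R_{\phi_R}$.
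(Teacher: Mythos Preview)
First, note that the paper does not itself prove this theorem: it is quoted from \cite{MPT23} without proof, so there is no ``paper's own proof'' to compare against. I will therefore simply assess your argument on its own.

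Your two directions are mislabelled: what you call ``$(1)\Rightarrow(2)$'' (starting from an $\sI$-configuration and producing a non-collapsing indiscernible) is in fact the contrapositive of $(2)\Rightarrow(1)$, and vice versa. This is harmless but worth fixing. The direction you label $(1)\Rightarrow(2)$ --- coding implies non-collapse --- is essentially correct: the compactness assembly of the $f_A$'s into a single $f$, the use of the modelling property to pass to an $\sI$-indiscernible while preserving the coding, and the observation that a strict reduct $\sJ$ of the $\aleph_0$-categorical $\sI$ satisfies $\Aut(\sJ)\supsetneq\Aut(\sI)$ (so some atomic $R$ separates two tuples of equal $\sJ$-quantifier-free type) are all sound.

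There is, however, a genuine gap in the direction you label $(2)\Rightarrow(1)$. You argue that $R$ is $\emptyset$-definable in $\sJ^*$ and that $\{R_\phi\}_\phi$ is closed under Boolean combinations, and conclude $R=R_{\phi_R}$. But $\emptyset$-definability in $\sJ^*$ allows quantifiers, and $\{R_\phi\}$ is \emph{not} closed under projections: the set $\{(i_2,\dots,i_k):\exists i_1\in\sI\ R_\phi(i_1,\dots,i_k)\}$ asks for a witness among the $a_i$'s, whereas $R_{\exists y_1\phi}$ asks for a witness in all of $\mM$, and these need not agree. So ``$\emptyset$-definable in $\sJ^*$'' may be strictly larger than $\{R_\phi\}$. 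Concretely, what you need is that the map $\qftp_\sI(\bar\imath)\mapsto\tp_\mM(\bar a_{\bar\imath})$ is \emph{injective} on $k$-types for every $k$. Your argument only yields $\Aut(\sJ^*)=\Aut(\sI)$, which says the $\Aut(\sJ^*)$-orbits on $k$-tuples coincide with the $\qftp_\sI$-classes; but these orbits can be strictly finer than the $\qftp_{\sJ^*}$-classes (the $\tp_\mM$-classes) whenever $\sJ^*$ fails to eliminate quantifiers. Nothing in your argument rules this out: two distinct $\qftp_\sI$ $k$-types could induce the same $\tp_\mM$, while higher-arity constraints still force $\Aut(\sJ^*)=\Aut(\sI)$. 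In that situation any atomic $R\in\Sig(\lL')$ separating those two $k$-types is $\emptyset$-definable in $\sJ^*$ yet equal to no $R_\phi$. Establishing the required injectivity is exactly the delicate point here (and is where \cite{MPT23} improves on \cite{GH19}, which used strong amalgamation to run the relevant back-and-forth); you should either supply that argument or cite it explicitly.
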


\subsubsection{The NFOP$_k$ property}

Most of the classical model-theoretic dividing lines (such as stability, NIP, NSOP, etc. see \cite{She90}) are essentially \emph{binary} (in that the local combinatorial configuration whose absence guarantees strong structure results is one given by formulas in two tuples of variables). In recent years, the power of these dividing lines in finite \emph{graph}-combinatorics has become evident, for instance, in the \emph{stable regularity lemma} of Malliaris and Shelah \cite{MS2014}. Finding appropriate dividing lines that allow us to prove similar results for \emph{hypergraphs} is one of the most prominent topics in the nexus of model theory and combinatorics. 

In recent years, it has become evident that these dividing lines should not be binary. The $k$-Independence Property is one of the more well-established higher-arity dividing lines, generalising the Independence Property. In \cite{AACT23}, a rather robust higher-arity generalisation of stability (i.e., \emph{order property}) is developed via the \emph{functional order property}\footnote{We note that alternative notions can be found in the literature, such as Takeuchi’s NOP$_2$ dividing line.}. As this material is not (yet) standard, we provide a brief summary below and refer the reader to the introduction of \cite{AACT23} for a historical account of the concept.

\begin{definition}\label{def:FOPk}
    Let $T$ be a complete $\lL$-theory and $k\in\Nbb$. An $\lL$-formula $\varphi(x,x_1\ldots,x_{k})$ has the \emph{$k$-functional order property} (\emph{FOP$_k$}) in $T$ if there is a model $\Mcal\models T$ and    
    \begin{itemize}
        \item a sequence $(a_f)_{f:\omega^{k-1}\to\omega}$ of elements in $M^{x}$,
        \item for $1\leq t\leq k$, sequences $(b^t_i)_{i<\omega}$ of element in $M^{x_{t}}$
    \end{itemize}
    such that 
    \[\Mcal\models\varphi(a_f,b^1_{i_1},\ldots,b^k_{i_k}) \ \Leftrightarrow \ i_k\leq f(i_1,\ldots,i_{k-1}).\]
    
    The theory $T$ is NFOP$_k$ if no $(k+1)$-partitioned formula has FOP$_k$ in $T$. 
\end{definition}

One of the results from \cite{AACT23} that will be important in this paper is a characterisation of FOP$_k$ via collapsing indiscernibles. We will recall this in the sequel, but first, we need to introduce some terminology.

Let $k$ be a positive integer. We denote by $\lL_k$ the following language:
\[
    \lL_k=\{P_1,\ldots,P_{k+1},<,<_k,R\}.
\]
We let $Q$ denote the $k$-ary relation $P_1\times \dots\times P_k$.

\begin{definition}[{\cite[Definition 3.12]{AACT23}}]\label{def:Tk}
    Define $T_k$ to be the $\lL_k$-theory consisting of the following axioms:
    \begin{enumerate}
        \item $P_1,\ldots, P_{k+1}$ is a partition.
        \item $<$ is a linear order with $P_1<\ldots< P_{k+1}$.
        \item $R$ only holds on $P_1\times\ldots\times P_{k+1}$ (which we also view as $Q\times P_{k+1}$).
        \item $<_k$ only holds on $Q\times Q$, and is a linear order on $Q$. 
        \item For any $\bar{x},\bar{y}\in Q$ and $w,z\in P_{k+1}$, $\big(\bar{x}\leq_k\bar{y} \wedge R(\bar{y},w)\wedge w\leq z\big)\rightarrow R(\bar{x},z)$.
    \end{enumerate}
\end{definition}

\begin{definition}
    We denote by $\cla{H}_k$ the class of finite models of $T_k$. 
\end{definition} 

\begin{fact}[{\cite[Corollaries 3.14 \& 3.16]{AACT23}}]
    $\cla{H}_k$ is a Fraïssé class with the Ramsey property. In particular, the Fraïssé limit $\Hcal_k$ has the modelling property.
\end{fact}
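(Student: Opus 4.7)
The plan is to treat the two assertions separately: the ``in particular'' clause is immediate from the \emph{generalised standard lemma} (quoted above), since $\Hcal_k$ is by construction an infinite locally finite structure, so it will suffice to show that $\cH_k$ is a Fraïssé class with the Ramsey property. I would not try to beat \cite{AACT23}; the approach below is simply how I would re-derive the statement.

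For the Fraïssé properties, I would verify the hereditary property, joint embedding property, and amalgamation property directly from \Cref{def:Tk}. All five axioms of $T_k$ are universal (the partition, the linearity and sort-restrictions of $<$ and $<_k$, and the monotonicity Horn clause (5)), so substructures of models of $T_k$ are models of $T_k$, giving HP. JEP is a degenerate case of AP over the empty structure, so the crux is AP. Given $A\hookrightarrow B_1, A\hookrightarrow B_2$ in $\cH_k$, I would build the free amalgam $C = B_1 \cup_A B_2$ by first choosing any linear extensions of $<$ and $<_k$ on $C$ that restrict to the given orders on each $B_i$ (which is always possible because both orders are partial orders on disjoint sets outside $A$), and then defining
\[
R^C \ := \ \text{closure of } R^{B_1} \cup R^{B_2} \text{ under axiom (5) of } T_k.
\]
Because axiom (5) is a positive Horn implication, this closure is well-defined; and because $B_1,B_2\models T_k$ individually already, the closure does not add any new tuples inside $B_i$, so the inclusions $B_i\hookrightarrow C$ remain embeddings.

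For the Ramsey property, the plan is to invoke a general theorem rather than build colourings by hand. The class $\cH_k$ is an expansion of the class of finite linear orders by unary partition predicates, an auxiliary order $<_k$ on $k$-tuples, and the monotone relation $R$; it is a class of finite ordered relational structures defined by forbidden substructures (encoded by the universal axioms). This is exactly the setting where the Hubička--Nešetřil machinery applies: one must exhibit (i) that the expansion of linear orders by $(<_k, R, P_1,\dots,P_{k+1})$ is \emph{precompact}, which is automatic since there are only finitely many isomorphism types of any given finite size, and (ii) that the class admits a \emph{completion}, i.e.\ every partial structure consistent with the axioms can be completed to a member of $\cH_k$. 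The completion is provided by precisely the minimal closure procedure used above for AP. Together with the classical Nešetřil--Rödl theorem for finite ordered relational structures, this yields the Ramsey property for $\cH_k$.

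The main obstacle I would expect is (ii), the completion step: even though the amalgamation construction provides a recipe, one has to check carefully that local closures in small substructures glue to a global closure without introducing new tuples that would violate the universal axioms on a larger scale. In particular, one must verify that the transitivity-like interaction between $<$, $<_k$, and $R$ imposed by axiom (5) does not force any ``collapse'' of the structure during the completion. Once this bookkeeping is done (and it is exactly the content of \cite[\S3]{AACT23}), the Ramsey property follows from the Hubička--Nešetřil theorem, and the modelling property for $\Hcal_k$-indiscernibles is then a formal consequence.
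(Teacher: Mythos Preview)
The paper does not prove this statement; it is recorded as a Fact with a bare citation to \cite{AACT23}, so there is no argument in the paper to compare against. Your proposal therefore goes beyond what the paper does, and the outline you give is broadly correct and in the spirit of how \cite{AACT23} proceeds: direct verification of the Fra\"iss\'e axioms, then an appeal to the Hubi\v{c}ka--Ne\v{s}et\v{r}il machinery for the Ramsey property, with the modelling property read off from the generalised standard lemma.

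One point in your amalgamation sketch is worth sharpening. You write that extending $<_k$ to $C$ ``is always possible because both orders are partial orders on disjoint sets outside $A$'', but recall from \Cref{def:Tk} that $<_k$ is a linear order on $Q = P_1\times\cdots\times P_k$, hence a $2k$-ary relation on the underlying domain. When you amalgamate $B_1$ and $B_2$ over $A$, the product $Q(C)$ strictly contains $Q(B_1)\cup Q(B_2)$: it has many ``mixed'' $k$-tuples whose coordinates are split between $B_1\setminus A$ and $B_2\setminus A$, and these tuples live in neither factor. So you are not merely extending a partial order on a disjoint union; you must define $<_k$ (and likewise $R$) on genuinely new tuples and then check that axiom~(5) can still be satisfied. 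Your closing paragraph does flag this completion step as the main obstacle, and it is exactly what \cite[\S3]{AACT23} works out, but the phrase ``choosing any linear extensions'' undersells what is involved.
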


\begin{fact}[{\cite[Theorem 4.15]{AACT23}}]\label{fact:NFOP_k-collapse}
    Let $T$ be a complete theory with monster model $\M$. The following are equivalent.
    \begin{enumerate}
        \item $T$ is NFOP$_k$.
        \item Every $\Hcal_k$-indexed indiscernible sequence in $\M$ is $(\lL_k\setminus\{R\})$-indiscernible.
    \end{enumerate}
\end{fact}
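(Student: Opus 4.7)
The plan is to establish both implications by contrapositive, using the modelling property of $\Hcal_k$ in one direction and the Fra\"iss\'e homogeneity of $\Hcal_k$ in the other.

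For $(2)\Rightarrow(1)$, suppose $\varphi(x_1,\ldots,x_{k+1})$ has FOP$_k$ in $T$, witnessed in some $\Mcal\models T$ by sequences $(a_f)_{f\colon\omega^{k-1}\to\omega}$ and $(b^t_i)_{i<\omega}$ for $t=1,\ldots,k$. I would construct an auxiliary $\lL_k$-structure $\Ibb\models T_k$ by taking $P_1^\Ibb$ to be the (or a monotone cofinal subfamily of the) set of functions $\omega^{k-1}\to\omega$, each $P_{t+1}^\Ibb$ a copy of $\omega$, the order $<_k^\Ibb$ on $Q^\Ibb$ chosen so that the monotonicity axiom for $R$ is satisfied, and $R^\Ibb(f,i_1,\ldots,i_{k-1},i_k)$ defined to hold exactly when $i_k\leq f(i_1,\ldots,i_{k-1})$. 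Placing $a_f$ at each $f\in P_1^\Ibb$ and $b^t_i$ at index $i\in P_{t+1}^\Ibb$ yields an $\Ibb$-indexed sequence on which $\varphi$ tracks $R^\Ibb$ on the nose. Applying the modelling property of $\Hcal_k$ (from the Fact immediately preceding \Cref{fact:NFOP_k-collapse}) extracts an $\Hcal_k$-indexed indiscernible sequence locally based on this one; local basing transfers the $\varphi$-versus-$R$ correspondence, so the extracted sequence is not $(\lL_k\setminus\{R\})$-indiscernible.

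For $(1)\Rightarrow(2)$, suppose $(a_i)_{i\in\Hcal_k}$ is an $\Hcal_k$-indexed indiscernible sequence failing $(\lL_k\setminus\{R\})$-indiscernibility: there are tuples $\bar{\imath},\bar{\jmath}$ from $\Hcal_k$ with identical quantifier-free type in $\lL_k\setminus\{R\}$ but differing quantifier-free $\Hcal_k$-type, and some $\lL$-formula $\varphi$ with $\models \varphi(a_{\bar{\imath}})\wedge\neg\varphi(a_{\bar{\jmath}})$. The differing data must live on the $R$-predicate. After partitioning the variables of $\varphi$ by the parts $P_1,\ldots,P_{k+1}$ and invoking $\Hcal_k$-indiscernibility iteratively, I would reduce to the situation where the discrepancy concerns a single atomic instance of $R$. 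Using the extreme amalgamation available in the Fra\"iss\'e limit $\Hcal_k$, one embeds into $\Hcal_k$ a countably infinite ``FOP$_k$-grid'': many candidate index tuples (one for each function $f\colon\omega^{k-1}\to\omega$ in an appropriate monotone family) in $P_1$, a product of copies of $\omega$ in $P_2,\ldots,P_{k+1}$, and $R$-edges encoding the below-the-graph relation for each $f$. Transporting this grid through the sequence $(a_i)$ and using $\varphi$ (with its variables assigned to the respective parts) as the witness yields an FOP$_k$-configuration for $T$ in the sense of \Cref{def:FOPk}.

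The main obstacle is $(1)\Rightarrow(2)$, specifically reconciling the monotonicity axiom on $R$ built into $T_k$ with the absence of any monotonicity restriction on the $f$'s in \Cref{def:FOPk}: one must verify that a ``monotone-in-$<_k$'' version of FOP$_k$ is equivalent to the general one, which can be done by a standard Ramsey-theoretic extraction of a monotone subfamily of $f$'s. A secondary technical point is ensuring that the reduction of the $R$-sensitive formula $\varphi$ to an atomic $R$-occurrence does not lose separating power; this should follow from quantifier elimination within $T_k$ down to order-and-partition data, combined with repeated use of $\Hcal_k$-indiscernibility to push the discrepancy into a single predicate evaluation.
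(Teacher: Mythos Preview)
The paper does not prove this statement at all: it is recorded as a \emph{Fact}, with the proof deferred entirely to the cited reference \cite[Theorem~4.15]{AACT23}. There is therefore no proof in the present paper against which to compare your proposal.

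That said, your outline is broadly in the right spirit for how such collapsing characterisations are typically established, and your identification of the two technical obstacles (reconciling the monotonicity axiom of $T_k$ with the unrestricted functions in \Cref{def:FOPk}, and reducing the $R$-sensitive discrepancy to a single atomic instance) is accurate. However, these are not minor bookkeeping steps: in the original reference the reduction in the $(1)\Rightarrow(2)$ direction is the substantive part of the argument and requires a careful intermediate characterisation (via what the authors call $T_k$-configurations) rather than a direct embedding of an FOP$_k$-grid into $\Hcal_k$. Your sketch glosses over precisely the place where the real work lies, so while the strategy is plausible, what you have written is not yet a proof. If you wish to supply one, you should consult \cite{AACT23} directly rather than attempt to reconstruct it from the statement alone.
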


\section{Relative quantifier elimination}\label{sec:RQE}
We deal, in this section, with quantifier elimination in Mekler groups. We refer the reader to \cite[Appendix A]{Rid17} for basic definitions of relative quantifier elimination and related notions. We recall simply here the key concept of a closed sort:
\begin{definition}[{\cite[Definition A.7]{Rid17}}]\label{def:closed}
    Let $\sM$ be a multisorted structure. A set of sorts $\Sigma$ is called \emph{closed} if any predicate involving a sort in $\Sigma$ and any function with a domain involving a sort in $\Sigma$ only involves sorts in $\Sigma$. 
\end{definition}

A sort $\mathsf S$ will be abusively called closed if $\mathsf S^\eq$ is closed.
A closed sort has good syntactical properties and all results of quantifier elimination presented in this paper will be therefore relative to a closed sort.

Let $\gG = (V;E)$ be a nice graph. As shown by Mekler, and summarised in the previous section, the graph $\gG$ is interpretable in the Mekler group $\Msf(\gG)=(G,\cdot,1)$. The goal of this section is to provide a detailed description of the ``definable structure'' of $\Msf(\gG)$, relative to the ``definable structure'' of $\gG$. We will proceed in a step-by-step fashion, summarised in the reduction diagram shown in \Cref{fig:reduction}.

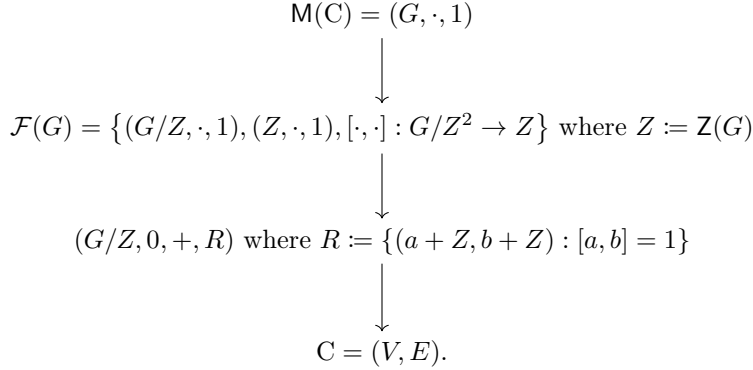
\begin{figure}[H]
\centering
        \begin{tikzpicture}
            \node {$\Msf(\gG) = (G,\cdot,1)$ } 
                child { 
                    node { $\mathcal{F}(G)=\left\{(G/Z,\cdot,1),(Z,\cdot,1),[\cdot,\cdot]:(G/Z)^2 \rightarrow Z \right\}$ where $Z\coloneq\Zsf(G)$} edge from parent[->]
                    child { 
                        node {$(G/Z,0,+,R)$ where $R\coloneq \{(a+Z,b+Z) : [a,b]=1\}$}
                    child{
                            node {$\gG=(V,E)$.} 
                        }
                    }
                };
        \end{tikzpicture}
    \caption{Reduction diagram for Relative Quantifier Elimination}    \label{fig:reduction} 
\end{figure}

A vertical arrow $A\rightarrow B$, in \Cref{fig:reduction}, should be understood as a relative quantifier elimination statement of the form:

\begin{center}
    ``$A$ eliminates quantifiers relative to $B^\eq$.''
\end{center}

The precise languages in which this happens will be introduced in the following paragraphs. These results will be enough for our practical purposes. As discussed in the introduction, the purpose of each relative quantifier elimination $A\rightarrow B$ is to obtain transfer principles, and notably a characterisation of (generalised) indiscernibles in $A$ relative to (generalised) indiscernibles in $B$. This will be done in the next section, where we will also point out how these characterisations allow us to establish various transfers of dividing lines.

Notice that one cannot simply ``combine the arrows'' of our reduction diagram, as we eliminate quantifiers relatively at the cost of adding (finite) imaginary sorts (which is why an arrow $A\to B$ indicates that $A$ eliminates quantifiers relative to $B^\eq$ rather than $B$). In fact, we will not present a complete quantifier elimination result from the Mekler group $\Msf(\gG)$ to the nice graph $\gG$, as it seems to us that such a result would involve a complicated language and will not reveal much more information than our results. This appears to be due to the fact that our structures do not eliminate \emph{finite imaginaries},\footnote{See \cite[Definition~8.4.9(1)]{TZ12}.} and we do not attempt to find sorts in which the various structures eliminate (finite) imaginaries. 

\subsection{First reduction: From \texorpdfstring{$G$}{G} to \texorpdfstring{$\mathcal{F}(G)$}{F(G)}}

Let $G$ be a $2$-nilpotent group of exponent $p$. For the remainder of this section, we will denote by $\Zsf$ its centre, $\Zsf(G)$. As in \cite{Bau02}, $\Fcal(G)$ will denote the following structure:
\[
    \Fcal(G)\coloneq \left((\vV,+,0),\,(\vW,+,0),\,\beta\colon\vV\times \vV\rightarrow \vW \right)
\]
where:
\begin{itemize}
    \item $(\vV,+,0)$ denotes the $\mathbb{F}_p$-vector space $(G/\Zsf,\cdot,1)$ (with an additive notation), 
    \item $(\vW,+,0)$ denotes the $\mathbb{F}_p$-vector space $(\Zsf,\cdot,1)$ (with an additive notation),
    \item $\beta \colon \vV\times \vV \rightarrow \vW$ denotes the \emph{commutator map}, that is:
    \[
    \begin{aligned}
      \beta \colon G/\Zsf\times G/\Zsf&\to \Zsf \\
      (a \bmod \Zsf, b \bmod \Zsf) &\rightarrow [a,b].
    \end{aligned}
    \]
\end{itemize}
Notice that these sorts are all interpretable in the language of groups, so we can expand $(G,\cdot,1)$ to a structure in the (multisorted) language $\Lcal_{G,\Fcal(G)}$ (following the notation above):
\[ 
    \{ (G,\cdot,^{-1},1),\, (\vV,+,0),\, (\vW,+,0),\,\beta \colon\vV\times \vV\rightarrow \vW,\, \pi \colon G \rightarrow \vV,\, \rho \colon G \rightarrow \vW \}. 
\]
where:
\begin{itemize}
    \item $\pi \colon G \rightarrow \vV$ denotes the natural projection map.
    \item $\rho \colon G \rightarrow \vW$ denotes the `inverse' inclusion map, that is: 
    \[
    \begin{aligned}
        \rho \colon G&\rightarrow \vW \\
        g &\mapsto 
            \begin{cases}   0 & \text{ if } g\notin \Zsf, \\ 
                            g & \text{ if } g\in \Zsf.
            \end{cases}
    \end{aligned}
    \]
\end{itemize}

The following fact is a reformulation of \cite[Corollary 3.1]{Bau02}
\begin{fact}\label{fact:RQEGtoF(G)}
    Let $G$ be a $2$-nilpotent group of exponent $p$. Then, $(G,\cdot,1)$ eliminates quantifiers relative to $\mathcal{F}(G)$, in the language $\mathcal{L}_{G,\Fcal(G)}$. 
\end{fact}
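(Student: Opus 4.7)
The strategy is to normalise group terms via commutator calculus, which reduces every atomic $G$-formula to an $\Fcal(G)$-atomic formula expressible through $\pi$, $\rho$, and $\beta$, and then to eliminate $G$-quantifiers by a standard back-and-forth between saturated models.

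First I would prove a normal-form lemma for group terms: in any $2$-nilpotent group of exponent $p$, every $\lL_{\mathsf{grp}}$-term $t(x_1,\ldots,x_n)$ is equivalent, identically in the variety, to a product
\[
    x_1^{a_1} \cdots x_n^{a_n} \cdot \prod_{1\leq i<j\leq n} [x_i, x_j]^{b_{ij}},
\]
with $a_i, b_{ij}\in\{0,\ldots,p-1\}$ determined by $t$. The proof is a straightforward induction on term complexity, using that commutators are central, the bilinearity identities $[a,bc]=[a,b][a,c]$ and $[ab,c]=[a,c][b,c]$, the collection formula $(xy)^n = x^n y^n [y,x]^{\binom{n}{2}}$ (itself a short induction), and the relation $x^p=1$ to reduce exponents modulo~$p$. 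A consequence is that any atomic formula $t(\bar x)=1$ is equivalent to the conjunction of a linear equation in the $\vV$-sort, $\sum_i a_i\,\pi(x_i)=0$, together with a $\vW$-equation of the form $\sum_{i<j} b_{ij}\,\beta(\pi(x_i),\pi(x_j)) + \sum_k c_k\,\rho(x_k)=0$, where the $\rho$-terms encode the central components of any $x_k$ forced to lie in $\Zsf$.

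Second, to eliminate quantifiers over the $G$-sort, I would apply the usual back-and-forth criterion. Let $\Mfrak_1,\Mfrak_2$ be two $|G|^+$-saturated models of the $\lL_{\Fcal(G)}$-theory of $(G,\cdot,1)$, and let $f:A\to A'$ be a partial $\lL_{\Fcal(G)}$-isomorphism between small substructures whose restrictions to the $\vV$- and $\vW$-sorts are already elementary. Given $g_0\in G_1\setminus A$, I must find a matching $g_0'\in G_2$. By the normal-form step, the quantifier-free $\lL_{\Fcal(G)}$-type of $g_0$ over $A$ is determined by the position of $\pi(g_0)$ in $\vV_1$ relative to $\pi(A)$, together with either the commutator data $\beta(\pi(g_0),\pi(a))$ for $a\in A$ when $\pi(g_0)\notin\langle\pi(A)\rangle$, or, when $\pi(g_0)=\sum_i a_i\pi(x_i)$ already lies in $\langle\pi(A)\rangle$, the value $\rho\bigl(g_0\cdot\prod_i x_i^{-a_i}\bigr)$ in $\vW_1$. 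By elementariness of $f$ on the $\vV$- and $\vW$-sorts and by $|G|^+$-saturation of $\Mfrak_2$, corresponding data can then be realised in $\Mfrak_2$.

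The main obstacle will be showing that the $\beta$-constraints on $g_0'$ are always simultaneously realisable: given a consistent prescription of $\pi(g_0')\in\vV_2$ together with values $\beta(\pi(g_0'),\pi(a'))$ for all $a'\in f(A)$ (consistency meaning compatibility with the bilinearity of $\beta$ and with the already-committed values over $\langle\pi(A')\rangle$), one needs an actual $g_0'\in G_2$ realising them. This reduces to the statement that in a $2$-nilpotent group of exponent $p$ the only relations between iterated commutators are those forced by bilinearity and antisymmetry, which is built into the axiomatisation of the variety. Saturation of $\Mfrak_2$ then produces the desired $g_0'$, and the back-and-forth closes, yielding the relative quantifier elimination.
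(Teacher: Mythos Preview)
Your overall strategy coincides with the paper's: use the back-and-forth criterion for relative quantifier elimination, with the commutator normal form reducing $G$-atomic formulas to $\Fcal(G)$-data. The paper's sketch is terser---it first extends $f_\Fcal$ to all of $\Fcal(G)$ by elementarity and saturation, then cites Baudisch for the lift to $G$---while you unpack the term normalisation and the case split on whether $\pi(g_0)\in\langle\pi(A)\rangle$. That extra detail is correct and makes the argument more self-contained.

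There is, however, a genuine confusion in your final paragraph. You claim the main obstacle is realising a prescription of $\pi(g_0')$ \emph{together with} values $\beta(\pi(g_0'),\pi(a'))$, and that this ``reduces to the statement that in a $2$-nilpotent group of exponent $p$ the only relations between iterated commutators are those forced by bilinearity and antisymmetry.'' That statement is false for arbitrary $2$-nilpotent groups of exponent $p$ (the Heisenberg group over $\Fbb_p$ has one-dimensional centre, so commutators satisfy many relations beyond those), and in any case it is not what is needed. Once $v'\in\vV_2$ is chosen, the values $\beta(v',\pi(a'))$ are \emph{determined}, not prescribed: $\beta$ is a function on $\vV^2$. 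The actual content is finding $v'$ realising the correct $\Fcal(G)$-type over $A'_\Fcal$, and you already dispatch that in the preceding paragraph via elementarity of $f$ on the $\vV$- and $\vW$-sorts plus saturation. The lift from $v'$ to $g_0'\in G_2$ is then just surjectivity of $\pi$, adjusted by a central element in the case $\pi(g_0)\in\langle\pi(A)\rangle$ exactly as you describe. So the ``main obstacle'' you flag does not exist; your second-to-last paragraph already finishes the proof, and the last paragraph should be deleted.
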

In particular, any $\lL_\mathsf{grp}$-formula $\phi(\bar{x})$ is equivalent, modulo $\Th(G)$, to a formula of the form:
\[
    \phi_{\mathcal{F}(G)}(\pi(t(\bar{x})),\rho(t(\bar{x}))),
\]
where $\phi_{\mathcal{F}(G)}(\bar{y},\bar{z})$ is a formula in the language $\{(\vV,+,0),(\vW,+,0),\beta\}$, and $t$ is a tuple of $\mathcal{L}_{grp}$-terms.

Notice, in particular, that the formula:
\[
    t(x)=1
\]
for a term in $\mathcal{L}_\mathsf{grp}$ is equivalent to
\[
    \pi(t(x))=0 \wedge \rho(t(x))=0.
\]

\begin{proof}[Proof of \Cref{fact:RQEGtoF(G)}  (Sketch).]
  We apply the well-known criterion for relative quantifier elimination and argue by back-and-forth and saturation. Let $G$ and $G'$ be $2$-nilpotent groups of exponent $p$, such that $G'$ is $\vert G \vert $-saturated and with isomorphic substructures $\Hcal = (H,H_\mathcal{F}) \subseteq (G,\mathcal{F}(G))$ and $\Hcal' =(H',H_\mathcal{F}') \subseteq (G',\mathcal{F}(G'))$. Let $f=(f_G,f_\mathcal{F})$ denote the isomorphism between $\Hcal$ and $\Hcal'$, and assume that $f_\mathcal{F}\colon  H_\mathcal{F} \rightarrow H_\mathcal{F}'$ is elementary.  By saturation, ${f}_\mathcal{F}$ can be extended to an embedding $\Tilde{f}_{\mathcal{F}}$ of $\mathcal{F}(G)$ into $\mathcal{F}(G') $. 
  Arguing as in \cite[Corollaries 3.1, 3.2]{Bau02}, we can extend $f_G\cup \Tilde{f}_{\mathcal{F}}$ to an embedding of $\Tilde{f}$ of $(G,\mathcal{F}(G))$ into $(G',\mathcal{F}(G'))$. 
\end{proof}

We call the structure $\{(\vV,+,0),(\vW,+,0),\beta \colon \vV^2\rightarrow \vW \}$ \emph{an alternating bilinear system} of $\mathbb{F}_p$-vector spaces.
As shown by Baudisch, in \cite[Section~3]{Bau02}, $\mathcal{F}$ is a functor from the category $\mathbb{G}_{2,p}$ of $2$-nilpotent groups of (finite) exponent $p$ to the category $\mathbb{B}_p$ of alternating bilinear systems of $\Fbb_p$-vector spaces.

When $G$ is a Mekler group $\mathcal{F}(G)$ enjoys an additional property which will be defined now and will be useful for our analysis.

\begin{definition}
    Let $\{(\vV,+,0),(\vW,+,0),\beta \colon \vV^2\rightarrow \vW \}$ be an alternating bilinear system, and let $\vV'\subseteq \vV$ be a subspace. 
    \begin{itemize}
        \item A basis $(v_i)_i$ of $\vV'$ is called \emph{separated} if for all sequences $(\alpha_{i,j})_{i<j}$ of scalars with only finitely many $\alpha_{i,j}$ non-trivial, if
        \[
            \sum_{i<j} \alpha_{i,j}\beta(v_i,v_j) =0,
        \]
        then for all $i<j$, either $\alpha_{i,j}=0$ or $\beta(v_i,v_j)=0$.
        \item  $\{(\vV,+,0),(\vW,+,0),\beta \colon \vV^2\rightarrow \vW \}$ is called \emph{separated} if it admits a separated basis.
    \end{itemize}
\end{definition}

\begin{example}
Let $\vV$ be any vector space.
    \begin{enumerate}
        \item The bilinear system $(\vV,\vV\wedge \vV,\wedge)$, where $\vV \wedge \vV$ is the wedge product, is separated. This follows from the fact that for any basis $(v_i)_{i}$ of $\vV$, $(v_i\wedge v_j)_{i<j}$ is a basis of $\vV\wedge \vV$, by the universal property of the wedge product.  
        \item Let $v_1,v_2,v_3,v_4$ be a basis of $\vV$. Let $\vW$ be $\vV\wedge \vV / \braket{v_1\wedge v_2-v_3\wedge v_4}$ and $\beta\colon\vV^2\rightarrow\vW$ be induced by the wedge product. Then the bilinear system $(\vV,\vW,\beta)$ is not separated. Indeed, for all linearly independent vectors $v,v'\in \vV$, observe that $\beta(v,v')\neq 0$. Therefore, a separated basis of $\vV$ would give six vectors linearly independent in $\vW$, but $\dim(\vW)=5$.
        \item If $\vV$ has dimension $\leqslant 2$, then any bilinear system of the form $(\vV,\vW,\beta)$ is separated.
    \end{enumerate}
\end{example}

Note that if $\Vcal$ is infinite-dimensional, then separatedness of $\Vcal$ is \textit{a priori} not a first-order property. In the case of Mekler groups, we can, in a first-order way, express the stronger property that any finite-dimensional vector subspace is contained in a finite-dimensional separated vector subspace. 
To this end, we need the following fact, which is a slightly weakened reformulation of {\cite[Axioms 8 \& 9]{Hod93}}:
\begin{fact}\label{fact:axioms8&9}
    Let $k,m,n$ be integers and let $g_0,\dots g_{n-1}$ be a sequence of independent elements in a Mekler group $\Msf$ such that:
    \begin{itemize}
        \item $g_0,\dots, g_{k-1}$ are of type $1^\nu$,
        \item $g_k,\dots, g_{m-1}$ are of type $p$, and no combination of $g_k,\dots, g_{m-1}$ is a product of at most $k+1$ elements of type $1^\nu$,
        \item $g_m,\dots, g_{n-1}$ are of type $1^\iota$, and no combination of $g_m,\dots, g_{n-1}$ is a product of at most $m+1$ elements of type $1^\nu$ or of type $p$.
    \end{itemize}
    Then, $g_0 \bmod \Zsf ,\dots, g_{n-1} \bmod \Zsf $ is a separated basis of $\braket{g_0 \bmod \Zsf ,\dots, g_{n-1} \bmod \Zsf }$ in $\mathcal{F}(G)$.
    \end{fact}
    \begin{corollary}\label{cor:SepVectorsupspace}
        There is a function $f:\Nbb\to\Nbb$ such that the following holds:
        For $G$ a Mekler group, let $(\Vcal,\Wcal)$ denotes $\mathcal{F}(G)$. Then, 
        \[\tag{$*_f$}
        \begin{aligned}
            \text{every vector subspace of $\Vcal$ of dimension $n$ is included in } \\
        \text{ a separated vector space of dimension at most $f(n)$.}
        \end{aligned}
        \]
        \end{corollary}
    \begin{proof}
    We show that $f(n)=2^{2^n}$. Let $V$ be a vector space of dimension $n$, and let $0\leq k\leq m\leq n$ be integers and $g_0,\dots g_{n-1}$ a basis of $V$ such that 
        \begin{itemize}
        \item $g_0,\dots, g_{k-1}$ are of type $1^\nu$,
        \item $g_k,\dots, g_{m-1}$ are of type $p$,
        \item $g_m,\dots, g_{n-1}$ are of type $1^\iota$.
    \end{itemize}
    By induction on $(n-m,m-k,k)$ with the lexicographic order, we show that $\Vcal$ is included in a vector space of dimension at most $(k+1)2^{(m+1)2^{n-m}-k}$ (which is less than $2^{2^n}$). If $g_0,\dots g_{n-1}$ satisfy the conditions of \Cref{fact:axioms8&9}, then $g_0,\dots g_{n-1}$ is separated and there is nothing to show. If not, then at least one of the following holds:
    \begin{itemize}
        \item there is a combination of the $g_i$'s, $m \leq i< n$, which is a product of at most $m+1$ elements $h_0
    ,\dots,h_{m}$ of type $p$ or $1^\nu$, or
        \item there is a combination of the $g_i$'s, $k \leq i< m$, which is a product of at most $k+1$ elements $h_0
    ,\dots,h_{k}$ of type $1^\nu$.
    \end{itemize}
    In any case, we consider the vector space $\Vcal'$ generated by $g_0,\dots, g_{n-1}, h_0
    ,\dots,h_{m}$ (resp. $g_0,\dots, g_{n-1}, h_0
    ,\dots,h_{k}$ ) and assume that $h_0
    ,\dots,h_{k}$ are independent. Let $g_{i_0},\dots, g_{i_k}$ a proper subset of the $g_i$'s such that $g_{i_0},\dots, g_{i_k}, h_0
    ,\dots,h_{m}$ \par
    (resp. $g_{i_0},\dots, g_{i_k}, h_0
    ,\dots,h_{k}$) is a basis of $\Vcal'$. By induction, $\Vcal$ is included in a vector space of at most dimension $(k+1)2^{(2m+2)2^{n-m-1}-k} $ (resp. $(2k+2)2^{(m+1)2^{n-m}-k-1} $), which is equal to $(k+1)2^{(m+1)2^{n-m}-k}$.
    \end{proof}

    This bound is probably not optimal. In fact, as far as we know, an optimal bound could be $f(n)=n$, i.e.~ every vector subspace of $\Vcal$ could be separated.

\subsection{Second reduction: From \texorpdfstring{$\{(\vV,+,0),(\vW,+,0),\beta\}$}{\{(V,+,0),(W,+,0), β\} } to \texorpdfstring{$(\vV,+,0,R)$}{(V,+,0,R)}}
We now analyse the definable sets in the bilinear system $\mathcal{F}(G)$. First, note that the following binary relation can be defined on $\vV$:
\[
    R\coloneqq \{(v_0,v_1) \ \vert \ \beta(v_0,v_1)=0 \}.
\]

Heuristically, by separatedness, one should be able to recover all the first-order expressiveness of $\mathcal{F}(G)$ from the structure $(\vV,R,+,0)$. However, within the structure $\{(\vV,+,0),(\vW,+,0),\beta\colon \vV^2\rightarrow \vW \}$, notice that the sort $\vV$ is not closed in the sense of \Cref{def:closed}. This needs to be addressed first. Since vector spaces do not eliminate finite imaginaries, this will come at the cost of introducing sorts $B_n$, for all positive integers $n$, which are a certain quotient of $P_{\leq k_n}(\vV^n)$, the set of subsets of $n$-tuples in $\vV$ of size less than a integer $k_n$ depending on $n$. 


For $n\in \mathbb{N}$, we denote by $\mathbb{A}_n(\mathbb{F}_p)$ the (finite) set of antisymmetric (i.e. skew-symmetric) $n\times n$ matrices $A=(a_{i,j})$ with coefficients in $\mathbb{F}_p$, and by $\mathbb{A}(\mathbb{F}_p)$ the union of $\Abb_n(\Fbb_p)$, for all $n\in\Nbb$.

For $n\in \mathbb{N}$, let $\vW_n$ be the following the set 
\[
    \left\{w\in \vW \ \Bigg\vert \ \exists v_0,\dots,v_{n-1}\in \vV,\, \exists (a_{i,j})\in \mathbb{A}_n(\mathbb{F}_p) \text{ s.t. } w=\sum_{i<j}a_{i,j}\beta(v_i,v_j)\right\},
\]
that is, the set of elements \textit{of order} $n$.
For two positive integers $n>m$, there is a natural inclusion of $\vW_m$ in $\vW_n$ and for every positive integer $n$, there is a function $+\colon (\vW_n)^2 \rightarrow \vW_{2n}$ which is the trace of the addition in $\vW$. 

It follows from separatedness that $\vW_n$, with the structure above, is an imaginary of the structure $(\vV,+,0,R)$, for all $n\in\Nbb$:
\begin{lemma}
    Let $(\vV,\vW,\beta)$ be an alternating bilinear system satisfying Property $(*_f)$ of \Cref{cor:SepVectorsupspace}, and denote by $\simeq$ the equivalence relation on $\mathbb{A}_n\times \vV^n$ given by: 
    \[
        (A,\bar{v})\simeq(A',\bar{v}') \Leftrightarrow \sum_{i<j}a_{i,j}\beta(v_i,v_j)=\sum_{i<j}a_{i,j}'\beta(v_i',v_j').
    \]
    Let $B_n\coloneq\mathbb{A}_n\times \vV^n/\simeq$. Then:
    \begin{enumerate}
        \item The equivalence relation $\simeq$ is interpretable in $(\vV,+,0,R)$.
        \item The set $B_n$ is an imaginary sort of $(\vV,+,0,R)$ and can be identified with $\vW_n$.
        \item The addition $+ \colon (\vW_n)^2 \rightarrow  \vW_{2n}$ is also interpretable in $(\vV,+,0,R)$.  
    \end{enumerate}
\end{lemma}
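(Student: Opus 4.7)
The plan is to establish (1) first; items (2) and (3) then follow formally. The heart of (1) is to show that, for each fixed antisymmetric matrix $A = (a_{i,j}) \in \Abb_n(\Fbb_p)$, the condition
\[
    \sum_{i<j} a_{i,j} \beta(v_i, v_j) = 0
\]
on a tuple $\bar{v} \in \vV^n$ is definable by a first-order formula in the language $(\vV, +, 0, R)$. From this, a definition of $\simeq$ follows by the block-diagonal trick: $(A, \bar{v}) \simeq (A', \bar{v}')$ if and only if the antisymmetric matrix $A \oplus (-A')$ and the concatenated tuple $\bar{v}\bar{v}'$ satisfy the above vanishing condition. Since $\Abb_n(\Fbb_p)$ is finite, this yields a uniform definition of $\simeq$ across all pairs $(A, A')$ as a finite disjunction.

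The key claim I would prove is: $\sum_{i<j} a_{i,j} \beta(v_i, v_j) = 0$ holds if and only if there exist linearly independent $e_1, \ldots, e_m \in \vV$ (with $m \leq n$) and scalars $\lambda_{i,k} \in \Fbb_p$ such that $v_i = \sum_k \lambda_{i,k} e_k$ for each $i$, and for every $k < l$ with $\neg R(e_k, e_l)$,
\[
    \sum_{i<j} a_{i,j}\bigl(\lambda_{i,k}\lambda_{j,l} - \lambda_{i,l}\lambda_{j,k}\bigr) = 0.
\]
The ``if'' direction is a routine bilinear computation: expanding $v_i = \sum_k \lambda_{i,k} e_k$ and using the alternation of $\beta$, every surviving term $\beta(e_k, e_l)$ either vanishes (because $R(e_k, e_l)$) or is paired with a zero coefficient (by hypothesis). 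The ``only if'' direction is where separatedness enters: I would pick a separated basis $(e_k)$ of the span of $\bar{v}$ (which exists by hypothesis), expand $\sum_{i<j} a_{i,j} \beta(v_i, v_j)$ in terms of $\{\beta(e_k, e_l)\}_{k<l}$, and invoke separatedness to conclude that the coefficients at non-commuting basis pairs must all vanish.

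The characterization above is first-order in $(\vV, +, 0, R)$: the integer $m$ is bounded by $n$, the scalars $\lambda_{i,k}$ range over the finite field $\Fbb_p$ (so their existence is a finite disjunction), linear combinations are expressible by iterated addition, linear independence is first-order, and the final scalar vanishing condition is decided once $A$ and $\Lambda = (\lambda_{i,k})$ are fixed. This proves (1). For (2), $B_n$ is by construction a quotient of the definable set $\Abb_n \times \vV^n$ (a finite disjoint union of copies of $\vV^n$) by the now-definable equivalence $\simeq$, hence an imaginary sort; the map $[A, \bar{v}]_\simeq \mapsto \sum a_{i,j}\beta(v_i, v_j)$ gives the required bijection onto $\vW_n$. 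For (3), the addition $\vW_n^2 \to \vW_{2n}$ corresponds, under this identification, to $([A, \bar{v}], [A', \bar{v}']) \mapsto [A \oplus A', \bar{v}\bar{v}']$, which is manifestly interpretable.

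The main obstacle is the ``only if'' direction of the key claim. A direct attempt to express ``the basis $(e_k)$ is separated'' inside $(\vV, +, 0, R)$ appears to fail, because separatedness constrains arbitrary linear combinations of $\beta(e_k, e_l)$'s living in $\vW$, which are not directly visible in that language. The resolution is that we never need to \emph{define} separatedness: the separatedness of the system guarantees the \emph{existence} of a suitable basis, and the characterization is phrased with an $\exists$ over bases rather than a $\forall$, which is enough.
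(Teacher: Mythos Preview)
Your proposal is correct and follows essentially the same approach as the paper. Both arguments hinge on the observation that, by separatedness, the vanishing of $\sum c_{k,l}\beta(e_k,e_l)$ for \emph{some} basis $(e_k)$ of the span is equivalent to the first-order condition ``$c_{k,l}=0$ whenever $\neg R(e_k,e_l)$'', and both express $\simeq$ as an existential over such bases (a finite disjunction over change-of-basis matrices in $\GL_m(\Fbb_p)$). Your packaging via the block-diagonal trick $A\oplus(-A')$ and the concatenation $\bar v\bar v'$ is a slight streamlining of the paper's direct treatment of $\langle \bar v,\bar v'\rangle$, and your description of the addition as $[A\oplus A',\bar v\bar v']$ is a cleaner variant of the paper's ``common separated basis with $A=A'+A''$'' argument, but the mathematical content is the same.
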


Notice that $\bigcup_n \vW_n$ is $\braket{\beta(\vV,\vV)}$, the $\Fbb_p$-vector subspace of $\vW$ generated by the image of $\beta$, and it is not equal to $\vW$ in general. 

\begin{notation}\label{notation:pi-f}
    Let $(\vV,\vW,\beta)$ be a bilinear system satisfying Property $(*f)$, and let $\simeq$ be the equivalence relation defined in the above lemma. For $A\in \mathbb{A}_n(\Fbb_p)$, we write:
    \[
    \begin{aligned}
        \pi_A: \vV^n    &\rightarrow B_n,\\
        \bar{v}         &\mapsto (A,\bar{v})/\simeq,
    \end{aligned}
    \]
    for the natural projection, and:
    \[
    \begin{aligned}
        f_n: \vW    &\rightarrow B_n \\
         w          &\mapsto  
                \begin{cases}
                    (A,\bar{v}) / \simeq & \text{if }w=\sum_{i<j}a_{i,j}\beta(v_i,v_j),\\
                    \und & \text{if }w\notin \vW_n.
                \end{cases}
    \end{aligned}
    \]
    for the reverse inclusion. Recall that the constant symbol $\und$ is interpreted as ``undetermined''.
\end{notation}
\begin{proof} \
    \begin{enumerate}
        \item By assumption, Property $(*_f)$ holds, and the vector space $\Vcal \coloneq \braket{\bar{v},\bar{v}'}$ is included in a separated vector space $\Vcal'$ of dimension at most $f(n)$, where $n$ is the dimension of $\Vcal$. Given a separated basis $\bar{u}=(u_i)$ of $\Vcal'$, the equality 
        \[
        \sum_{i<j}a_{i,j}\beta(v_i,v_j)=\sum_{i<j}a_{i,j}'\beta(v_i',v_j')
        \]
        can be rewritten in this basis to give an equation:
        \[
            \sum_{i<j}c_{i,j}\beta(u_i,u_j)=0, 
        \]
        for coefficients $c_{i,j}\in\mathbb{F}_p$.   Therefore, $\sum_{i<j}a_{i,j}\beta(v_i,v_j)=\sum_{i<j}a_{i,j}'\beta(v_i',v_j')$ holds if and only if, in some basis $\bar{u}$ of a vector subspace of dimension  at most $f(n)$, for all $i<j$, $c_{i,j}=0$ or $(u_i,u_j)\in R$. It follows that the equivalence relation $\simeq$ can be written using only the addition in $\vV$ and the predicate $R$.
        \item Immediate.
        \item By Property $(*_f)$, we can work in a finite dimensional separated vector subspace.  Consider two elements $w',w''$ and an appropriate separated basis $(v_i)_{i<m}$ with $A'=(a_{i,j}'), A''=(a_{i,j}'')\in \mathbb{A}_m(\Fbb_p)$, such that $f_m(w')=\pi_{A'}(v_i)$ and $f_m(w'')=\pi_{A''}(v_i)$. 
        
        Suppose that $A=A'+A''$. Then, we simply have that 
        \[
        f_m(w'+w'')=\pi_A(v_i).
        \]
        Therefore, for $b,b',b''\in B_n$,
        \[
            \exists w,w' \in \vW_n \left( f_n(w')=b'\wedge f_n(w'')=b''\wedge f_n(w+w')=b\right)
        \] 
        holds if and only if for some $m\leq 2n$, there are $A',A''\in \mathbb{A}_{m}(\Fbb_p)$ and $\bar{v}'\in \vV^m$ such that $\pi_{A'}(\bar{v})=b'$, $\pi_{A''}(\bar{v})=b''$ and $\pi_{A'+A''}(\bar{v})=b$. The statement follows.
    \end{enumerate}
\end{proof}

Of course, we can recover the alternating map by considering the function $f_{A}$, where 
\[
    A=
    \begin{pmatrix} 
        0  & 1 \\
        -1 & 0 
    \end{pmatrix}.
\]
Then, for a non trivial $w\in \vW$ we have that $f_2(w)=\pi_A(v_0,v_1)$ if and only if $\beta(v_0,v_1)=w$.
In particular:

\begin{corollary}
    The structures \[\{(\vV,+,0),(\vW,+,0),\beta \colon \vV^2\rightarrow \vW \}\] and
    \[\{(\vV,+,0,R)^{\eq},(\vW,+,0), \pi_A: \vV^{n} \rightarrow B_n,  f_n: \vW \rightarrow B_n; n\in \mathbb{N}, A\in \mathbb{A}_n(\Fbb_p) \} \]
    are bi-interpretable. 
\end{corollary}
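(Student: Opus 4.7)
The plan is to give explicit interpretations in both directions and check that their composites are definably isomorphic to the identity; because both structures share the underlying sorts $\vV$ and $\vW$, the core of the argument reduces to exhibiting interdefinability of the basic symbols, with most of the work already done by the preceding lemma.

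For the interpretation of the second structure inside the bilinear system $\{(\vV,+,0),(\vW,+,0),\beta\}$, I define $R(v_0,v_1) \Leftrightarrow \beta(v_0,v_1) = 0$; this exhibits $(\vV,+,0,R)$ as a (quantifier-free) reduct of the bilinear system, so the preceding lemma provides the sorts $B_n$ and the projection maps $\pi_A$ as imaginary sorts and functions of $(\vV,+,0,R)$, and hence of the bilinear system. The map $f_n \colon \vW \to B_n$ is then defined by cases: send $w \in \vW_n$ to $\pi_A(\bar v)/{\simeq}$ for any representation $w = \sum_{i<j} a_{i,j}\beta(v_i,v_j)$, and to the undetermined constant $u$ if $w \notin \vW_n$. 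The lemma guarantees that $\simeq$ exactly captures equality of such sums, so the value of $f_n$ is independent of the chosen representation, and the case ``$w \in \vW_n$'' is itself a first-order condition expressible using $\beta$.

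For the reverse interpretation, everything but $\beta$ is already present, so I only need to recover $\beta(v_0, v_1) = w$ from the data $(R, \pi_A, f_n)$. Splitting into two cases, $\beta(v_0, v_1) = 0$ is equivalent to $R(v_0, v_1)$; and for non-trivial $w$, the observation preceding the corollary gives $\beta(v_0, v_1) = w \Leftrightarrow f_2(w) = \pi_A(v_0, v_1)$, with $A = \left(\begin{smallmatrix} 0 & 1 \\ -1 & 0 \end{smallmatrix}\right)$. Combining the two cases yields a first-order definition of $\beta$ using only $R$, $\pi_A$, and $f_2$. Since both interpretations fix the ambient sorts $\vV$ and $\vW$ identically and recover $R$ and $\beta$ from one another by tautological unfolding, the composites of the two interpretations are literally the identity. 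There is no substantive obstacle: the only delicate points are the well-definedness of $f_n$ and the coherence of the sorts $B_n$ as $n$ varies, and both are already supplied by the preceding lemma, which is why this assertion appears as a corollary rather than a theorem.
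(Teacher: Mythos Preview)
Your proposal is correct and follows exactly the approach the paper takes: the paper's entire argument is the paragraph immediately preceding the corollary, namely the observation that $\beta$ is recovered from $f_2$ and $\pi_A$ with $A=\left(\begin{smallmatrix}0&1\\-1&0\end{smallmatrix}\right)$, together with the preceding lemma supplying the $B_n$, $\pi_A$, and the well-definedness of $f_n$. You have simply spelled out both directions and the composite check in more detail than the paper, which treats the statement as immediate.
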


\begin{proposition}\label{prop: bilinearsystemEQR}
    An alternating bilinear system 
   \[
        \{(\vV,+,0,R)^{\eq},(\vW,+,0), \pi_A , f_n; n\in \mathbb{N},  A\in \mathbb{A}_n\}
    \]
    satisfying Property $(*_f)$ of \Cref{cor:SepVectorsupspace} eliminates quantifiers relative to $(\vV,+,0,R)^{\eq}$.
\end{proposition}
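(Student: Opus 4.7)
The plan is to apply the standard back-and-forth criterion for relative quantifier elimination. Fix two separated alternating bilinear systems $\mathcal{S} = (\vV, \vW, \beta)$ and $\mathcal{S}' = (\vV', \vW', \beta')$, with $\mathcal{S}'$ sufficiently saturated, and let $f \colon H \to \mathcal{S}'$ be an embedding from a substructure $H = (H_\vV, H_\vW)$ of $\mathcal{S}$ in the full language (including the imaginary sorts $B_n$ and the functions $\pi_A, f_n$), whose restriction to $H_\vV^{\eq}$ is elementary as a map into $\vV'^{\eq}$. It suffices to show that, given any $g \in \mathcal{S}$, we can extend $f$ to a substructure containing $g$.

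First, I would use saturation of $\vV'$ and elementarity of $f|_{H_\vV^{\eq}}$ to lift this to an elementary embedding $\tilde f_\vV \colon \vV \hookrightarrow \vV'$. This automatically determines the extension of $f$ on the subgroup $\vW_\beta \coloneqq \langle \beta(\vV, \vV) \rangle = \bigcup_n \vW_n$: for $w = \sum_{i<j} a_{i,j}\, \beta(v_i, v_j) \in \vW_n$ set
\[
    \tilde f_\vW(w) \coloneqq \sum_{i<j} a_{i,j}\, \beta'(\tilde f_\vV(v_i), \tilde f_\vV(v_j)).
\]
The essential point for well-definedness is that, by the preceding lemma, the equivalence relation $\simeq$ on $\mathbb{A}_n \times \vV^n$ is already definable in $(\vV,+,0,R)^{\eq}$, hence preserved by the elementary map $\tilde f_\vV$; in particular, $\tilde f_\vW(w)$ does not depend on the chosen representation of $w$. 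Compatibility with the given $f$ on $H_\vW \cap \vW_\beta$ is forced by the fact that $f_n$ restricted to $\vW_n$ is a bijection onto its image in $B_n$ and that $f_n$ belongs to the language.

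It remains to extend to a complement of $\vW_\beta$. Choose an $\Fbb_p$-subspace $\vW_0 \subseteq \vW$ with $\vW = \vW_\beta \oplus \vW_0$, arranged so that $H_\vW \cap \vW_0$ is a complement of $H_\vW \cap \vW_\beta$ inside $H_\vW$. On $\vW_0$ the bilinear system structure trivialises: its elements satisfy $f_n(w) = u$ for every $n$, and the only remaining datum is its structure as a pure $\Fbb_p$-vector space. Using saturation of $\vW'$ and a parallel choice of complement on the target side, we may then extend $f$ freely to $\vW_0$, subject only to preserving linear independence. Assembling these three pieces produces the desired embedding of $\mathcal{S}$ into $\mathcal{S}'$.

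The main obstacle is the well-definedness in the second step, and this is precisely where the separatedness hypothesis earns its keep: it ensures that every linear relation among the $\beta(v_i, v_j)$ is detectable by finitely many instances of $R$ applied to vectors in $\vV$, so that $\simeq$ is definable in $(\vV,+,0,R)^{\eq}$. Without separatedness, relations in $\vW$ could encode information invisible to $R$, and an elementary map on $\vV^{\eq}$ would not suffice to determine the extension on $\vW_\beta$.
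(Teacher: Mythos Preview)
Your proposal is correct and follows essentially the same back-and-forth argument as the paper: extend the elementary map on $\vV^{\eq}$, observe that this forces the extension on $\bigcup_n \vW_n$ (the paper phrases this as ``identifying $\vW_n$ with $B_n$'', which is exactly your well-definedness remark via the definability of $\simeq$), and then extend freely on a complement using saturation. Your treatment is slightly more explicit about compatibility on $H_\vW\cap\vW_\beta$ and the arrangement of the complement, but the structure of the proof is the same.
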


In particular, in such bilinear system $\{(\vV,+,0,R),(\vW,+,0),\beta\}$, every formula $\phi(w_1,\dots,w_n;v_1,\dots,v_n)$,
where $n\in \mathbb{N}$, $v_1,\dots,v_n \in \vV$ and $w_1,\dots,w_n \in \vW$, is equivalent to a Boolean combination of formulas of the form:
\begin{itemize}
    \item $\sum a_i w_i=0$ where $a_i's$ are in $\mathbb{F}_p$,
    \item $\phi_\vV( f_{n}(\sum a_{1,i} w_i),\dots,f_{n}(\sum a_{k,i} w_i), \bar{v})$ where $\phi_{\vV}$ is a formula in the language of $(\vV,+,0,R)^{\eq}$, and $a_{j,i}$'s are in $\mathbb{F}_p$.
\end{itemize}
In particular, $\vV$ is a stably embedded sort and the induced structure on $\vV$ is given by $(\vV,+,0,R)$.

\begin{remark}
    Let us briefly argue why the non-elimination of finite imaginaries prevents us from exhibiting a simpler language for quantifier elimination: Consider, for instance, an element of $\vW$ of the form:
    \[
        \beta(a,b+c)+ \beta(d,c) = \beta(a,b)+\beta(a,c)+ \beta(d,c) =\beta(a,b)+ \beta(d+a,c),
    \]
    where $a,b,c,d \in \vV$ form a separated basis. At this level of generality, none of these three forms can be favoured over the others, and thus we do not try to distinguish between all the possibilities.
\end{remark}

\begin{remark}
    We can deduce the weaker statement that $\{(\vV,+,0),(\vW,+,0),\beta\}$ eliminates $\vW$-quantifiers. For instance, a formula of the form:
    \[
        \phi_\vV( f_{n}(w), \bar{v})
    \]
    where $\phi_\vV$ is a $(\vV,+,0,R)^{\eq}$-formula can be rewritten:
    \[
        \exists v_1',\dots,v_n'\in V^m\left( \phi_\vV'( v_1',\dots,v_n',\bar{v})\wedge w= \sum a_{i,j} \beta(v_i',v_j')\right) 
    \]
    for some appropriate integer $m$, and an appropriate $\{\vV,+,0,R\}$-formula $\phi_\vV'$. The predicate $R$ can then be eliminated using the function $\beta$.
\end{remark}

\begin{proof}[Proof of~\Cref{prop: bilinearsystemEQR}]
    Let
    \[
        \mathcal{M}\coloneq \{(\vV,+,0,R)^{\eq},(\vW,+,0),\beta\}
    \]
    and
    \[
        \mathcal{N}\coloneq \{(\vV',+,0,R)^{\eq},(\vW',+,0),\beta\}
    \]
    be two alternating bilinear systems satisfying Property $(*_f)$ of \Cref{cor:SepVectorsupspace}. Let $\sigma= (\sigma_\vV,\sigma_\vW): \mathcal{A} \rightarrow \mathcal{B}$ be an isomorphism between two substructures $\mathcal{A}=(A_\vV,A_\vW)$ and $\mathcal{B}=(B_\vV,B_\vW)$ of $\mathcal{M}$ and $\mathcal{N}$, respectively. Suppose furthermore that $\sigma_\vV: A_\vV \rightarrow B_\vV$ is elementary. It suffices to show that we can extend $\sigma$ to an embedding $\Tilde{\sigma}$ of $\mathcal{M}$ into $\mathcal{N}$. By elementarity, we can extend $\sigma_\vV$ to $\Tilde{\sigma}_\vV: \vV^{\eq} \rightarrow {\vV'}^{\eq}$.
    
    By identifying $\vW_n$ with $B_n$, we may assume that $\vW_n\subseteq A_\vW$ for every $n$. It remains to show that the map $\sigma$ can be extended to the elements $w$ with non-finite order, i.e. such that $f_A(w)=\und$ for every $n$ and $A\in \mathbb{A}_n(\Fbb_p)$. Take any vector space $\vW_\omega$ such that $A_\vW\oplus \vW_\omega= \vW$, and let $(w_i)_{i\in I}$ be a basis of $\vW_\omega$. By saturation, we can find $\vert I\vert$-many vectors $(w_i')_{i\in I}$ of $\vW'$ such that $(w_i')_{i\in I}$ are linearly independent over $B_\vW\cup \bigcup_n \vW_n' $. Then it follows that mapping $w_i\mapsto w_i'$ gives an extension of the embedding $\sigma: \vW\rightarrow \vW'$.
\end{proof}


    

\subsection{Third reduction: From \texorpdfstring{$(\vV,+,0,R)$}{(V,+,0,R)} to \texorpdfstring{$\gG$}{C}}

We go back to Mekler groups and multiplicative notation. Let $\gG=(V,E)$ be a nice graph, and $G=\Msf(\gG)$ be its Mekler group. Let $\Zsf$ denote the centre of $G$.    By \Cref{cor:SepVectorsupspace} and \Cref{prop: bilinearsystemEQR}, we now know the induced structure on $G/\Zsf$. It is given by 
\[
    (G/\Zsf,\cdot, 1, R),
\]
where 
\[
    R\coloneq \{(a+\Zsf,b+\Zsf) \ \vert \ [a,b]=1\}.
\]

Notice that the equivalence relations $a\sim b$ and $a\approx b$ pass to the quotient modulo $\Zsf$, and abusing notation, we will write \begin{itemize}
    \item $(a\bmod \Zsf) \approx (b\bmod \Zsf)$ if $\bigvee_{0<n<p}b^n\bmod \Zsf=a \bmod \Zsf$; and
    \item $(a\bmod \Zsf) \sim (b\bmod \Zsf)$ if $\forall x \ xRa \leftrightarrow xRb$.
\end{itemize} 
Similarly, abusing terminology, we will also say that an element $a\bmod \Zsf \in G/\Zsf$ is of type $1^\nu$ (resp. $1^\iota$ or $p$) if $a$ is. The subset of elements of type $1^\nu$ is of course still definable and given by the following formula $\varphi(x)$:
\[
x\notin \Zsf \wedge \forall y \, (x \sim y \leftrightarrow x \approx y) \wedge (\exists z \, [\, z \notin \Zsf \wedge z R x \wedge \neg (z \approx x) \, ])
\]
and we recover the nice graph $\gG=(V,E)$ the same way by considering the quotient $\phi(G/\Zsf)/\sim $ where the edge relation $E$ is induced by the predicate $R$ on $G/\Zsf$. We want to analyse the definable structure in $(\vV,+,0,R)$ and reduce it to $\gG=(V,E)$.

Recall from \Cref{Fact:Hod}(5) that an element $a$ of $G/\Zsf$ can be written $a=a_0\cdots a_{n'-1}$ where $n'\in \mathbb{N}$ and $a_0,\dots,a_{n'-1}$ are distinct elements of type $1^\nu$, and this presentation is unique once we fix an order on $\{ [a]_\sim: a\in G \text{ of type }1^\nu    \}$. We call the set $\{[a_0]_\sim,\dots, [a_{n'-1}]_\sim \}$  the \emph{support} of $a$. If $\gG$ is infinite, the support is not interpretable, in the sense that there is no formula $\psi(x,y)$ that holds precisely when $x$ and $y$ have the same support. However, it becomes definable (with quantifiers) when we bound the size of the support. Therefore, the function $a \mapsto \{[a_0]_\sim, \dots,  [a_{n'-1}]_\sim\}$ must be added to the language in order to obtain quantifier elimination. 
 This is however not sufficient. The reason is that, for a general description of elements in an elementary extension, we must use a transcendental basis. We introduce therefore in the next definition a language strong enough to reflect this. 
 
If $g\in G/\Zsf$ is an element of type $p$, recall that there is a unique class $[a]_\sim$ of an element of type $1^\nu$, such that $gRa$. This class $[a]_\sim$ is called \emph{handle} of $g$, denoted by $h(g)$. 

\begin{definition}\label{def:Anm}
    We denote by $A_{n,m}$ the set of elements $a$ consisting of a product at most $n$ elements $a_0,\dots, a_{n'-1}$ of type $1^\nu$ and of at most $m$ elements $g_0,\dots,g_{m'-1}$ of type $p$ such that:
    \begin{enumerate}[label=(\roman*)]
        \item\label{item:Anm1} $g_0,\dots,g_{m'-1}$ are not a product of less than $n+m+1$ elements of type $1^\nu$,
        \item\label{item:Anm2} $[a_0]_\sim, \dots,  [a_{n'-1}]_\sim , h(g_0),\dots,h(g_{m'-1})$ are pairwise distinct.
        \item\label{item:Anm3} $[a_0]_\sim, \dots,  [a_{n'-1}]_\sim$ are not connected (according to $E$) to any of $h(g_0),\dots,h(g_{m'-1})$.

    Note that these conditions are first-order. 
    \end{enumerate} We denote:
    \[
    \begin{array}{rccl}
         S_{n,m}\colon & A_{n,m} &\longrightarrow &\mathcal{P}_{\leqslant n}(\gG)   \\
                  & a       & \longmapsto    &  \{[a_0]_\sim, \dots,  [a_{n'-1}]_\sim\}, 
    \end{array}
    \]
    and:
    \[ 
    \begin{array}{rccl}
        S_{n,m}': & A_{n,m} &\longrightarrow &\mathcal{P}_{\leqslant m}(C)  \\
                  & a       & \longmapsto    &  \{ h(g_0),\dots,h(g_{m'-1})\},
                  
    \end{array}
    \]
    where $\mathcal{P}_{\leq n}(\gG) \coloneq \bigcup_{k\leq n}\mathcal{P}_k(\gG)$ denotes the set of at-most-$n$-elements subsets of $\gG$ \footnote{We will view $\mathcal{P}_{\leqslant n}(\gG)$ as an imaginary sort of $\gG$, or if $\gG$ has elimination of finite imaginaries, as a definable subset of $\gG^n$.}    
    We set $S_{n,m}(a)=S_{n,m}'(a)= \und $ if $a\notin A_{n,m}$.
\end{definition}
Observe that the sets $A_{n,m}$ don't form a partition: for $(n,m),(k,l) \in \Nbb^2$, $A_{n,m}\cap A_{k,l}
$ is not necessarily empty (for example, $A_{n,0}\subseteq A_{n+1,0}$), and in general, $G \subsetneq \bigcup_{n,m} A_{n,m}$. However, for any $a$ there is a pair $(n,m)\in\mathbb{N}^2$ which is minimal for the reverse lexicographic order, such that $A_{n,m}(a)$.

 For such a minimal pair $(n,m)$, \Cref{item:Anm2,item:Anm3} will be easy to verify and \Cref{item:Anm1} will hold automatically.
 
\begin{remark}
    Let $a,b$ be elements of a Mekler group $M$. If $(n,m)$ and $(k,l)$ are minimal such that $A_{n,m}(a)$ and $A_{k,l}(b)$ hold, and $a=a_0\cdots a_{n-1}\cdot g_0 \cdots g_{m-1}$ is the corresponding decomposition, then
    \begin{itemize}
        \item  $g_0,\dots,g_{m-1}$ are not a (finite) product of elements of type $1^\nu$, 
        \item $A_{k+n,l+m}(a\cdot b)$ holds.
    \end{itemize}
    Notice however that $(k+n,l+m)$ may not be minimal such that $A_{k+n,l+m}(a\cdot b)$ holds. The more complicated cases in the proofs below arise when it is indeed not the case.
\end{remark}

\begin{proposition}    
    The functions $S_{n,m}$ and $S_{n,m}'$ are well-defined and $\emptyset$-definable in Mekler groups.  We refer to them as the \emph{support functions}.
\end{proposition}

\begin{proof}
    Consider $G\coloneq \Msf(\gG)$ the Mekler group associated with a nice graph $\gG$ and let $a\in G$. Suppose that we are given two decompositions:
    \[
    a = a_0\cdots a_{n-1} \cdot g_0 \cdots g_{m-1} = a_0'\cdots a_{n-1}' \cdot g_0' \cdots g_{m-1}'
    \] 
    which satisfy \labelcref{item:Anm1,item:Anm2,item:Anm3}, from  \Cref{def:Anm}. We must show $\{[a_0]_\sim, \dots,  [a_{n-1}]_\sim\}= \{[a_0']_\sim, \dots,  [a_{n-1}']_\sim\}$ and $\{ h(g_0),\dots,h(g_{m-1})\} = \{ h(g_0'),\dots,h(g_{m-1}')\}$. We start with the latter.

    Suppose that $h(g_0)\notin \left\{ h(g_0'),\dots,h(g_{m-1}')\right\}$.  Since $\gG$ is triangle-free and square-free, for all $i$'s, $g_0$ and $g_i'$ are products of elements of type $1^\nu$ and at most one of these elements is in both products.  It follows that $g_0$ is composed with at most $n+m$ elements of type $1^\nu$, which contradicts \labelcref{item:Anm1}. Therefore, we have that $h(g_0)\in \{ h(g_0'),\dots,h(g_{m-1}')\}$ and, arguing in the same manner, it follows that  $\left\{ h(g_0),\dots,h(g_{m-1})\right\} = \{ h(g_0'),\dots,h(g_{m-1}')\}$.
    
    Now, we have by assumption that $[a_0]_\sim$ is not equal to and does not commute with any element of $\{ h(g_0'),\dots,h(g_{m-1}')\}$. Therefore, $[a_0]_\sim$ can't be an element composing any of the $g_{i}'$ and we must have  $[a_0]_\sim=[a_i']_\sim $ for some $i<n$. More generally,  we have $\{[a_0]_\sim, \dots,  [a_{n-1}]_\sim\}= \{[a_0']_\sim, \dots,  [a_{n-1}']_\sim\}$, as wanted.

\end{proof}

\begin{remark}
    Let $a\in G$ satisfying $A_{n,m}$. A decomposition $ a=a_0\cdots a_{n'-1}\cdot g_0 \cdots g_{m'-1}$ such that 
    $S_{n,m}(a)=\{[a_0]_\sim,\dots,[a_{n'-1}]_\sim\}$ and $S_{n,m}'(a)=\{h(g_0),\dots, h(g_{m'-1})\}$
    is \emph{almost} uniquely determined. If $a=a_0'\cdots a_{n''-1}'\cdot g_0' \cdots g_{m''-1}'$ is another such decomposition, then 
    \begin{itemize}
        \item $\{a_0,\dots,a_{n'-1}\}=\{a_0',\dots,a_{n''-1}'\}$,
        \item $\{g_0 \pmod {V_c},\dots,g_{m'-1} \pmod {V_c}\}=\{g_0'\pmod {V_c},\dots,g_{m''-1}'\pmod {V_c}\}$
    \end{itemize}
    where $V_c \coloneq \braket{a_{i,j} : i<j<m}$ is the subgroup generated by the set of connections, i.e. the elements $a_{i,j}$ of type $1^\nu$  such that $[a_{i,j}]_\sim$ is connected to both $h(g_i)$ and $h(g_j)$ (as the graph $\gG$ is square free, there are $(p-1)m(m-1)/2$ of them).  The connection $a_{i,j}$ can therefore be ``included'' in the element of type $p$ and handle $h(g_i)$ or in the one of
    handles $h(g_j)$.

    In particular, $n'=n''$ and $m'=m''$, that is, the decompositions have the same length.
\end{remark}

The following proposition lies at the heart of our relative quantifier elimination result. We postpone the proof to the end of the section, in order to first give more context, as well as some essential lemmas. 

\begin{proposition}\label{prop:RQEGraph}
    The structure 
    \[
    \begin{aligned}
        \big\{ \left(G/\Zsf,\cdot, ^{-1}, (A_{n,m})_{n,m}\right)&, \gG=(V, E)^{\eq},\\ &S_{n,m}\colon  A_{n,m} \rightarrow \mathcal{P}_{\leqslant n}(\gG), S_{n,m}': A_{n,m} \rightarrow \mathcal{P}_{\leqslant m}(\gG) \big\}
    \end{aligned}
    \]
    eliminates quantifiers relative to the graph $\gG$. 
    
\end{proposition}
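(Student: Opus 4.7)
The plan is to apply the standard back-and-forth criterion for relative quantifier elimination over the graph sort. Let $\sM_1$ and $\sM_2$ be two models of the theory in question, with $\sM_2$ sufficiently saturated, and let $\sigma = (\sigma_G, \sigma_\gG) \colon \sA \to \sB$ be an isomorphism between substructures $\sA \subseteq \sM_1$ and $\sB \subseteq \sM_2$ such that $\sigma_\gG$ is elementary on the graph sort. It suffices to extend $\sigma$ to an embedding $\tilde{\sigma} \colon \sM_1 \to \sM_2$.

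By elementarity and saturation, first extend $\sigma_\gG$ to an elementary embedding $\tilde{\sigma}_\gG \colon \gG_1 \to \gG_2$. Next, complete $\sA \cap (G_1/\Zsf_1)$ to a full transversal $X_1 = X_1^\nu \cup X_1^p \cup X_1^\iota$ of $G_1/\Zsf_1$. We extend $\sigma_G$ to $X_1$ element-by-element. For each new $a \in X_1^\nu$ whose $\sim$-class corresponds to a vertex $v \in \gG_1$, use saturation to pick $a' \in G_2/\Zsf_2$ of type $1^\nu$ whose $\sim$-class is $\tilde{\sigma}_\gG(v)$ and which is independent (in the sense of \Cref{def:independence}) from the previously fixed images. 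For each $g \in X_1^p$ of type $p$ with handle $h \in \gG_1$, select $g' \in G_2/\Zsf_2$ of type $p$ with handle $\tilde{\sigma}_\gG(h)$, independent from previous choices; proceed analogously for $X_1^\iota$ (with no graph constraint). Finally, extend $\sigma_G$ by $\Fbb_p$-linearity to the whole of $G_1/\Zsf_1$.

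We then verify that this map is a partial isomorphism of the full structure. Preservation of the group operation and of the graph is by construction. For the predicates $A_{n,m}$ and the support functions $S_{n,m}, S'_{n,m}$, the key observation is that membership in $A_{n,m}$, as well as the values $S_{n,m}(a)$ and $S'_{n,m}(a)$ for an element $a$, are determined by the decomposition of $a$ into transversal elements, and more precisely by the $\sim$-classes and handles of its factors. Since our choices respect these data, the verification is immediate.

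The main obstacle is ensuring the existence of the required independent elements with prescribed $\sim$-class or handle at each step. This follows from Hodges' axiomatisation of Mekler groups (\cite[Appendix~A.3]{Hod93}) together with the saturation of $\sM_2$: the axioms guarantee, over any small set, the existence of generic elements of types $1^\nu$, $p$, and $1^\iota$ realising any prescribed $\sim$-class or handle in $\gG_2$. A secondary subtlety, addressed by choosing images sequentially with the independence requirement, is ensuring that no unexpected algebraic identities arise between the new image and the ones already fixed, since such identities would cause a failure of the support functions or the predicates $A_{n,m}$ to match.
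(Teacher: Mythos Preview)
Your outline follows the right overall strategy (back-and-forth, extend on the graph sort first, then lift to the group sort via a transversal), but there is a genuine gap in the step where you extend $\sigma_G$.

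The problem is compatibility with the map already defined on $G_\sA$. You write ``complete $\sA\cap(G_1/\Zsf_1)$ to a full transversal $X_1$'' and then choose the image of each new $a\in X_1$ \emph{independently} of all previously fixed images. But elements of $G_\sA$ need not themselves lie in any transversal: for instance, if $g\in G_\sA$ satisfies $A_{2,0}(g)\wedge\neg A_{1,0}(g)$ with $S_{2,0}(g)=\{[a_0]_\sim,[a_1]_\sim\}$ and $a_0,a_1$ adjacent, then $g$ has type $p-1$ and no power of $g$ is of type $1^\nu$, $p$, or $1^\iota$. In that situation $X_1\cap G_\sA$ does not generate $G_\sA$, so when you pick images $a_0',a_1'$ for $a_0,a_1\in X_1^\nu$ ``independent from previously fixed images'' you will have $a_0'a_1'\neq\sigma_G(g)$, and your $\Fbb_p$-linear extension fails to agree with $\sigma_G$ on $G_\sA$. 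The independence requirement you impose prevents \emph{extra} relations among the new images, but it does nothing to enforce the relations that $\sigma_G$ already records.

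This is exactly the content of Steps~1 and~2 in the paper's proof, which you have skipped. There one enlarges $G_\sA$ by adjoining, for each $g\in G_\sA$, its individual components $a_0,\dots,a_{m-1}$ of type $1^\nu$ and $g_0,\dots,g_{n-1}$ of type $p$, choosing the image of each new component to be a genuine component of $f_G(g)$ (e.g.\ $b_0$ with $[b_0]_\sim\in S_{m,n}(f_G(g))$ and $[b_0]_\sim\notin S_{m,n}(f_G(g)b_0^{-1})$). The delicate part is then verifying, via a case analysis on how $A_{k,\ell}$, $S_{k,\ell}$, $S'_{k,\ell}$ behave on products $g'a_0^r$ (resp.\ $g'g_0^r$) for arbitrary $g'\in G_\sA$, that the enlarged map still preserves all quantifier-free formulas. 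Only after this reduction does one obtain a transversal $X$ with $G_\sA=\langle X\cap G_\sA\rangle$, at which point your Step~3 and the linear extension become legitimate. Your final paragraph gestures at ``unexpected algebraic identities'', but the issue is the opposite: it is the \emph{expected} identities coming from $G_\sA$ that must be respected, and independence alone cannot do that.
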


In particular, every formula $\phi(x)$ in the structure $(G/\Zsf,\cdot,1,R)$ is equivalent to a Boolean combination of formulas of the form:
\begin{itemize}
    \item $A_{n,m}(t(x))$, where $n,m\in \mathbb{N}$, $t(x)$ is a group term,
    \item $\phi_\gG(S_{n,m}(t(x)),S_{n,m}'(t(x)))$, where $n,m\in \mathbb{N}$, $t(x)$ is a tuple of group terms and $\phi_\gG(x_\gG,x_\gG')$ is a formula in $\gG=(V,R)$.
\end{itemize}

To illustrate this result, we give below `main-sorted-quantifier-free' definition of important sets (proofs are optional and left to the reader).
\begin{itemize}
    \item $A_{0,0}=\{1\}$, $A_{0,1}\setminus A_{0,0} $ is exactly the set of elements of type $p$, and $A_{1,0}\setminus A_{0,0}$ the set of element of type $1^\nu$. 
    \item For any element of $a$ type $1^\nu$ and element $g$ of type $p$, we have:
    \begin{center}
        $S_{1,0}(a)=\{[a]_\sim \}$ and $S_{0,1}(g)=\{h(g)\}$.
    \end{center}
    \item Elements of type $p-1$ are exactly the elements $x$ with support consisting of two commuting elements: 
    \[
        A_{2,0}(x) \wedge \exists \alpha,\beta\in S_{2,0}(x)  \ \alpha E \beta,
    \]
    \item Elements of type $1^\iota$ are all the other elements: 
    \[
        \neg A_{0,1}(x)\wedge  (\neg A_{2,0}(x)  \vee (A_{2,0}(x) \wedge \exists \alpha,\beta\in S_{2,0}(x)  \ \neg \alpha E \beta \wedge \alpha \neq \beta ).
    \]
    Recall that $A_{0,0},A_{1,0}\subseteq A_{2,0}$.
    \item For $a,b\in G/\Zsf$, the relation $aRb$ induced by commutation on $G/\Zsf$ is given by 
    \[
    \begin{aligned}
        (a\in A_{1,0} ~\wedge & ~b\in A_{0,1} \wedge S_{0,1}'(b)=S_{1,0}(a) ) \\
            &\vee (a\in A_{0,1}\wedge b\in A_{1,0} \wedge S_{0,1}'(a)=S_{1,0}(b) ) \\
            &\vee (a,b\in A_{2,0} \wedge \forall \alpha \in S_{2,0}(a) ~\forall \beta \in S_{2,0}(b) \ \alpha E \beta ).
    \end{aligned}
    \]
\end{itemize}

The proof, that we will detail below, uses the criteria of Shoenfield, consisting of extending a partial isomorphism $f: \sA \subseteq \sM \rightarrow \sB \subseteq \sN$ between two models with saturation. We will do it in five distinct steps:
    \begin{itemize}
        \item[Step 0:] We enlarge $f$ to all elements of $\gG(\sM)^{\eq}$.
        \item[Step 1:] We enlarge $f$ to elements $a$ of type $1^\nu$ occurring in the decomposition of an element in $G_\sA$.
        \item[Step 2:] We enlarge $f$ to elements $g$ of type $p$ occurring in the decomposition of an element in $G_\sA$.
        \item[Step 3:] We find a transversal $X$ that is ``compatible'' with $\sA$, and enlarge $f$ to this transversal.
        \item[Step 4:] We extend $f$ to a full embedding of the graph $\Gamma(X)$ (which contains $C(\sM)$), then to a full embedding of $\sM$.
        
    \end{itemize}

 We chose the language $\lL$ so that the following improvement of \Cref{Fact:CH} holds:
\begin{lemma}\label{Lem:TransversalQFDefinable}
    Given a small set of variables $Y=Y^\nu \frown Y^p \frown Y^\iota$, the statement: 
    \begin{center}
    ``The elements of $Y^\nu$, $Y^p$ and $Y^\iota$ are respectively of type $1^\nu$, $p$ and $1^\iota$, and the set $Y$ can be extended to a transversal of $G$'' 
    \end{center} 
    is a \emph{quantifier-free} type in the variables $Y$ in the language $\lL$.
\end{lemma}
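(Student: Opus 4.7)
The plan is to construct the partial type $\Phi(Y)$ explicitly and verify its correctness using the canonical presentation of Mekler-group elements recalled in \Cref{subsec:Mekler}. For each finite subtuple $\bar y=(y_1,\ldots,y_m)\subseteq Y$ respecting the partition $Y=Y^\nu\sqcup Y^p\sqcup Y^\iota$, include in $\Phi(Y)$ two kinds of conjuncts, both quantifier-free in $\lL$ (which we take to extend the language of \Cref{prop:RQEGraph} by the projection $\pi:G\to G/\Zsf$).

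\emph{Type axioms.} For $y\in Y^\nu$ include $A_{1,0}(\pi(y))$, for $y\in Y^p$ include $A_{0,1}(\pi(y))$, and for $y\in Y^\iota$ include the Boolean combination characterising type $1^\iota$ elements given in the bullet points following \Cref{prop:RQEGraph} (which, read with the natural atomic predicates on the imaginary sort $\mathcal{P}_2(\gG)$, is quantifier-free in $\lL$).

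\emph{Independence axioms.} For each nontrivial $\alpha\in\mathbb{F}_p^m$, let $k_\nu,k_p,k_\iota$ denote the number of nonzero $\alpha_i$ whose $y_i$ lies in $Y^\nu,Y^p,Y^\iota$ respectively. Include the quantifier-free formula asserting that $\pi(y_1^{\alpha_1}\cdots y_m^{\alpha_m})$ lies in $A_{N,k_p}$ for the appropriate $N$ (determined by folding the type-$1^\nu$ decomposition lengths of the $Y^\iota$-contributions together with the $Y^\nu$-contributions), with $S_{N,k_p}$- and $S'_{N,k_p}$-images equal to the predicted sets of $\sim$-classes (for the $\nu,\iota$-part) and handles (for the $p$-part). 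This formula encodes exactly the canonical-form data of the product as prescribed by \Cref{subsec:Mekler}, and is quantifier-free in $\lL$ since all its symbols are in the language.

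\emph{Correctness.} In one direction, any tuple extending to a transversal satisfies $\Phi(Y)$ by direct inspection of the canonical form. Conversely, if $Y\models\Phi(Y)$, then the type axioms force $Y^\nu\subseteq\tE^\nu$, $Y^p\subseteq\tE^p$, $Y^\iota\subseteq\tE^\iota$, while the independence axioms rule out every nontrivial $\mathbb{F}_p$-relation modulo $\Zsf$, $\braket{\Zsf,\tE^\nu}$, and $\braket{\Zsf,\tE^\nu,\tE^p}$, respectively: any such relation would force the product into a strictly smaller $A_{n,m}$-set or shrink its support image. Thus $Y$ is independent in the sense of \Cref{def:independence}, and \cite[Lemma~2.7]{CH18} yields an extension to a transversal.

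The main obstacle is the case of $Y^\iota$: type $1^\iota$ elements lack a canonical handle, so their contribution to a combination must be tracked through their underlying type $1^\nu$-decomposition. We handle this by letting the ``$\nu$-index'' $N$ of $A_{N,k_p}$ absorb the $1^\nu$-decomposition lengths of the $Y^\iota$-factors; this is legitimate because, by \Cref{prop:RQEGraph} together with the structure theory of \Cref{subsec:Mekler}, the $A_{n,0}$-membership of a type $1^\iota$ element and its support are quantifier-freely determined from the graph data carried by that element.
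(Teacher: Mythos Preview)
Your proposal has a genuine gap in the treatment of $Y^\iota$. You plan to ``let the $\nu$-index $N$ of $A_{N,k_p}$ absorb the $1^\nu$-decomposition lengths of the $Y^\iota$-factors,'' but for an element of $Y^\iota$ that actually belongs to a transversal there is no such finite decomposition length: by definition of the $X^\iota$-part of a transversal, these elements are independent over $\braket{\Zsf,\tE^\nu,\tE^p}$, so in $G/\Zsf$ they lie in \emph{no} $A_{n,m}$ whatsoever. Your final paragraph asserts that ``the $A_{n,0}$-membership of a type $1^\iota$ element \ldots\ [is] quantifier-freely determined,'' and that is true, but the value of that determination for the elements you care about is simply $\neg A_{n,0}$ for every $n$; there is no finite $N$ to feed into your positive clause. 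Consequently your independence axioms, which try to \emph{positively} place each nontrivial product into a specific $A_{N,k_p}$, assert something incompatible with the very independence you are trying to capture whenever $k_\iota>0$.

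The paper's argument avoids this by going in the opposite direction: it uses only \emph{negative} predicates. Independence of $Y^p$ over $\braket{\Zsf,\tE^\nu}$ is expressed by the infinite scheme $\{\neg A_{n,0}(\prod v_i^{k_i}):n\in\Nbb,\ \bar k\neq\bar 0\}$, and independence of $Y^\iota$ over $\braket{\Zsf,\tE^\nu,\tE^p}$ by $\{\neg A_{n,m}(\prod w_i^{k_i}):n,m\in\Nbb,\ \bar k\neq\bar 0\}$. These are quantifier-free schemes in $\lL$ and say exactly that no nontrivial word in the relevant variables falls into the subgroup generated by the lower types. Your type axioms are fine (and already close to the paper's implicit treatment), but for the independence part you should replace the positive $A_{N,k_p}$-clauses with these negative schemes; once you do, there is no need to track supports, handles, or any ``predicted'' $S_{n,m}$-images at all.
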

\begin{proof}
    The quantifier-free type 
    \[
        \left\{\neg A_{n,0}\left(\prod v^{k_i}_i\right) \ \middle\vert \ (k_1,\dots,k_l)\in \{0,1,\dots,p-1\}\setminus \{0,\dots,0\}, n\in\Nbb  \right\}
    \]
    expresses that $v_1,\dots, v_l \in \vV$ are independent modulo $\braket{\tE^\nu}$. 
    
    Similarly, the quantifier-free type 
    \[
        \left\{\neg A_{n,m}\left(\prod w^{k_i}_i\right) \ \middle\vert \ (k_1,\dots,k_l)\in \{0,1,\dots,p-1\}\setminus \{0,\dots,0\}, n,m\in\Nbb \right\}
    \]
    expresses that $w_1,\dots, w_l \in V$ are independent modulo $\braket{\tE^\nu,\tE^p}$. The lemma follows immediately. 
\end{proof}

    In the next lemma, we characterise finite subsets $\Delta$ of elements $d$ which share the same element $h_0$ of type $p$.
    Let $d,d'\in S$ and assume that there are pairs of integers $(k,l), (k',l')$, minimal with the reverse lexicographic order, such that $A_{k,\ell}(d)$, $A_{k',\ell'}(d')$ hold. Write accordingly $d= a_0 \cdots a_k \cdot g_0 \cdots g_{l} $ and $d'= b_0 \cdots b_{k'} \cdot h_0 \cdots h_{l'}$.
    Assume that $g_0=h_0$. Then, in the product $d{d'}^{-1}$, the element $g_0$ and $h_0$ cancel out and:
   
    \begin{itemize}
        \item $A_{k+k',\ell+\ell'-2}(d{d'}^{-1})$ holds,
        \item $S_{k+k',\ell+\ell'-2}(dd'^{-1}) \subseteq S_{k,\ell}(d)\cup S_{k',\ell'}(d')$,
        \item $ S'_{k+k',\ell+\ell'-2}(dd'^{-1}) \subseteq S'_{k,\ell}(d)\cup S'_{k',\ell'}(d')\setminus \{[h_0]_\sim\}  $.
    \end{itemize}

    This generalises of course to any finite subset of elements sharing the same element of type $p$ in there decomposition. We need a partial reciprocal of the fact above. It is easier to state if we restrict ourself to the case where only one element of type $p$ ``cancel out´´ in the product $dd´^{-1}$ (i.e. when the inclusion above are strict equalities).
\begin{lemma} \label{lem:vanish}
    Let $\Delta$ be a finite subset of \ $\bigcup_{n,m}A_{n,m}$ and $c\in \gG$. Assume that for all $d,d' \in \Delta$, if $(k,l), (k',l')$ are minimal with the reverse lexicographic order such that $A_{k,\ell}(d)$, $A_{k',\ell'}(d')$ hold,  then 
    \begin{itemize}
        \item $A_{k+k',\ell+\ell'-2}(dd'^{-1})$ holds, 
        \item $S_{k+k',\ell+\ell'-2}(dd'^{-1})=S_{k,\ell}(d)\cup S_{k',\ell'}(d')$ 
        \item $ S'_{k+k',\ell+\ell'-2}(dd'^{-1})= S'_{k,\ell}(d)\cup S'_{k',\ell'}(d')\setminus \{c\}   $. 
    \end{itemize}
    Then, there is an element $h_0$ of type $p$ and with handle $c$ such that for all $d\in \Delta$, and for $(k,\ell)$ minimal such that $A_{k,\ell}(d)$ holds, we have that $A_{k,\ell-1}(dh_0)$ holds, $S_{k,\ell-1}'(dh_0)=S_{k}'(d)$ and $S_{t,\ell-1}'(dh_0)=  S_{t,\ell-1}'(d) \setminus \{c\}$.
\end{lemma}

The proof of this lemma is left to the reader.

\begin{definition}\label{def:cover}
    Let $\gG$ be an infinite nice graph. A \emph{cover} of $\gG$ is a graph $(\Gamma,E)$ containing $\gG$ as a subgraph such that for every $b\in \Gamma  \setminus \gG$, one of the following two statements holds:
    \begin{itemize}
        \item There is a unique vertex $a$ in $\Gamma$ connected to $b$, and moreover $a$ is in $\gG$ and connected to infinitely many elements in $\gG$.
        \item $b$ is an isolated vertex in $\Gamma$.
    \end{itemize}
    A cover of $\gG$ is called \emph{infinite} if moreover: 
    \begin{itemize}
        \item Every vertex $a$ in $\gG$ with infinitely many neighbours in $\gG$ has infinitely many neighbours in $\Gamma\setminus \gG$.
        \item There are infinitely many isolated vertices in $\Gamma \setminus \gG$.
    \end{itemize} 
\end{definition}

\begin{lemma}\label{lem:RQECover}
    Let $(\Gamma,E)$ be an infinite cover of a nice graph $\gG$. Then the structure $(\Gamma,\gG,E)$, with a predicate for the subgraph $\gG$ eliminates quantifiers relative to $(\gG,E)$. 
\end{lemma}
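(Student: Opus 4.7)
The plan is to verify the standard back-and-forth criterion for relative quantifier elimination. Given two infinite covers $(\Gamma,\gG,E)$ and $(\Gamma',\gG',E)$ with $(\Gamma',\gG',E)$ sufficiently saturated, substructures $A \subseteq \Gamma$ and $B \subseteq \Gamma'$, and an isomorphism $\sigma\colon A \to B$ (in the natural language of graphs with a predicate for $\gG$) such that $\sigma|_{A \cap \gG}$ is elementary in the pure graph language, I would extend $\sigma$ to an embedding of $\Gamma$ into $\Gamma'$, adjoining one element at a time. The \emph{attachment function} $h \colon \Gamma \to \gG \cup \{u\}$, sending each non-isolated $b \in \Gamma \setminus \gG$ to its unique $\gG$-neighbour and defaulting to $u$ otherwise, is $\emptyset$-definable in $(\Gamma,\gG,E)$; I would work in the definitional expansion by $h$, in which substructures are automatically closed under $h$, so that relative QE in this expanded language entails the claimed relative QE in the original.

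The back-and-forth then distinguishes the three possible \emph{kinds} of a new element $c \in \Gamma \setminus A$:
\begin{itemize}
\item If $c \in \gG$: by $h$-closure of $A$, no element of $A \setminus \gG$ is attached to $c$, so $c$ has no edges to $A \setminus \gG$; I would pick $c' \in \gG'$ realising the graph-type of $c$ over $A \cap \gG$ (transferred by $\sigma$), using elementarity of $\sigma|_{A \cap \gG}$ and saturation of $(\gG',E)$, avoiding the small set $B$.
\item If $c \in \Gamma \setminus \gG$ is isolated: by the infinite cover hypothesis on $\Gamma'$, there are infinitely many isolated vertices in $\Gamma' \setminus \gG'$, so I would pick one outside $B$ as $\sigma(c)$.
\item If $c \in \Gamma \setminus \gG$ is non-isolated with $h(c) = a$: first adjoin $a$ via the first case if $a \notin A$. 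Since $a$ has infinitely many $\gG$-neighbours (by the definition of cover) and $\sigma|_{A \cap \gG}$ is elementary, $\sigma(a)$ has infinitely many $\gG'$-neighbours; the infinite cover hypothesis then guarantees that $\sigma(a)$ has infinitely many non-isolated neighbours in $\Gamma' \setminus \gG'$, from which I would pick one outside $B$.
\end{itemize}

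The main obstacle lies in the first case, where one must realise a specific pure-graph type over $A \cap \gG$ by an element of $\gG'$ not in $B$. This relies on combining the elementarity of $\sigma|_{A \cap \gG}$ with saturation of $(\gG',E)$, which follows from saturation of $(\Gamma',\gG',E)$ since $\gG'$ is $\emptyset$-definable; some care is also needed, in arranging the order of the back-and-forth, to ensure that elementarity of the $\gG$-restriction is preserved as we enlarge the partial isomorphism. Once this is handled, the infinite cover hypothesis supplies enough ``free'' vertices in $\Gamma' \setminus \gG'$---both isolated and attached to each qualifying $\gG'$-vertex---to handle the remaining two cases, and the back-and-forth concludes.
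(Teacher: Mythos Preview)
Your proposal is correct and follows essentially the same back-and-forth argument as the paper. The only organizational difference is that the paper first extends the partial isomorphism to all of $\gG(\sM)$ at once (using elementarity of $f|_{\gG}$ and saturation of $\gG(\sN)$), and only afterwards handles the vertices in $\Gamma\setminus\gG$; this sidesteps the concern you flag about preserving elementarity of the $\gG$-restriction step-by-step, but the underlying idea is identical.
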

\begin{proof}
    Let $\sM=(\Gamma(\sM),\gG(\sM))$ and $\sN=(\Gamma(\sN),\gG(\sN))$ be two models, and $\sA,\sB$ be substructures of $\sM$ and $\sN$, respectively. Assume $\sN$ is $\vert \sM \vert$-saturated. Consider a partial isomorphism $f:\sA \rightarrow \sB$ such that  $\restriction{f}{\gG(\sM)}$ is elementary in $\gG$. 
    
    Then, $\restriction{f}{\gG(\sM)}$ can be extended to a full embedding $\tilde{f}_{\gG(\sM)}:\gG(\sM)\rightarrow \gG(\sN)$ and $\Tilde{f}_{\gG(\sM)}\cup f$ is a partial isomorphism. We reset the notation by setting $f=\Tilde{f}_{\gG(\sM)}\cup f$. It remains to extend $f$ to a point $b$ in $\Gamma(\sM)\setminus \sA$. If $b$ is connected to a unique $a\in \gG(\sM)$, then $f(a)$ has infinitely many neighbours. By saturation, there is an element $b'\in \Gamma(\sN)\setminus \sB$ joined with $f(a)$.
    If $b$ is isolated, then by saturation we can find an isolated element $b'$ in $\Gamma(\sM)\setminus \sB$. In any case, we see that $\Tilde{f}=f\cup (b,b')$ extends $f$ at $b$, as wanted.
\end{proof}

Recall that given a group $(G,\cdot,1)$ and $X$ a subset of $G$, we denote by $(\Gamma(X),E)$ or simply $\Gamma(X)$ the graph with vertices the classes of elements in $X$ modulo $\sim$ and edge relation $E$ given by commutation: 
\[
    [a]_\sim E [b]_\sim \iff ab=ba.
\] 
Recall also that, in a Mekler group $\mathsf{M}$, if $X^\nu$ is a choice of representatives of elements of type $1^\nu$ modulo $\sim$, then $\Gamma(X^\nu)= C$ is the nice graph associated to $M$.
The following lemma is almost immediate:
\begin{lemma}\label{lem:infinitecover}
    Let $\Msf=(G,\cdot,1)$ be the Mekler group of an infinite nice graph $\gG$, and $X$ be a transversal of $\Msf$. Then $\Gamma(X)$ is a cover of $\gG$. Moreover, if $\Msf/\Zsf$ is $\aleph_0$-saturated,  then $\Gamma(X)$ is an infinite cover of $\gG$.
\end{lemma}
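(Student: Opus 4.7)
The plan is a case analysis on the Mekler type of a representative of each $\sim$-class in $\Gamma(G)$, guided by Proposition~2.6. I first verify that $\gG$ embeds into $\Gamma(G)$ as a subgraph via the map $v\mapsto[v]_\sim$: this map is well-defined because each vertex of $\gG$ is a type-$1^\nu$ element of $\Msf(\gG)$; it is injective because niceness condition~(2) forces distinct vertices of $\gG$ to have distinct neighbourhoods (hence distinct centralisers in $\Msf(\gG)$); and it preserves edges directly from the defining commutation relations of $\Msf(\gG)$.

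For $b=[g]_\sim\in\Gamma(G)\setminus\gG$, Proposition~2.6 gives that $g$ is of type $p-1$, $p$, or $1^\iota$. If $g$ has type $1^\iota$, the very definition of isolation yields that every non-central element commuting with $g$ is $\approx$-equivalent (hence $\sim$-equivalent) to $g$, so $[g]_\sim$ is an isolated vertex of $\Gamma(G)$. If $g=a_0^{\alpha_0}\cdots a_{m-1}^{\alpha_{m-1}}$ is of type $p$ with unique handle $a\in\gG$, I compute the centraliser $\Csf(g)$ by reducing $[k,g]=1$ to the linear equation $\sum_i\alpha_i[k,a_i]=0$ in $\Zsf$. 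The free $2$-nilpotent structure of $\Msf(\gG)$ together with the no-triangle/no-square axioms of $\gG$ secure linear independence of the relevant basis commutators $[v,a_i]$, and this forces $\Csf(g)/\Zsf$ to be the $2$-dimensional $\mathbb{F}_p$-subspace spanned by the images of $g$ and $a$. The only $\sim$-classes appearing in $\Csf(g)\setminus\Zsf$ are therefore $[g]_\sim$ itself (discarded by irreflexivity of $E$) and $[a]_\sim$, giving $[a]_\sim$ as the unique neighbour of $[g]_\sim$ in $\Gamma(G)$. A parallel computation handles the type $p-1$ case, where the centraliser is the $2$-dimensional span of $\{a_0,a_1\}$ and the bookkeeping of $\sim$-classes requires some care.

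For the ``moreover'' statement I will invoke $\aleph_0$-saturation of $\Msf$. Applied to the partial type ``$y$ is of type $1^\nu$, commutes with $a$, and is not $\sim$-equivalent to any of a given finite list of vertices'' --- consistent by the extension axioms of nice graphs together with the axiomatisation of $\Th(\Msf(\gG))$ --- saturation produces infinitely many neighbours of each $a\in\gG$. A variant yields, for each such $a$, infinitely many $\sim$-inequivalent type-$p$ elements with handle $a$ (obtained by pairing neighbours of $a$ in different non-adjacent configurations), and also infinitely many $\sim$-inequivalent type-$1^\iota$ classes (by realising products of vertices with pairwise empty common-neighbourhood). Together these produce the extra clauses of \Cref{def:cover} needed for an \emph{infinite} cover.

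The step I expect to be the main technical obstacle is the centraliser computation in the type-$p$ case, and more delicately the $\sim$-class bookkeeping in the type $p-1$ case, since there both $a_0$ and $a_1$ commute with $a_0a_1$ and one must reconcile the appearance of two candidate neighbours with the uniqueness clause of \Cref{def:cover}; resolving this amounts to tracking the paper's convention for the embedding $\gG\hookrightarrow\Gamma(G)$.
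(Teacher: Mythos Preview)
The paper gives no proof, calling the lemma immediate. Your plan for types $1^\iota$ and $p$ is correct and is the substance of what is needed.

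Your concern about type $(p-1)$ is well founded and is not a matter of convention. If $g=a_0^{\alpha_0}a_1^{\alpha_1}z$ with $a_0Ea_1$, then $[g]_\sim\in\Gamma(G)\setminus\gG$ is adjacent to both $[a_0]_\sim$ and $[a_1]_\sim$, so neither clause of \Cref{def:cover} holds; your case split also omits the $\sim$-class of the centre, which is adjacent to every vertex. The lemma as literally stated for $\Gamma(G)$ therefore fails. In the paper it is only ever applied (Step~4 of the proof of \Cref{prop:RQEGraph}) to $\Gamma(X)$ with $X$ a transversal; since a transversal contains no type-$(p-1)$ and no central elements, every class in $\Gamma(X)\setminus\gG$ comes from $X^p$ or $X^\iota$, and your argument goes through cleanly. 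That is the statement you should prove. One further wrinkle in your ``infinite cover'' sketch: the partial type you invoke to force infinitely many $1^\nu$-neighbours of an arbitrary $a\in\gG$ need not be consistent (when $\gG$ is the bi-infinite path every vertex has degree~$2$, in every model); what $\aleph_0$-saturation of $\Msf$ does yield is infinitely many pairwise independent elements of $X^p$ with a prescribed handle and infinitely many independent elements of $X^\iota$, and this is what the application in \Cref{lem:RQECover} actually uses.
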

\begin{center}
    
\begin{tikzpicture}[scale=0.6]
    \filldraw (0,0) circle (1pt);
    \draw (-2,-3) rectangle (8,6);
    \draw (9,2) node{$\Gamma(X)$};
    \draw[dashed] (0,2) ellipse (40pt and 110pt);
    \draw (0,-2.3) node{$\Gamma(X^\nu) = C$}; 
    \foreach \i in {0,...,10}{
        \filldraw (130+20*\i:1) circle (1pt);
        \draw (130+20*\i:1)--(0:0);
    }
\foreach \i in {0,...,2}{
        \filldraw (90+10*\i:0.9) circle (0.5pt);
}
    
    \foreach \i in {0,...,10}{
        \filldraw (0,4)++(130+20*\i:1) circle (1pt);
        \draw (0,4)++(130+20*\i:1)--(0,4);
    }
    \foreach \i in {0,...,5}{
        \filldraw (0,2)++(72*\i:0.6) circle (1pt);
        \draw (0,2)++(72*\i:0.6)--(0,2);
    }
    \foreach \i in {0,...,2}{
        \filldraw (0,4)++(90+10*\i:0.9) circle (0.5pt);
}
    \draw[dashed] (3,2) ellipse (40pt and 110pt);
    \draw (3,-2.3) node{$\Gamma(X^p)$}; 
    \foreach \i in {-2,...,5}{
        \filldraw (3,1)++(0.2*\i,-0.1*\i) circle (1pt);
        \draw (3,1)++(0.2*\i,-0.1*\i) .. controls (1,1-0.2*\i) .. (0,0);
    }
    \foreach \i in {-4,-3.5,-3}{
        \filldraw (3,1)++(0.2*\i,-0.1*\i) circle (0.5pt);
    }
    
    \foreach \i in {-3,...,4}{
        \filldraw (3,4)++(0.2*\i,0.1*\i) circle (1pt);
        \draw (3,4)++(0.2*\i,0.1*\i) .. controls (1.1,4.2+0.2*\i) .. (0,4);
    }
    
    \foreach \i in {-5,-4.5,-4}{
        \filldraw (3,4)++(0.2*\i,0.1*\i) circle (0.5pt);
        
    }
    \draw[dashed] (6,2) ellipse (40pt and 110pt);
    \draw (6,-2.3) node{$\Gamma(X^\iota)$};
    \foreach \i in {-4,...,4}{
        \filldraw (6,2)++(0,0.5*\i) circle (1pt);
        }
\end{tikzpicture}

\end{center}
\begin{proof}
    The fact that $\Gamma(X)$ is a cover of $\gG$ is a simple exercise (see \cite[Fact 2.10]{CH18}). Assume that $\Msf/\Zsf$ is $\aleph_0$-saturated. Let $a\in \gG$ be a vertex with an infinite set of neighbours $N_a \subset \gG $. Then ``$ x \text{ is of type $p$ and handle }a$'' is satisfied by products of (at least three) elements of $N_a$. The type $p(x_i : i \in \mathbb{N})$:
    \[
    \bigcup_{i\in\Nbb}\{ x_i \text{ is of type $p$, handle $a$ and linearly independent over all elements of type } 1^\nu\}
    \]
    is consistent.  As $\Msf/\Zsf$ is $\aleph_0$-saturated, $\Msf$ realises this type, and clearly, for any choice of transversal $X= X^\nu \cup X^p \cup X^\iota$, $a$ is connected to infinitely many elements $[g]_\sim$, $g\in X^p$.
    Similarly, if $\gG$ is infinite, there is infinitely many element of type $1^\iota$ and the type $q(x_i : i \in \mathbb{N}) :$
    \[
   \bigcup_{i\in\Nbb}\ \{ x_i \text{ is of type $1^\iota$, and linearly independent over all elements of type } 1^\nu \text{ and of type }p  \}
    \]
    is consistent. Again, by $\aleph_0$-saturation, there will be infinitely many isolated point in $\Gamma (X)\setminus \gG$ for any choice of transversal.
\end{proof}

We are now ready to prove \Cref{prop:RQEGraph}.

\begin{proof}[Proof of \Cref{prop:RQEGraph}]
    We denote, only in this proof, the group sort by $G$, instead of $G/\Zsf$, (in particular, we use a multiplicative notation, despite the group being abelian). 
    In the proof below, we follow the 5-step strategy outlined earlier.
    
    Let $\sA\subseteq \sM, \sB \subseteq \sN$ and $f: \sA \hookrightarrow \sB$ be a partial isomorphism, where $\sM$ is an $\aleph_0$-saturated and $\sN$ an $\vert \sM \vert$-saturated Mekler group of exponent $p$. The embedding $f$ consists of two (compatible) functions $f_G: G_{\sA} \rightarrow G_{\sB}$, $f_\gG:  \gG_{\sA} \rightarrow \gG_{\sB}$, for the sorts $G$ and $\gG$, respectively. Assume that $f_\gG:  \gG_{\sA} \hookrightarrow \gG_{\sB}$ is elementary. Notice that $G_{\sA}$ is already a subgroup since the structure is closed under multiplication and the groups are of exponent $p$. We will show that $f$ can be extended to a full embedding of $\sM$ in $\sN$.

    \textbf{Step 0:} By elementarity, we can extend $f_\gG:  \gG_{\sA} \hookrightarrow \gG_{\sB}$ to $\gG(\sM)^{\eq} \hookrightarrow \gG(\sN)^{\eq}$, and $f\cup f_\gG$ is still a partial isomorphism, as $\gG$ is a closed sort, and

    \[
        \bigcup_{n,m} S_{n,m}(G_\sA)\cup S_{n,m}'(G_\sA) \subseteq  \gG_{\sA}.
    \]
    For simplicity, we update the notation and let $f$ denote the extension. \\

    Let $g\in G_{\Acal}$. Let $(n,m)$ be minimal for the reverse lexicographic order such that $g\in A_{n,m}$ and write $g=a_0 \cdots a_{
    n-1}\cdot g_0 \cdots g_{m-1}$ as a product of $n$ elements of type $1^\nu$ and $m$ elements of type $p$. \\

    \textbf{Step 1:} We reduce to the case where $a_0,\dots,a_{n-1} \in G_{\sA}$.

     Assume, for example, that $a_0 \notin G_{\sA}$. In this case, since $f$ preserves the function $S_{n,m}$, $f_\gG ([a_0]_\sim) \in A_{n,m}(f_G(g))$, and this means that we can find $b_0\in G(\mathcal{N})$ of type $1^\nu$ such that:
     \begin{itemize}
         \item $[b_0]_\sim=f_\gG ([a_0]_\sim)$
         \item $[b_0]_\sim \notin S_{n,m}(f_G(g)\cdot b_0^{-1})$.
     \end{itemize} 
     Then we can extend $f_{G}$ by setting $f_{G}(a_0)=b_0$. We show that this function preserves quantifier-free formulas. To see this, fix $g'\in G_{\sA}$ and $0<r<p$.  
     We should show that, for every integer $s,t$:
     \begin{enumerate}[label=(\roman*)]
        \item\label{item:qf1} $A_{s,t}(g'a_0^r)$ holds if and only if $A_{s,t}(f(g')b_0^r)$ holds,
        
        \item\label{item:qf2}  $f(S_{s,t}(g'a_0^r))=S_{s,t}(f(g')b_0^r)$  if the first point holds,
        \item  \label{item:qf3}$f(S_{s,t}'(g'a_0^r))=S_{s,t}'(f(g')b_0^r)$  if the first point holds.
     \end{enumerate}
     
    Indeed it in particular shows that $f_G \cup \{(a_0,b_0)\}$ preserves equations of the form $g_0 \cdots g_{n-1} a_0^r= 1$ where $r<p$ (since such an equation holds if and only if $A_{0,0}(g_0 \cdots g_{n-1} a_0^r)$ holds). It will follows that for a quantifier-free formula $\phi(x,y,c)$ with parameters $c$ in $\gG(\mathcal{M})$ and $g'$ a tuple from $G_A$: \[ \sM\vDash\phi(g',a_0,c)\text{ if and only if }\Ncal\vDash\phi(f_{G}(g'),f_{G}(a_0),f_\gG(c)).\]

    We start with a trivial case:

    \begin{itemize}
        
        \item \emph{Case 0}: $g'\notin \bigcup_{k,l} A_{k,l}$. Then for all integers $s,t$, $A_{s,t}(g'a_0^r)$ and $A_{s,t}(f(g')b_0^r)$ don't hold and $S_{s,t}(g'a_0^r)=\und $, $S_{s,t}(f(g')b_0^r)=\und$.
    \end{itemize}
    
    Therefore, we may assume that $g'\in  \bigcup_{k,l} A_{k,l}$, and let $(k,\ell)$ be minimal (in the reverse lexicographic order) such that $g'\in A_{k,\ell}$. We should first find the smallest $(k',\ell')$ such that $g'a_0^r\in A_{k',\ell'}$ and then express $S_{k',\ell'}(g'a_0^r)$ and $S_{k',\ell'}'(g'a_0^r)$ in terms of $g,g'$ and $[a_0]_\sim$.
     We distinguish cases:
    \begin{itemize}
        
        \item \emph{Case 1}: $[a_0]_\sim \notin S_{k,\ell} (g')$ and $[a_0]_\sim \notin S_{k,\ell}'(g')$. 
        
        Then $(k',l')=(k+1,l)$ and 
        $S_{k+1,\ell}(g'a_0^r)= S_{k,\ell}(g')\cup \{[a_0]_\sim\}$ and $S_{k+1,\ell}'(g'a_0^r)= S_{k,\ell}'(g')$.
        \item \emph{Case 2}: $[a_0]_\sim \notin S_{k,\ell} (g')$ and $[a_0]_\sim E \alpha$ or $[a_0]_\sim = \alpha$ for some element $\alpha \in S_{k,\ell}' (g')$. 
        
        Then $(k',l')=(k,l)$ and 
        $S_{k,\ell}(g'a_0^r)= S_{k,\ell}(g')$ and $S_{k,\ell}'(g'a_0^r)= S_{k,\ell}'(g')$.
        \item \emph{Case 3}: $[a_0]_\sim \in S_{k,\ell} (g')$ (and, by minimality of $(k,\ell)$, $\neg [a_0]_\sim E \alpha$ for all elements $\alpha \in S_{k,\ell}' (g')$). 
        
        We now need to consider two further subcases:
        \begin{itemize}
            \item \emph{Case 3(A)}: $A_{k+n,\ell+m}(g'g^r)$ and $[a_0]_\sim \notin S_{k+n,\ell+n}(g'g^r)$. This is the subcase where the powers of $a_0$ in $g'$ and in $g^r$ cancel out in the product $g'g^r$. 
            
            Then $(k',l')=(k-1,l)$ and 
            \[S_{k-1,\ell}(g'a_0^r) =S_{k,\ell}(g')\setminus \{[a_0]_{\sim}\},\] and \[S_{k-1,\ell}'(g'a_0^r) =S_{k,\ell}'(g').\]
            \item \emph{Case 3(B)}: $A_{k+n,\ell+m}(g'g^r)$ and $[a_0]_\sim \in S_{k+n,\ell+m}(g'g^r)$.  In this subcase, the powers of $a_0$ in $g'$ and in $g^r$ don't cancel.
            
            Then we have $A_{k,\ell}(g'a_0^r)$ with $(k',l')=(k,l)$ minimal,
            \[S_{k,\ell}(g'a_0^r) =S_{k,\ell}(g')\] and
            \[S_{k,\ell}'(g'a_0^r) =S_{k,\ell}'(g').\]
        \end{itemize}
    \end{itemize}
    The exact same statement holds in $\sN$ if we replace $a_0$ with $b_0$, $g'$ with $f_G(g')$ and $g$ with $f_G(g)$.
    Therefore, \Cref{item:qf1,item:qf2,item:qf3} hold for $(k',l')$. It will automatically follows for the other integers $(s,t)$, as $(k',l')$, $S_{k',l'}$ and $S_{k',l'}'$ determine all the other pairs of integers such that $A_{s,t}(g'a_0^r)$ holds and determine $S_{s,t}(g'a_0^r)$ and $S_{s,t}'(g'a_0^r)$.
    
    This argument shows that $f_G \cup \{(a_0,b_0)\}$ is a partial isomorphism. We can reset the notation by setting $f_G=f_G \cup \{(a_0,b_0)\}$, and therefore assume that $a_0\in G_\sA$.

    Repeating the argument above, we may assume that for any $g=a_0\cdots a_{m-1}\cdot g_0\cdots g_{n-1}\in G_{\sA}\cap A_{m,n}$, we have $a_0,\dots,a_{m-1} \in G_{\sA}$. \\
    
    Fix again a $g\in G_{\sA}$ with $(n,m)$ minimal (for the reverse lexicographic order) such that $g=a_0\cdots a_{n-1}\cdot g_0\cdots g_{m-1}$ a product of $n$ elements of types $1^\nu$ and $m$ elements of type $p$. By the previous step, we may assume that $m=0$ and $g=g_0\cdots g_{m-1}$.\\
    
    \textbf{Step 2:}  We reduce to the case where $g_0,\dots, g_{m-1}\in G_{\sA}$. 
    
    This is similar to Step $1$. Assume, for example, that $g_0 \notin G_{\sA}$. By minimality of $(0,m)$, $g_0$ is not a finite product of elements of type $1^\nu$.
    Consider the set $V_g^{h(g_0)}$ of element $g'$ where $h(g_0)$ ``vanishes'' in the product $g'g$ 
    :
    \begin{align*}
        \{g' \in G  \mid & \text{ for }(k,\ell)\in \mathbb{N}^2 \text{ minimal such that } A_{k,\ell}(g'),\ \\
  &  \ A_{k,\ell+m-2}(g'g)\text{ and }h(g_0) \notin S_{k,\ell+m-2}(g'g)=S_{k,\ell}(g')\cup S_{0,m}(g )\setminus \{h(g_0)\} \}.
    \end{align*}
    ($k, \ell$ depends on $g'$, but we hide this dependence for the sake of notation simplicity). Then, consider the partial type $p(x)$:
    \begin{align*}
        \left\{  x\text{ of type p and handle } h(g_0) :  \ \right.& A_{k,\ell-1}(g'x), S_{k,\ell-1}(g'x)=S_{k,\ell}(g')\\
        & \left. \text{ and } S_{k,\ell-1}'(g'x)=S_{k,\ell}'(g')\setminus \{h(g_0)\}  \right\}.
    \end{align*}
        
    Since $f$ is a partial isomorphism, we can consider the image by $f$ of the type $p(x)$:
\begin{align*} f(p(x)) \coloneq \{  x \text{ of type p and handle } f(h(g_0)) :\  &
 A_{k,\ell-1}(f(g')x),  S_{k,\ell-1}(f(g')x) = S_{k,\ell}(f(g'))\\ &
\text{ and }S_{k,\ell-1}'(f(g')x) = S_{k,\ell}'(f(g'))\setminus f(h(g_0))   \}.\end{align*}

    By \Cref{lem:vanish}, one can see that this type is consistent. 
    Since $\mathcal{N}$ is saturated, we may find $h_0$ in $G(\mathcal{N})$ satisfying $f(p(x))$.
    
    
    Then we extend $f_{G}$ by setting $f_{G}(g_0)=h_0$. We show that this function preserves quantifier-free formulas. To show this, we fix $g'\in G_{\sA}$ and $0<r<p$. Let $(k,\ell)$ be minimal for the reverse lexicographic order such that   $g'\in A_{k,\ell}$. As in Step 1, we can start with the trivial case:
    \begin{itemize}
        \item \emph{Case 0}: $g'\notin \bigcup_{k,l} A_{k,l}$. Then for all integers $s,t$, $A_{s,t}(g'g_0^r)$ and $A_{s,t}(f(g')h_0^r)$ don't hold and $S_{s,t}(g'g_0^r)=\und $, $S_{s,t}(f(g')h_0^r)=\und$.
    \end{itemize}
    
    Therefore, we may assume that $g'\in  \bigcup_{k,l} A_{k,l}$, and we need to find the smallest $(k',\ell')$ such that $g'g_0^r\in A_{k',\ell'}$. Again, there are several cases to consider:
    \begin{itemize}
        \item \emph{Case 1}: $\neg h(g_0)E \alpha$ for all $\alpha\in S_{k,\ell} (g')$ and $h(g_0) \notin S_{k,\ell} (g')$.
        
        Then $(k',\ell')=(k,\ell+1)$, $S_{k,\ell+1}(g'g_0^r)= S_{k,\ell}(g')$ and $S_{k,\ell+1}'(g'g_0^r)= S_{k,\ell}'(g')\cup \{h(g_0)\}$.
        \item \emph{Case 2}: For some $t>0$, $h(g_0)$ is related (according to $E$) with exactly $t$-many elements of $ S_{k,\ell} (g')$ (and by minimality of $(k,\ell)$, $h(g_0) \notin S_{k,\ell}'(g')$). 
        
        Then $(k',\ell')=(k-t,\ell+1)$, $S_{k-t,\ell+1}(g'g_0^r)= S_{k,\ell}(g') \setminus \{ \alpha \in S_{k,\ell}(g') \ \vert \ \alpha E h(g_0)\}$, and $S_{k-t,\ell+1}'(g'g_0^r)= S_{k,\ell}'(g') \cup \{h(g_0)\}$.
        \item \emph{Case 3}: $h(g_0) \in S_{k,\ell}' (g')$ (then by minimality of $(k,\ell)$, $\neg h(g_0) E \alpha$ for all $\alpha \in  S_{k,\ell}(g')$). 

        We now need to consider two further subcases:
        \begin{itemize}
            \item \emph{Case 3(A)}: There is an integer $t$ such that that $A_{t,\ell+m-2}(g'g^r)$ and $h(g_0) \notin S_{t,\ell+m-2}'(g'g^r)$. 
            This means that the handle $h(g_0)$ vanishes in the product $g_0^rg'$. At the cost of multiplying $g'$ with elements of type $1^\nu$, we may assume that $g' \in V_{g^r}^{h(g_0)}$. We have then $(k',l')=(k,\ell-1)$, $A_{k,\ell-1}(g'g_0^r)$.
   \[S_{k,\ell-1}'(g'g_0^r) =S_{k,\ell}'(g')\setminus \{h(g_0)
            \}\]
            and 
        \[S_{k,\ell-1}(g'g_0^r) =S_{k,\ell}(g') .\]         
            \item \emph{Case 3(B)}: There is no such integer $t$. 
            
            Then we have $(k',\ell')=(k,\ell)$,
            \[S_{k,\ell}(g'g_0^r) =S_{k,\ell}(g')\]
            and 
            \[S_{k,\ell}'(g'g_0^r) =S_{k,\ell}'(g').\]
        \end{itemize}
    \end{itemize}
    
    By choice of $h_0$,  we have that same statement holds in $\sN$ with $h_0$ instead of $g_0$, $f_G(g')$ instead of $g'$ and $f_G(g)$ instead of $g$. It follows that that $f_G \cup \{(g_0,h_0)\}$  preserves all the predicates $A_{k,l}$ and the functions $S_{k,l},S_{k,l}'$. 
        This argument shows that $f_G \cup \{(g_0,h_0)\}$ is a partial isomorphism. We can reset the notation by setting $f_G=f_G \cup \{(g_0,h_0)\}$, and therefore assume that $h_0\in G_\sA$.

    By repeating Step 2, we may assume that for any $g=a_0\cdots a_{m-1}\cdot g_0\cdots g_{n-1}\in G_{\sA}\cap A_{m,n}$, we have $g_0,\dots,g_{n-1} \in G_{\sA}$. \\
    
    \textbf{Step 3:} We find a transversal $X$ compatible with the substructure $\sA$, i.e. such that $G_{\sA}=\braket{X_{\sA}}$ where $X_{\sA}= X\cap G_{\sA}$.  
    
    We may, indeed, consider first a set $X_{\sA}^\nu$ of representatives in $\sA$ of the $\sim$-classes of elements of type $1^\nu$ in $\sA$. Then, $X_{\sA}^p$ is the set of representative in $A$ of $\sim$-classes of elements of type $p$ independent over $X_{\sA}^\nu$. If a product $g_1\cdots g_n \in \sA$ of elements of type $p$ is a finite product $a_1\cdots a_m \in G$ of elements of type $1^\nu$, these elements are, by the previous steps, in $\sA$, which is a contradiction. Therefore, they must also be independent over all elements of type $1^\nu$ in $G$.  Finally, $X_{\sA}^\iota$ is the set of representatives of elements in $\sA$ of type of $1^\iota$ independent over  $X_{\sA}^\nu,X_{\sA}^p$. By the previous steps, and with a similar argument, they are also independent over all elements of type $p$ and $1^\nu$ in $G$. \\

    

    
    
    \textbf{Step 4:} We extend $f$ to a full embedding of $\Gamma(X)$, then to a full embedding of $\sM$.
    
    By \Cref{Lem:TransversalQFDefinable}, and since $f$ preserves quantifier-free formulas, $f(X_A)$ can also be extended to a transversal $Y$ of $B$.
    Denote by $\Gamma(X)$ the graph with vertices $\{[x]_\sim : x\in X\}$ and edges   $\{([x]_\sim,[y]_\sim) : xRy\in \sM\}$ (this is exactly the graph $\Gamma(\tilde{X})$ where $\tilde{X}$ is a lift of $X$ in the Mekler group).
    
    We therefore have a partial isomorphism $f_\Gamma$ between $\Gamma(X)$ and $\Gamma(Y)$ as graphs. By saturation and \Cref{lem:infinitecover}, these graphs are infinite covers of $\gG(\mathcal{M})$ and $\gG(\mathcal{N})$, respectively. By $\vert \Gamma(X)\vert$-saturation of $\Gamma(Y)$ (\cite[2.11 (3)]{CH18}) and by quantifier elimination relative to $C$, see \Cref{lem:RQECover}) we can extend it to a full embedding $f_\Gamma:\Gamma(X) \hookrightarrow \Gamma(Y)$ .
    
    This embedding gives immediately an extension of $\restriction{f}{\braket{X_A}}$ to \[{f:G(\sM)=\braket{X} \hookrightarrow G(\sN)=\braket{Y}}\] as wanted.
\end{proof}

\section{Transfer principles}\label{sec:transfers}

A \emph{transfer principle for Mekler groups} (or, in our context, simply a \emph{transfer principle}) is any statement characterising a model-theoretic property of the Mekler group $\Msf$ at the level of the graph $\gG$. Quite a lot of transfer principles for Mekler groups are already known. We recall below the current state of affairs. The reader can refer to the cited sources for relevant definitions and proofs:

\begin{theorem}\label{thm:KnownTransfers}
    A Mekler group $\Msf$ has the property $P$ if and only if its associated graph $\gG$ has the property $P$, where $P$ is one of the following properties:
    \begin{itemize}
        \item $\lambda$-stability for every cardinal $\lambda$, \cite{Mek81,Hod93};
        \item CM-triviality \cite{Bau02};
        \item The $n$-independence property, for every $n\in \Nbb$, \cite{CH18};
        \item The tree property of the second kind, \cite{CH18};
        \item The first and second strict order properties, \cite{Ahn20};
        \item The anti-chain tree property, \cite{AKL22}.
    \end{itemize}
\end{theorem}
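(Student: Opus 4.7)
The statement is a compilation of transfer principles already established in the cited literature, each originally proved by rather different ad-hoc methods; I will sketch how I would give a unified treatment using the tools developed earlier in the paper, rather than rehearse those individual arguments.

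The plan is to leverage the main relative quantifier elimination (Proposition \ref{prop:RQEGraph}) together with Theorem A on the transfer of collapsing of generalised indiscernibles. For each of the properties in the list \emph{except} $\lambda$-stability and CM-triviality — that is, for $n$-IP (any $n\in\Nbb$), TP$_2$, the two strict order properties, and the anti-chain property — there is a known Ramsey structure $\Ical_P$ and reduct $\Jcal_P$ thereof such that a theory has the tame side of $P$ if and only if every $\Ical_P$-indiscernible sequence is in fact $\Jcal_P$-indiscernible. For $n$-IP one uses ordered $n$-hypergraphs; for TP$_2$ one uses the Ramsey expansion of an array indexing; for SOP$_1$/SOP$_2$ one uses tree indexings à la Kaplan–Ramsey; for the anti-chain property one uses the Fraïssé limit of partial orders with a generic linear refinement. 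In each case Theorem A, applied to $(\Ical_P,\Jcal_P)$, gives the equivalence ``$\Msf(\gG)$ collapses $\Ical_P$-indiscernibles to $\Jcal_P$-indiscernibles iff $\gG$ does'', which is exactly the transfer of $P$ between $\Msf(\gG)$ and $\gG$.

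For $\lambda$-stability, I would argue directly from Proposition \ref{prop:RQEGraph}: by the quantifier-free description of formulas in the group in terms of the predicates $A_{n,m}$, the support functions $S_{n,m},S_{n,m}'$ (valued in finite imaginaries of $\gG$), and $\gG$-formulas, the type of a tuple $\bar g$ over a small set $A\subseteq \Msf(\gG)$ is determined by the type over $A^\eq$ (restricted to the graph sort) of the finite tuple of supports of all group-terms in $\bar g$ and $A$. A straightforward cardinal count then yields $|S_{\Msf(\gG)}(A)|\leq|S_{\gG}(A')|+|A|$ for a set $A'$ of graph-imaginaries of cardinality $|A|$, and conversely $\gG$ is interpretable in $\Msf(\gG)$, giving both directions.

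CM-triviality is the most delicate item and the one I expect to be the main obstacle, since it is a property of the geometry of canonical bases and, to the best of my knowledge, does not admit a clean characterisation by collapsing of indiscernibles. Here I would follow Baudisch's original strategy from \cite{Bau02}: use the reduction diagram of Section \ref{sec:RQE} to push canonical bases from $\Msf(\gG)$ down first to the bilinear system $\Fcal(G)$ and then, via separatedness (Proposition \ref{prop:MeklerGroupSeparated}) and Proposition \ref{prop: bilinearsystemEQR}, to $(\vV,+,0,R)^\eq$, and finally to $\gG^\eq$ via Proposition \ref{prop:RQEGraph}. The technical difficulty is to verify that the extra finite-imaginary sorts $B_n$ introduced along the way do not obstruct the CM-triviality inequality, and that the commutator map $\beta$ — which is the only source of non-abelian behaviour — interacts with canonical bases in a controlled way; this is essentially the content of the original computations in \cite{Bau02}, which I do not see a way to avoid.
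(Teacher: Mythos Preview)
The paper does not prove this theorem at all: it is presented purely as a summary of results from the cited literature, with the explicit remark that ``the reader can refer to the cited sources for relevant definitions and proofs''. So there is no paper proof to compare against, and your proposal is in fact attempting something the paper does not attempt.

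That said, your unified approach has a genuine gap. Your plan hinges on the claim that each of the listed properties (barring $\lambda$-stability and CM-triviality) admits a characterisation of the form ``$T$ is tame iff every $\Ical_P$-indiscernible is $\Jcal_P$-indiscernible'' for some Ramsey structure $\Ical_P$ and reduct $\Jcal_P$, so that Theorem~A applies directly. This is true for $n$-IP (and the paper itself spells this out in \Cref{fact:nip-n-nc-k}), but it is \emph{not} known for $\mathrm{TP}_2$. The standard characterisation of $\mathrm{NTP}_2$ is via mutually indiscernible arrays, which is not the same as a single $\Ical$-indiscernible sequence for a Ramsey $\Ical$ collapsing to a reduct; the Chernikov--Hempel transfer of $\mathrm{NTP}_2$ in \cite{CH18} proceeds by a direct analysis of inp-patterns, not via Theorem~A. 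Similarly, while there are tree-indexed indiscernible characterisations around $\mathrm{SOP}_1$/$\mathrm{SOP}_2$ and $\mathrm{ATP}$, you would need to verify carefully that the relevant index structures are Ramsey and that the characterisation is literally of the ``collapse to a fixed reduct'' shape required by Theorem~A; you assert this without justification. Note that the paper itself only claims to recover $\mathrm{NIP}_k$ (and $\mathrm{NFOP}_k$) as corollaries of Theorem~A, which is a signal that the other properties in the list are not straightforwardly covered.

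Your sketch for $\lambda$-stability via type-counting from \Cref{prop:RQEGraph} is plausible in spirit but too loose to stand as a proof: the relative quantifier elimination is stepwise and each step introduces finite-imaginary sorts, so the bound $|S_{\Msf(\gG)}(A)|\leq |S_{\gG}(A')|+|A|$ needs a careful bookkeeping argument that you have not supplied. For CM-triviality you correctly identify that one cannot avoid Baudisch's computations, so there is no unification there either.
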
  

In this section, we prove new transfer principles. Our main tool is the relative quantifier elimination result developed in the previous section, and our method is again a three-step reduction. In the first subsection we discuss pairs of Mekler groups. The second subsection is dedicated to dividing lines and includes some negative results. 


\subsection{Model completeness and Stable embeddedness}\label{subsec:SE}
From relative quantifier elimination one can always deduce a relative model completeness result:

\begin{lemma}\label{lem:RMCGeneral}
    Assume that a complete theory $T$ eliminates quantifiers relative to a closed sort $\Sigma$ in a language $\lL$, and let $\sN,\sM\vDash T$ be two structures such that:
    \begin{itemize}
        \item $\sM \subseteq_{\lL} \sN$,
        \item $\Sigma(\sM) \preceq \Sigma(\sN)$.
    \end{itemize}
    Then $\sM \preceq \sN$.
\end{lemma}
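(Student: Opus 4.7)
The plan is to verify the Tarski--Vaught test: for any $\lL$-formula $\phi(\bar{x})$ and tuple $\bar{a}$ from $\sM$, I will show $\sM \models \phi(\bar{a})$ if and only if $\sN \models \phi(\bar{a})$.

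The first step is to apply the relative quantifier elimination hypothesis to replace $\phi(\bar{x})$, modulo $T$, by a formula of the shape $\phi_\Sigma(\bar{t}(\bar{x}))$, where $\bar{t}$ is a tuple of $\lL$-terms taking values in $\Sigma$ and $\phi_\Sigma$ is a formula whose free variables and quantifiers all live in $\Sigma$. Since $\sM \subseteq_\lL \sN$, the terms agree on $\bar{a}$, producing a common tuple $\bar{b} := \bar{t}(\bar{a}) \in \Sigma(\sM) \subseteq \Sigma(\sN)$.

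The second step uses that $\Sigma$ is a closed sort: the truth of $\phi_\Sigma(\bar{b})$ in either ambient structure is fully determined by the corresponding $\Sigma$-reduct. Therefore
\[
    \sM \models \phi_\Sigma(\bar{b}) \iff \Sigma(\sM) \models \phi_\Sigma(\bar{b}),
\]
and likewise for $\sN$. Combining this observation with the elementarity assumption $\Sigma(\sM) \preceq \Sigma(\sN)$ gives $\sM \models \phi(\bar{a}) \iff \sN \models \phi(\bar{a})$, as required.

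The only mild obstacle is unwinding what \emph{closed sort} means in practice, ensuring that terms with codomain $\Sigma$ really do evaluate identically in $\sM$ and $\sN$ and that $\phi_\Sigma$ genuinely sees nothing outside $\Sigma$. These are, however, precisely the properties baked into the notion of relative quantifier elimination along a closed sort, so no additional technical work is needed beyond citing that definition.
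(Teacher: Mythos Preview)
Your argument is correct and is precisely the standard unwinding of relative quantifier elimination along a closed sort. The paper does not actually supply a proof of this lemma at all: it is stated as a general fact (``From relative quantifier elimination one can always deduce a relative model completeness result'') and then immediately used. Your write-up is exactly the proof one would expect to see filled in, so there is nothing to compare against; the only quibble is that you are not really invoking the Tarski--Vaught test but simply verifying the definition of $\sM \preceq \sN$ directly.
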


However, we can often optimise this result by weakening the first condition, considering only a reduct of the language $\lL$ and adding a more relevant algebraic requirement. This is what we shall do in the next paragraphs: We will deduce model completeness statements from the three relative quantifier elimination results stated in the previous section (\Cref{prop:RMCFG,prop:RMCVR,prop:RMCC} below). We will then combine these results in the last part of this subsection to obtain model completeness for Mekler groups relative to their graphs (\Cref{prop:RMCMekler}). 

We will implicitly use that ``elementary extension'' is a property of a pair of structures, and does not depend on the language in which the structures are taken:

\begin{fact}
    Let $\sM\preceq \sN$ be an elementary extension. Then $\sM^\eq \preceq \sN^{\eq}$.
\end{fact}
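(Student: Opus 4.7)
The plan is to reduce elementarity of $\sM^\eq \preceq \sN^\eq$ to elementarity of $\sM \preceq \sN$ via the standard ``pullback'' of formulas in imaginary sorts to formulas in the home sort. First, I would recall the construction: $\sM^\eq$ has one imaginary sort $S_E$ for each $\emptyset$-definable equivalence relation $E$ on some power $M^n$ of the home sort, equipped with a projection map $\pi_E \colon M^n \to S_E$. Since $\sM \preceq \sN$, every such $E$ extends canonically to an $\emptyset$-definable equivalence relation on $N^n$, so there is a natural $\Lcal^\eq$-embedding $\sM^\eq \hookrightarrow \sN^\eq$ commuting with all the projection maps.

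The key technical step is the following ``coding lemma'': for every $\Lcal^\eq$-formula $\varphi(x_1, \dots, x_k)$ with $x_i$ ranging over $S_{E_i}$, there exists an $\Lcal$-formula $\psi(\bar{y}_1, \dots, \bar{y}_k)$ that is $E_i$-invariant in each tuple $\bar{y}_i$ and satisfies
\[
    \sM^\eq \models \varphi(\pi_{E_1}(\bar{a}_1), \dots, \pi_{E_k}(\bar{a}_k)) \iff \sM \models \psi(\bar{a}_1, \dots, \bar{a}_k),
\]
for every choice of representatives $\bar{a}_i$ in the home sort, and analogously with $\sN$ in place of $\sM$. This is proved by induction on the structure of $\varphi$: atomic formulas $\pi_{E_i}(\bar{y}_i) = \pi_{E_i}(\bar{y}_i')$ become $E_i(\bar{y}_i, \bar{y}_i')$; Boolean combinations are immediate; and a quantifier $\exists x \in S_E$ gets replaced by a quantifier $\exists \bar{y}$ in the home sort over a tuple of the appropriate length. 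The formula $\psi$ does not depend on $\sM$ or $\sN$, only on $\varphi$.

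With this in hand, elementarity follows at once: given any $\Lcal^\eq$-formula $\varphi(\bar{x})$ and any tuple $\bar{e} = (\pi_{E_1}(\bar{a}_1), \dots, \pi_{E_k}(\bar{a}_k))$ from $\sM^\eq$, the coding lemma yields
\[
    \sM^\eq \models \varphi(\bar{e}) \iff \sM \models \psi(\bar{a}_1, \dots, \bar{a}_k) \iff \sN \models \psi(\bar{a}_1, \dots, \bar{a}_k) \iff \sN^\eq \models \varphi(\bar{e}),
\]
where the middle equivalence is $\sM \preceq \sN$. The Tarski--Vaught test then gives $\sM^\eq \preceq \sN^\eq$. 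There is no real obstacle here, as the only content is the bookkeeping in the inductive construction of $\psi$; the fact is stated precisely because it allows us to freely pass between a pair of elementary models and the corresponding pair of their eq-expansions, which is used in the subsequent model-completeness arguments.
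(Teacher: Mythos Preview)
Your argument is correct and is the standard one. The paper does not actually prove this fact: it is stated as a well-known observation with no proof given, precisely because it is folklore (see, e.g., \cite[Section~8.4]{TZ12} or \cite[Section~1.3]{Hod93}). One minor remark: the final appeal to Tarski--Vaught is superfluous, since your chain of equivalences already establishes $\sM^\eq \models \varphi(\bar e) \iff \sN^\eq \models \varphi(\bar e)$ for every $\Lcal^\eq$-formula $\varphi$ and every tuple $\bar e$ from $\sM^\eq$, which is exactly the definition of $\sM^\eq \preceq \sN^\eq$.
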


At the same time, we will look at stably embedded pairs of Mekler groups. Let us now recall the definition:

\begin{definition}[Stable Embeddedness]
    Let $T$ be a complete theory. An elementary extension $\mathcal{M} \preceq \mathcal{N}$ is called \emph{stably embedded} if for every formula $\phi(x,b)$ with $b\in \sN$, there is a formula $\psi(x,c)$ with $c\in \sM$ such that $\phi(\sM,b)=\psi(\sM,c)$. In this case, we write $\sM \preceq^{\st} \sN$.
    
    The pair is called \emph{uniformly stably embedded} if, in addition, the choice of the formula $\psi(x,z)$ depends only on $\phi(x,y)$, and is independent of the parameters $b$. In this case, we write $\sM \preceq^{\ust} \sN$.
\end{definition}

Stable embeddedness says that every subset of $\sM$ which is \emph{externally} definable in $\sN$ is \emph{internally} definable. The proof of \Cref{thm:RSEMekler} will follow, without much effort, from quantifier elimination, as the characterisation does not require any new assumptions on the pair of Mekler groups. Again, stable embeddedness is a property of a pair of structures and does not depend on the language in which the structures are taken:

\begin{fact}[e.g. \cite{Tou23}]
    Let $\sM\preceq^{\st} \sN$ (resp. $\sM\preceq^{\ust} \sN$) be a (uniformly) stably embedded elementary (resp. \emph{uniformly} stably embedded) pair of structures. Then $\sM^\eq \preceq^{\st} \sN^{\eq}$ (resp. $\sM^\eq \preceq^{\ust} \sN^{\eq}$).
\end{fact}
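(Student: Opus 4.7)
The plan is to reduce the statement for $\sM^\eq \preceq^{\st} \sN^\eq$ (and likewise for $\preceq^{\ust}$) to the corresponding home-sort statement by unwinding the standard coding of imaginary sorts. Given an $\lL^\eq$-formula $\phi(x,z)$, where $x = (x_1,\dots,x_n)$ is a tuple of variables in imaginary sorts $X_i/E_i$ and $z$ is in an imaginary sort $X_0/E_0$, the construction of $T^\eq$ provides a home-sort $\lL$-formula $\tilde\phi(y,w)$, invariant under $E_1 \times \cdots \times E_n$ in $y$ and under $E_0$ in $w$, such that $\phi([y]_{E},[w]_{E_0})$ and $\tilde\phi(y,w)$ are equivalent in any model of $T$.

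First, given a parameter $b \in \sN^\eq$, I would write $b = [\beta]_{E_0}$ with $\beta \in \sN$ and apply stable embeddedness of $\sM$ in $\sN$ to $\tilde\phi(y,\beta)$ to obtain an $\lL$-formula $\tilde\psi(y,v)$ and a parameter $c \in \sM$ with $\tilde\phi(\sM,\beta) = \tilde\psi(\sM,c)$. Since $\tilde\phi(\cdot,\beta)$ is $E_1\times\cdots\times E_n$-invariant on all of $\sN$, its trace $\tilde\psi(\sM,c)$ inherits this invariance on $\sM$. I would then lift back to $\sM^\eq$ via the formula
\[
\psi(x,c) \,:=\, \exists y_1\cdots\exists y_n\left(\bigwedge_{i=1}^{n} [y_i]_{E_i} = x_i \,\wedge\, \tilde\psi(y_1,\dots,y_n,c)\right),
\]
whose parameter $c$ lives in $\sM \subseteq \sM^\eq$, and check the equality $\phi(\sM^\eq,b) = \psi(\sM^\eq,c)$ using the $E$-invariance just established.

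For the uniform case, uniform stable embeddedness in the home sort supplies $\tilde\psi(y,v)$ depending only on $\tilde\phi$, hence only on $\phi$, independently of $\beta$; the formula $\psi(x,v)$ produced by the same construction then witnesses uniform stable embeddedness at the level of imaginaries with the parameter-map $b \mapsto c$.

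The main technical point to verify will be the $E$-invariance of $\tilde\psi(\cdot,c)$ on $\sM$, which is what allows the existential ``$\exists y$'' in the definition of $\psi$ to descend to a well-defined subset of $(\sM^\eq)^n$ equal to $\phi(\sM^\eq,b)$. Once this is in hand, everything else is bookkeeping: the argument is conceptually routine once one accepts that imaginary sorts are $\emptyset$-definable quotients, so that definability properties in the home sort pull back and push forward along these quotient maps.
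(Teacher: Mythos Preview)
Your approach is correct and is the standard argument for this fact. Note, however, that the paper does not actually prove this statement: it is recorded as a \emph{Fact} with a citation to \cite{Tou23} and used as a black box, so there is no ``paper's own proof'' to compare against.

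One small clarification on the technical point you flag: the $E$-invariance of $\tilde\psi(\sM,c)$ is automatic, since as a \emph{set} it equals $\tilde\phi(\sM,\beta)$, and the latter is $E$-invariant because $\tilde\phi$ is $E$-invariant in its first block of variables (this holds in $\sN$, hence in particular on tuples from $\sM$). In fact, in the converse direction of your equality check you only ever need the invariance of $\tilde\phi$ itself: if $\sM^\eq\models\psi(a,c)$ via witnesses $a'$ with $[a_i']=[a_i]$, then $a'\in\tilde\phi(\sM,\beta)$, and invariance of $\tilde\phi$ gives $\sN\models\tilde\phi(a_1,\dots,a_n,\beta)$, hence $\sN^\eq\models\phi(a,b)$. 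So the ``main technical point'' you anticipate is not really an obstacle. Also, while you phrase things for a single parameter $z$ in a single imaginary sort, the extension to finite tuples of parameters (possibly in several sorts) is immediate by taking products of the underlying equivalence relations, and this is all the definition of stable embeddedness requires.
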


The above fact justifies our stepwise approach and will be used implicitly.

\subsubsection{First reduction: From \texorpdfstring{$G$}{G} to \texorpdfstring{$\mathcal{F}(G)$}{F(G)}}

\begin{proposition}[Relative Model Completeness]\label{prop:RMCFG}
    Let $G$ and $G'$ be $2$-nilpotent groups of exponent $p$. Assume that $G$ is a subgroup of $G'$ such that $\Zsf(G) \subseteq \Zsf(G')$.
    Then, $G \preceq G'$ if and only if $\mathcal{F}(G) \preceq \mathcal{F}(G')$, as alternating bilinear systems.
\end{proposition}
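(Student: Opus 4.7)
The plan is to apply the relative quantifier elimination of \Cref{fact:RQEGtoF(G)}, which asserts that any $\lL_{\mathsf{grp}}$-formula $\phi(\bar x)$ is equivalent (modulo the theory of $2$-nilpotent groups of exponent $p$) to a formula of the form $\phi_{\mathcal{F}}(\pi(t(\bar x)),\rho(t(\bar x)))$, where $\phi_{\mathcal{F}}$ lies in the bilinear system language and $t$ is a tuple of group terms. The proposition should then be an immediate consequence, once the natural interpretation maps are checked to be compatible with the given inclusion $G\hookrightarrow G'$.

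For the forward implication, I would note that $\mathcal{F}(G)$ is $\emptyset$-interpretable in $\mathcal{G}$: the sort $\vW$ is the centre, defined by $\forall y\,(xy=yx)$; the sort $\vV$ is the quotient of $G$ by the resulting equivalence relation; and $\beta$ is the commutator map. Since elementarity is transferred through $\emptyset$-interpretations, $\mathcal{G}\preceq\mathcal{G}'$ yields $\mathcal{F}(G)\preceq\mathcal{F}(G')$ as soon as the interpreted structure in $\mathcal{G}$ naturally sits inside that of $\mathcal{G}'$. This uses the hypothesis $\Zsf(G)\subseteq\Zsf(G')$ together with the fact that elementarity automatically upgrades it to the equality $\Zsf(G) = G\cap\Zsf(G')$, guaranteeing the injectivity of $G/\Zsf(G)\to G'/\Zsf(G')$.

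For the backward implication, the main step is to apply the relative quantifier elimination directly. Given $\phi(\bar x)\in\lL_{\mathsf{grp}}$ and $\bar g \in G^{|\bar x|}$, the rewriting via \Cref{fact:RQEGtoF(G)} yields
\[
    \mathcal{G}\models \phi(\bar g) \iff \mathcal{F}(G)\models \phi_{\mathcal{F}}(\pi(t(\bar g)),\rho(t(\bar g))),
\]
and analogously for $\mathcal{G}'$. The tuple $(\pi(t(\bar g)),\rho(t(\bar g)))$ lies in $\mathcal{F}(G)$, and by $\mathcal{F}(G)\preceq\mathcal{F}(G')$ it has the same truth value for $\phi_{\mathcal{F}}$ in the two bilinear systems; hence $\phi(\bar g)$ has the same truth value in $\mathcal{G}$ and $\mathcal{G}'$, establishing $\mathcal{G}\preceq\mathcal{G}'$. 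For this rewriting to make sense, one needs the maps $\pi$ and $\rho$ to commute with the inclusion $G\hookrightarrow G'$, which reduces to $\Zsf(G) = G\cap\Zsf(G')$. This is delivered by the injectivity of the induced map $\vV(G)\hookrightarrow\vV(G')$ contained in $\mathcal{F}(G)\preceq\mathcal{F}(G')$.

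The main subtle point -- small but crucial in both directions -- is thus the verification of $\Zsf(G) = G\cap\Zsf(G')$. In the forward direction this comes from the $\emptyset$-definability of the centre; in the backward direction it is granted by the injectivity of the $\vV$-embedding coming from $\mathcal{F}(G)\preceq\mathcal{F}(G')$. Once this equality is in hand, the statement is a clean application of the relative quantifier elimination together with the standard fact that $\emptyset$-interpretations preserve elementary extensions.
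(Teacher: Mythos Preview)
Your proof is correct and follows the same route as the paper: both directions rest on the relative quantifier elimination of \Cref{fact:RQEGtoF(G)}, with the forward direction handled by the $\emptyset$-interpretability of $\mathcal{F}(G)$ in $\mathcal{G}$ and the backward direction by the formula-rewriting $\phi\leftrightarrow\phi_{\mathcal{F}}(\pi(t(\bar x)),\rho(t(\bar x)))$. The paper packages the backward step as an appeal to \Cref{lem:RMCGeneral}, whereas you unfold that lemma by hand; and you are in fact more careful than the paper in isolating and verifying the equality $\Zsf(G)=G\cap\Zsf(G')$ (needed for $\pi,\rho$ to commute with the inclusion), which the paper's proof leaves implicit.
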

\begin{proof}
    Since $\Zsf(G) \subseteq \Zsf(G')$, the following diagram commutes:
    \[\begin{tikzcd}
	{\mathsf{Z}(G)} & G & {G/\mathsf{Z}(G)} \\
	{\mathsf{Z}(G')} & {G'} & {G'/\mathsf{Z}(G')}
	\arrow[from=1-1, to=1-2]
	\arrow[hook', from=1-1, to=2-1]
	\arrow["{\rho_\mathsf{Z}}"', shift left, bend right=60, from=1-2, to=1-1]
	\arrow["{\pi_{G/\mathsf{Z}}}", from=1-2, to=1-3]
	\arrow[hook', from=1-2, to=2-2]
	\arrow[dashed, hook', from=1-3, to=2-3]
	\arrow[from=2-1, to=2-2]
	\arrow[from=2-2, to=2-3]
\end{tikzcd}\]
    In particular, we have that $G \subseteq_{\lL} G'$, where $\lL=\mathcal{L}_{G,\Fcal(G)}$ is the language of relative quantifier elimination in \Cref{fact:RQEGtoF(G)}. We may conclude by \Cref{lem:RMCGeneral}.
\end{proof}

\begin{proposition}[Relative Stable Embeddedness]\label{prop:RSEFG}
    Let $G \preceq G'$ be $2$-nilpotent groups of exponent $p$. Then:
    \[
        G \preceq^{\st} G' \ \text{if and only if } \ \mathcal{F}(G)\preceq^{\st} \mathcal{F}(G'), 
    \]
    and: 
    \[
        G \preceq^{\ust} G' \ \text{if and only if } \ \mathcal{F}(G)\preceq^{\ust} \mathcal{F}(G'). 
    \]
\end{proposition}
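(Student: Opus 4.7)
The plan is to prove both directions by exploiting the relative quantifier elimination from \Cref{fact:RQEGtoF(G)} together with the well-known fact that (uniform) stable embeddedness is preserved when passing to imaginaries and restricting to $\emptyset$-interpretable structures.

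For the forward direction, $G \preceq^{\st} G'$ implies $\mathcal{F}(G) \preceq^{\st} \mathcal{F}(G')$, I would invoke the fact recalled just before this proposition: stable (resp. uniformly stable) embeddedness transfers to $G^{\eq} \preceq^{\st} (G')^{\eq}$ (resp. $\preceq^{\ust}$). Since all sorts of $\mathcal{F}(G) = (\vV, \vW, \beta)$ are $\emptyset$-interpretable in $G$ (with $\vV = G/\Zsf$, $\vW = \Zsf$, and $\beta$ the commutator map), they embed naturally into $G^{\eq}$, and the restriction of a (uniformly) stably embedded pair to an $\emptyset$-interpretable substructure is again (uniformly) stably embedded.

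For the backward direction, assume $\mathcal{F}(G) \preceq^{\st} \mathcal{F}(G')$. Let $\phi(x; y)$ be an $\lL_{\mathsf{grp}}$-formula and $b \in G'$. By \Cref{fact:RQEGtoF(G)}, $\phi(x; b)$ is equivalent modulo $\Th(G')$ to a formula of the form $\phi_{\mathcal{F}}(\pi(t(x,b)), \rho(t(x,b)))$ for a tuple of group terms $t$. Since $G'$ is $2$-nilpotent of exponent $p$, each term $t(x, b)$ can be rewritten in the form $x^{\alpha} b^{\beta} \cdot z(x, b)$, where $z(x,b)$ is central and is a product of commutators. After applying $\pi$ and $\rho$ and using that $\beta(\pi(x), \pi(b))$ computes such commutators in $\vW$, the formula $\phi(x; b)$ becomes an $\lL_{\mathcal{F}}$-formula $\psi(\pi(x), \rho(x); \pi(b), \rho(b))$ with parameters from $\mathcal{F}(G')$. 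Applying the assumed stable embeddedness of $\mathcal{F}(G)$ in $\mathcal{F}(G')$, there is an $\lL_{\mathcal{F}}$-formula $\chi(v, w; c)$ with $c \in \mathcal{F}(G)$ defining the same subset of $\mathcal{F}(G)$. Since parameters of $\mathcal{F}(G)$ are realised as images under $\pi$ and $\rho$ of elements of $G$, this pulls back to a formula in the language $\mathcal{L}_{G, \mathcal{F}(G)}$ with parameters in $G$, and by the interpretation, equivalently to an $\lL_{\mathsf{grp}}$-formula $\chi'(x; g)$ with $g \in G$ defining $\phi(G; b)$. This witnesses $G \preceq^{\st} G'$.

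For the uniform version, the same argument runs through and only needs to be checked for uniformity at each step: the reduction to $\psi$ via \Cref{fact:RQEGtoF(G)} depends only on $\phi$ (not on $b$), and under $\mathcal{F}(G) \preceq^{\ust} \mathcal{F}(G')$ the formula $\chi$ depends only on $\psi$; the final pullback to $\lL_{\mathsf{grp}}$ is also purely formula-level. The main (though essentially bookkeeping) obstacle is the explicit rewriting of group terms $t(x, b)$ into a form where the variables $x$ and the parameters $b$ are separated cleanly, so that the resulting $\lL_{\mathcal{F}}$-formula truly has parameters in $\mathcal{F}(G')$ rather than mixed-variable syntactic noise; this is possible precisely because of $2$-nilpotence and exponent $p$, but must be performed with care to ensure the uniformity claim.
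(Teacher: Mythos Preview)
Your overall strategy matches the paper's, and the forward direction is fine. But there is a genuine gap in the backward direction, precisely at the step you flag as ``essentially bookkeeping.''

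You claim that after rewriting terms and applying $\pi,\rho$, the formula $\phi(x;b)$ becomes an $\lL_{\mathcal F}$-formula $\psi(\pi(x),\rho(x);\pi(b),\rho(b))$. This is false in general, because the map $a\mapsto(\pi(a),\rho(a))$ is not injective: for any non-central $a$ and any central $z\neq 1$, the elements $a$ and $az$ have the same image $(\pi(a),0)$. So already the formula $x=b$ (with $b\notin\Zsf$) cannot be captured this way. The underlying reason is that $\rho$ is only a homomorphism on $\Zsf$: when $t(x,b)=t_1(x)t_2(b)z(x,b)$ lands in the centre without $t_1(x)$ or $t_2(b)$ being central individually, $\rho(t(x,b))$ is not expressible via $\rho(x)$ and $\rho(b)$.

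The paper's fix is not bookkeeping but the actual content of the argument: one chooses (if it exists) an anchor $a_0\in G$ with $t(a_0,b)\in\Zsf$, and rewrites
\[
t(x,b)=t_1(x)t_1(a_0)^{-1}\cdot t(a_0,b)\cdot z(a_0,b)^{-1}z(x,b).
\]
Then $\rho(t(x,b))$ is expressed through $\rho(t_1(x)t_1(a_0)^{-1})$ (a term with the $G$-parameter $a_0$, not $\rho(x)$), the single $\mathcal F(G')$-parameter $\rho(t(a_0,b))$, and $\beta$-terms in $\pi(x),\pi(b)$. Only then can one invoke $\mathcal F(G)\preceq^{\st}\mathcal F(G')$ to replace the $\mathcal F(G')$-parameters. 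The uniformity statement also hinges on this: the resulting $\mathcal F$-formula depends on whether such an $a_0$ exists, and one must argue that this case split can be absorbed into $G$-parameters. So the step you defer is exactly where the work lies.
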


\begin{proof}
    Assume $\mathcal{F}(G)\preceq^{\st} \mathcal{F}(G') $. Let $b$ be a tuple from $G'$, and $\phi(x,y)$ be an $\lL_\mathsf{grp}$-formula. We want to define $\phi(G,b)$ using parameters in $G$. By \Cref{fact:RQEGtoF(G)}, $\phi(x,b)$ is equivalent to a formula of the form:
    \[
    \phi_{\mathcal{F}(G)}(\pi(t'(x,b)),\rho(t(x,b)))
    \]
    where $\phi_{\mathcal{F}(G)}(y,z,w)$ is a formula in the language $\{(\vV,+,0),(\vW,+,0),\beta\}$, and $t'$ and $t$ are tuples of $\lL_{\mathsf{grp}}$-terms. To simplify the argument, we assume that $t$ consists of only one term. 
    
    Since $\pi$ is a morphism, we have $\pi(t'(x,b))=t'(\pi(x),\pi(b))$ (where the term $t'$ is understood additively in the right-hand side of the equality). 
    
    Therefore, after incorporating $t'$ into $\phi_{\mathcal{F}(G)}$, the formula becomes equivalent to one of the form:
    \[
        \phi_{\mathcal{F}(G)}'(\pi(x),\pi(b),\rho(t(x,b))).
    \]
    Notice, however, that $\rho$ is a morphism only when it is restricted to the centre, $\Zsf$. In particular, if $t(x,b)=z(x,b)$ is a product of commutators, it takes value in $\Zsf$, and we have 
    \[
        \rho(z(x,b))= z_\beta(\pi(x),\pi(b)),
    \]
    where $z_\beta$ is a sum of $\beta$-terms (we can separate the variables $x$ from the parameters $b$ using bilinearity of $\beta$).
    
    In general, $t(x,b)$ need not be a product of commutators, so we have to reduce to that case. If there is no $a_0\in G$ such that $t(a_0,b)$ is in $\Zsf$, then, by definition of $\rho$, we can replace $\rho(t(x,b))$ by $0$ (recall that groups in $\lL_{\Fcal(G)}$ are written additively). So, assume that such an element $a_0$ exists. Since the group $G'$ is $2$-nilpotent, we can rewrite the term $t(x,y)$ as $t_1(x)t_2(y)z(x,y)$ where $t_1(x),t_2(y)$, and $z(x,y)$ are $\lL_\mathsf{grp}$-terms and $z(x,y)$ is a product of commutators (and therefore takes values in $\Zsf$).
    Then, for all $a\in G$, we have that:
    
\begin{align*}
    t(a,b) \in \Zsf & \Leftrightarrow t_1(a)t_2(b)z(a,b) \in \Zsf \\
        & \Leftrightarrow t_1(a)t_1(a_0)^{-1}t_1(a_0)t_2(b)z(a_0,b) \in \Zsf \\
        & \Leftrightarrow t_1(a)t_1(a_0)^{-1}t(a_0,b) \in \Zsf\\
        & \Leftrightarrow t_1(a)t_1(a_0)^{-1} \in \Zsf.
\end{align*}

    We rewrite $t(x,b)$ as follows:
\begin{align*}
    t(x,b) 
    &= t_1(x)t_1(a_0)^{-1}t_1(a_0)t_2(b)z(x,b) \\
    &= t_1(x)t_1(a_0)^{-1}t(a_0,b)z(a_0,b)^{-1}z(x,b).
\end{align*}
    Let $\sigma(x,a_0,b)$ be the following term:
    \[
    \sigma(x,a_0,b) \coloneq \rho\left(t_1(x)t_1(a_0)^{-1}\right) +\rho(t(a_0,b))+\rho\left(z(a_0,b)^{-1}\right)+\rho(z(x,b)).
    \]
    Then our formula is equivalent to:
    \[
    \begin{aligned}
    \left(t_1(x)t_1(a_0)^{-1} \in \Zsf \wedge 
    \phi_{\mathcal{F}(G)}'\left(\pi(x),\pi(b), \sigma(x,a_0,b)\right) \right) \\
   \lor
   \left(  t_1(x)t_1(a_0)^{-1} \notin \Zsf \wedge \phi_{\mathcal{F}(G)}'(\pi(x),\pi(b),0)\right).
    \end{aligned}
    \]
    As before, we can replace the terms $\rho(z(x,b))$ in $\sigma(x,a_0,b)$ by equivalent terms of the form $z_{\beta}(\pi(x),\pi(b))$ which are the sum of $\beta$-terms. Therefore, we get a formula of the form:
    \[
        \phi_{\mathcal{F}(G)}''\left(\pi(x),\rho\left(t_1(x)t_1(a_0)^{-1}\right), \pi(b), \rho\left(t\left(a_0,b\right)\right)\right).
    \]
    Since $\mathcal{F}(G)\preceq^{\st} \mathcal{F}(G') $, there is a formula $\psi_{\mathcal{F}(G)}(x_\mathcal{F},y_\mathcal{F}, a_\mathcal{F})$ where $a_\mathcal{F} \in \mathcal{F}(G)$ such that:
    \[
        \phi_{\mathcal{F}(G)}''\left(\mathcal{F}(G)^{\vert x\vert+1 }, \pi(b), \rho(t(a_0,b))\right)= \psi_{\mathcal{F}(G)}\left(\mathcal{F}(G)^{\vert x\vert + 1 }, a_\mathcal{F}\right).
    \]

    At the end, we have that $\phi(G,b) = \psi(G,a)$ where $a\in G$ is such that $\pi(a)=a_{\mathcal{F}}$, and where $\psi(x,a)$ is the formula:
    \[
        \psi_{\mathcal{F}(G)} \left(\pi(x),\rho\left(t_1(x)t_1(a_0)^{-1}\right), \pi(a)\right) .  
    \]

    We have shown that $G$ is stably embedded in $G'$. The uniform stable embeddedness statement follows the same way once we notice that the formula $\phi_{\mathcal{F}(G)}''$ does not depend on the choice of $b$ -- it depends only on whether or not there is $a_0\in G$ such that $t(a_0,b)$ is in $\Zsf$, and the case distinction can be handled using parameters in $G$.
\end{proof}

\subsubsection{Second reduction: From \texorpdfstring{$\{(\vV,+,0),(\vW,+,0),\beta\}$}{\{(V,+,0),(W,+,0), β\} } to \texorpdfstring{$(\vV,+,0,R)$}{(V,+,0,R)}}

Recall we denote $\cup_n \vW_n \coloneq \braket{\beta(\vV,\vV)}$ the subspace of $\vW$ generated by all terms $\beta(v,v')$, where $v,v'\in \vV$. We can always find a complement $\vW_\omega$, such that $\vW=\braket{\beta(\vV,\vV)} \oplus \vW_\omega$. We recommend the reader remind themself of \Cref{notation:pi-f} before delving into the proofs. 

\begin{proposition}[Relative Model Completeness]\label{prop:RMCVR}
    Let $(\vV,\vW,\beta) \subseteq (\vV',\vW',\beta')$ be two alternating bilinear systems satisfying Property $(*_f)$ of \Cref{cor:SepVectorsupspace}. Denote by $R$ (resp. $R'$) the relation induced on $\vV$ (resp. on $\vV'$) by $\beta$ (resp. $\beta'$). Assume that a complement $U$ of $\vW_\omega$ is disjoint from $\cup_n \vW_n'=\braket{\beta(\vV',\vV')}$.
    Then $(\vV,\vW,\beta) \preceq (\vV',\vW',\beta')$ if and only if $(\vV,+,0,R) \preceq (\vV',+,0,R')$.
\end{proposition}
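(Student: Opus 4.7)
The plan is to apply \Cref{prop: bilinearsystemEQR} together with the general relative model-completeness criterion of \Cref{lem:RMCGeneral}. I will work in the language
\[
    \lL = \{(\vV,+,0,R)^{eq}, (\vW,+,0), \pi_A, f_n : n \in \Nbb, A \in \Abb_n(\Fbb_p)\},
\]
in which separated alternating bilinear systems eliminate quantifiers relative to the closed sort $\vV^{eq}$. Both directions of the biconditional then reduce to controlling how the inclusion of the two systems behaves on the maps $f_n$.

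For the forward implication, suppose $(\vV,\vW,\beta) \preceq (\vV',\vW',\beta')$. By \Cref{prop: bilinearsystemEQR}, $\vV$ is stably embedded with induced structure exactly $(\vV,+,0,R)$, so restricting the elementary embedding to $\vV$ immediately yields $(\vV,+,0,R) \preceq (\vV',+,0,R')$.

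For the converse, assume $(\vV,+,0,R) \preceq (\vV',+,0,R')$, and hence $(\vV,+,0,R)^{eq} \preceq (\vV',+,0,R')^{eq}$. By \Cref{lem:RMCGeneral} it suffices to check that $(\vV,\vW,\beta) \subseteq (\vV',\vW',\beta')$ is an inclusion of $\lL$-structures. The imaginary sorts $B_n$ and the maps $\pi_A$ depend only on $\vV$, $+$ and $R$, so they are respected automatically. The one delicate point is the behaviour of $f_n\colon \vW \to B_n$: whenever $w \in \vW$ admits a presentation $w = \sum a_{ij}\beta(v_i, v_j)$ with $v_i \in \vV$, the same presentation witnesses the same value of $f_n(w)$ in $\vW'$. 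What must be excluded is the possibility that some $w \in \vW$ has $f_n(w) = u$ when computed in $\vW$ but $f_n(w) \neq u$ when computed in $\vW'$.

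Equivalently, I must establish $\vW \cap \bigcup_n \vW_n' \subseteq \bigcup_n \vW_n$, and this is precisely the content of the algebraic hypothesis. Decomposing $\vW = \bigcup_n \vW_n \oplus U$ with $U$ disjoint from $\bigcup_n \vW_n'$, any $w \in \vW \cap \bigcup_n \vW_n'$ writes uniquely as $w = w_1 + w_2$ with $w_1 \in \bigcup_n \vW_n$ and $w_2 \in U$. Since $\bigcup_n \vW_n \subseteq \bigcup_n \vW_n'$, we obtain $w_2 = w - w_1 \in \bigcup_n \vW_n'$, forcing $w_2 \in U \cap \bigcup_n \vW_n' = \{0\}$, so $w \in \bigcup_n \vW_n$ as required. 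The main obstacle I expect is pinpointing this compatibility condition on $f_n$ as the only nontrivial check arising from the QE; once this is isolated, the linear-algebra step above is immediate and everything else is formal.
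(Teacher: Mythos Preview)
Your proof is correct and follows essentially the same route as the paper: invoke the relative quantifier elimination of \Cref{prop: bilinearsystemEQR} and apply \Cref{lem:RMCGeneral}, reducing the problem to verifying that the inclusion respects the maps $f_n$. Your linear-algebra argument using $\vW = \bigcup_n \vW_n \oplus U$ is exactly how the paper handles the complement condition.

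One small imprecision: your ``Equivalently'' is not quite right. Excluding the bad case for every $n$ amounts to $\vW \cap \vW_n' \subseteq \vW_n$ for each $n$, not merely $\vW \cap \bigcup_n \vW_n' \subseteq \bigcup_n \vW_n$. Your argument handles $w \notin \bigcup_k \vW_k$, but leaves open the case $w \in \vW_m \setminus \vW_n$ with $w \in \vW_n'$. This is not a real gap, however: once $w \in \bigcup_k \vW_k$, the minimal $n$ with $w \in \vW_n$ is a property of the imaginary $f_m(w) \in B_m$, hence preserved by the assumed elementarity of $(\vV,+,0,R)^{\eq}$. The paper's proof is equally brief on this point, simply asserting that $f_n$ is preserved on $\braket{\beta(\vV,\vV)}$.
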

\begin{proof}
    Since $\restriction{\beta'}{\vV^2}=\beta$, the embedding $(\vV,\vW,\beta) \subseteq (V',W',\beta')$ preserves $\pi_A$ for $A\in \mathbb{A}_n(\Fbb_p)$. It also preserves the functions $f_n$ for $n\in \mathbb{N}$ restricted to $\braket{\beta(\vV,\vV)}$. The condition on the complement $U$ ensures that on $\vW\setminus \braket{\beta(\vV,\vV)}$, we have $f_n(w)=f_n'(w)=\und$. Therefore, the embedding also preserves the language of relative quantifier elimination stated in \cref{prop: bilinearsystemEQR}. Again, we may conclude by \Cref{lem:RMCGeneral}.
\end{proof}

\begin{proposition}[Relative Stable Embeddedness]\label{prop:RSEVR}
    Let $(\vV,\vW,\beta) \preceq (\vV',\vW',\beta')$ be an elementary extension of bilinear systems satisfying Property $(*_f)$ of \Cref{cor:SepVectorsupspace}. With the same notation as in the previous proposition, we have: 
    \[
        (\vV,\vW,\beta) \preceq^{\st} (\vV',\vW',\beta') \text{ if and only if } (\vV,+,0,R) \preceq^{\st} (\vV',+,0,R),
    \]
    and
    \[
        (\vV,\vW,\beta) \preceq^{\ust} (\vV',\vW',\beta') \text{ if and only if } (\vV,+,0,R) \preceq^{\ust} (\vV',+,0,R).
    \]
\end{proposition}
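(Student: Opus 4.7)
The plan is to mirror the proof of Proposition \ref{prop:RSEFG}, substituting the relative quantifier elimination of Proposition \ref{prop: bilinearsystemEQR} for that of Fact \ref{fact:RQEGtoF(G)}. Throughout, I write $\Mcal = (\vV, \vW, \beta)$ and $\Mcal' = (\vV', \vW', \beta')$.

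The forward direction should be essentially formal. By Proposition \ref{prop: bilinearsystemEQR}, the sort $\vV$ is stably embedded in any separated bilinear system, and the induced structure on $\vV$ is exactly $(\vV, +, 0, R)$. Since stable embeddedness transfers to the $(\cdot)^\eq$ expansion, and restricting a stably embedded pair to an internally stably embedded sort preserves stable embeddedness, the hypothesis $\Mcal \preceq^{\st} \Mcal'$ immediately yields $(\vV, +, 0, R) \preceq^{\st} (\vV', +, 0, R)$. The uniform case follows by prefixing ``uniform'' to each statement.

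The reverse direction is the substantive half. Assume $(\vV,+,0,R) \preceq^{\st} (\vV',+,0,R)$, hence $(\vV,+,0,R)^\eq \preceq^{\st} (\vV',+,0,R)^\eq$. Given a formula $\phi(x, b)$ with parameters $b \in \vV' \cup \vW'$, I would apply Proposition \ref{prop: bilinearsystemEQR} to reduce $\phi(x, b)$ to a Boolean combination of (a) linear equations $\sum_i \alpha_i w_i(x, b) = 0$ in $\vW$ and (b) formulas $\phi_\vV(f_{n_1}(\tau_1(x, b)), \ldots, f_{n_k}(\tau_k(x, b)), \bar v(x, b))$ with $\phi_\vV$ an $(\vV, +, 0, R)^\eq$-formula. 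For each atomic subformula, I would decompose the $\vW$-terms $w_i$ and $\tau_j$ into variable-only and $b$-dependent components using the bilinearity of $\beta$, and then deploy the auxiliary-element trick from the proof of Proposition \ref{prop:RSEFG}: seek some $a_0 \in \Mcal$ such that the $b$-dependent component evaluated at $a_0$ lands in $\vW$ (respectively in $\vW_{n_j}$); if such an $a_0$ exists, substitute it to push the external $\vW'$-parameters into internal $\vW$-parameters, and if none exists the contribution either collapses to the default value $u$ (for $f_{n_j}$) or forces the atomic subformula to be uniformly empty (for a linear equation). The $\vV'^\eq$-parameters that remain are then absorbed using stable embeddedness of $(\vV,+,0,R)^\eq$.

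The main obstacle I anticipate is the bookkeeping around the partially defined functions $f_n$: one must verify that the condition ``$\tau_j(a_0, b) \in \vW_{n_j}$'' is governed by the internal data available at $a_0$ (so that its truth value is preserved under the substitution), and that when it does hold the resulting imaginary in $B_{n_j} \subseteq \vV^\eq$ is realised by an internal element via $(\vV,+,0,R)^\eq$-stable embeddedness. For the uniform version, the finitely many case splits -- over the existence and type of $a_0$, and over which $\vW_n$ each mixed term falls into -- must be absorbed into a single formula scheme via definable branching on the internal parameters; this is expected to be a routine amplification once the non-uniform argument is in place.
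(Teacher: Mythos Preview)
Your proposal is correct and follows essentially the same approach as the paper's proof. For the $f_n$-bookkeeping you flag as the main obstacle, the paper's resolution is to pick $w_0 \in \vW$ with $a\cdot w_0 + w' \in \vW_n'$, rewrite $f_n(a\cdot w + w')$ in terms of the purely internal quantity $f_{2n}(a\cdot(w - w_0))$ together with the single external parameter $f_n(a\cdot w_0 + w') \in (\vV')^\eq$, and then absorb the latter via stable embeddedness of $(\vV,+,0,R)^\eq$ in $(\vV',+,0,R)^\eq$ (rather than by realising it internally, as your last paragraph suggests).
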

\begin{proof}
    Assume $(\vV,+,0,R) \preceq^{\st} (\vV',+,0,R)$ and let $\phi(x_\vV,x_\vW,v',w')$ be a formula in the language of $\{(\vV,+,0), (\vW,+,0) ,\beta \}$ with parameters $v'\in \vV'$ and $w'\in \vW'$.
    We need to find a formula $\psi(x_\vV,x_\vW,v,w)$ with parameters  $v$ in $\vV$ and parameters $w$ in $\vW$ such that:
    \[
        \phi\left(\vV^{\vert x_\vV\vert},\vW^{\vert x_\vW\vert},v',w'\right)=\psi\left(\vV^{\vert x_\vV\vert},\vW^{\vert x_\vW\vert},v,w\right).
    \]
    By \Cref{prop: bilinearsystemEQR}, the formula $\phi(x_\vV,x_\vW,v',w')$ is equivalent to a Boolean combination of formulas of the form:
    \begin{itemize}
        \item $a \cdot x_\vW = w''$ where $a$ is a $x_\vW$ tuple in $\mathbb{F}_p$, $w''\in \vW'$, (the operation $\cdot$ denotes here the sum of products of the components $a$ and $x_\vW$).
        \item $\phi_\vV( f_{n}( a_{1} \cdot x_\vW + w_1'),\dots,f_{n}(a_{k}\cdot x_\vW + w_k'), x_\vV,v')$ where $\phi_\vV$ is a formula in the language of $(\vV,+,0,R)^{\eq}$, the $a_{i}$'s are $\vert x_\vW \vert$-tuples in $\mathbb{F}_p$, and the $w_i'$'s are parameters in $\vW'$ (the operation $\cdot$ is as in the previous point).
    \end{itemize}

    The first formula, which is only an equality between terms, is not a problem. If there is no $w\in \vW^{\vert x_\vW \vert}$ satisfying the formula $a \cdot x_\vW = w''$, we can replace this formula with $\bot$ and we are done. Otherwise, it means that $w''$ belongs to $\vW$ and we don't need to find a new parameter.

    We analyse the second formula. To simplify, we assume that there is only one term in $f_{n}$, and we assume that the formula $\phi(x_\vV,x_\vW,v',\vW')$ is given by:
    \[
        \phi_\vV( f_{n}( a \cdot x_\vW + w'), x_\vV,v').
    \]
    The general case follows by repeating the same process for each term.
 
    Let $\vW_n'$ be the set of elements of the form $\sum_{i<j<n}\beta(u_i,u_j')$ with $u_0,\dots,u_{n-1} \in\vW'$. If for all $w\in \vW^{\vert x_\vW \vert}$, $a \cdot w + w'$ is not in $\vW_n'$ (and thus $f_{n}( a \cdot w + w')=\und$ for all $w\in \vW^{\vert x_\vW \vert}$), then
    we can replace the term $ f_{n}( a \cdot x_\vW + w')$ by $\und$.
    
    Assume there is $w_0\in \vW^{\vert x_\vW \vert}$ such that $a \cdot w_0 + w'$ is in $\vW_n'$, then for all $w\in \vW^{\vert x_\vW \vert}$, we can write
    \[
        a \cdot w + w' = a \cdot (w-w_0) + a \cdot w_0 + w'.
    \]
    It follows that $a \cdot w + w'$ is in $\vW_n$ only if $a \cdot (w-w_0) $ is in $\vW_{2n}$. Then, we have that $a \cdot w + w' \in \vW_n$ and the value of $f_n(a \cdot w + w')$ can be expressed using $f_{2n}(a \cdot (w-w_0) )$ and $f_n(a\cdot w_0+w')$. Therefore, the formula
    \[
        \phi_\vV( f_{n}( a \cdot x_\vW + w'), x_\vV,v')
    \]
    is equivalent to:
    \[
    \begin{aligned}
        \left(f_{2n}(a \cdot (w-w_0) )\neq \und   \wedge   \phi_\vV'\left(f_{2n}(a \cdot (w-w_0) ) , f_n\left(a\cdot w_0+w'\right), x_\vV,v'\right)\right) \\
        \lor\left(f_{2n}(a \cdot (w-w_0))= \und  \wedge  \phi_\vV\left( \und , x_\vV,v'\right)\right)
    \end{aligned} 
    \]
    where $\phi_\vV'$ is some formula in the language $(\vV,+,0,R)^{\eq}$.

    Using the fact that $(\vV,+,0,R) \preceq^{\st} (\vV',+,0,R)$, and considering $ f_n(a\cdot w_0+w')$ as parameters in $(\vV')^{\eq}$, we can find a formula $\psi_\vV( x_{B_n}, x_\vV,v_0)$ with parameters $v_0\in \vV$ such that for all $w\in \vW^{\vert x_\vW \vert} $ and $v\in \vV^{\vert x_\vV \vert}$:
    \[
        (\vV,+,0,R)\vDash\phi_\vV( f_{n}( a \cdot w + w'), v ,v') \leftrightarrow  \psi_\vV( f_{2n}(a \cdot (w-w_0) ), v,v_0). 
    \]

    In other words, if we set $\psi(x_\vV,x_\vW,v_0,w_0) = \psi_\vV( f_{2n}(a \cdot (x_\vW-w_0) ) , x_\vV,v_0)$, we have:
    \[
        \phi\left(\vV^{\vert x_\vV \vert},\vW^{\vert x_\vW \vert},v',w'\right) = \psi\left(\vV^{\vert x_\vV \vert},\vW^{\vert x_\vW \vert},v_0,w_0\right),
    \]
    as wanted. 
    
    We have shown that $(\vV,\vW,\beta) \preceq^{\st} (\vV',\vW',\beta')$. The uniform stable embeddedness statement follows the same way, as discussed in the proof of \Cref{prop:RMCFG} -- the case distinctions can again be encoded with parameters in $\vV$.  
\end{proof}

\subsubsection{Third reduction: From \texorpdfstring{$(\vV,+,0,R)$}{(V,+,0,R)} to \texorpdfstring{$(\gG,R)$}{(C,R)}} Let $\Msf=(G,\cdot), \Msf'=(G',\cdot)$ be Mekler groups of respective graphs $(\gG,E)$ and $(\gG',E')$, denote additively $(\vV,+,0,R)$ (resp. $(\vV',+,0,R')$) the abelian quotient $G/\Zsf$ (resp. $G'/\Zsf'$), and where $R$ (resp. $R'$) denotes the relation induced by the commutativity relation on $G$ (resp. $G'$) on $\vV$ (resp. $V'$) as well as on $\gG$ (resp. $\gG'$).

\begin{proposition}[Relative Model Completeness]\label{prop:RMCC}
    In the notation above, assume that $(\vV,+,0,R) \subseteq (\vV',+,0,R')$ and that a transversal $X=X^\nu X^pX^\iota$ of $\vV$ that can be extended to a transversal $X'={X^\nu}' {X^p}'{X^\iota}'$ of $V'$ (with ${X^\nu}\subseteq {X^\nu}'$,  $X^p\subseteq {X^p}'$ and $X^\iota\subseteq {X^\iota}'$).
    Then
$(\vV,+,0,R) \preceq (\vV',+,0,R')$ if and only if $(\gG,E) \preceq (\gG',E')$ as graphs.
\end{proposition}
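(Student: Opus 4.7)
The plan is to deduce this relative model completeness statement from the relative quantifier elimination proved in \Cref{prop:RQEGraph}, combined with the abstract principle \Cref{lem:RMCGeneral}. Let $\lL$ denote the language of \Cref{prop:RQEGraph}, which expands the pure language $\{+, 0, R\}$ on $\vV$ by the type predicates $A_{n,m}$ and the support functions $S_{n,m} : A_{n,m} \to \mathcal{P}_n(\gG)$ and $S_{n,m}' : A_{n,m} \to \mathcal{P}_m(\gG)$. All of these are $\emptyset$-definable in $(\vV, +, 0, R)$, so the notion of elementary extension on the pair $(\vV, \vV')$ is unaffected by passing to this $\lL$-expansion.

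For the ``only if'' direction the graph $\gG$ is $\emptyset$-interpretable in $(\vV, +, 0, R)$: it is the quotient of the $\emptyset$-definable set of elements of type $1^\nu$ by the $\emptyset$-definable equivalence relation $\sim$. Hence any elementary pair $(\vV, +, 0, R) \preceq (\vV', +, 0, R')$ induces, via this common interpretation, the elementary pair $\gG \preceq \gG'$, using that $\preceq$ passes to $(\cdot)^{\eq}$.

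For the main direction, assume $\gG \preceq \gG'$. By \Cref{lem:RMCGeneral} applied with $\Sigma = \gG$ (which is indeed a closed sort in the relevant $\lL$-structure), it suffices to verify that the inclusion $(\vV, +, 0, R) \subseteq (\vV', +, 0, R')$ is an $\lL$-embedding, i.e.\ preserves the predicates $A_{n,m}$ and the functions $S_{n,m}, S_{n,m}'$. This is where the transversal hypothesis enters decisively. Any $a \in \vV$ admits a unique normal-form expression as a product of powers of elements of $X = X^\nu \cup X^p \cup X^\iota$ (after fixing an order on $X$); the minimal $(n, m)$ with $a \in A_{n,m}$, as well as the supports $S_{n,m}(a)$ and $S_{n,m}'(a)$, are read off from this normal form (compare Steps 1 and 2 in the proof of \Cref{prop:RQEGraph}). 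Since $X \subseteq X'$, the same product is also a normal-form decomposition of $a$ relative to $X'$ inside $\vV'$, so the type $(n, m)$ and both supports computed in $\vV'$ coincide with those computed in $\vV$.

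The main obstacle in this last argument is ensuring that $a$ does not acquire, inside $\vV'$, a strictly shorter or otherwise incomparable decomposition which would decrease its $(n, m)$ or modify its supports. This is precisely what the transversal extension assumption rules out: a hypothetical shorter expression for $a$ in $\vV'$, combined with the one coming from $X$, would produce a non-trivial algebraic relation inside $\vV'$ between elements of $(X')^\nu, (X')^p, (X')^\iota$, contradicting the maximality conditions in the definition of a transversal of $\vV'$; similarly, the handles of the type-$p$ factors lie in $\gG \preceq \gG'$ and are determined by the graph structure, so $S_{n,m}'$ is preserved. Once $\lL$-preservation is established, \Cref{lem:RMCGeneral} together with $\gG \preceq \gG'$ delivers the desired conclusion $(\vV, +, 0, R) \preceq (\vV', +, 0, R')$.
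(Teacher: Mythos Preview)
Your proof is correct and follows essentially the same route as the paper: verify that the transversal-extension hypothesis forces the predicates $A_{n,m}$ and the support functions $S_{n,m}, S_{n,m}'$ to agree on $\vV$ whether computed in $\vV$ or in $\vV'$, and then invoke \Cref{lem:RMCGeneral}. The paper's proof is a two-line version of yours (it simply asserts that $\restriction{S_{n,m}^{\vV'}}{\vV}=S_{n,m}^{\vV}$ and $\restriction{{S'_{n,m}}^{\vV'}}{\vV}={S_{n,m}'}^{\vV}$ follow from the transversal condition, and does not spell out the ``only if'' direction), so your extra justification via normal forms and the obstacle paragraph is welcome detail rather than a different argument.
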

\begin{proof}
    It is easy to see that the conditions for being a transversal imply that for all $n,m\in \mathbb{N}^2$, we have:
    \[
        \restriction{S_{n,m}^{\vV'}}{\vV}=S_{n,m}^{\vV} \text{ and } \restriction{{S'_{n,m}}^{\vV'}}{\vV}={S_{n,m}'}^{\vV}.
    \] 
    We can conclude by \Cref{lem:RMCGeneral}.
\end{proof}

\begin{proposition}[Relative Stable Embeddedness]\label{prop:RSEC}
    In the notation above, assume that $G \preceq G'$ is an elementary extension of Mekler groups. Then:
        \[
            (\vV,+,0,R) \preceq^{\st} (\vV',+,0,R') \text{ if and only if } (\gG,E) \preceq^{\st} (\gG',E'),
        \]
    and
        \[
            (\vV,+,0,R) \preceq^{\ust} (\vV',+,0,R') \text{ if and only if } (\gG,E) \preceq^{\ust} (\gG',E').
        \]
\end{proposition}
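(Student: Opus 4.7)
The forward direction, that $\vV \preceq^{\st} \vV'$ implies $\gG \preceq^{\st} \gG'$ (and likewise for the uniform variant), is immediate: the graph $\gG$ is $\emptyset$-interpretable in $(\vV,+,0,R)$ as the quotient of the definable set of type-$1^\nu$ elements by $\sim$, so the conclusion follows from the $\eq$-fact recalled at the start of this subsection. The substance of the proposition therefore lies in the reverse direction, which mirrors the strategy of \Cref{prop:RSEFG,prop:RSEVR} and is built on the relative quantifier elimination of \Cref{prop:RQEGraph}.

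Assume $\gG \preceq^{\st} \gG'$, and let $\phi(x, b')$ be a formula in the language of $(\vV, +, 0, R)$ with parameters $b' \in \vV'$. By \Cref{prop:RQEGraph}, in the expanded language, $\phi(x, b')$ is equivalent to a Boolean combination of formulas of the two forms
\[
    A_{n,m}\bigl(t(x, b')\bigr) \qquad \text{and} \qquad \phi_\gG\bigl(S_{n,m}(t(x, b')),\, S'_{n,m}(t(x, b'))\bigr),
\]
with $n, m \in \Nbb$, $t$ a tuple of group terms, and $\phi_\gG$ a formula in $(\gG', E')$. It suffices to rewrite each such atomic formula with parameters in $\vV$.

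The central move is a transversal-based decomposition of $b'$. Fix a transversal $X = X^\nu X^p X^\iota$ of $G$; by \Cref{Lem:TransversalQFDefinable} and the elementarity of the pair $G \preceq G'$, $X$ extends to a transversal $X'$ of $G'$. Writing each component of $b'$ in this basis and splitting linearly, one obtains $b' = b'_1 + b'_2$ where $b'_1 \in \vV$ collects the components supported in $X$ and $b'_2$ collects the remaining components, whose basis elements lie in $X' \setminus X$. Setting $u(x, b'_1) := t(x, b'_1) \in \vV$ and $v(b'_2) := t(0, b'_2)$, one has $t(x, b') = u(x, b'_1) + v(b'_2)$. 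Using the defining properties of a transversal, one shows that the values $S_{n,m}(t(x, b'))$ and $S'_{n,m}(t(x, b'))$ each decompose as a disjoint union of a $\gG$-part, definable from $x$ and $b'_1 \in \vV$, and a $\gG'$-part, depending only on $b'_2$ (hence constant in $\gG'^\eq$). Consequently, each $A_{n,m}$-predicate becomes a finite disjunction over pairs $(n_1,m_1)+(n_2,m_2) = (n,m)$ of conjunctions $A_{n_1,m_1}(u(x, b'_1)) \wedge A_{n_2,m_2}(v(b'_2))$ whose second factor has fixed truth value, and each $\phi_\gG$-atomic formula becomes a formula over $(\gG',E')^\eq$ whose variable arguments take values in $\gG^\eq$ and whose constant arguments lie in $\gG'^\eq$. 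The hypothesis $\gG \preceq^{\st} \gG'$ (resp. $\preceq^{\ust}$) rewrites the latter with parameters in $\gG \subseteq \vV$, yielding the required formula. The uniform version follows by the same argument, as all reductions depend only on $\phi$ and not on the specific value of $b'$.

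The principal obstacle is rigorously verifying the clean decomposition of the support and handle functions on $t(x, b') = u(x, b'_1) + v(b'_2)$. One must argue, using the definition of a transversal and a case analysis parallel to Step~2 of the proof of \Cref{prop:RQEGraph}, that no cancellations among type-$1^\nu$ components can occur across the split (since the $\sim$-classes contributed by $v(b'_2)$ lie in $\gG' \setminus \gG$), and carefully handle the possibility that a type-$p$ component of $v(b'_2)$ has its handle in $\gG$ — a coincidence that must be absorbed into the $\gG$-valued variable portion of the parameter rather than left as a fixed $\gG'^\eq$-constant.
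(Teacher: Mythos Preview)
Your approach is sound but takes a genuinely different route from the paper. The paper does not fix a transversal; instead, for each parameter $b \in \vV'$ it introduces the notion of a \emph{best approximation} $a_0 \in \vV$, defined intrinsically as any element minimising (for the reverse lexicographic order) the pair $(k,l)$ with $A_{k,l}(b - a_0)$. A short lemma (\Cref{claim:bestapprox}) then shows directly from minimality that $b - a_0$ decomposes as a sum of type-$1^\nu$ elements whose $\sim$-classes lie in $\gG' \setminus \gG$ together with type-$p$ elements independent over $\langle \tE^\nu(\vV') \rangle$, yielding the disjoint-support splitting $S_{n,m}(t(a) + b) = S_{k,l}(b - a_0) \sqcup S_{n-k,m-l}(t(a) + a_0)$ (and the analogue for $S'$) without reference to any choice of basis. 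Your transversal split $b' = b'_1 + b'_2$ plays the same structural role as $b \mapsto a_0$, but $b'_1$ need not be a best approximation in general---precisely because of the handle-in-$\gG$ phenomenon you correctly flag---so your case analysis has to do by hand what minimality buys for free in the paper's version. The trade-off: your decomposition is more concrete and makes uniformity visible, at the cost of an external choice and a longer verification; the paper's minimality argument is choice-free and packages the case analysis into a single lemma.

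One point to tighten: your justification that a transversal $X$ of $G$ extends to one of $G'$ is correct but needs more than a bare citation of \Cref{Lem:TransversalQFDefinable}. That lemma gives that the partial-transversal condition is a quantifier-free type; you still need to argue (via elementarity and the predicates $A_{n,m}$) that, for instance, $X^p$ remains independent over $\langle \Zsf', \tE^\nu(\vV') \rangle$ in the larger model, not merely over $\langle \Zsf, \tE^\nu(\vV) \rangle$.
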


A proof similar to that of \Cref{prop:RSEVR} will not work, and we have to be more careful, since we lose some information with the support function $S_{n,m}$. We are going to use the predicates $A_{n,m}$ as a pre-distance, and approximate elements from $\vV'$ by elements from $\vV$:
    
\begin{definition}
    A \emph{best approximation in $\vV$} of a singleton $b\in {\vV'}$ is an element $a_0\in \vV$ such that $f_{n,m}(b-a_0)$ holds and $(n,m)$ is minimal for the reverse lexicographic order on $\mathbb{N}^2$. 
    If no such element exists, we say that $b$ has \emph{no approximation} in $\vV$.
\end{definition}
     
\begin{lemma}\label{claim:bestapprox}
    In the notation of \Cref{prop:RSEC}, let $a_0$ be a best approximation of $b\in \vV'$, with $(k,l)$ minimal such that $A_{k,l}(b-a_0)$. Then:
    \begin{itemize}
        \item  The element $b-a_0$ is the sum of $k$ elements of types $1^\nu$ in $\vV'\setminus \vV$ and $l$ elements of type $p$ of $\vV'$ which are independent over $\braket{\tE^\nu(\vV')}$.  
        \item  For all $a\in \vV$, for all $n,m \in \Nbb$, we have:
        \[
            A_{n+k,m+l}(b-a) \Leftrightarrow A_{n,m}(a-a_0)
        \]
        \[  
            S_{n+k,m+l}(b-a)= S_{k,l}(b-a_0) \sqcup S_{n,m}(a-a_0),
        \]
        and
        \[ 
            S_{n+k,m+l}'(b-a) =S_{k,l}'(b-a_0) \sqcup S_{n,m}'(a-a_0) .
        \]
     \end{itemize}
\end{lemma}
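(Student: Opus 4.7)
The proof will proceed in two parts, one for each bullet, both fundamentally relying on the reverse-lex minimality of $(k,l)$.

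For the first bullet, I will fix any decomposition $b - a_0 = \sum_{i<k} a_i' + \sum_{j<l} g_j'$ witnessing $A_{k,l}(b - a_0)$, with the $a_i'$ of type $1^\nu$ and the $g_j'$ of type $p$ with pairwise distinct handles. If some $a_i' \in \vV$, then $a_0 + a_i' \in \vV$ would give $A_{k-1, l}(b - (a_0 + a_i'))$, strictly smaller in reverse lex, contradicting minimality. The same argument for type-$p$ components (which, if in $\vV$, could be absorbed into $a_0$ to strictly reduce $l$) shows each $g_j' \in \vV' \setminus \vV$ as well. Finally, a putative dependence $\sum_j \alpha_j g_j' = \sum_i \beta_i v_i$ with $v_i$ of type $1^\nu$ in $\vV'$ would let me eliminate one $g_{j_0}'$ and produce a decomposition of $b - a_0$ with at most $l - 1$ type-$p$ summands, again contradicting reverse-lex minimality. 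Hence the $g_j'$ are independent over $\langle \tE^\nu(\vV') \rangle$.

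For the second bullet, fix $a \in \vV$ and pick a minimal decomposition $a - a_0 = \sum_r a_r'' + \sum_s g_s''$ witnessing $A_{n,m}(a - a_0)$, with $a_r'' \in \vV$ of type $1^\nu$ and $g_s'' \in \vV$ of type $p$, handles pairwise distinct in $\gG$. Expanding $b - a = (b - a_0) - (a - a_0)$ yields a combined candidate decomposition with $n + k$ type-$1^\nu$ and $m + l$ type-$p$ summands. The plan is to show that this is already a minimal decomposition of $b - a$, from which the biconditional $A_{n+k, m+l}(b - a) \Leftrightarrow A_{n, m}(a - a_0)$ and both disjoint-union identities follow. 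Two things need checking: the type-$1^\nu$ $\sim$-classes from the two halves are pairwise disjoint, and the type-$p$ handles from the two halves are pairwise disjoint. The first is immediate from the first bullet, since the $[a_i']_\sim$ lie in $\gG' \setminus \gG$ (because $\vV$ is closed under $\mathbb{F}_p$-scalars) while $[a_r'']_\sim \in \gG$. The converse direction follows from a uniqueness argument: in any minimal decomposition of $b - a$, the ``$\vV' \setminus \vV$-part'' must coincide with the pieces identified in the first bullet, leaving a $\vV$-remainder equal to $a - a_0$.

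The main obstacle is the type-$p$ handle-disjointness. My plan is to argue by contradiction: suppose $h(g_{j_0}') = h(g_{s_0}'') = [c] \in \gG$. Consider the adjusted element $a_0^\dagger := a_0 + g_{s_0}'' \in \vV$; the decomposition of $b - a_0^\dagger$ then has the same $k$ and $l$, with $g_{j_0}'$ replaced by $g_{j_0}' - g_{s_0}''$, which retains handle $[c]$ and lies in $\vV' \setminus \vV$. This does not immediately contradict minimality. However, a sharper form of the first bullet (also provable via reverse-lex minimality) shows that each type-$1^\nu$ ``building block'' of $g_{j_0}'$ must itself lie in $\vV' \setminus \vV$: otherwise extracting such a block into $a_0$ would strictly reduce $(k,l)$. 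With this refinement in hand, the shared handle at $c \in \vV$ together with the constituents of $g_{j_0}'$ lying in $\vV' \setminus \vV$ should force a non-trivial relation between some $g_j'$ and elements of $\tE^\nu(\vV')$, contradicting the independence statement of the first bullet. The delicate point is that the independence is asserted only over $\tE^\nu(\vV')$, not over all of $\vV$, so the contradiction must be extracted from a careful case analysis exploiting that $c \in \vV$ while the ``atoms'' of $g_{j_0}'$ lie outside $\vV$.
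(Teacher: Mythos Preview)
Your treatment of the first bullet is correct and is exactly the paper's argument, just unpacked: the paper writes only ``easily follows from the minimality of $(n,m)$'', and your absorb-into-$a_0$ moves are precisely what that sentence encodes. Your additional observation that each $g_j'\in\vV'\setminus\vV$ is right (same absorption argument) and is implicitly what the paper uses in the next step.

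For the second bullet, the paper's entire argument is one line: since the support of $b-a_0$ lies in $\vV'\setminus\vV$ while $a-a_0\in\vV$, the two supports are disjoint. You recover this for the $S_{n,m}$-part (the $1^\nu$ classes) without difficulty, which is the paper's point. Where your proposal diverges is in treating handle-disjointness (the $S'_{n,m}$-part) as a separate ``main obstacle'' requiring an independent contradiction argument. That argument is not completed: you observe correctly that the substitution $a_0\mapsto a_0+g_{s_0}''$ does \emph{not} strictly reduce $(k,l)$, and your fallback plan---a ``careful case analysis'' exploiting independence over $\langle\tE^\nu(\vV')\rangle$---is only a sketch, and in fact independence over type-$1^\nu$ elements places no direct constraint on handles. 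So as written, your second bullet has a genuine gap at exactly the point you flag as delicate. The paper, by contrast, does not isolate handle-disjointness as a separate issue at all; it treats the disjointness of the full decomposition data as following from the first bullet (all components of $b-a_0$ sit outside $\vV$, all components of $a-a_0$ sit inside). Whether that one-line justification is fully adequate for the $S'$-identity is a fair question, but your proposed alternative does not close it either.
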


\begin{proof}
    The first point easily follows from the minimality of $(n,m)$.
         
    The second follows from the fact that, since the support of $b-a_0$ is in $\vV'\setminus \vV$, the support of $a-a_0\in \vV$ and $b-a_0$ must be disjoint.
\end{proof}

\begin{proof}[Proof of \Cref{prop:RSEC}]
    Assume $(\gG,E) \preceq^{\st} (\gG',E')$ and let $\phi(x,b)$ be a group formula with parameters $b\in \vV'$.  By \Cref{prop:RQEGraph}, the formula $\phi(x,b)$ in the structure $(G/\Zsf,\cdot,1,R)$ is equivalent to a Boolean combination of formulas of the form:
    \begin{itemize}
        \item $t(x,b)=1$ where $t(x,b)$ is a group term,
        \item $A_{n,m}(t(x,b))$, where $n,m\in \mathbb{N}$, $t(x)$ is a group term,
        \item $\phi_\gG(S_{n,m}(t(x,b)),S_{n,m}'(t(x,b)))$, where $n,m\in \mathbb{N}$, $t(x,y)$ is a tuple of $\lL_\mathsf{grp}$-terms and $\phi_\gG(x_\gG,x_\gG')$ is a formula in the language $(\gG,E)$.
    \end{itemize}

    We handle the first form as before, so let us assume that $\phi(x,b)$ is a conjunction of formulas of the second and of the third form. Since the group is abelian, we can separate the variables $x$ from the parameters $b$ and assume that the terms are of the form $t(x)+b_1$ where $b_1\in b$. 
    
    So, let us assume that our formula is given by:
    \[
        \phi(x,b)=\phi_\gG(S_{n,m}(t(x)+b),S_{n,m}'(t(x)+b))\wedge A_{n,m}(t(x)+b),
    \]
    and that $t(x)$ is a single term and $b$ is a singleton-- of course, the general case with many terms follows the same way.
    
    If $b$ does not have a best approximation in $\vV$, then $A_{n,m}(t(a)+b)$ never holds, and we can replace the formula by $\bot$.
    
    Assume that there exists a best approximation $a_0$ of $b$ in $\vV$, and let $(k,l)$ be minimal (for the reverse lexicographic order of $\Nbb^2$) such that $A_{k,l}(b-a_0)$ holds. Then, by \Cref{claim:bestapprox}, for all $a\in \vV$ and $k,l\in \Nbb$, we have:
    \[
        A_{n,m}(t(a)+b) \Leftrightarrow A_{n-k,m-l}(t(a)+a_0) 
    \]
    and
    \[
    S_{n,m}(t(a)+b)= S_{k,l}(b-a_0) \sqcup S_{n-k,m-l}(t(a)+a_0),
    \]
    and similarly for $S'_{n,m}(t(a)+b)$. 
    
    Observe that if $k>n$ or $l>m$, then neither $S_{n,m}(t(a)+b)$ nor $S_{n,m}'(t(a)+b)$ can hold. This means that $\phi(x,b)$ can be rewritten as a formula of the form:
    \[
        \phi(x,b)= \phi_\gG(S_{n-k,m-l}(t(a)+a_0),S_{k,l}'(t(a)+a_0), S_{k,l}(b-a_0),S_{k,l}'(b-a_0)),
    \]
    and the terms $S_{k,l}(b-a_0)$, $S_{k,l}(b-a_0)$ can be seen as parameters in $(\gG',R)^\eq$. 
    
    Using our assumption that $(\gG,E) \preceq^{\st} (\gG',E')$, there is a formula  $\psi_\gG(x,y,\alpha)$ in $(\gG',E')^\eq$ with parameters $\alpha\in \gG$ such that:
    \[
        \phi_\gG\left(\mathcal{P}_{n-k}(\gG),\mathcal{P}_{m-l}(\gG), S_{k,l}(b-a_0),S_{k,l}'(b-a_0)\right)= \psi_\gG\left(\mathcal{P}_{n-k}(\gG),\mathcal{P}_{m-l}(\gG),\alpha\right), 
    \]
    where $\mathcal{P}_{n-k}(\gG)$ denotes the set of all subsets of size $\leq n-k$ of $\gG$.
    
    Let $a\in \gG^{\vert \alpha \vert}$ be such that $S_{0,1}(a)=\{\alpha\}$, and by $\psi(x,a)$ the formula: 
    \[
        \psi_\gG(S_{n-k,m-l}(t(x)+a_0),\{S_{n-k,m-l}'(t(x)+a_0): k\leq n, l\leq m\}, S_{1,0}(a)),
    \]
    (we identify $\alpha$ with $\{\alpha\}=S_{1,0}(a)$).
    
    We have 
    \[
        \phi(G(\gG),b)=\psi(G(\gG),a),
    \]
    as desired and thus $(\vV,+,0,R) \preceq^{\st} (\vV',+,0,R')$.  
    
    Notice that the formula $\psi$ depends on the existence of a minimal $(k,l)$ such that $A_{k,l}(b-a_0)$ holds, but if $N$ is the largest integer that occurs in the formula, one need only consider the case for $k<N$ and $l<N$.  Thus, the uniform statement follows in a similar way.
\end{proof}

\subsubsection{Reduction from \texorpdfstring{$(G,\cdot,1)$}{(G,•,1)} to \texorpdfstring{$(\gG,R)$}{(C,R)}} Now we combine all the previous results. 

\begin{proposition}[Relative Model Completeness] \label{prop:RMCMekler}
    Let $\gG$ and $\gG'$ be nice graphs and let $\Msf=(G,\cdot,1)$ and $\Msf=(G',\cdot,1)$ be their respective Mekler groups. Assume that:
    \begin{itemize}
        \item $\Msf \subseteq \Msf'$, and $\Zsf \subseteq \Zsf'$;
        \item there is a transversal $X=X^\nu X^pX^\iota$ of $G$ which can be extended to a transversal $X'={X^\nu}' {X^p}'{X^\iota}'$ of $G'$ (with ${X^\nu}\subseteq {X^\nu}'$,  $X^p\subseteq {X^p}'$ and $X^\iota\subseteq {X^\iota}'$);
        \item there is a subspace $H$ of $\Zsf$ such that $G = \braket{X} \times H$ and such that $\braket{X'}\cap H = \emptyset$, where $X$ and $X'$ are transversal as above. 
    \end{itemize} 
    Then $\Msf \preceq \Msf'$ if and only if $\gG \preceq \gG'$ as graphs.
\end{proposition}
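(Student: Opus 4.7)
The plan is to chain together the three relative model completeness results already established in this subsection---\Cref{prop:RMCFG}, \Cref{prop:RMCVR}, and \Cref{prop:RMCC}. Each of these is a biconditional, so once the hypotheses of all three are verified, both directions of the claim follow at once from the chain
\[
    \Msf \preceq \Msf'
    \iff \mathcal{F}(G) \preceq \mathcal{F}(G')
    \iff (\vV,+,0,R) \preceq (\vV',+,0,R')
    \iff \gG \preceq \gG'.
\]

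Two of the three hypotheses are given to us essentially for free: the assumption $\Zsf \subseteq \Zsf'$ is exactly the hypothesis needed to apply \Cref{prop:RMCFG}, and the hypothesis that the transversal $X$ extends to a transversal $X'$ of $G'$ (respecting the decomposition into types) is exactly the hypothesis of \Cref{prop:RMCC}. The content of the proof is therefore concentrated in the verification of the hypothesis of \Cref{prop:RMCVR}, which asks for a complement of $\vW_\omega = \braket{\beta(\vV,\vV)}$ inside $\vW = \Zsf$ that is disjoint from $\braket{\beta(\vV',\vV')}$.

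The subspace $H$ from our assumption is tailor-made to play this role. First, the normal-form description of elements in a Mekler group recalled in \Cref{subsec:Mekler}, together with the bilinearity of the commutator modulo the centre, shows that $\braket{X}\cap\Zsf = \braket{\beta(\vV,\vV)}$: every commutator in $G$ is an $\Fbb_p$-linear combination of commutators of transversal elements. Combined with the decomposition $G = \braket{X}\times H$, this yields $\Zsf = \braket{\beta(\vV,\vV)} \oplus H$, so $H$ is a complement of $\vW_\omega$ in $\vW$. Applying the same reasoning to $G'$ gives $\braket{\beta(\vV',\vV')} \subseteq \braket{X'}$, and the triviality of $\braket{X'}\cap H$ then forces $\braket{\beta(\vV',\vV')}\cap H$ to be trivial as well, which is what \Cref{prop:RMCVR} requires.

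The only genuinely structural step is thus the identification $\braket{X}\cap\Zsf = \braket{\beta(\vV,\vV)}$, which is a mostly clerical consequence of the full normal-form description of elements in a Mekler group (involving also the central basis $X^\zeta$), but is the one place where anything concrete needs to be checked. Once it is in place, the three biconditionals compose formally, and both directions follow simultaneously without further work.
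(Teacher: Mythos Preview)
Your approach is essentially identical to the paper's: chain \Cref{prop:RMCFG}, \Cref{prop:RMCVR}, and \Cref{prop:RMCC}, checking that the three listed hypotheses feed into the respective propositions, with $H$ playing the role of the complement needed for \Cref{prop:RMCVR}. One small slip: you write ``a complement of $\vW_\omega = \braket{\beta(\vV,\vV)}$'', but in the paper's notation $\vW_\omega$ is the \emph{complement} of $\braket{\beta(\vV,\vV)}$, not $\braket{\beta(\vV,\vV)}$ itself; your argument is unaffected since you correctly identify $H$ as a complement of $\braket{\beta(\vV,\vV)}$ disjoint from $\braket{\beta(\vV',\vV')}$, and you also supply the justification $\braket{X}\cap\Zsf = \braket{\beta(\vV,\vV)}$ that the paper leaves implicit.
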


 Some similarities may be observed with statements already known, such as \cite[Lemma 2.14]{CH18}, which we will recall in \Cref{sec:alternative}. The proposition was perhaps already known, but we take the occasion to give a proof that makes use of our stepwise reductions.

\begin{proof}
    The embedding of $G$ in $G'$ preserves the centre $\Zsf \subseteq \Zsf'$  by definition. The induced embedding  $\mathcal{F}(G) \subseteq \mathcal{F}(G')$ preserves $\beta$ automatically (since it comes from an embedding of groups). The subspace $H$ of $\Zsf$ does not intersect $\braket{\beta(V,V')}$. The set $X \bmod \Zsf$ is a transversal of $G/\Zsf$ that extends to the transversal $X' \bmod \Zsf$ in $G'/\Zsf'$. Therefore, this proposition follows immediately from the previous ones, as we have:
    \begin{align*}
        \Msf \preceq \Msf' 
        & \Leftrightarrow \mathcal{F}(G) \preceq \mathcal{F}(G')        &\text{(\Cref{prop:RMCFG})}\\
        & \Leftrightarrow (G/\Zsf,\cdot,R) \preceq (G'/\Zsf',\cdot,R')  &\text{(\Cref{prop:RMCVR})}\\
        & \Leftrightarrow \gG \preceq \gG'                              &\text{(\Cref{prop:RMCC})}
    \end{align*}
    and the result follows.
\end{proof}

The following example shows that it is not enough to assume that $G$ is a subgroup of $G'$, and that the requirement of preserving a transversal is necessary. 

\begin{example}
    Consider the nice graphs $\gG$ and $\gG'$ given respectively by: 
    \begin{center}
    \begin{tikzpicture}
        \draw (-2,0) -- (4,0);
        \foreach \i in {-1,...,3}{
            \fill (\i,0) circle (2pt);
            \draw (\i,0) node[below] {$a_{\i}$};
    }
    \end{tikzpicture}
    \end{center}
and 
    \begin{center}
    \begin{tikzpicture}[scale=0.8]
        \draw (-2,0) -- (4,0);
        \draw (6,0) -- (12,0);
        \draw (5,0) node {$\cdots$};
        \foreach \i in {-1,...,3}{
            \fill (\i,0) circle (2pt);
            \draw (\i,0) node[below] {$a_{\i}$};
            
            \fill (8+\i,0) circle (2pt);
            \draw (8+\i,0) node[below] {$b_{\i}$};
    }
    \end{tikzpicture}
    \end{center}
    
    Denote by $\Msf$ and $\Msf'$ their corresponding Merkler group. The elementary embedding $\gG \rightarrow \gG'$ can easily be extended to an embedding of $\Msf\hookrightarrow \Msf'$ that is elementary by \Cref{prop:RMCMekler}.
    
    Consider now an elementary extension $\Nsf=\braket{G,g}$ of $\Msf$, generated by $\Msf$ and a new element $g$ of type $1^\iota$ and independent over $\Msf$. Then there is a group embedding $f$ of $\Nsf$ in $\Msf'$ such that $f(\Zsf)\subset \Zsf'$ and such that $f(g)$ is of type $1^\nu$ and $[f(g)]_\sim = b_0$. In particular, for this embedding, we have $\Nsf\not\preceq \Msf'$, even if the embedding satisfies the first item of \Cref{prop:RMCMekler}.
\end{example}

\setcounter{thmx}{1}

\begin{thmx}[Relative Stable Embeddedness] \label{thm:RSEMekler}
    Let $\Msf \preceq \Msf'$ be an elementary extension of Mekler groups of nice graphs $\gG$ and $\gG'$, respectively. Then:
    \begin{center}
        $\Msf \preceq^{\st} \Msf'$ if and only if $\gG \preceq^{\st} \gG'$
    \end{center}
    and
    \begin{center}
        $\Msf \preceq^{\ust} \Msf'$ if and only if $\gG \preceq^{\ust} \gG'$.
    \end{center}
\end{thmx}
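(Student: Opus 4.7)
The plan is to prove \Cref{thm:RSEMekler} by chaining together the three relative stable embeddedness results established in the preceding subsections, mirroring the reduction diagram of \Cref{fig:reduction}. Specifically, I would apply \Cref{prop:RSEFG}, \Cref{prop:RSEVR}, and \Cref{prop:RSEC} successively, for both the non-uniform and uniform versions.

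First, I would verify that each intermediate hypothesis is automatic. Since $\Msf \preceq \Msf'$ and the centre is $\emptyset$-definable, we have $\Zsf \subseteq \Zsf'$, so \Cref{prop:RMCFG} yields the elementary pair $\mathcal{F}(G) \preceq \mathcal{F}(G')$ (and both bilinear systems are separated by \Cref{prop:MeklerGroupSeparated}, which is what \Cref{prop:RSEVR} needs). The pair $\Msf \preceq \Msf'$ of Mekler groups is exactly what \Cref{prop:RSEC} requires as its hypothesis. The passage through imaginary sorts (the quotient $G/\Zsf$ and the finite imaginary sorts $B_n$ introduced after \Cref{prop: bilinearsystemEQR}) is justified by the fact, recorded earlier in \Cref{subsec:SE}, that (uniform) stable embeddedness transfers between a structure and its $\eq$-expansion.

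Granting the hypotheses, the proof is then just the chain of equivalences
\begin{align*}
\Msf \preceq^{\st} \Msf'
&\iff \mathcal{F}(G) \preceq^{\st} \mathcal{F}(G') & &\text{(\Cref{prop:RSEFG})}\\
&\iff (\vV,+,0,R) \preceq^{\st} (\vV',+,0,R') & &\text{(\Cref{prop:RSEVR})}\\
&\iff (\gG,E) \preceq^{\st} (\gG',E') & &\text{(\Cref{prop:RSEC}),}
\end{align*}
and identically with $\preceq^{\st}$ replaced by $\preceq^{\ust}$, since each of the three propositions establishes both versions in parallel.

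The main obstacle is purely bookkeeping: there is no new combinatorial or definability argument to be run, as all the substantive work — in particular, tracking best approximations for the non-trivial direction in \Cref{prop:RSEC}, and handling the finite imaginaries of \Cref{prop: bilinearsystemEQR} in \Cref{prop:RSEVR} — was already carried out in proving those propositions. The only subtle point to flag is that, unlike model completeness (which required the extra transversal-compatibility assumption in \Cref{prop:RMCMekler}), stable embeddedness is a property of the elementary pair itself, so no additional algebraic hypotheses need to be imposed on $\Msf \preceq \Msf'$.
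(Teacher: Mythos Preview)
Your proposal is correct and matches the paper's proof essentially line for line: the paper simply chains \Cref{prop:RSEFG}, \Cref{prop:RSEVR}, and \Cref{prop:RSEC} in an \texttt{align*} block exactly as you do, treating the uniform and non-uniform cases in parallel. If anything, your write-up is more careful than the paper's, which does not explicitly spell out the verification of the intermediate hypotheses or the role of the $\eq$-transfer fact.
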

\begin{proof}
    We simply combine the previous statements. Assume $G \preceq G'$. Then, with the notation previously established, we have:
    \begin{align*}
        \Msf \preceq^{(\mathsf{u})\st} \Msf' 
        & \Leftrightarrow \mathcal{F}(G) \preceq^{(\mathsf{u})\st} \mathcal{F}(G') &       \text{(\Cref{prop:RSEFG})}  \\
        & \Leftrightarrow (G/\Zsf,\cdot,R) \preceq^{(\mathsf{u})\st} (G'/\Zsf',\cdot,R')  & \text{(\Cref{prop:RSEVR})}\\
        & \Leftrightarrow \gG \preceq^{(\mathsf{u})\st} \gG' &\text{(\Cref{prop:RSEC})}
    \end{align*}
    and the result follows.
\end{proof}

\begin{examples}
    Let $\gG$ be any of the following graphs:
        \begin{itemize}
            \item $\gG=(V,E)$ with $V=\Zbb$ infinite and $E=\{(n,n+1) \ \vert \ n\in \Zbb\}$:
                \begin{center}
    \begin{tikzpicture}
        \draw (-2,0) -- (4,0);
        \foreach \i in {-1,...,3}{
            \fill (\i,0) circle (2pt);
    }
    \end{tikzpicture}
    \end{center}
        \item $\gG=(V,E)$ is a regular tree of any degree.
        \item $\gG=(V,E)$, any nice graph bi-interpretable with: $(\Zbb,<,+,0)$, $(\Rbb,<)$, ${(\Rbb,<,+,0)}$, $(\Rbb,+,\cdot,0,1)$.
        \end{itemize}
    Then $\gG$, and therefore $\Msf(\gG)$, is stably embedded in every elementary extension.
\end{examples}

\subsection{Characterisation of indiscernibles}

We will characterise indiscernible sequences in a Mekler group $\Msf$ in terms of indiscernible sequences in the graph $\gG$. To simplify the presentation, we introduce some new terminology. We fix an indexing structure $\Ical$ in a language $\Lcal'$. By an ``indiscernible sequence'' in this subsection we always mean an $\Ical$-indiscernible sequence.

\begin{definition}
    Let $\mathcal{M}$ be a structure and $(\bar{a}_i)_{i\in \Ical}$ and $(\bar{b}_i)_{i\in \Ical}$ be sequences in $\mathcal{M}$. We say that the indiscernibility of $(\bar{a}_i)_{i\in \Ical}$ over $B\subseteq \sM$ is \emph{witnessed} by the sequence $(\bar{b}_i)_{i\in \Ical}$ if there exist parameters $\bar{b}$ and a definable tuple of functions $\bar{f}$ such that:
    \begin{enumerate}
        \item $(\bar{b}_i)_{i\in \Ical}$ is indiscernible over $B\cup \bar b$,
        \item for every $i\in \Ical$, $\bar{a}_i=\bar{f}(\bar{b}_i,\bar{b})$.
    \end{enumerate}
 As usual, when $B=\emptyset$, then we do not mention it.   
\end{definition}

Notice that if $(\bar{b}_i)_{i\in \Ical}$ and $(\bar{a}_i)_{i\in \Ical}$ are as above, then $(\bar{a}_i)_{i\in \Ical}$ is indiscernible over $B$.

We work in $\Msf^{\eq}$. We shall show that, if $\Ical$ is Ramsey, then the $\Ical$-indiscernibility of a sequence $(\bar{a}_i)_{i\in \Ical}$ in $\Msf^{\eq}$ must be witnessed by an $\Ical$-indiscernible sequence $(\bar{b}_i)_{i\in \Ical}$ in $\gG$. In general, the length of the witnessing tuples $\bar{b}_i$ will be (much) greater than that of $\bar a_i$. In any case, we will have $\vert \bar{b}_i \vert \leq N \vert \bar{a}_i \vert $ for some $N$ depending on $\qftp_\Ical(i)$ and of course on the sequence $(a_i)_i$.

As for relative quantifier elimination, we proceed in three steps. Before we get into that, we give the following useful lemma: 

\begin{lemma}\label{lem:IndiscWitnessIndependant}
   Let $\Ical$ be a Ramsey indexing structure. Let $D$ be a $\emptyset$-definable subset of a group $G$ of  exponent $p$. The indiscernibility of any sequence $(\bar{g}_i)_{i\in \Ical}$ in $G$ is witnessed by an indiscernible sequence $(\bar{a}_i\bar{b}_i)_{i\in \Ical}$ where $\cup_{i\in \Ical}\bar{b}_i \subseteq D$ and $\cup_{i\in \Ical}\bar{a}_i$ is independent (in the sense of \Cref{def:independence}) over $\braket{D}$.
\end{lemma}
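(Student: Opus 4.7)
The plan is to carve each $\bar{g}_i$ into an independent part $\bar{a}_i$ and a $D$-part $\bar{b}_i$ coordinate by coordinate using a uniform recipe, and then to apply the modelling property to extract an $\Ical$-indiscernible witness.

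First I would process the coordinates of $\bar{g}_i$ in a fixed order. At stage $k$, for each $i$ I ask whether $g_{i,k}$ lies in the subgroup $\braket{D\cup \bar{a}_i^{<k}}$, where $\bar{a}_i^{<k}$ collects the coordinates already placed into $\bar{a}_i$. Such membership is existential, witnessed by a specific group term $t$ and a finite tuple from $D$, and since $(\bar{g}_i)_{i\in\Ical}$ is $\Ical$-indiscernible and the quantifier-free type of a single index $i$ is constant in $i$, the same group term $t_k$ will either produce a witness $\bar{b}_{i,k}\in D$ with $g_{i,k}=t_k(\bar{a}_i^{<k},\bar{b}_{i,k})$ for every $i$, or for none. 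In the first case the coordinate is declared dependent and $\bar{b}_{i,k}$ is added to $\bar{b}_i$; in the second, $g_{i,k}$ is appended to $\bar{a}_i$. This produces, for every $i$, a decomposition $\bar{g}_i=\bar{f}(\bar{a}_i\bar{b}_i)$ via a fixed term tuple $\bar{f}$, with each $\bar{a}_i$ independent over $\braket{D}$ by construction.

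The sequence $(\bar{a}_i\bar{b}_i)_{i\in\Ical}$ need not itself be $\Ical$-indiscernible, so I would then apply the Generalised Standard Lemma to extract an $\Ical$-indiscernible sequence $(\bar{a}_i'\bar{b}_i')_{i\in\Ical}$ locally based on it. Pushing through $\bar{f}$ yields an $\Ical$-indexed sequence in $G$ locally based on $(\bar{g}_i)_i$, and hence sharing its Ehrenfeucht--Mostowski type, so up to an automorphism we may assume $\bar{g}_i = \bar{f}(\bar{a}_i'\bar{b}_i')$. The conditions $\bar{b}_i'\subseteq D$ and the independence of each $\bar{a}_i'$ over $\braket{D}$ are negations of countable families of quantifier-free formulas detectable on finite subtuples, hence they are inherited through local basedness.

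The main obstacle is ensuring \emph{joint} independence of $\bigcup_i \bar{a}_i'$ over $\braket{D}$, rather than merely per-$i$ independence. For this I would strengthen Step~1 so that a coordinate $g_{i,k}$ is declared dependent whenever it can be written as a group term applied to $D$, to the previously chosen coordinates for $i$, and to the independent coordinates for finitely many indices $j$ of prescribed quantifier-free $\Ical$-type relative to $i$; by the modelling property the existence of a global non-trivial relation in the final sequence would then descend to a local relation that should have been caught at some stage of the construction. Carrying out this ``joint'' bookkeeping consistently while keeping $\bar{f}$ a single per-index term tuple is the delicate part of the argument.
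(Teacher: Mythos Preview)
Your per-coordinate splitting and the subsequent use of the modelling property are sound, and in fact mirror the final step of the paper's argument (writing each $\bar b_i\in\braket{D}$ as a product of elements of $D$ and then extracting an indiscernible). One small inaccuracy first: the quantifier-free $1$-type in $\Ical$ is \emph{not} constant in general (Ramsey structures routinely carry unary predicates, e.g.\ the parts $P_1,\ldots,P_{k+1}$ in $\Hcal_k$), so the term $t_k$ you extract must be allowed to depend on $\qftp_\Ical(i)$. This is easily repaired and the paper makes the same accommodation.

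The real gap is exactly where you place it, and your proposed fix does not close it. Once you declare $g_{i,k}$ dependent via a term involving $\bar a_j$-coordinates for \emph{other} indices $j$, you can no longer reconstruct $\bar g_i$ from $(\bar a_i,\bar b_i)$ alone; insisting that $\bar f$ remain a per-index term tuple is therefore not merely ``delicate'' but impossible in general. (Concretely: if $g_i g_j\in\braket{D}$ for all $i<j$ in an order $\Ical$, then no choice of per-index $(\bar a_i,\bar b_i)$ with $\bigcup_i\bar a_i$ jointly independent over $\braket{D}$ and $\bar g_i=\bar f(\bar a_i,\bar b_i)$ exists.) What you are missing is that the definition of ``witnessed'' allows additional \emph{parameters} $\bar b$: one only needs $(\bar a_i\bar b_i)_i$ indiscernible over $\bar b$ and $\bar g_i=\bar f(\bar a_i\bar b_i,\bar b)$. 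The paper exploits this as follows. Given a cross-index relation $t(\bar a_{i_0},\ldots,\bar a_{i_{n-1}},\bar a_i)\in\braket{D}$, one uses amalgamation of $\Age(\Ical)$ and compactness to pass to an elementary extension $\Ical'\succcurlyeq\Ical$ containing a \emph{single} tuple $k_0<\cdots<k_{n-1}$ that serves as ``earlier witnesses'' simultaneously for every $j\in\Ical$ with $\qftp(j)=\qftp(i)$. Stretching the sequence to $\Ical'$ and restricting back to $\Ical$ turns $(\bar a_{k_0},\ldots,\bar a_{k_{n-1}})$ into a fixed parameter over which the $\Ical$-subsequence is indiscernible; the offending coordinate $a_j^0$ is then replaced by $t(\bar a_{k_0},\ldots,\bar a_{k_{n-1}},\bar a_j)\in\braket{D}$, which is now a genuine per-index term with parameters. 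This common-witness step (and the realisation that parameters are permitted) is the idea your proposal lacks.
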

\begin{proof}
    Observe that since $\Ical$ is Ramsey, by the Generalised Standard Lemma -- \Cref{thm:genstandlem} -- it suffices to prove the proposition when $\Ical$ is the \emph{Fra\"iss\'e limit}\footnote{See \cite[Theorem~7.1.2]{Hod93} for a statement of Fra\"iss\'e's Theorem. We will denote the \emph{Fra\"iss\'e limit} of an amalgamation class $\Ccal$ by $\Flim(\Ccal)$.} of its \emph{age}, that is, when $\Ical=\Flim(\Age(\Ical))$. To see this, suppose that the result holds for all $\Flim(\Age(\Ical))$-indexed sequences. Then, given an $\Ical$-indexed sequence $(\bar a_i)_{i\in\Ical}$, by the Generalised Standard Lemma, we can find a $\Flim(\Age(\Ical))$-indexed indiscernible $(\bar a_i')_{i\in\Flim(\Age(\Ical))}$ which is locally based on it and a $\Flim(\Age(\Ical))$-indiscernible of the desired form $(\bar b_i')_{i\in\Flim(\Age(\Ical))}$ witnessing the indiscernibility of $(\bar a_i')_{i\in\Flim(\Age(\Ical))}$. Applying the Generalised Standard Lemma again, this time to the sequence $(\bar a_i'\frown\bar b_i')_{i\in\Flim(\Age(\Ical))}$ we can find an $\Ical$-indiscernible $(\bar a_i''\frown\bar b_i'')_{i\in\Ical}$ locally based on $(\bar a_i'\frown\bar b_i')_{i\in\Flim(\Age(\Ical))}$. It is clear that $(\bar a_i'')_{i\in\Ical}$ is locally based on $(\bar a_i)_{i\in\Ical}$ and that they are both $\Ical$-indiscernible, so we can find an automorphism $\sigma$ sending $\bar a_i''$ to $\bar a_i$. Then $(\sigma(\bar b_i''))_{i\in\Ical}$ remains of the desired form and witnesses the indiscernibility of $(\bar a_i)_{i\in\Ical}$. Thus, in the remainder of the proof, assume $\Ical=\Flim(\Age(\Ical))$. Observe also that since $\Ical$ is Ramsey, by \cite[Theorem~B]{MP23}, it expands a linear order, which we will assume is already in $\Lcal'$.
    
    Let us first write $(\bar{g}_i)_{i\in \Ical}$ as a sequence $(\bar{a}_i\bar{b}_i)_{i\in \Ical}$ such that $\bar{a}_i\notin \braket{D}$ and $\bar{b}_i \in \braket{D}$, for all $i\in \Ical$.

    Consider a group term $t(\bar{x}_0,\dots, \bar{x}_{k-1},\bar{x})$ where one of the variables, say $x^0\in\bar{x}$ really occurs.\footnote{We mean that the term is not equal to a term $t'(\bar{x}_0,\dots, \bar{x}_{n-1})$} Assume that $t(\bar{a}_{i_0},\dots,\bar{a}_{i_{n-1}},\bar{a}_{i})\in \braket{D}$ for some $i_0<\cdots<i_{n-1}<i$. For every $j$ with $\qftp(j)=\qftp(i)$, we want to replace $a_{j}^0$ with a term in $\braket{D}$ such that the new sequence witnesses the indiscernibility of the original one. 
    
    For every such $j$, there are $j_0,\dots,j_{n-1}$ such that  the term $b_j''=t(\bar{a}_{j_0},\dots,\bar{a}_{j_{n-1}},\bar{a}_{j})$ is in $\braket{D}$. For $j$ such that  $\qftp(j)\neq \qftp(i)$, we set $b_j''=\emptyset$, and we can consider the sequence $(\bar{a}_i\bar{b}_ib_i'')_{i\in\Ical}$. At this point, we need the following claim:

    \begin{claim}\label{lem:same-types}
        There is an elementary extension $\Ical'\elext\Ical$ and elements $j_1<\cdots<j_n\in\Ical'$ such that for all $k\in\Ical$ for which there exist $k_1<\cdots<k_n<k$ with:
        \[
        \qftp^\Ical(i_1,\dots,i_n,i)=\qftp^\Ical(k_1,\dots,k_n,k),
        \]
        we have that: 
        \[
            \qftp^{\Ical'}(i_1,\dots,i_n,i) = \qftp^{\Ical'}(j_1,\dots,j_n,k).
        \]
    \end{claim}

    \begin{claimproof}
        This follows from the fact that $\Age(\Ical)$ has the amalgamation property and compactness. Formally, fix an enumeration $(c_i:i\in\Nbb)$ of $\Ical$, and for every finite $\Delta(y_1,\dots,y_n,y)\subseteq_\mathsf{fin} \qftp(i_1,\dots,i_n,i)$, let $\phi_m^\Delta(x_1,\dots,x_n,c_1,\dots,c_m)$ be the following formula:
        \[
            \bigwedge_{i\leq m} \left(\left(\exists z_1,\dots,z_n\Delta(z_1,\dots,z_n,c_i) \right)\limplies \Delta(x_1,\dots,x_n,c_i)\right).
        \]
        Intuitively, $\phi_m^\Delta$ says that $x_1,\dots,x_n$ is a \emph{common $\Delta$-witness}, that is, it verifies the claim, for formulas in $\Delta$ and for the first $m$ elements in the enumeration of $\Ical$, so it suffices to show that: 
        \[
            \Sigma(x_1,\dots,x_n) \coloneq \{\phi_m^\Delta(x_1,\dots,x_n,c_1,\dots,c_m):m\in\Nbb,\Delta\subseteq_\mathsf{fin}\qftp(i_1,\dots,i_n,i)\}
        \]
        is satisfiable. Clearly, it is enough to show that a single $\phi_m^\Delta$ is satisfiable, and this easily follows from the amalgamation and homogeneity of $\Ical$. Indeed, we can build a witness for $\phi_m^\Delta$ inductively: once the witness for the first $l<m$ elements has been constructed, we use amalgamation to find a structure where the $(l+1)$-st element has a common witness with the first $l$ elements (if one exists for this element). Then, since $\Ical$ is homogeneous, we can embed this structure appropriately in $\Ical$.
    \end{claimproof}

    By \Cref{lem:same-types}, we find an elementary extension $\Ical' \succcurlyeq \Ical''$ and $k_0<\cdots<k_{n-1} \in \Ical'$ such that for all $j\in \Ical$ with $\qftp(j)=\qftp(i)$, we have
    \[
        \qftp^{\Ical'}(k_0,\dots,k_{n-1},j)=\qftp^\Ical(i_0,\dots,i_{n-1},i).
    \]

    By the Generalised Standard Lemma, there is an indiscernible sequence $(\tilde{a}_i\tilde{b}_i\tilde{b}''_i)_{i\in \Ical'}$ indexed by $\Ical'$ that is locally based on $(\bar{a}_i\bar{b}_ib''_i)_{i\in\Ical}$. Notice that the sequence $(\tilde{a}_i\tilde{b}_i\tilde{b}''_i)_{i\in \Ical'}$ restricted to $\Ical\subseteq\Ical'$ is indiscernible over $\{\tilde{a}_{k_0},\dots,\tilde{a}_{k_{n-1}}\}$.

    We find an automorphism $\sigma$ sending $\bar{a}_i\bar{b}_i$ to $ \tilde{a}_i\tilde{b}_i$, for all $i\in \Ical$. Denote by $\bar{b}\coloneq\bar{b}_0,\dots,\bar{b}_{n-1}$ the tuple $\sigma^{-1}(\tilde{a}_{k_0},\dots,\tilde{a}_{k_{n-1}})$. 

    For $j\in \Ical$ with $\qftp(i)=\qftp(j)$, denote $c_{j}=t(\bar{b}_{0},\dots,\bar{b}_{k-1},\bar{a}_{j})\in D$ and by $\bar{a}_j'$ the tuple $\bar{a}_j$ from which we removed $a_j^0$. For $j \in \Ical$ with $\qftp(i)=\qftp(j)$, we set $\bar{a}_j'=\bar{a}_j$ and $c_{j}=\emptyset$.

    Since we can express $a_i^0$ in terms of $c_{i}$ and $\bar{a}_i\setminus \{a_i^0\}$, we have that the indiscernibility of $(\bar{a}_i\bar{b}_i)_{i\in\Ical}$ is witnessed by that of $(\bar{a}_i' \bar{b}_i\frown c_i)_{i>k}$ over $\bar{b}$. 
    We may repeat the process until, for all $\qftp(j)=\qftp(i)$, $\bar{a}_j$ is independent over $\braket{D}$, and we reset the notation. Of course, this process must essentially be carried out for all quantifier-free types in $\Ical$, but the transformations for different quantifier-free types do not interact with each other, and hence we can perform them all simultaneously.
    
    It remains to find $\bar{b}_i'$ in $D$ such that $(\bar{a}_i,\bar{b}_i')_{i\in\Ical}$ is indiscernible and witnesses the indiscernibility of $(\bar{a}_i,\bar{b}_i)_{i\in\Ical}$. To simplify the notation, assume $\bar{b}_i=b_i$ is a singleton and $b_i = b_{0,i}\cdot \cdots \cdot b_{n-1,i}$ where $b_{k,i}\in D$.  
    
    By the Generalised Standard Lemma, we can now find an indiscernible sequence $(\tilde{a}_i,\tilde{b}_{0,i},\dots,\tilde{b}_{n-1,i})_{i\in\Ical}$ that is locally based on $(\bar{a}_i,b_{0,i},\dots,b_{n-1,i})_{i\in\Ical}$. For every $i\in\Ical$, set $\tilde{b}_i=\tilde{b}_{0,i}\cdot \cdots \cdot \tilde{b}_{n-1,i}$. 
    
    Finally, let $\sigma$ be an automorphism which for every $i$ sends $(\tilde{a}_i,\tilde{b}_i)$ to $(\bar{a}_i,b_i)$. Then $\sigma(\tilde{b}_{0,i}'),\dots,\sigma(\tilde{b}_{n-1,i}')$ are as desired.
\end{proof}

\subsubsection{Reduction from \texorpdfstring{$G$}{G} to \texorpdfstring{$\mathcal{F}(G)$}{F(G)}}
Let $G$ be a $2$-nilpotent group of prime exponent $p$. We characterise the indiscernibility of sequences in $G$ in terms of indiscernibility in $\mathcal{F}(G)$.

\begin{proposition}\label{prop:IndiscerniblereductionF(G)}
\
    \begin{itemize}
        \item ($\Ical$ Ramsey) The indiscernibility of a sequence $(g_i)_{i\in\Ical}$ from $G$ is witnessed by an indiscernible sequence  $(\bar{a}_i\bar{b}_i)_{i\in\Ical}$ such that $\bar{b}_i \in \Zsf$ and $\bar{a}_i$ are independent over $\Zsf$.
        \item (Arbitrary $\Ical$) Let $(\bar{a}_i\bar{b}_i)_{i\in\Ical}$ be a sequence in $G$ such that $\bar{b}_i \in \Zsf$ and $\bar{a}_i$ are independent over $\Zsf$.     Then $(\bar{a}_i\bar{b}_i)_{i\in\Ical}$ is indiscernible if and only if $\left(\pi(\bar{a}_i),\rho\left(\bar{b}_i\right)\right)_{i\in\Ical}$ is indiscernible in $\mathcal{F}(G)$.
    \end{itemize}
\end{proposition}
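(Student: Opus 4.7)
The plan is to handle the two parts separately. Part (1) is a direct application of \Cref{lem:IndiscWitnessIndependant}, with $D = \Zsf$: this is $\emptyset$-definable in $G$ as the centre, and $\braket{\Zsf}=\Zsf$, so the lemma produces precisely a witnessing sequence $(\bar{a}_i\bar{b}_i)_{i\in\Ical}$ with $\bar{b}_i\in\Zsf$ and $\cup_{i\in\Ical}\bar{a}_i$ independent over $\Zsf$.

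For part (2), the ``only if'' direction is immediate, since $\Zsf,\pi$ and $\rho$ are all $\emptyset$-definable (or interpretable) in $G$. For the converse, we invoke the relative quantifier elimination of \Cref{fact:RQEGtoF(G)}: any $\lL_{\mathsf{grp}}$-formula on tuples $(\bar{a}_{i_k}\bar{b}_{i_k})_{k\leq n}$ reduces modulo $\Th(G)$ to a Boolean combination of formulas of the form $\phi_{\mathcal{F}(G)}(\pi(t(\bar{x}\bar{y})),\rho(t(\bar{x}\bar{y})))$ for group terms $t$. It therefore suffices to show that for any such $t$, both $\pi(t(\bar{a}_{i_1}\bar{b}_{i_1},\ldots,\bar{a}_{i_n}\bar{b}_{i_n}))$ and $\rho(t(\bar{a}_{i_1}\bar{b}_{i_1},\ldots,\bar{a}_{i_n}\bar{b}_{i_n}))$ can be recovered from $(\pi(\bar{a}_{i_k}),\rho(\bar{b}_{i_k}))_{k\leq n}$ through $\emptyset$-definable functions of $\mathcal{F}(G)$.

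The key computation relies on $2$-nilpotency: every group term $t(\bar{x},\bar{y})$ can be rewritten in the form $t_1(\bar{x})\cdot t_2(\bar{y})\cdot z(\bar{x},\bar{y})$, where $z$ is a product of commutators. Evaluating on $(\bar{a},\bar{b})$ with $\bar{b}\in\Zsf$ kills all commutators involving $\bar{b}$, so $t(\bar{a},\bar{b})=t_1(\bar{a})\cdot t_2(\bar{b})\cdot z'(\bar{a})$ with $t_2(\bar{b}), z'(\bar{a}) \in \Zsf$. Consequently $\pi(t(\bar{a},\bar{b}))=t_1(\pi(\bar{a}))$, a function of $\pi(\bar{a})$ (using that $G/\Zsf$ is abelian). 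For $\rho$ there is a dichotomy: if $t_1(\bar{a})\notin \Zsf$, then $\rho(t(\bar{a},\bar{b}))=0$; otherwise the independence of $\cup_i\bar{a}_i$ over $\Zsf$ forces $t_1(\bar{a})=1$, whence $\rho(t(\bar{a},\bar{b}))=\rho(t_2(\bar{b}))+\rho(z'(\bar{a}))$, which is a sum of an $\Fbb_p$-linear function of $\rho(\bar{b})$ and of $\beta$-values on $\pi(\bar{a})$.

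The main subtlety, and where independence is essential, lies in the uniformity of this dichotomy: the condition ``$t_1(\bar{a})\in\Zsf$'' must depend on the term $t_1$ alone, not on the specific tuple, so that the same branch is taken simultaneously for $(\bar{a}_{i_k}\bar{b}_{i_k})_k$ and for $(\bar{a}_{j_k}\bar{b}_{j_k})_k$. Independence guarantees exactly this: $t_1(\bar{a})\in\Zsf$ iff $t_1(\pi(\bar{a}))=0$ in the $\Fbb_p$-vector space $G/\Zsf$, iff all coefficients in $t_1$ vanish, which is a property of $t_1$. Hence, for indices $(i_k)$ and $(j_k)$ of the same quantifier-free $\Ical$-type, the $\mathcal{F}(G)$-indiscernibility of $(\pi(\bar{a}_i),\rho(\bar{b}_i))_{i\in\Ical}$ yields equality of $\mathcal{F}(G)$-types of $(\pi(t),\rho(t))$ evaluated at the two index tuples, for every group term $t$. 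By relative quantifier elimination, this establishes the desired $\Ical$-indiscernibility of $(\bar{a}_i\bar{b}_i)_{i\in\Ical}$ in $G$.
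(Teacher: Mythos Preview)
Your proof is correct and follows the same approach as the paper: Part~(1) is \Cref{lem:IndiscWitnessIndependant} with $D=\Zsf$, and Part~(2) uses \Cref{fact:RQEGtoF(G)} to reduce every $\lL_{\mathsf{grp}}$-formula to an $\mathcal{F}(G)$-formula in $\pi(t)$ and $\rho(t)$, then expresses these uniformly as $\mathcal{F}(G)$-terms in $(\pi(\bar a_i),\rho(\bar b_i))$. Your treatment of $\rho$ is in fact more explicit than the paper's---you correctly isolate the commutator contribution $z'(\bar a)$ and observe that $\rho(z'(\bar a))$ is a $\beta$-expression in $\pi(\bar a)$, whereas the paper simply asserts that $\rho(t(\bar a_i,\bar b_i))$ reduces to a term in $\rho(\bar b_i)$ (implicitly absorbing the commutator part, which depends on $\pi(\bar a_i)$, into the $\mathcal{F}(G)$-formula); your emphasis on the uniformity of the dichotomy across indices is also a helpful clarification.
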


\begin{proof}
    The first point follows from \Cref{lem:IndiscWitnessIndependant}, by setting $D=\Zsf$, the centre of $G$. 
    
    It remains to show the second point. Notice that, since $\bar{b}_i$ are central elements, for every $\lL_\mathsf{grp}$-term $t(\bar{x},\bar{y})$, there is an $\lL_\mathsf{grp}$-term $t''(\bar{x})$ such that:
    \[
        \pi(t(\bar{a}_i,b_i)) = \pi(t''(\bar{a}_i))=t''(\pi(\bar{a}_i)).\footnote{The term $t''$ should be understood multiplicatively in the group $G$ and additively in the sort $\vW$ of $\Fcal(G)$.}
    \]
    Also, since $\bar{a}_i$ are independent over $\Zsf$, for every $\lL_\mathsf{grp}$-term $t(\bar{x},\bar{y})$ there is an $\lL_\mathsf{grp}$-term $t'(\bar{y})$ such that:  
    \[
        \rho(t(\bar{a}_i,b_i)) = \rho(t'(\bar{b}_i))=t'(\rho(\bar{b}_i)).
    \]
    Then, by quantifier elimination, we know that any formula $\phi(\bar{x},\bar{y})$ is equivalent to a formula of the form:
    \[
        \phi_{\mathcal{F}(G)}(\pi(t(\bar{x},\bar{y})),\rho(t(\bar{x},\bar{y}))),
    \]
    where $\phi_{\mathcal{F}(G)}(\bar{y},\bar{z})$ is a formula in the language $\{(V,+,0),(W,+,0),\beta\}$. 

    By the above, we find group terms $t'(\bar{y})$ and $t''(\bar{x})$ such that $\phi(\bar{a}_i,\bar{b}_i)$ is equivalent to:
    \[
        \phi_{\mathcal{F}(G)}(t'(\pi(\bar{a}_i)),t''(\rho(\bar{b}_i))).
    \]
    The statement follows easily.

 \end{proof}
\subsubsection{Reduction from \texorpdfstring{$\{(\vV,+,0),(\vW,+,0),\beta\}$}{\{(V,+,0),(W,+,0), β\} } to \texorpdfstring{$(\vV,+,0,R)$}{(V,+,0,R)}}
Consider an alternating bilinear system $\{(\vV,+,0),(\vW,+,0),\beta\}$ satisfying Property $(*_f)$ of \Cref{cor:SepVectorsupspace}. Let $(\bar{v}_i\bar{u}_i \bar{w}_i)_{i\in\Ical}$ be a sequence with $\bar{v}_i \in \vV$, $\bar{u}_i\in\bigcup_n \vW_n$, and $\bar{w}_i \notin \bigcup_n \vW_n$.

\begin{lemma} \label{lem:IndiscWitness}
    Let $\Ical$ be a Ramsey indexing structure. Let $(\bar{v}_i\bar{u}_i \bar{w}_i)_{i\in\Ical}$ be an indiscernible sequence with $\bar{v}_i \in \vV$, $\bar{u}_i\in\bigcup_n \vW_n$, and $\bar{w}_i \notin \bigcup_n \vW_n$. Then, for every realised quantifier-free type $q$ in $\Ical$ there exist a positive integer $n_q$, coefficients $\bar{d}^{k,l}_q$, for $k<l<n_q$ and for every $i\in\Ical$ with $\qftp(i) = q$, a sequence of tuples $(\bar{v}^{0,i}_q, \dots, \bar{v}^{n_q-1,i}_q)$ such that:
    \begin{itemize}
        \item $(\bar{v}_i,\bar{v}^{0,i}_{\qftp(i)},\dots,\bar{v}^{n-1,i}_{\qftp(i)}, \bar{w}_i)_{i\in\Ical}$ is indiscernible; and
        \item For all $i\in\Ical$ we have:
        \[
            \bar{u}_i= \sum_{k<l} \bar{d}^{k,l}_{q} \beta (\bar{v}^{k,i}_{\qftp(i)},\bar{v}^{l,i}_{\qftp(i)}),\footnote{The sum and $\beta$ are componentwise.}
        \]
        where $q= \qftp(i)$.
    \end{itemize}
\end{lemma}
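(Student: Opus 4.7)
The plan is, for each realised quantifier-free type $q$ of $\Ical$, to pass from an arbitrary $\beta$-decomposition of $\bar{u}_{i_0}$ (for some fixed $i_0$ with $\qftp(i_0)=q$) to a decomposition with the \emph{same} coefficients for every $i$ with $\qftp(i) = q$, and then to replace the resulting ad hoc vectors with indiscernible ones via the Generalised Standard Lemma. First, for each such $q$ and each coordinate of $\bar u_i$, the property ``$x \in \vW_n$'' is $\emptyset$-definable for each fixed $n$, so by indiscernibility of $(\bar u_i)_{i \in \Ical}$ there is a minimal integer $n_q$ such that every coordinate of $\bar u_i$ lies in $\vW_{n_q}$ for all $i$ of type $q$. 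Fixing such an $i_0$, pick a decomposition
\[
    \bar u_{i_0} = \sum_{k < l < n_q} \bar d^{k,l}_q \, \beta\bigl(\bar v^{k, i_0}_q,\, \bar v^{l, i_0}_q\bigr)
\]
(understood componentwise). For any other $i$ with $\qftp(i) = q$, indiscernibility provides an automorphism $\sigma_i \in \Aut(\mM)$ sending $\bar u_{i_0}$ to $\bar u_i$; setting $\bar v^{k,i}_q := \sigma_i(\bar v^{k, i_0}_q)$ yields a decomposition of $\bar u_i$ with the \emph{same} coefficients $\bar d^{k,l}_q$. This pins down $n_q$ and $\bar d^{k,l}_q$ uniformly in each type.

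Next, consider the enriched (not yet indiscernible) sequence
\[
    \bigl(\bar v_i,\, \bar u_i,\, \bar w_i,\, \bar v^{0,i}_{\qftp(i)}, \dots, \bar v^{\,n_{\qftp(i)} - 1,\, i}_{\qftp(i)}\bigr)_{i \in \Ical}.
\]
Since $\Ical$ is Ramsey, the Generalised Standard Lemma yields an $\Ical$-indiscernible sequence $(\bar v'_i, \bar u'_i, \bar w'_i, \tilde v^{0,i}, \dots, \tilde v^{\,n_{\qftp(i)} - 1,\, i})_{i \in \Ical}$ locally based on it. Because $(\bar v_i, \bar u_i, \bar w_i)_{i \in \Ical}$ was already indiscernible, local basing forces $(\bar v'_i, \bar u'_i, \bar w'_i)_{i \in \Ical}$ to have the same EM-type, and hence (both being $\Ical$-indiscernible) they are conjugate by some $\tau \in \Aut(\mM)$. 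Setting $\hat v^{k,i}_q := \tau(\tilde v^{k,i})$ for $q = \qftp(i)$, the sequence $(\bar v_i, \hat v^{0,i}_q, \dots, \hat v^{\,n_q - 1,\, i}_q, \bar w_i)_{i \in \Ical}$ is indiscernible. Finally, the \emph{single} formula
\[
    x = \sum_{k < l < n_q} \bar d^{k,l}_q \, \beta\bigl(y^k,\, y^l\bigr)
\]
holds on the enriched sequence at every index of type $q$, by construction of the uniform coefficients. Including this formula in the finite set $\Delta$ used for local basing propagates it to the new sequence, and then $\tau$ transports it to $\bar u_i = \sum_{k<l<n_q} \bar d^{k,l}_q \, \beta(\hat v^{k,i}_q, \hat v^{l,i}_q)$, as required.

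The main difficulty lies in the first paragraph: the coefficients $\bar d^{k,l}_q$ must be chosen \emph{before} invoking the Generalised Standard Lemma, so that the decomposition relation becomes a single $\emptyset$-definable formula that can be forced into $\Delta$; otherwise one would only control the \emph{existential statement} ``some decomposition exists'', which is too weak to produce explicit witnesses $\hat v^{k,i}_q$. The non-canonicity of the $\sigma_i$ is harmless here, as they are used only to exhibit the existence of witnesses — the standard lemma then replaces them with globally coherent indiscernible ones. A secondary point is that $n_q$ and the $\bar d^{k,l}_q$ genuinely vary with $q$; we make no attempt to uniformise across types, matching the stated conclusion.
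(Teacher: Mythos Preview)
Your proof is correct and follows essentially the same strategy as the paper: fix uniform coefficients $\bar d^{k,l}_q$ depending only on $\qftp(i)$ (the paper does this in one line ``by indiscernibility'', you spell it out via automorphisms $\sigma_i$), enrich the sequence with the decomposing vectors, apply the Generalised Standard Lemma, and pull back via an automorphism to the original $(\bar v_i,\bar u_i,\bar w_i)$. The only cosmetic difference is that the paper drops $\bar u_i$ from the enriched sequence and instead \emph{reconstructs} $\tilde u_i$ from the new $\tilde v^{k,i}$'s before finding the automorphism, whereas you keep $\bar u_i$ in the sequence throughout; both variants are equivalent since $\bar u_i$ is $\emptyset$-definable from the $\bar v^{k,i}$'s once the coefficients are fixed.
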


\begin{proof}
    To simplify the notation, assume that $\bar{u}_i=u_i$ is a singleton. For every $i\in\Ical$, we can rewrite $u_i$ as a sum $\beta(v',v'')$ for $v',v''$ certain tuples $v_{0,i},\dots,v_{n-1,i}$. By indiscernibility, the we can choose the $n$ and the coefficients depending only on $\qftp(i)$, so, for all $i\in\Ical$ with $\qftp(i)=q$ we have:
    \[
        u_i= \sum_{k<l} \bar{d}^{k,l}_{q} \beta (\bar{v}^{k,i},\bar{v}^{l,i}),
    \]
    for some $d^{k,l}_q\in \mathbb{F}_p$. For every realised quantifier-free type $q$ in $\Ical$ and all $i\in\Ical$ realising $q$, we substitute $u_i$ by the tuples $v^{0,1},\dots,v^{n_q-1,i}$ we found above. The new sequence need not be indiscernible, but by the Generalised Standard Lemma, we can now find an indiscernible sequence 
    \[          (\tilde{v}_i,\tilde{v}^{0,i},\dots,\tilde{v}^{n_{\qftp(i)}-1,i}, \tilde{w}_i)_{i\in\Ical}
    \]
    locally based on our new sequence. Set 
    \[\Tilde{u}= \sum_{k<l} \bar{d}^{k,l}_{q} \beta (\tilde{v}^{k,i},\tilde{v}^{l,i}).\]
    Let $\sigma$ be an automorphism which, for every $i$, sends $(\tilde{v}_i,\tilde{u}_i,\tilde{w}_i)$ to $(\bar{v}_i,\bar{u}_i,\bar{w}_i)$. Then, for each $i\in\Ical$, the tuple $\sigma(\tilde{v}^{0,i}),\dots,\sigma(\tilde{v}^{n_{\qftp(i)}-1,i})$ is as desired.
\end{proof}
\begin{proposition}\label{prop:IndiscerniblereductionVR} \
    \begin{itemize}
        \item ($\Ical$ Ramsey) Any indiscernible sequence $(\bar{u}_i)_{i\in\Ical}$ is witnessed by a sequence $(\bar{v}_i\bar{w}_i)_{i\in\Ical}$ such that $\bar{v}_i \in V$ and $\cup\bar{w}_i$ are in $\vW$ and linearly independent over $\bigcup_n \vW_n$.
        \item (Arbitrary $\Ical$) Let $(\bar{v}_i\bar{w}_i)_{i\in\Ical}$ be a sequence in $(\vV,\vW,\beta)$ with $\bar{v}_i \in \vV$ and $\bar{w}_i\in \vW$. Assume that $\bigcup_i \bar{w}_i$ is a linearly independent subset of $\vW$ over $\bigcup_n \vW_n$.
        
        Then $(\bar{v}_i\bar{w}_i)_{i\in\Ical}$ is indiscernible if and only if $(\bar{v}_i)_{i\in\Ical}$ is indiscernible in $(\vV,+,0,R)$.
    \end{itemize}
\end{proposition}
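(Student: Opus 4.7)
The proposition is the bilinear-system analogue of \Cref{prop:IndiscerniblereductionF(G)}, with $\Zsf$ replaced by the subspace $\bigcup_n \vW_n = \braket{\beta(\vV,\vV)}$. My plan is to combine \Cref{lem:IndiscWitness} with an adaptation of the argument of \Cref{lem:IndiscWitnessIndependant}, and then to read off the second statement directly from the relative quantifier elimination \Cref{prop: bilinearsystemEQR}.

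For part (1), I first split the indiscernible tuples as $\bar{u}_i=\bar{v}_i \frown \bar{u}_i'\frown \bar{w}_i$ with $\bar{v}_i\in \vV$, $\bar{u}_i'\in \bigcup_n \vW_n$, and $\bar{w}_i\in \vW\setminus \bigcup_n \vW_n$. I apply \Cref{lem:IndiscWitness} to the sub-sequence $(\bar{u}_i')_{i\in\Ical}$: for every realised quantifier-free type $q$ in $\Ical$ this produces $\vV$-tuples $\bar{v}^{*,i}_q$ and fixed coefficients $\bar{d}^{k,l}_q$ such that each $\bar{u}_i'$ is recovered as the same $\emptyset$-definable sum of $\beta$-terms in the $\bar{v}^{*,i}_{\qftp(i)}$, and such that the enlarged sequence is still indiscernible. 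This reduces us to witnessing the indiscernibility of the original sequence by one of the form $(\bar{v}_i\frown \bar{v}^{*,i}_{\qftp(i)},\bar{w}_i)_{i\in \Ical}$, whose $\vW$-component now avoids $\bigcup_n \vW_n$ pointwise.

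Second, I eliminate remaining linear dependences of the $\bar w_i$ modulo $\bigcup_n \vW_n$ by repeating the scheme of \Cref{lem:IndiscWitnessIndependant}, stratified by $n$. If some nontrivial linear combination $w=\sum c_k w_{i_0,k}+\dots$ over the entries of finitely many $\bar w_{i_j}$ lies in some $\vW_n$, one writes $w$ explicitly as a sum of $\beta$-terms on finitely many $\vV$-tuples. Then, exactly as in the proof of \Cref{lem:IndiscWitnessIndependant}, I invoke \Cref{lem:same-types} together with amalgamation of $\Age(\Ical)$ and the Generalised Standard Lemma to produce indices in an elementary extension of $\Ical$ that are a uniform common witness for every index with $\qftp(j)=\qftp(i_0)$, then use an automorphism to pull these witnesses back to parameters inside the sequence. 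This lets me replace one of the components $w_{i_0,k}$ by an element of $\braket{\beta(\vV,\vV)}$, which is then re-absorbed into the $\vV$-part by a further application of \Cref{lem:IndiscWitness}. Iterating over all realised quantifier-free types and all $n$, one arrives at a sequence of the required form.

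Part (2) is essentially immediate given \Cref{prop: bilinearsystemEQR}. The forward direction is a projection: any $(\vV,+,0,R)^\eq$-formula is a formula of the bilinear system, and the remark after \Cref{prop: bilinearsystemEQR} says that the induced structure on $\vV$ is precisely $(\vV,+,0,R)$. For the converse, by \Cref{prop: bilinearsystemEQR} it suffices to check that tuples of indices of the same quantifier-free type agree on atomic formulas of the two shapes
\[
\sum_{l,k} c_{l,k}\, w_{i_l,k}=0 \qquad \text{and} \qquad \phi_\vV\!\left(f_n\!\left(\textstyle\sum_{l,k} c_{l,k}\, w_{i_l,k}\right),\bar{v}_{i_1},\dots,\bar{v}_{i_m}\right).
\]
For the first shape, linear independence of $\bigcup_i \bar{w}_i$ over $\bigcup_n \vW_n$ (in particular over $0$) forces the truth value to depend only on the coefficients $c_{l,k}$ and not on the indices. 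For the second shape, the argument of $f_n$ is either the zero vector (if all coefficients vanish) or an element of $\vW \setminus \bigcup_n \vW_n$ (by the same independence hypothesis), in which case $f_n$ evaluates to $u$; in either case the formula reduces to a $(\vV,+,0,R)^\eq$-formula in the $\bar{v}_{i_l}$, whose truth value is controlled by the hypothesis that $(\bar{v}_i)_{i\in\Ical}$ is $(\vV,+,0,R)$-indiscernible.

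I expect the main obstacle to lie in Part (1), namely in keeping track of how the successive absorption steps interact across different quantifier-free types in $\Ical$, and verifying that the process terminates (or can be carried out in a single compactness step). However, since $\bigcup_n \vW_n$ is a directed union of the $\emptyset$-definable sets $\vW_n$, one can perform the procedure one value of $n$ at a time; and within each stage, the parallel between our set-up and the group-theoretic argument of \Cref{lem:IndiscWitnessIndependant} is sufficiently tight that the same amalgamation/standard-lemma argument applies almost verbatim.
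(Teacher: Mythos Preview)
Your proposal is correct and follows essentially the same route as the paper. For Part~(1) you apply \Cref{lem:IndiscWitness} and \Cref{lem:IndiscWitnessIndependant} in the opposite order (first absorbing the components already in $\bigcup_n\vW_n$, then eliminating cross-index dependences modulo $\bigcup_n\vW_n$), but this is immaterial; your treatment of the stratification over $n$ is in fact more explicit than the paper's terse ``with $D=\vW_n$ for every $n$''. Your Part~(2) is identical to the paper's argument.
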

\begin{proof} 
    By applying \Cref{lem:IndiscWitnessIndependant} with $D=\vW_n$ for every $n$, we may assume that $\cup\bar{w}_i \setminus\bigcup_n \vW_n $ are linearly independent over $\bigcup_n \vW_n$. If $w\in \cup\bar{w}_i$ is in $\vW_n$, we use \Cref{lem:IndiscWitness} (for every quantifier-free type) to replace $w$ with $n$ (potentially) new elements $v\in \vV$. This gives us the first point.

    It remains to show the second. Let $\phi(v,w)$ be a formula. By relative quantifier elimination, the formula is equivalent to a Boolean combination of formulas for the form: 
    \begin{itemize}
        \item $\sum a_i w_i=0$ where $a_i's$ are in $\mathbb{F}_p$,
        \item $\phi_\vV( f_{n}(\sum a_{0,i} w_i),\dots,f_{n}(\sum a_{k-1,i} w_i), \bar{v})$ where $\phi_\vV$ is a formula in the language of $(\vV,+,0,R)^{\eq}$, and $a_{j,i}$'s are in $\mathbb{F}_p$.
    \end{itemize}
    
    Notice that, by definition of the linear independence of $\bar{w}_i$'s, a formula of the form $\sum a_i w_i=0$ never holds unless all coefficients $a_i$ are $0$. Also, since non-trivial sums $\sum a_{i} w_i$ are never in $\vW_n$, we must have $f_{n}(\sum a_{i} w_i)=\und$ (that is, the function $f_n$ is not defined on $\sum a_{i} w_i$). It follows that $\phi((\bar{v}_i)_i,(w_i)_i)$ is equivalent to a formula of the form $\phi_\vV'( (\bar{v}_i)_i)$, and the statement follows.
\end{proof}

\subsubsection{Reduction from \texorpdfstring{$(\vV,+,0,R)$}{(V,+,0,R)} to \texorpdfstring{$(\gG,E)$}{(C,E)}}
Let $\Msf=(G,\cdot,1)$ be a Mekler group, denote additively $(\vV,+,0)$ the abelian quotient $G/\Zsf$, and denote by $R$ (resp. $E$) the relation induced on $\vV$ (resp. on $\gG$) by the commutativity relation on $\Msf$.

\begin{proposition}\label{prop:IndiscerniblereductionCR}\
    \begin{itemize}
        \item ($\Ical$ is Ramsey) Any indiscernible sequence $(\bar{u}_i)_{i\in\Ical}$ in $\vV$ is witnessed by a sequence $(\bar{v}_i\bar{v}_i'\bar{v}_i'')_{i\in\Ical}$         which is part of a transversal, i.e. such that:
            \begin{itemize}
                \item  $\cup_i\bar{v}_i$ is composed of elements of type $1^\nu$,
                \item $\cup_i\bar{v}_i'$ is composed of elements of type $p$ and linearly independent over elements of type $1^\nu$,
               \item $\cup_i\bar{v}_i''$ is composed of elements of type $1^\iota$ and linearly independent over elements of type $1^\nu$ and $p$.    
        \end{itemize}
 
        \item (Arbitrary $\Ical$) Consider a sequence $(\bar{v}_i,\bar{v}_i',\bar{v}_i'')_{i\in\Ical}$ such that:
        \begin{itemize}
            \item  $\cup_i\bar{v}_i$ is composed of elements of type $1^\nu$,
            \item $\cup_i\bar{v}_i'$ is composed of elements of type $p$ and linearly independent over element of type $1^\nu$,
            \item $\cup_i\bar{v}_i''$ is composed of elements of type $1^\iota$ and linearly independent over elements of type $1^\nu$ and $p$.
        \end{itemize}
    \end{itemize}
    Then $(\bar{v}_i,\bar{v}_i',\bar{v}_i'')_{i\in\Ical}$ is indiscernible if and only if, the sequence 
    \[
        ([\bar{v}_i]_\sim,h(\bar{v}_i'))_i,\footnote{This is the sequence of tuples consisting of the class modulo $\sim$ of each component of $\bar{v}_i$ and of the handles of each component of $\bar{v}_i'$.}
    \]
    is indiscernible in $(\gG,E)$.
\end{proposition}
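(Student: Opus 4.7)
The plan is to prove the two parts separately, with the first part (construction of the witnessing transversal-like sequence) relying on the Generalised Standard Lemma together with iterated applications of \Cref{lem:IndiscWitnessIndependant}, and the second part (equivalence of indiscernibilities) relying entirely on the relative quantifier elimination from \Cref{prop:RQEGraph}.

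For the first part, starting from an indiscernible sequence $(\bar{u}_i)_{i\in\Ical}$ in $\vV$, I would fix a transversal of $\vV$ and, for each $i$, write the components of $\bar{u}_i$ as sums $\bar{v}_i + \bar{v}_i' + \bar{v}_i''$ where $\bar v_i$ is a sum of $1^\nu$-elements, $\bar v_i'$ a sum of $p$-elements and $\bar v_i''$ a sum of $1^\iota$-elements. By indiscernibility, the lengths of these decompositions depend only on $\qftp_\Ical(i)$. Applying the Generalised Standard Lemma to the lengthened sequence $(\bar{v}_i,\bar{v}_i',\bar{v}_i'')_{i\in\Ical}$ yields an indiscernible sequence locally based on it, and the substitution can be done simultaneously across all quantifier-free types without interaction (exactly as in the proof of \Cref{lem:IndiscWitness}). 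To secure the independence clauses, I would then apply \Cref{lem:IndiscWitnessIndependant} iteratively, first with $D$ the subgroup generated by elements of type $1^\nu$ (forcing linear independence of $\bigcup_i\bar v_i'$ over $\tE^\nu$) and then with $D$ enlarged by the $p$-elements (forcing linear independence of $\bigcup_i\bar v_i''$ over $\langle\tE^\nu,\tE^p\rangle$); since $\approx$-equivalent elements of the same class can always be absorbed, this does not destroy the decomposition form.

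For the second part, the left-to-right implication is immediate because the maps $x\mapsto [x]_\sim$ and $g\mapsto h(g)$ are $\emptyset$-definable. For the converse, I use \Cref{prop:RQEGraph}: every formula in $(\vV,+,0,R)$ is a Boolean combination of formulas of the shape $A_{n,m}(t(\bar{x}))$, $t(\bar{x})=0$, and $\phi_\gG(S_{n,m}(t(\bar{x})),S_{n,m}'(t(\bar{x})))$, where $t$ is a group term. The crux is to evaluate these on a term $t$ applied to a tuple from the transversal-like sequence: by the independence hypotheses, $t(\bar{v}_{i_1},\bar{v}'_{i_1},\bar{v}''_{i_1},\dots)$ admits a unique decomposition into its $1^\nu$-, $p$- and $1^\iota$-parts; no $1^\iota$-summand can be cancelled against the other parts, and similarly no $p$-part collapses into a product of $1^\nu$-elements, except in the controlled cases appearing in the proof of \Cref{prop:RQEGraph}. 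Consequently, the value of $A_{n,m}\circ t$ is determined by the pattern of coincidences of $\sim$-classes and of handles among the components, while $S_{n,m}\circ t$ and $S_{n,m}'\circ t$ return sets built directly from these classes and handles. All of this data is read off the sequence $([\bar{v}_i]_\sim,h(\bar{v}_i'))_{i\in\Ical}$, so the truth of each building-block formula on any index tuple depends only on $\qftp_\gG^\Lcal$ of the corresponding tuple in $\gG$. Indiscernibility in $\gG$ then transfers directly.

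The main technical obstacle is the careful bookkeeping of case (3) in the proof of \Cref{prop:RQEGraph}, where a product of $p$-elements and $1^\nu$-elements can, under the right commutation relations, collapse into a shorter product. One must verify that under the independence assumptions on the transversal-like sequence, the minimal $(n,m)$ with $A_{n,m}(t(\bar{v}_{i_1},\bar{v}'_{i_1},\bar{v}''_{i_1},\dots))$, as well as the values $S_{n,m}$ and $S_{n,m}'$ at this minimum, are uniformly computable from the graph data and the term $t$. Once this is granted, the two parts combine to give the proposition.
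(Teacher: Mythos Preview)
Your proposal is correct and follows essentially the same route as the paper: \Cref{lem:IndiscWitnessIndependant} (iterated) for the first bullet, and the relative quantifier elimination of \Cref{prop:RQEGraph} for the second. One simplification you are missing: the ``case (3) bookkeeping'' you flag as the main technical obstacle does not actually arise. Because $(\bar v_i,\bar v_i',\bar v_i'')$ lies in a transversal, the independence hypotheses prevent any collapsing, so the support and handle functions on a product split as plain unions, e.g.\ $S_{n,m}(v_{i_1}\cdots v_{i_n}v'_{j_1}\cdots v'_{j_m})=\{[v_{i_1}]_\sim,\dots,[v_{i_n}]_\sim\}$, and hence are recovered from the individual values $S_{1,0}(v_{i_a})$ and $S'_{0,1}(v'_{j_b})$. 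This is precisely the paper's single remark in its (otherwise one-line) proof, and it makes the equivalence with indiscernibility in $(\gG,E)$ immediate rather than requiring any case analysis.
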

\begin{proof}
    We deduce the first point directly from \Cref{lem:IndiscWitnessIndependant}, and the second follows immediately from relative quantifier elimination. Notice that we don't need to consider terms of the form $S_{n,m}(v_{i_1}\cdots v_{i_n} v_{j_1}'\cdots v_{j_m}')= \{[v_{i_1}]_\sim, \dots, [v_{i_n}]_\sim\}$ for $n+m>1$ since they can be recovered from $S_{1,0}(v_{i_j})$ for $1\leq j\leq n$. 
\end{proof}

\subsubsection{Reduction from \texorpdfstring{$(G,\cdot,1)$}{(G,•,1)} to \texorpdfstring{$(\gG,E)$}{(C,E)}}

Let $\mathcal{M}=(G,\cdot,1)$ be a monster model of a Mekler group, and let $\gG=(V,E)$ be the corresponding nice graph.

\begin{lemma} \label{lem:IndiscWitness2}
    Let $(\alpha_i \beta_i)_{i\in\Ical}$ be an indiscernible sequence where for all $i$'s, $\beta_i \in \Zsf$ is a finite product of commutator. Then, for every realised quantifier-free type $q$ in $\Ical$ there exist a positive integer $n_q$, coefficients $\bar{d}^{k,l}_q$, for $k<l<n_q$ and for every $i\in\Ical$ with $\qftp(i) = q$, a sequence of tuples $(\beta^{0,i}_q, \dots, \beta^{n-1,i}_q)$ such that:
    \begin{itemize}
        \item The sequence $(\alpha_i,\beta^{0,i}_{\qftp(i)},\dots,\beta^{n-1,i}_{\qftp(i)})_{i\in\Ical}$ is indiscernible; and
        \item For all $i\in\Ical$, we have:
        \[
            \beta_i= \prod_{k<l} \left[\beta^{k,i}_{\qftp(i)},\beta^{l,i}_{\qftp(i)}\right]^{d^{k,l}_q},
            \footnote{If $\beta_i$ is a tuple, all operations are done componentwise.}
        \]
        where $q=\qftp(i)$
\end{itemize}
\end{lemma}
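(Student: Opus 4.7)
The statement is essentially the multiplicative version of \Cref{lem:IndiscWitness}, so the plan is to mimic that proof, translating the $\beta$-terms of the bilinear system into commutators of the group. To simplify, I assume $\beta_i$ is a singleton (the general case follows componentwise). For each $i\in\Ical$, since $\beta_i\in\Zsf$ is a product of commutators, I can write
\[
    \beta_i=\prod_{k<l<n_i}[g_{k,i},g_{l,i}]^{d_i^{k,l}}
\]
for some tuple $(g_{0,i},\dots,g_{n_i-1,i})$ in $G$ and scalars $d_i^{k,l}\in\{0,\dots,p-1\}$. Using the indiscernibility of $(\alpha_i\beta_i)_{i\in\Ical}$ and homogeneity of the monster, given a quantifier-free type $q$ realised in $\Ical$, I can fix a presentation for a single representative $i_0\models q$ (setting $n_q=n_{i_0}$ and $d^{k,l}_q=d^{k,l}_{i_0}$) and then, for every $j\in\Ical$ with $\qftp(j)=q$, pull back the $g_{k,i_0}$'s through an automorphism that sends $(\alpha_{i_0},\beta_{i_0})$ to $(\alpha_j,\beta_j)$ to obtain witnesses with the same shape. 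Doing this uniformly over all realised quantifier-free types gives a (not yet indiscernible) sequence $(\alpha_i,g_{0,i},\dots,g_{n_{\qftp(i)}-1,i})_{i\in\Ical}$ whose projection onto $(\alpha_i,\beta_i)$ recovers the original sequence.

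Next, I apply the Generalised Standard Lemma to extract an $\Ical$-indiscernible sequence
\[
    (\tilde{\alpha}_i,\tilde{g}_{0,i},\dots,\tilde{g}_{n_{\qftp(i)}-1,i})_{i\in\Ical}
\]
locally based on the above sequence. Setting $\tilde{\beta}_i\coloneq\prod_{k<l}[\tilde{g}_{k,i},\tilde{g}_{l,i}]^{d^{k,l}_{\qftp(i)}}$, locally basing ensures $(\tilde{\alpha}_i,\tilde{\beta}_i)_{i\in\Ical}$ has the same type as $(\alpha_i,\beta_i)_{i\in\Ical}$ (they are both $\Ical$-indiscernible and agree on all finite subsequences up to quantifier-free type). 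By the usual automorphism/homogeneity argument, there is $\sigma\in\Aut(\mM)$ sending $(\tilde{\alpha}_i,\tilde{\beta}_i)$ to $(\alpha_i,\beta_i)$ for all $i\in\Ical$; then $\beta^{k,i}_q\coloneq\sigma(\tilde{g}_{k,i})$ are the desired witnesses, since $\sigma$ is a group automorphism and hence commutes with the commutator-product expression.

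\textbf{Main obstacle.} The only subtle point is justifying that the data $(n_q,(d^{k,l}_q)_{k<l})$ can be chosen depending only on $\qftp(i)$, rather than varying arbitrarily with $i$. This is handled not by genuine uniformity in the original sequence, but by \emph{fixing} one representative per type and pulling back via automorphisms; thereafter, the Generalised Standard Lemma takes care of producing a genuinely indiscernible sequence with this prescribed shape. Since different quantifier-free types contribute disjoint coordinates, they can be handled simultaneously without interference, exactly as in the proof of \Cref{lem:IndiscWitness}.
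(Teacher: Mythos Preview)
Your proposal is correct and follows exactly the approach the paper intends: the paper's proof consists solely of the sentence ``The proof is similar to that of \Cref{lem:IndiscWitness}'', and you have faithfully unpacked that analogy, replacing the bilinear $\beta$-terms by commutator products and otherwise reproducing the standard-lemma-plus-automorphism argument verbatim. Your explicit handling of the per-type choice of $(n_q,d^{k,l}_q)$ via pulling back along automorphisms is a slightly more detailed justification of what the proof of \Cref{lem:IndiscWitness} phrases as ``by indiscernibility, we can choose $n$ and the coefficients depending only on $\qftp(i)$'', but the content is the same.
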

\begin{proof}
    The proof is similar to that of \Cref{lem:IndiscWitness}. 
\end{proof}

\begin{proposition}\label{prop:CharacterisationIndiscernible} \
    \begin{itemize} 
        \item ($\Ical$ Ramsey) Any indiscernible sequence $(\bar{g}_i)_{i\in\Ical}$ in $G$ is witnessed by an indiscernible sequence $(\alpha_i,\beta_i,\gamma_i,\delta_i)_{i\in\Ical}$ in $G$ which is part of a full transversal, i.e. such that: 
        \begin{itemize}
            \item $\cup_i \alpha_i \cup \beta_i \cup\gamma_i$ is part of a transversal,
            \item $\cup_i \alpha_i$ is a subset of $\tE^\nu$ (and is independent over $\Zsf$),
            \item $\cup_i\beta_i$ is a subset of $\tE^p$ (and is independent over $\braket{\Zsf,\tE^\nu}$),
            \item $\cup_i\gamma_i$ is a subset of $\tE^\iota$ (and is independent over $\braket{\Zsf,\tE^\nu,\tE^p}$),
            \item $\cup_i \delta_i$ is a subset of $\Zsf$ and is independent over $\braket{\tE^\nu,\tE^p,\tE^\iota}$.
        \end{itemize}
        \item (Arbitrary $\Ical$) Such a sequence $(\alpha_i,\beta_i,\gamma_i, \delta_i)_{i\in\Ical}$ which is part of a full transversal -- with the same notation as above -- is indiscernible if and only if the sequence
        \[ 
            ([\alpha_i]_\sim, h(\beta_i))_{i\in\Ical}
        \]
        is indiscernible in $\gG$.
    \end{itemize}
\end{proposition}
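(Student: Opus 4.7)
The plan is to chain the three reduction results already proved (\Cref{prop:IndiscerniblereductionF(G)}, \Cref{prop:IndiscerniblereductionVR}, \Cref{prop:IndiscerniblereductionCR}) into a single statement, carefully tracking what happens to the centre $\Zsf$, which is a feature of $G$ but does not appear in the intermediate reductions.

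For the first (Ramsey) direction, start with an indiscernible sequence $(\bar{g}_i)_{i\in\Ical}$. Applying \Cref{prop:IndiscerniblereductionF(G)}, I obtain a witnessing indiscernible $(\bar{a}_i\bar{b}_i)_{i\in\Ical}$ where $\bar{b}_i \in \Zsf$ and $\cup_i\bar{a}_i$ is independent over $\Zsf$. The centre decomposes (non-canonically) as $\Zsf = \braket{\beta(\vV,\vV)} \oplus \vW_\omega$, so the central component $\bar{b}_i$ needs to be split accordingly. I would first apply \Cref{lem:IndiscWitnessIndependant} with $D=\braket{\beta(\vV,\vV)}$, which allows me to write $\bar{b}_i$ as a concatenation of a tuple $\bar{b}_i^{\mathrm{com}}\in\braket{\beta(\vV,\vV)}$ and a tuple $\delta_i$ with $\cup_i\delta_i$ independent over $\braket{\beta(\vV,\vV)} = \braket{\tE^\nu,\tE^p,\tE^\iota}\cap\Zsf$; after a further application of \Cref{lem:IndiscWitnessIndependant} with $D=\braket{\tE^\nu,\tE^p,\tE^\iota}$, these $\delta_i$'s are actually independent over $\braket{\tE^\nu,\tE^p,\tE^\iota}$ as required. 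The remaining central part $\bar{b}_i^{\mathrm{com}}$ is a product of commutators, and \Cref{lem:IndiscWitness2} lets us rewrite it as a bilinear expression in elements of $G$, which we absorb into the $\bar{a}_i$ component.

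Next, I would push the sequence $\pi(\bar{a}_i)$ down to $\vV = G/\Zsf$ and apply the Ramsey part of \Cref{prop:IndiscerniblereductionVR} (implicitly, since $\pi(\bar{a}_i)$ is already independent over the kernel of $\pi$). Then I apply the Ramsey part of \Cref{prop:IndiscerniblereductionCR} to refine the sequence in $\vV$ into three subtuples according to type: elements of type $1^\nu$, elements of type $p$ and elements of type $1^\iota$, with the requisite cascading independence conditions. Lifting representatives of these tuples back to $G$ along a transversal and calling them $\alpha_i,\beta_i,\gamma_i$, one checks directly that $\bigcup_i(\alpha_i\cup\beta_i\cup\gamma_i)$ satisfies exactly the three independence conditions characterising being part of a transversal. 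The witnessing properties compose across the three reductions, so we obtain a single indiscernible $(\alpha_i,\beta_i,\gamma_i,\delta_i)_{i\in\Ical}$ witnessing the indiscernibility of $(\bar{g}_i)_{i\in\Ical}$.

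For the second (arbitrary $\Ical$) direction, one implication is immediate since $[\cdot]_\sim$ and $h(\cdot)$ are $\emptyset$-definable maps. For the converse, assume $([\alpha_i]_\sim, h(\beta_i))_{i\in\Ical}$ is indiscernible in $\gG$. Applying the arbitrary-$\Ical$ part of \Cref{prop:IndiscerniblereductionCR}, the sequence $(\pi(\alpha_i),\pi(\beta_i),\pi(\gamma_i))_{i\in\Ical}$ is indiscernible in $(\vV,+,0,R)$, since the transversal conditions on $\alpha_i,\beta_i,\gamma_i$ translate into exactly the hypotheses required. Then the arbitrary-$\Ical$ part of \Cref{prop:IndiscerniblereductionVR} promotes this to indiscernibility in $\Fcal(G)$ of the sequence together with $\rho(\delta_i)$ (noting $\delta_i\notin\braket{\beta(\vV,\vV)}$, so the linear independence assumption of \Cref{prop:IndiscerniblereductionVR} is met). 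Finally, the arbitrary-$\Ical$ part of \Cref{prop:IndiscerniblereductionF(G)} gives indiscernibility of $(\alpha_i,\beta_i,\gamma_i,\delta_i)_{i\in\Ical}$ in $G$.

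The main obstacle will be bookkeeping: at each step one must verify that the independence condition required as a hypothesis of the next proposition is actually satisfied, and that the definable maps connecting the structures (namely $\pi$, $\rho$, $[\cdot]_\sim$, $h(\cdot)$) correctly translate the transversal conditions on the $G$-side into the linear-independence conditions appearing on the $\Fcal(G)$- and $(\vV,+,0,R)$-sides. The decomposition $\Zsf = \braket{\beta(\vV,\vV)}\oplus\vW_\omega$ is somewhat delicate because it is not canonical, but since independence is preserved by any choice of complement, this causes no real issue.
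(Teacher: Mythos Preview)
Your proposal is correct and follows the same strategy as the paper's proof, which simply cites \Cref{lem:IndiscWitnessIndependant}, \Cref{lem:IndiscWitness2}, and \Cref{prop:IndiscerniblereductionF(G),prop:IndiscerniblereductionVR,prop:IndiscerniblereductionCR} without further elaboration. Your account of how to chain these results---in particular the treatment of the central component via the decomposition $\Zsf = \braket{\beta(\vV,\vV)} \oplus \vW_\omega$ and the use of \Cref{lem:IndiscWitness2} to absorb the commutator part---is exactly what the paper leaves implicit.
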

\begin{proof}
    One can easily see why a sequence of indiscernibles must be witnessed by an indiscernible sequence which is part of a transversal using \Cref{lem:IndiscWitnessIndependant,lem:IndiscWitness2} . The second point follows immediately from \Cref{prop:IndiscerniblereductionF(G),prop:IndiscerniblereductionVR,prop:IndiscerniblereductionCR}. 
\end{proof}

\subsection{Transfers of dividing lines}\label{subsec:Collapse}

\subsubsection{Collapsing and $\ncodingcla{K}$} 
Having established our characterisation of indiscernibles in Mekler groups, we can now deduce (almost immediately) our main theorem:

\setcounter{thmx}{0}
\begin{thmx}[Collapsing Transfer]\label{thm:TransferCollapsing}
    Let $\Ical$ be Ramsey and $\sJ$ a (not necessarily Ramsey) reduct of $\sI$. Let $\Msf$ be a Mekler group of the nice graph $\gG$. Then $\Msf$ collapses $\Ical$-indiscernibles (resp. collapses $\Ical$-indiscernibles to $\sJ$-indiscernibles) if and only if $\gG$ collapses $\Ical$-indiscernibles (resp. collapses $\Ical$-indiscernibles to $\sJ$-indiscernibles). 
\end{thmx}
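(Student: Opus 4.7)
The plan is to invoke \Cref{prop:CharacterisationIndiscernible} in both directions, bridged on the graph side by the Generalised Standard Lemma. The argument is uniform across the two variants of the statement, so I will present it for ``collapses $\Ical$-indiscernibles to $\sJ$-indiscernibles''; the non-specific version then follows by letting $\sJ$ depend on the sequence at hand.

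For the direction $\gG \Rightarrow \Msf$, I would begin with an $\Ical$-indiscernible sequence $(\bar g_i)_{i \in \Ical}$ in $\Msf$. The Ramsey bullet of \Cref{prop:CharacterisationIndiscernible} yields an $\Ical$-indiscernible witness $(\alpha_i, \beta_i, \gamma_i, \delta_i)_{i \in \Ical}$ that is part of a full transversal, and then the arbitrary-$\Ical$ bullet tells us that the projected sequence $([\alpha_i]_\sim, h(\beta_i))_{i \in \Ical}$ is $\Ical$-indiscernible in $\gG$. The hypothesis collapses the latter to a $\sJ$-indiscernible sequence, whereupon the arbitrary-$\Ical$ bullet (applied now in the opposite direction, which crucially does not require the Ramsey assumption) forces $(\alpha_i, \beta_i, \gamma_i, \delta_i)_{i \in \Ical}$ itself to be $\sJ$-indiscernible. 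Since this sequence witnesses the indiscernibility of $(\bar g_i)_{i \in \Ical}$ via fixed parameters and a definable tuple of functions, applying these functions to $\sJ$-qftype-equivalent indices shows that $(\bar g_i)_{i \in \Ical}$ is $\sJ$-indiscernible as well.

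For the direction $\Msf \Rightarrow \gG$, take an $\Ical$-indiscernible sequence $(c_i)_{i \in \Ical}$ in $\gG$. Choose a lift $\alpha_i \in \tE^\nu$ with $[\alpha_i]_\sim = c_i$, and use the Generalised Standard Lemma to extract an $\Ical$-indiscernible sequence $(\alpha_i')_{i \in \Ical}$ in $\Msf$ locally based on $(\alpha_i)_{i \in \Ical}$. The projection $([\alpha_i']_\sim)_{i \in \Ical}$ is $\Ical$-indiscernible and locally based on $(c_i)_{i \in \Ical}$, so the two share the same EM-type; a monster-model automorphism lets us assume $[\alpha_i']_\sim = c_i$ for all $i$. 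By hypothesis, $(\alpha_i')_{i \in \Ical}$ is $\sJ$-indiscernible, and since $\sim$ is $\emptyset$-definable this passes to $(c_i)_{i \in \Ical}$.

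The main work for this proof is concentrated in \Cref{prop:CharacterisationIndiscernible}; the remaining subtleties are bookkeeping around parameters, since the notion of ``witnessing'' requires indiscernibility over some auxiliary $\bar b$. I would handle this by treating $\bar b$ as constant extra coordinates of the sequence (equivalently, by naming $\bar b$ with constants throughout), so that the hypothesis on $\gG$ applies over these parameters as well. This is the place where I anticipate the only real obstacle, but it is routine given the formalism already in place.
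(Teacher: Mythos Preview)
Your proposal is correct and, for the substantive direction $\gG \Rightarrow \Msf$, follows exactly the paper's argument: invoke the Ramsey half of \Cref{prop:CharacterisationIndiscernible} to produce a transversal witness, push down to $\gG$ via the arbitrary-$\Ical$ half, collapse there, and push back up. Your handling of the parameter $\bar b$ is the right fix for a point the paper leaves implicit, and it is indeed routine (appending $\bar b$ as constant coordinates reduces collapsing over $\bar b$ to collapsing over $\emptyset$).

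The only place you work harder than necessary is the direction $\Msf \Rightarrow \gG$. The paper does not even write this out, because $\gG$ is interpretable in $\Msf$ (as the imaginary sort $\Gamma(\tE^\nu)$), so an $\Ical$-indiscernible sequence in $\gG$ is already an $\Ical$-indiscernible sequence in $\Msf^{\eq}$, and collapsing in $\Msf$ passes to $\Msf^{\eq}$ trivially. Your lifting-plus-GSL-plus-automorphism manoeuvre is correct but can be replaced by this one-line observation.
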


\begin{proof}
    Assume that $\gG$ collapses $\Ical$ to $\sJ$ indiscernibles. Consider an $\Ical$-indiscernible sequence $(g_i)_{i\in\Ical}$ in $\Msf$. By \Cref{prop:CharacterisationIndiscernible}, the $\Ical$-indiscernibility of $(g_i)_{i\in\Ical}$ is witnessed by an indiscernible sequence $(\alpha_i,\beta_i,\gamma_i,\delta_i)_{i\in\Ical}$, with $\alpha_i$ of type $1^\nu$ and $\beta$ of type $p$ and such that $\cup_i \alpha_i \cup \beta_i \cup \gamma_i$ is part of a full transversal. 
    In particular, $([\alpha_i]_\sim,h(\beta_i))_{i\in\Ical}$ is  $\Ical$-indiscernible, and therefore $\sJ$ indiscernible since $\gG$ collapses $\Ical$-indiscernibles to $\sJ$-indiscernibles. 
    
    By the equivalence, $(\alpha_i,\beta_i,\gamma_i)_{i\in\dom(\Ical)}$ is $\sJ$-indiscernible. As it witnesses the indiscernibility of $(g_i)_{i\in\Ical}$, the latter is also $\sJ$-indiscernible. Notice that no assumption has been made on $\sJ$; therefore, the proof can be adapted to (non-specific) collapsing.  
\end{proof}

\begin{corollary}[$\ncodingcla{K}$ transfer]
    Let $\cla{K}$ be a Ramsey class in a countable language with an $\aleph_0$-categorical Fraïssé limit. A Mekler group $\Msf$ is $\ncodingcla{K}$ if and only if its associated graph $\gG$ is $\ncodingcla{K}$. 
\end{corollary}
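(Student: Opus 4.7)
The plan is to chain together the two main theorems established earlier in the paper: the characterisation of $\ncodingcla{K}$ via collapsing (\Cref{thm:Ramsey class collapse}) and the Collapsing Transfer (\Cref{thm:TransferCollapsing}). Let $\sI = \Flim(\cK)$ denote the Fraïssé limit of $\cK$. By hypothesis $\sI$ is $\aleph_0$-categorical, and since $\cK$ is a Ramsey class, $\sI$ is a Ramsey structure in the sense used throughout the paper.

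First, I would make the (essentially bookkeeping) observation that $\Msf \in \ncodingcla{\cK}$ if and only if $\Msf \in \ncodingcla{\sI}$, and likewise for $\gG$. This uses compactness together with the fact that $\Age(\sI) = \cK$: admitting a $\cK$-configuration for every $A \in \cK$ yields, by a standard compactness argument, a single $\sI$-configuration in some elementary extension, and conversely every restriction of an $\sI$-configuration to a finite substructure $A \in \cK$ is a $\cK$-configuration. I would state this equivalence and point the reader to the analogous discussion in \cite{GPS21} rather than unpack the details.

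Next, the machinery takes over: apply \Cref{thm:Ramsey class collapse} to the theory of $\Msf$ to get that $\Msf \in \ncodingcla{\sI}$ is equivalent to $\Msf$ collapsing $\sI$-indiscernibles; apply it again to the theory of $\gG$. Finally, \Cref{thm:TransferCollapsing} (applied with $\Ical = \sI$ and in the non-specific collapsing form) tells us that $\Msf$ collapses $\sI$-indiscernibles if and only if $\gG$ does. Chaining the three equivalences yields the conclusion.

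There is no real obstacle here: the corollary is essentially a formal consequence of the preceding material. The only point worth writing carefully is the step $\ncodingcla{\cK} = \ncodingcla{\sI}$, since it is the only place where one uses that $\cK$ is the age of its Fraïssé limit rather than working directly with $\sI$-configurations. Everything else is a direct two-step substitution.
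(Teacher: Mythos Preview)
Your proposal is correct and follows exactly the same route as the paper: apply \Cref{thm:Ramsey class collapse} to translate $\ncodingcla{\cK}$ into collapsing $\sI$-indiscernibles, then invoke \Cref{thm:TransferCollapsing}. The only difference is that you single out the identification $\ncodingcla{\cK} = \ncodingcla{\sI}$ as a separate step, whereas in the paper's conventions $\ncodingcla{\sI}$ literally means $\ncodingcla{\Age(\sI)} = \ncodingcla{\cK}$, so this is tautological rather than a compactness argument; but being explicit here does no harm.
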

\begin{proof}
    By \Cref{thm:Ramsey class collapse}, we have that for any complete theory $T$, $T$ is $\ncodingcla{K}$ if and only if $T$ has no uncollapsed $\Flim(\cla{K})$-indiscernible. Therefore, this is only a reformulation of the previous theorem.
\end{proof}

We can now, for instance, recover the main theorem of \cite[Theorem 4.7]{CH18}. For that, we recall a characterisation of NIP$_k$, for an integer $k>1$. Let $\Ccal_{\Hsf_{k}}$ be the class of all finite ordered $k$-partite hypergraphs. This is, of course, a Ramsey class by a classical result of Nešetřil and Rödl \cite{NR1977}.

\begin{fact}[{\cite[Proposition 5.4]{CPT19}, \cite[Theorem 3.14]{GH19}}]\label{fact:nip-n-nc-k}
    The following are equivalent for a first-order theory $T$ and $k\in\Nbb$:
        \begin{enumerate}
            \item $T$ is NIP$_k$.
            \item $T$ is $\ncodingcla{\Ccal_{\Hsf_{k}}}$.
        \end{enumerate}
\end{fact}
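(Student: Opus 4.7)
The plan is to invoke \Cref{thm:Ramsey class collapse} applied to the Fra\"iss\'e limit $\sI := \Flim(\Ccal_{\Hsf_{k}})$, the generic countable ordered $k$-partite $k$-uniform hypergraph. This structure is $\aleph_0$-categorical (its age is uniformly locally finite), and its age $\Ccal_{\Hsf_{k}}$ is a Ramsey class by the Ne\v{s}et\v{r}il--R\"odl partition theorem for ordered hypergraphs. Consequently, membership in $\ncodingcla{\Ccal_{\Hsf_{k}}}$ is equivalent to $T$ collapsing every $\sI$-indexed indiscernible sequence. Writing $\sJ$ for the reduct of $\sI$ that forgets the hyperedge relation $R$, it then suffices to prove that $T$ is NIP$_k$ if and only if every $\sI$-indiscernible sequence is $\sJ$-indiscernible.

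For the direction NIP$_k$ $\Rightarrow$ collapse, I would let $(c_v)_{v\in\sI}$ be $\sI$-indiscernible and argue by contrapositive: if it is not $\sJ$-indiscernible then some formula $\phi(\bar y_1,\dots,\bar y_k)$, together with $k$-tuples $\bar v,\bar v'\in P_1\times\dots\times P_k$ that agree on the part predicates and the orders but differ on $R$, witness $\phi(c_{\bar v})\leftrightarrow\neg\phi(c_{\bar v'})$. Homogeneity of $\sI$ and compactness allow me to fix a grid $\{v_i^j : j\in[k],\, i<\omega\}$ with $v_i^j\in P_j$ and realise an arbitrary $k$-ary relation on the grid as the restriction of the hyperedge relation of some extension of $\sI$ within its age. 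The pattern of $\phi$ on the corresponding $c_{v_i^j}$'s then traces every $k$-ary subset of $\omega^k$, which is an IP$_k$-configuration for $\phi$, contradicting NIP$_k$.

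Conversely, if $\phi(x;y_1,\dots,y_k)$ has IP$_k$ in $T$, witnessed by $(a_i^j)_{i<\omega,\,j\in[k]}$ and $(b_s)_{s\subseteq \omega^k}$, I would view the union of the $a_i^j$'s as a sequence indexed by the vertices of a countable ordered $k$-partite hypergraph, placing $a_i^j$ at the $i$-th vertex of part $P_j$, and enrich each $k$-tuple of indices with additional data recording whether $\phi(b_{s_0},a^1_{i_1},\dots,a^k_{i_k})$ holds, where $s_0$ is chosen to coincide with the hyperedge set. The generalised standard lemma applied to this enriched sequence produces an $\sI$-indiscernible $(c_v)_{v\in\sI}$ that is locally based on it; by local basing, the formula $\phi$ still separates indices according to $R$, so $(c_v)_{v\in\sI}$ is not $\sJ$-indiscernible.

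The main obstacle is the asymmetry between the two formulations: in IP$_k$, a single parameter tuple $b_s$ encodes an entire $k$-ary relation on the $a_i^j$'s, whereas in $\sI$-indiscernibility the dependence on $R$ is only a \emph{local} property of $k$-tuples of indices. The bridge is to let the hyperedge relation on the index set of $\sI$ itself play the r\^ole of the external parameter family $\{b_s : s\subseteq\omega^k\}$ from IP$_k$; turning this correspondence into a rigorous two-way translation, particularly in the extraction step, is the technical content of \cite[Proposition~5.4]{CPT19} and \cite[Theorem~3.14]{GH19}, and the detailed verification is carried out there.
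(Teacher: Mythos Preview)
The paper does not prove this statement; it is recorded as a \emph{Fact} with citations to \cite{CPT19} and \cite{GH19}, so there is nothing to compare against directly. Your outline is a reasonable sketch of one of the standard approaches, and the forward direction (NIP$_k$ $\Rightarrow$ collapse to $\sJ$ $\Rightarrow$ $T\in\ncodingcla{\Ccal_{\Hsf_{k}}}$) is fine.

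There is, however, a genuine gap in your converse. You construct from an IP$_k$ witness an $\sI$-indiscernible that is not $\sJ$-indiscernible, where $\sJ$ is the \emph{specific} reduct forgetting $R$. But \Cref{thm:Ramsey class collapse}, which you invoke, characterises $\ncodingcla{\Ccal_{\Hsf_k}}$ via \emph{general} collapsing: its negation says that some $\sI$-indiscernible is not $\sJ'$-indiscernible for \emph{any} strict reduct $\sJ'$. Your sequence failing to be $\sJ$-indiscernible does not, on its own, rule out that it collapses to some other strict reduct of $\sI$. Closing this requires either an analysis of the reducts of the generic ordered $k$-partite hypergraph, or an argument that your constructed sequence is in fact fully uncollapsed (distinct $\sI$-qf-types yield distinct types). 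Neither step is in your sketch.

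A cleaner route for the direction IP$_k$ $\Rightarrow$ $T\notin\ncodingcla{\Ccal_{\Hsf_k}}$ avoids indiscernibles entirely: from a formula $\phi(x;y_1,\dots,y_k)$ with IP$_k$ and its witnesses, one can directly build a $\Ccal_{\Hsf_k}$-configuration in the sense of \Cref{def: K-config} by mapping the vertices of each finite ordered $k$-partite hypergraph into the IP$_k$ array and reading off $R$ via $\phi$ with the appropriate parameter $b_s$. This is essentially what is done in the cited references and sidesteps the reduct issue altogether.
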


\begin{corollary}[{\cite[Corollary~4.8]{CH18}}]
    For every $k\geq 1$, a Mekler group $\Msf$ is NIP$_k$ if and only if its associated graph $\gG$ is NIP$_k$.
\end{corollary}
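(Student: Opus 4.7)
The plan is to derive this corollary as an immediate application of the preceding $\ncodingcla{K}$-transfer corollary, combined with the known characterisation of NIP$_k$ in terms of $\cla{K}$-configurations. The strategy is essentially three lines of bookkeeping: identify the correct Ramsey class, verify its Fra\"iss\'e limit is $\aleph_0$-categorical, and invoke the previous results.

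More concretely, I would proceed as follows. First, recall from \Cref{fact:nip-n-nc-k} that for any complete theory $T$ and any $k\geq 1$, one has $T$ is NIP$_k$ if and only if $T\in\ncodingcla{\Ccal_{\Hsf_k}}$, where $\Ccal_{\Hsf_k}$ is the class of finite ordered $k$-partite hypergraphs. By the Ne\v{s}et\v{r}il--R\"odl theorem, $\Ccal_{\Hsf_k}$ is a Ramsey class, and its Fra\"iss\'e limit $\Hsf_k$ (the generic ordered $k$-partite hypergraph) is $\aleph_0$-categorical, being a Fra\"iss\'e limit of a class in a finite relational language with only finitely many structures of each size up to isomorphism. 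Therefore $\Ccal_{\Hsf_k}$ satisfies the hypotheses of the preceding $\ncodingcla{K}$-transfer corollary.

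Applying that corollary to $\cla{K}=\Ccal_{\Hsf_k}$, we obtain that a Mekler group $\Msf$ belongs to $\ncodingcla{\Ccal_{\Hsf_k}}$ if and only if its associated nice graph $\gG$ belongs to $\ncodingcla{\Ccal_{\Hsf_k}}$. Combining this equivalence with \Cref{fact:nip-n-nc-k} (used once for $\Msf$ and once for $\gG$) yields the desired equivalence: $\Msf$ is NIP$_k$ if and only if $\gG$ is NIP$_k$.

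There is no real obstacle here, since all the substantive work has already been done in \Cref{thm:TransferCollapsing} and the preceding $\ncodingcla{K}$-corollary; the only thing to check is that the indexing structure $\Hsf_k$ fits the framework, which it does by the standard Ne\v{s}et\v{r}il--R\"odl result. If one wanted a self-contained statement avoiding the $\cla{K}$-configuration machinery, one could alternatively argue directly via \Cref{thm:TransferCollapsing}: by \cite[Theorem 4.15]{AACT23}-type characterisations (or the analogous characterisation for NIP$_k$ via collapse to the reduct removing the hyperedge relation), NIP$_k$ is equivalent to collapsing $\Hsf_k$-indiscernibles to $(\lL_{\Hsf_k}\setminus\{R\})$-indiscernibles, and \Cref{thm:TransferCollapsing} then transfers this property between $\Msf$ and $\gG$ verbatim.
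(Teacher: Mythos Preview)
Your proposal is correct and matches the paper's approach exactly: the corollary is stated without proof in the paper, but the surrounding text sets it up precisely as you describe, by recalling that $\Ccal_{\Hsf_k}$ is Ramsey (Ne\v{s}et\v{r}il--R\"odl) and that NIP$_k$ coincides with $\ncodingcla{\Ccal_{\Hsf_k}}$ (\Cref{fact:nip-n-nc-k}), so that the $\ncodingcla{K}$-transfer corollary applies directly.
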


We can also immediately deduce the following, from \Cref{fact:NFOP_k-collapse}:

\begin{corollary}
    For every $k\geq 1$ a Mekler group $\Msf$ is NFOP$_k$ if and only if its associated graph $\gG$ is NFOP$_k$. 
    
    In particular, for all $k\geq 2$ there is a strictly NFOP$_k$ pure group.
\end{corollary}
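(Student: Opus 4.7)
The equivalence is an immediate consequence of combining the characterisation of NFOP$_k$ via indiscernible collapse (\Cref{fact:NFOP_k-collapse}) with our Collapsing Transfer Theorem (\Cref{thm:TransferCollapsing}). Specifically, \Cref{fact:NFOP_k-collapse} asserts that a theory $T$ is NFOP$_k$ if and only if $T$ collapses $\Hcal_k$-indiscernibles to $(\lL_k\setminus\{R\})$-indiscernibles, where $\Hcal_k$ is the Ramsey Fraïssé limit of $\cH_k$ and its $(\lL_k\setminus\{R\})$-reduct is a strict first-order reduct. Applying \Cref{thm:TransferCollapsing} with $\Ical=\Hcal_k$ and $\Jcal$ the $(\lL_k\setminus\{R\})$-reduct of $\Hcal_k$ then yields the equivalence between NFOP$_k$ of $\Msf$ and NFOP$_k$ of $\gG$.

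For the ``in particular'' statement, by the first part it suffices to exhibit, for each $k\geq 2$, a nice graph $\gG_k$ whose theory is strictly NFOP$_k$ (i.e.\ NFOP$_k$ but not NFOP$_{k-1}$). For $k=2$ this reduces to finding a nice graph whose theory is unstable and NIP, for which classical examples exist. For $k\geq 3$, the plan is to start from a strictly NFOP$_k$ higher-arity combinatorial structure -- such as the Fraïssé limit of finite $k$-partite $k$-hypergraphs, whose strict NFOP$_k$ can be verified directly from \Cref{def:FOPk} -- and encode it quantifier-freely into a nice graph by replacing each hyperedge with a small \emph{gadget} attached to its $k$ endpoints. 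The gadget is chosen so that incidence is recoverable quantifier-freely in the graph language while the resulting graph satisfies all three niceness conditions. The strict NFOP$_k$ property then transfers from the hypergraph to $\gG_k$, because quantifier-free interpretations preserve the collapse (or non-collapse) of $\Hcal_k$-indiscernibles, and it propagates to the pure group $\Msf(\gG_k)$ by the first part.

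The main technical obstacle is the gadget design: the prohibition of triangles and squares, together with the separation condition in the definition of niceness, severely restricts the admissible gadgets. A natural candidate is a ``spider'' with a central vertex joined by internally disjoint paths of length at least three to the $k$ endpoints; this kills short cycles arising from overlapping hyperedges, and the separation condition can be enforced by adjoining auxiliary separating vertices. The delicate remaining step is to verify that the gadget machinery does not introduce spurious FOP$_{k-1}$-witnessing configurations absent in the original hypergraph -- once this is in place, \Cref{thm:TransferCollapsing} delivers the desired strictly NFOP$_k$ Mekler group.
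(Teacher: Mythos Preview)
Your argument for the first part (the equivalence) is correct and identical to the paper's: combine \Cref{fact:NFOP_k-collapse} with \Cref{thm:TransferCollapsing}.

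For the ``in particular'' part, your plan has a genuine gap that you acknowledge yourself: the gadget construction is only sketched, and the verification that no spurious FOP$_{k-1}$ configurations appear is left open. More importantly, you are reinventing a wheel that already exists in the literature. The paper's proof bypasses all of this by invoking \cite[Theorem~5.5.1, Exercise~5.5.9]{Hod93}: \emph{every} structure in a finite relational language is bi-interpretable with a nice graph. This is precisely the gadget-encoding you are attempting to build by hand, already carried out in full generality. Once you know this, there is nothing to construct: take the random $k$-hypergraph, which is strictly NFOP$_k$ by results of \cite{AACT23} (Proposition~3.23 gives NFOP$_k$ for any $k$-ary structure; Proposition~2.8 gives NFOP$_k\Rightarrow$ NIP$_k$, and since the random $k$-hypergraph has IP$_{k-1}$ it has FOP$_{k-1}$), let $\gG$ be a nice graph bi-interpretable with it, and apply the first part. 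Bi-interpretability transfers (N)FOP$_k$ in both directions, so $\gG$ is strictly NFOP$_k$, and hence so is $\Msf(\gG)$.

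A smaller point: your reduction of the case $k=2$ to ``unstable and NIP'' is not quite right, since NFOP$_2$ is not the same as NIP; you need NFOP$_2$ and unstable. This is again handled cleanly by the AACT23 results just cited.
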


\begin{proof}
    The first part of the corollary is immediate from \Cref{fact:NFOP_k-collapse} and \Cref{thm:TransferCollapsing}. 

    By \cref{fact:everything-is-nice}, any structure in a finite relational language is bi-interpretable with a nice graph. From \cite[Proposition~3.23]{AACT23} we know that any $k$-ary structure (i.e. any structure which admits quantifier elimination in a relational language where all symbols have arity at most $k$) is NFOP$_k$, so in particular, the random $k$-hypergraph, $k\geq 2$, is NFOP$_k$. We know, from \cite[Proposition~2.8]{AACT23} that:
    \begin{center}
        NFOP$_k$ $\implies$  NIP$_k$,
    \end{center}
    and since the random $k$-hypergraph has IP$_{k-1}$ it has FOP$_{k-1}$, is thus strictly NFOP$_k$. Let $\gG$ be any nice graph bi-interpretable with the random $k$-hypergraph. Then $\Msf(\gG)$ is a strictly NFOP$_k$ pure group, by the first part of the corollary.
\end{proof}

\subsection{Resplendence} Notice that the quantifier elimination results and the transfer principles obtained from them are all resplendent, in the sense of \cite[Appendix~A]{Rid17}. This means that \Cref{thm:TransferCollapsing,thm:RSEMekler}, and  \Cref{prop:CharacterisationIndiscernible,prop:RMCMekler} remain valid if we replace, \emph{mutatis mutandis}, the Mekler groups $\Msf$ with a $\gG$-enrichment 
\[
    \Msf^*= \{\Msf=(G,\cdot,1), \gG=(V,R,\dots), \pi \colon E^{\nu}\rightarrow \gG \},
\]
where $\dots$ denotes additional structure on $\gG$.

To finish off this section, we give some negative results. These are derived from basic observations and do not require the relative quantifier elimination.

\subsubsection{Distality}

In this paper, a precise definition of distality is not necessary. The reader can find one in \cite[Chapter~9]{Simon_2015}. For our purposes, it suffices to note that a distal theory does not admit any non-constant totally indiscernible sequences.

\begin{proposition}
    Let $\Msf$ be a Mekler group. Then $\Th(\Msf)$ is non-distal.
\end{proposition}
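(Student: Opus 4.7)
The plan is to exhibit a non-constant totally indiscernible sequence in $\Msf$, which by the preceding note contradicts distality. Let $\Ical$ denote the Fra\"iss\'e limit of the class of finite pure sets (no relations), so that $\Ical$-indiscernibility coincides with the usual notion of total indiscernibility; this class is trivially Ramsey, so \Cref{prop:CharacterisationIndiscernible} applies. The characterisation reduces the task to exhibiting a non-constant totally indiscernible sequence of vertices $(v_i)_{i<\omega}$ in the nice graph $\gG$ associated with $\Msf$: given such a $(v_i)_i$, we lift it to a sequence $(a_i)_i$ of type $1^\nu$ elements in the monster model of $\Th(\Msf)$ with $[a_i]_\sim = v_i$ and $\{a_i\}_i$ part of a full transversal. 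The latter is possible because distinct $\sim$-classes of type $1^\nu$ elements admit representatives that are linearly independent modulo $\Zsf$, by the canonical form of elements of a Mekler group. Applying \Cref{prop:CharacterisationIndiscernible} to $(a_i)_i$ gives that this sequence is totally indiscernible in $\Msf$, and it is non-constant because the $v_i$ are pairwise distinct.

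To produce the required vertex sequence, note that $\gG$ is triangle-free (part of niceness) and may be assumed infinite (otherwise $\Msf$ is finite and the statement is vacuous under the standard conventions on distality). The classical infinite Ramsey theorem then yields an infinite independent set $\{v_i : i<\omega\}$ in $\gG$. Applying the Generalised Standard Lemma to the (trivially Ramsey) class of finite pure sets produces an $\Ical$-indiscernible sequence in the graph sort of the monster model of $\Th(\Msf)$ locally based on this independent set; since its elements remain pairwise distinct and pairwise non-adjacent, this is the desired non-constant totally indiscernible sequence of vertices.

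The main technical input is \Cref{prop:CharacterisationIndiscernible}, which handles the delicate translation of total indiscernibility between $\gG$ and $\Msf$: without it, one would need to argue directly that the group-theoretic type of a lifted transversal sequence is controlled by the graph-theoretic type of its $\sim$-classes. The two combinatorial ingredients (the infinite Ramsey theorem and the Standard Lemma) are entirely standard, so the proposition follows essentially for free from the characterisation of indiscernibles developed in the previous section.
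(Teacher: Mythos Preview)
There is a genuine gap. Your claim that the class of finite pure sets is ``trivially Ramsey'' is false in the sense used throughout the paper: Ramsey indexing structures must expand a linear order (this is invoked explicitly in the proof of \Cref{lem:IndiscWitnessIndependant}, citing \cite[Theorem~B]{MP23}), and the pure set does not. Equivalently, the modelling property fails for set-indexed sequences --- one cannot in general extract a totally indiscernible sequence locally based on a given sequence. So your appeal to the Generalised Standard Lemma in the last paragraph is illegitimate.

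This is not a cosmetic issue that can be patched by a different extraction argument. Your strategy reduces non-distality of $\Msf$ to the existence of a non-constant totally indiscernible sequence in $\gG$, but $\gG$ need not have one: nice graphs can be bi-interpretable with $(\Qbb,<)$ (see the examples after \Cref{thm:RSEMekler}), and in any such distal graph every totally indiscernible sequence is constant. The paper's proof avoids this entirely by working with elements of type $1^\iota$ rather than $1^\nu$. For a sequence $(\gamma_i)_i$ consisting only of independent $1^\iota$-elements, the associated graph data $([\alpha_i]_\sim,h(\beta_i))_i$ in the second bullet of \Cref{prop:CharacterisationIndiscernible} is the sequence of \emph{empty} tuples, which is vacuously totally indiscernible regardless of $\gG$. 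The infinite anticlique you found is still the starting point, but it is used (via compactness and products) to manufacture independent $1^\iota$-elements, not to produce an indiscernible sequence in the graph.
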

\begin{proof}
    We show that $\Msf$ admits a non-constant totally indiscernible sequence. Consider an infinite sequence $(\gamma_i)_{i\in \mathbb{N}}$ of elements of type $1^\iota$ which are independent over $\braket{\Zsf,\tE^\nu,\tE^p}$. By \Cref{prop:CharacterisationIndiscernible}, this sequence is totally indiscernible. It only remains to show that such a sequence exists, which we briefly justify for the sake of completeness. Since $\gG$ is an infinite graph, it has, by Ramsey, an infinite clique (i.e. a subset where every two elements are connected one to the other), or an infinite anticlique (i.e. an infinite subset where no two elements are connected one to the other). Since $\gG$ is nice, it cannot have an infinite clique and, therefore, it must have an infinite anticlique $A$. By compactness and considering larger and larger products of elements in $A$, we can find elements of type $1^{\iota}$ which are independent over $\braket{\Zsf,\tE^\nu,\tE^p}$.
\end{proof}

A more relevant question is therefore the following:

\begin{question}
    Assume that a nice graph $\gG$ admits a distal expansion. Does $\Msf(\gG)$ admit a distal expansion?
\end{question}

 It seems that if $\gG$ admits a linear order $<$ such that the structure $(\gG,<,E)$ is distal, then one can define a natural valuation on $G/\Zsf$ and $\Zsf$, and this additional structure would eliminate all traces of stability that Mekler's construction can bring `on top' of the structure $\gG$. 

\subsubsection{Burden}\label{subsubsec:burden}

As we have seen, Mekler's construction preserves many dividing lines. However, it seems that it does not (always) preserve notions of dimension. We will treat here the \emph{burden}, i.e. the notion of dimension attached to NTP$_2$ theories ( which coincide with the dp-rank if the theory is NIP).

\begin{proposition}
    Let $\gG$ be an infinite nice graph. Then, $\Msf(\gG)$ has burden at least $\aleph_{0-}$.
\end{proposition}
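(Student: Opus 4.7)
For each $n \in \Nbb$, I will exhibit an inp-pattern of depth $n$ in $\Msf(\gG)$; since this holds for every finite $n$, the burden is at least $\aleph_{0-}$. The combinatorial ingredient: as $\gG$ is infinite and triangle-free (nice), Ramsey's theorem supplies an infinite anticlique $\{v_k : k < \omega\}\subseteq V(\gG)$. By \Cref{prop:MeklerGroupSeparated}, the commutators $\{[v_i,v_j]: i<j\}$ are linearly independent in the centre $Z$ of $\Msf(\gG)$, providing the infinite supply of independent central elements on which the construction relies.

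Fix $n$ and choose $n$ distinguished vertices $a_1,\dots,a_n$ in the anticlique, together with, for each $i\leq n$, an injective sequence $(u_{i,j})_{j<\omega}$ of further anticlique vertices disjoint from $\{a_1,\dots,a_n\}$. The inp-pattern will use the formulas
\[
\varphi_i(x;y) := \bigl([x,a_i] = [y,a_i]\bigr),
\]
with $u_{i,j}$ as the $j$-th parameter of row $i$. Row $i$ is $2$-inconsistent: the central elements $[u_{i,j},a_i]$ are pairwise linearly independent in $Z$ for varying $j$ (by separatedness and the anticlique property), so no single $x$ can have its commutator with $a_i$ equal to two of them simultaneously.

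The main obstacle, and where genuine work is required, is the consistency of every transversal: for each $f:[n]\to\omega$ one must exhibit $x\in\Msf(\gG)$ satisfying $[x,a_i] = [u_{i,f(i)},a_i]$ for all $i$. The naive candidate $x = \prod_i u_{i,f(i)}$ fails, since the cross-terms $[u_{i,f(i)},a_k]$ for $i\neq k$ are non-trivial in $Z$ and, by separatedness, cannot be cancelled by further products of anticlique vertices; worse, the no-square condition of the nice graph $\gG$ forces the common neighbourhood $\bigcap_{k\neq i}N_\gG(a_k)$ to contain at most one vertex for $n\geq 3$, so even a simple single-vertex correction is generally unavailable. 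The plan to overcome this is to pass to an $\aleph_0$-saturated elementary extension $\Msf'\succ\Msf(\gG)$; the associated nice graph $\gG'$ is then itself saturated, hence rich enough to furnish the compound correction element (typically of type $1^\iota$, built as a product of vertices outside $\{a_k, u_{i,j}\}$ whose commutators against the $a_k$'s cancel the unwanted cross-terms). The relative model completeness proved in \Cref{sec:RQE}, notably \Cref{prop:RMCMekler}, then reduces the existence of the required correction to a combinatorial realisation statement about (saturated extensions of) $\gG$, which is guaranteed by the infinitude of the anticlique. Executing this reduction carefully is the heart of the proof.
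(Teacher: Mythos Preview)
Your proposal has a genuine gap: the paths in your inp-pattern are not merely hard to realise, they are \emph{inconsistent}, and no passage to a saturated extension can repair this.

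Take $n=2$. You want $x$ with $[x,a_1]=[u_{1},a_1]$ and $[x,a_2]=[u_{2},a_2]$ (abbreviating $u_i:=u_{i,f(i)}$). In $\Msf(\gG)$ every element is, modulo the centre, a product $\prod_v v^{e_v}$ over vertices $v$, and by linear independence of the commutators $[v,a_i]$ (for $v\neq a_i$, $v\not E a_i$) the first equation forces $e_{u_1}=1$ and $e_v=0$ for every other $v$ not adjacent to $a_1$. Since $u_1$ lies in your anticlique, $u_1\not E a_2$ and $u_1\neq a_2,u_2$, so the second equation forces $e_{u_1}=0$. Thus the conjunction $[x,a_1]=[u_1,a_1]\wedge[x,a_2]=[u_2,a_2]$ is unsatisfiable in $\Msf(\gG)$; being a first-order sentence with parameters, it remains unsatisfiable in every elementary extension. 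Your hoped-for correction element $c$ would need $[c,a_1]$ to contain no $[u_1,a_1]$-component while $[c,a_2]$ contains a $[u_1,a_2]^{-1}$-component, which the same linear-independence argument shows is impossible. The anticlique choice, which you need for row inconsistency, is precisely what kills path consistency.

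By contrast, the paper's argument avoids commutator equations entirely and works directly with the \emph{support} map. The formula $\phi(x,y_\gG)$ says ``$\pi(x)$ has support of size exactly $m$ and the vertex $y_\gG$ lies in that support''. With an $m\times\omega$ array of distinct vertices $a_{i,j}$, each row is $(m{+}1)$-inconsistent (a support of size $m$ cannot contain $m{+}1$ vertices), while every path $\{a_{i,f(i)}:i<m\}$ is realised by any lift of the product $a_{0,f(0)}\cdots a_{m-1,f(m-1)}$. No anticlique, no saturation, no correction element is needed. If you want to salvage your general idea, you must abandon the anticlique constraint on the $u_{i,j}$'s and instead arrange that each $u_{i,j}$ is adjacent to every $a_k$ with $k\neq i$; but the no-square axiom for nice graphs immediately obstructs this for $n\geq 3$, so a different formula is really required.
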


We refer the reader to \cite{Adl} for the precise definitions needed for the remainder of this section. 

This proposition, in particular, provides a negative answer to \cite[Problem~5.8]{CH18}. One should expect similar results for any other reasonable notion of dimension. The proof below describes how one can obtain an inp-pattern of arbitrary finite size.  

\begin{proof}
    Consider the pattern consisting of an array $(a_{i,j})_{i,j\in\omega}$ of distinct elements of $\gG$ and the formula $\phi(x,y_\gG):$
 \[
    A_{m,0}(\pi(x)) \wedge y_\gamma \in S_{m,0}(x)
\]
where $x$ is a group sorted variable, $y_\gamma$ is a $\gG$-sorted variable, $\pi$ is the projection modulo $\Zsf$, $A_{m,0}$ is the predicate for elements $x$ which are sum of $m$ element of type $1^\nu$, and $S_{m,0}(x)$ is the support of such element $x$.

Then notice that $\{\phi(x,a_{i,j})|i\in\omega\}$ is $(m+1)$-inconsistent (but $m$-consistent!) and $\{\phi(x,a_{i,f(i)})|i\in\omega\}$ is consistent for all $f \colon m \rightarrow \omega$.
 
We thus have an inp-pattern of depth $m$ for every integer $m$, therefore $\Msf(\gG)$ has burden at least $\aleph_{0-}$.
\end{proof}

In \cite[Remark 5.7]{CH18}, Chernikov and Hempel argue that a Mekler group $\Msf(\gG)$ is \emph{strong} if and only if $\gG$ is strong. More generally, one can certainly ask what is the exact burden of $\Msf(\gG)$. We conjecture the following formula:

\begin{conjecture}
    We have $\bdn(\Msf(\gG)) = \min \{\aleph_{0-},\bdn(\gG)\}$.
\end{conjecture}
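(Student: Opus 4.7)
The conjecture should be read with $\max$ rather than $\min$, the latter being inconsistent with the preceding proposition whenever $\bdn(\gG)$ is finite. The lower bound $\bdn(\Msf(\gG))\geq\max\{\aleph_{0-},\bdn(\gG)\}$ is immediate: the preceding proposition gives the first summand, while $\bdn(\Msf(\gG))\geq\bdn(\gG)$ follows from the fact that $\gG$ is $\emptyset$-interpretable in $\Msf(\gG)$ (via $\Gamma(\tE^\nu)$) and interpretations do not decrease burden. The substantive content is therefore the matching upper bound.

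For the upper bound, the plan is to combine the relative quantifier elimination of \Cref{prop:RQEGraph} with the characterisation of indiscernibles in \Cref{prop:CharacterisationIndiscernible}. Suppose, for contradiction, that $\kappa>\max\{\aleph_{0-},\bdn(\gG)\}$ and fix an inp-pattern $((\phi_i(x,\bar b_{i,j}))_{j<\omega})_{i<\kappa}$ of depth $\kappa$ in $\Msf(\gG)$. First perform a mutually indiscernible extraction so that each row $(\bar b_{i,j})_{j<\omega}$ is indiscernible over the remaining rows (this preserves the pattern and requires no extra indexing structure since the rows are $\omega$-indexed). By \Cref{prop:CharacterisationIndiscernible}, the indiscernibility of each row may then be assumed to be witnessed by a sequence drawn from a common full transversal of $\Msf(\gG)$, together with independent central witnesses.

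Next, apply the relative QE to rewrite each $\phi_i(x,y)$ as a Boolean combination of three types of atoms: \emph{(a)} equations $t(x,y)=1$; \emph{(b)} support-size predicates $A_{n,m}(t(x,y))$; and \emph{(c)} pullbacks $\phi_\gG(S_{n,m}(t(x,y)),S'_{n,m}(t(x,y)))$ of graph formulas along the support/handle functions. Partition the rows of the pattern according to which type of atom is ``essentially'' responsible for the $k$-inconsistency of that row: call these the \emph{group-theoretic} rows (types (a)–(b)) and the \emph{graph-theoretic} rows (type (c)). Mimicking the argument of the preceding proposition, each group-theoretic row contributes at most a fixed finite amount to the burden, since an $A_{n,m}$-constraint becomes inconsistent as soon as the support exceeds size $n$; hence their combined contribution is bounded by a supremum of finite depths, i.e. by $\aleph_{0-}$. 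The graph-theoretic rows, after applying the support/handle functions to the transversal decomposition of $x$, should descend to a genuine inp-pattern in $\gG$ of depth exceeding $\bdn(\gG)$, yielding the desired contradiction.

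The main obstacle lies precisely in this last descent. The graph-level atoms of form (c) are evaluated on supports $S_{n,m}(t(x,\bar y))$ which are non-constant functions of $x$, so a priori the ``pattern'' one obtains lives in the expansion of $\gG$ by these functions rather than in $\gG$ itself. I expect the descent to require a careful analysis along the lines of Steps~1–4 in the proof of \Cref{prop:RQEGraph}, exploiting the rigidity of the support/handle functions on a transversal: once the transversal decomposition of a candidate realiser of $x$ is fixed, each atom of form (c) becomes a genuine graph condition on a uniformly bounded tuple in $\gG$, whose variation across the rows reflects only the variation of the parameters $\bar b_{i,j}$. Making this rigorous, and verifying that the resulting graph-level array is indeed an inp-pattern of the required depth, is the heart of the proof.
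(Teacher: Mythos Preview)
This statement is labelled a \textbf{Conjecture} in the paper, and the paper offers no proof of it; the authors leave it entirely open after establishing only the lower bound $\bdn(\Msf(\gG))\geq\aleph_{0-}$ in the preceding proposition. There is therefore no ``paper's own proof'' to compare your proposal against.

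Your observation that the formula should read $\max$ rather than $\min$ is correct: with $\min$, the conjecture would contradict both the proposition just before it (take $\gG$ of finite burden) and the obvious bound $\bdn(\Msf(\gG))\geq\bdn(\gG)$ coming from interpretability. This is evidently a typo in the paper.

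As for your proof sketch: you correctly isolate the lower bound as immediate and identify the upper bound as the real content. Your strategy---mutually indiscernible extraction, passage to transversal witnesses via \Cref{prop:CharacterisationIndiscernible}, and a case split on which kind of atom from the relative QE drives the inconsistency of each row---is a natural line of attack. However, you yourself flag the central difficulty honestly: the ``descent'' of the graph-theoretic rows to a genuine inp-pattern in $\gG$ is not carried out, and the step where you bound the contribution of the group-theoretic rows by $\aleph_{0-}$ is asserted rather than argued (the interaction between rows of different types, and the fact that a single $\phi_i$ may mix atoms of all three kinds, needs more care than ``each row contributes at most a fixed finite amount''). What you have written is a reasonable plan of attack for an open problem, not a proof; and since the paper treats the statement as open, that is exactly the right status to assign it.
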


\section{Second proof of Theorem A}\label{sec:alternative}

We now present a second proof of \Cref{thm:TransferCollapsing}. We follow a proof strategy similar to that of Hempel and Chernikov in \cite{CH18}. It avoids the formal description of definable sets from \Cref{sec:RQE}, at the cost of introducing a (reasonable) additional assumption on the collapsing property of the nice graph. Let us recall the following key lemma from \cite{CH18}:

\begin{lemma}[{\cite[Lemma~2.14]{CH18}}]\label{lem:basic}
    Let $\gG$ be a nice graph and $\Mcal$ a saturated model of $\Th(\Msf(\gG))$. Let $X$ be a transversal and $K_X\leq\Zsf(\Mcal)$ be such that $\Mcal = \langle X\rangle\times\langle K_X\rangle$. Let $W$ and $Y$ be two small subsets of $X$ and $\bar h_1,\bar h_2$ two tuples in $K_X$. Suppose that the following conditions hold:
    \begin{itemize}
        \item There is a bijection $f:W\to Y$ which respects the $1^\nu$-, $p$-, $1^\iota$-parts ,the handles, and $\tp_\Gamma(W^\nu) = \tp_\Gamma(f(Y)^\nu)$.
        \item $\tp_{K_X}(\bar h_1) = \tp_{K_X}(\bar h_2)$.
    \end{itemize}
    Then, there is an automorphism $\sigma\in\Aut(\Mcal)$ extending $f$ such that $\sigma(\bar h_1) = \bar h_2$.
\end{lemma}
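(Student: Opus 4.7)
The plan is to exploit the direct-product decomposition $\Mcal = \langle X\rangle \times \langle K_X\rangle$ and build $\sigma$ separately on each factor before gluing them. The first factor will be handled by transferring the question to the combinatorial cover $\Gamma$ of $\gG$, and the second by exploiting the vector-space structure on $K_X$.

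The first step is to extend the bijection $f$ to a transversal-preserving bijection $\tilde f : X \to X$: one preserving the partition $X = X^\nu \sqcup X^p \sqcup X^\iota$, the handles, and the induced graph on $\sim$-classes. On $X^\nu$, this will be a back-and-forth argument inside the cover $\Gamma$ of $\gG$. The hypothesis $\tp_\Gamma(W^\nu) = \tp_\Gamma(f(W)^\nu)$ supplies the base case, and the saturation of $\Mcal$ (which forces $\Gamma$ to be an $\aleph_0$-saturated infinite cover in the sense of \Cref{def:cover}) produces vertices realising any required adjacency pattern at each step. On $X^p$, each element is determined up to $\approx$ by its handle in $X^\nu$, whose image is already prescribed, so saturation lets us select concrete representatives compatibly. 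On $X^\iota$, the elements correspond to isolated vertices of $\Gamma$ and can again be matched using saturation.

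The second step invokes the universal property of Mekler's construction: since $\langle X\rangle$ is freely generated, in the variety of $2$-nilpotent exponent-$p$ groups, subject to the commutation relations encoded by $\gG$, any such transversal-preserving bijection $\tilde f : X \to X$ lifts uniquely to a group automorphism $\sigma_1$ of $\langle X\rangle$. By construction, $\sigma_1$ preserves the commutator subgroup setwise. Independently, because $\bar h_1$ and $\bar h_2$ are tuples in $K_X$ with the same type in the pure $\mathbb{F}_p$-vector space $\langle K_X\rangle$, homogeneity of vector spaces in a saturated model yields an automorphism $\sigma_2$ of $\langle K_X\rangle$ sending $\bar h_1$ to $\bar h_2$. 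Gluing then gives $\sigma = \sigma_1 \times \sigma_2 \in \Aut(\Mcal)$, which extends $f$ and satisfies $\sigma(\bar h_1) = \bar h_2$ as required.

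The main obstacle I expect is the first step: the back-and-forth has to respect simultaneously the $\sim$-equivalence, the handle assignment between $X^p$ and $X^\nu$, and the edges of $\Gamma$, and these pieces of data interact. Concretely, after placing a new type-$p$ element one must be sure that its handle has already been dealt with (and was given an image with the right graph-theoretic neighbourhood), while after placing a new type-$1^\nu$ element one must leave enough room for later type-$p$ elements whose handle is the new vertex. Interleaving the three extensions carefully, and appealing to $\aleph_0$-saturation of $\Gamma$ plus the matching of $\tp_\Gamma$-types from the hypothesis, should resolve this without issue.
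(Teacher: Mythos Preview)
The paper does not give its own proof of this lemma: it is quoted verbatim from \cite[Lemma~2.14]{CH18} and used as a black box in \Cref{sec:alternative}. So there is no in-paper argument to compare against.

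That said, your outline is correct and is essentially the argument one finds in \cite{CH18}; it also lines up with the paper's own toolkit. Two small corrections. First, when you invoke the ``universal property'', the commutation relations that present $\langle X\rangle$ are those of the cover $\Gamma(X)$, not of $\gG$ alone: elements of $X^p$ and $X^\iota$ contribute vertices outside $\gG$, and the presentation you need is that $\langle X\rangle$ is free $2$-nilpotent of exponent $p$ on $X$ modulo $[x,y]=1$ for $x,y$ adjacent in $\Gamma(X)$. That this presentation holds is exactly the content of \cite[Fact~2.11(4)]{CH18} (used in the paper's proof of \Cref{prop:RQEGraph}), so you should cite that rather than assert it. Second, the worry you raise about interleaving is not really an obstacle: do the $X^\nu$ part first, extending the given partial elementary map on $\sim$-classes to an automorphism of the interpretable (hence saturated) graph $\gG(\Mcal)$; then the handle assignment forces where the $\sim$-classes of $X^p$ go, and \Cref{lem:RQECover} (or direct saturation of the infinite cover) lets you realise these together with the isolated $X^\iota$-classes. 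The sequential order avoids any interaction. The vector-space step for $K_X$ and the gluing are as you describe.
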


In \cite{CH18}, amongst other results, the authors essentially show the following (\cite[Theorem 4.7]{CH18}): If, for some nice graph $\gG$, every model of $\Th(\gG)$ collapses $\Hsf_k$-indiscernibles (where $\Hsf_k$ is the random $k$-partite hypergraph) to $\Psf_k$-indiscernibles (where $\Psf_k$ is the random $k$-partite set), then so must every model of $\Th(\Msf(\gG))$. Below, we adapt their argument for the general case of collapsing $\Ical$-indiscernibles (in place of $\Hsf_k$-indiscernibles) to $\Jcal$-indiscernibles (in place of $\Psf_k$-indiscernibles). 

\begin{definition}[Specific collapsing]
    Let $\Ical$ be an $\aleph_0$-categorical Fra\"iss\'e limit of a Ramsey class and $\Jcal$ a proper reduct of $\Ical$. We say that \emph{$\Ical$ has specific collapsing to $\Jcal$} if for all theories $T$, and all $\Mcal\vDash T$, every collapsing $\Ical$-indiscernible sequence in $\Mcal$, is (a possibly collapsing) $\Jcal$-indiscernible sequence.
\end{definition}

Many dividing lines are characterised by specific collapsing to a reduct. Classical examples include stability (where we have specific collapsing of order-indiscernibles to indiscernible sets), NIP$_k$ (here we have specific collapsing of random ordered $k$-hypergraph indiscernibles to linear orders), and recently NFOP$_k$ (this is more complicated to describe but follows from results in \cite{AACT23}, in particular, from \cite[Lemma~4.14]{AACT23}) for all $k\in\Nbb$.

We start with an easy lemma:
\begin{lemma}\label{lem:collapse-in-stable}
    Let $T$ be a stable theory, $\Ical$ an $\aleph_0$-categorial Fra\"iss\'e limit of a Ramsey class in a finite relational language, and $\Jcal$ a proper reduct of $\Ical$. If $\Ical$ has specific collapsing to $\Jcal$ then every model of $T$ collapses $\Ical$-indiscernibles to $\Jcal$-indiscernibles.
\end{lemma}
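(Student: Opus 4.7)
The plan is the following. Given an $\Ical$-indiscernible sequence $(a_i)_{i\in\Ical}$ in some $\Mcal\vDash T$, the specific collapsing hypothesis reduces the task to showing that $(a_i)_{i\in\Ical}$ is \emph{collapsing} --- that is, the induced map from $\qftp_\Ical^{\lL'}$-types of finite tuples to complete types over $\emptyset$ in $T$ is not injective. I would argue this by contradiction: assume the map is injective. By the contrapositive of \Cref{thm:Ramsey class collapse}, the existence of such a non-collapsing $\Ical$-indiscernible in some model of $T$ is equivalent to $T\in\codingcla{\Ical}$.

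Next, I extract a linear order from the Ramsey hypothesis. Since $\Age(\Ical)$ is a Ramsey class of finite structures in a finite relational language with Fraïssé limit $\Ical$, the Kechris--Pestov--Todorčević correspondence tells us that $\Aut(\Ical)$ is extremely amenable, and hence preserves some linear order $<$ on $|\Ical|$. Because the Fraïssé limit $\Ical$ is ultrahomogeneous (so has quantifier elimination in $\lL'$) and $\aleph_0$-categorical, the $\Aut(\Ical)$-invariant relation $<$ is defined by a quantifier-free $\lL'$-formula $\phi_<(x,y)$, i.e., a Boolean combination of atomic $\lL'$-formulas.

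Finally, $T\in\codingcla{\Ical}$ provides, for each $A\in\Age(\Ical)$, a function $f_A\colon A\to\Mcal^k$ together with a translation $I\colon\Sig(\lL')\to\lL$ that preserves every atomic $\lL'$-relation. Applying this translation symbol-by-symbol to $\phi_<$ yields an $\lL$-formula $I(\phi_<)(x,y)$ that linearly orders $f_A(A)$ for every such $A$; taking $|A|$ arbitrarily large exhibits arbitrarily long finite $I(\phi_<)$-chains in $\Mcal$, i.e., the order property for $I(\phi_<)$ in $T$. This contradicts stability of $T$ and completes the proof. The only real technical obstacle is the bookkeeping of KPT plus quantifier elimination needed to conclude that $<$ is quantifier-free $\lL'$-definable; once that is in hand, transporting $<$ through $I$ is routine.
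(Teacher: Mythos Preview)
Your proof is correct, and it rests on the same core observation as the paper's: a Ramsey structure $\Ical$ carries a quantifier-free definable linear order, and stability of $T$ is incompatible with that order surviving. The difference is only in packaging. The paper argues directly at the level of a single indiscernible sequence: since $\Ical$ expands a linear order, any $\Ical$-indiscernible is in particular order-indiscernible; stability forces order-indiscernibles to be indiscernible sets, so the sequence collapses to the order-free reduct; specific collapsing then pushes it down to $\Jcal$. This takes two lines and uses only the classical characterisation of stability via indiscernible sets.

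Your route is more circuitous: you pass through \Cref{thm:Ramsey class collapse} to convert non-collapsing into a $\cK$-configuration, then invoke KPT to extract the invariant order, and finally transport that order through the coding map $I$ to exhibit the order property. Each step is valid, but the detour through the coding machinery and KPT is unnecessary here---the fact that Ramsey classes are ordered is already available as a black box (the paper cites \cite[Corollary~2.26]{Bodirsky_2015}), and applying it directly to the sequence avoids having to verify that $I$ respects the quantifier-free Boolean combination defining $<$. Your approach does have the minor virtue of making explicit why the argument is really about coding rather than about a particular sequence, but the paper's version is shorter and avoids the bookkeeping you flag at the end.
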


\begin{proof}
    Since $\Ical$ is Ramsey, it expands a linear order (see \cite[Corollary~2.26]{Bodirsky_2015}), and since $T$ is stable, we must have that $\Ical$-indiscernibles collapse to the reduct of $\Ical$ where we forget the order in models of $T$. By specific collapsing, we then have that $\Ical$-indiscernibles collapse to $\Jcal$-indiscernibles in models of $T$.
\end{proof}

\textbf{Theorem A'} (Second Statement)\textbf{.} 
    Let $\Ical$ be an $\aleph_0$-categorial Fra\"iss\'e limit of a Ramsey class in a finite relational language, and $\Jcal$ a proper reduct of $\Ical$, also in a finite relational language. For every nice graph $\gG$, if:
    \begin{itemize}
        \item $\Ical$ has specific collapsing to $\Jcal$; and
        \item $\Th(\gG)$ collapses $\Ical$-indiscernibles to $\Jcal$-indiscernibles, 
    \end{itemize}
    then:
    \begin{itemize}
        \item $\Th(\Msf(\gG))$ collapses $\Ical$-indiscernibles to $\Jcal$-indiscernibles.
    \end{itemize} 

\begin{proof}
    Assume toward a contradiction that $\Th(\gG)$ collapses $\Ical$-indiscernibles to $\Jcal$-indiscernibles but $\Th(\Msf(\gG))$ does not. Since both $\Ical$ and $\Jcal$ are finitely homogeneous, they are $k$-ary, for some $k\in\Nbb$. By appropriately padding the symbols in $\Lcal_{\Ical}$ and $\Lcal_{\Jcal}$, the respective languages of $\Ical$ and $\Jcal$, we may assume that they all have arity exactly $k$. Moreover, since $\Ical$ has quantifier elimination we can also assume that $\Lcal_\Jcal\subseteq\Lcal_\Ical$.
	
    By assumption, we can find a model $\Mcal\vDash\Th(\Msf(\gG))$ and an $\Ical$-indiscernible sequence $B = (b_i :i\in\dom(\Ical))$ in $\Mcal$ which is not $\Jcal$-indiscernible. Since $\Ical$ has specific collapsing to $\Jcal$, this must mean that $B$ is non-collapsing. In particular, for all all $(i_1,\dots,i_k),(j_1,\dots,j_k)\in\dom(\Ical)^k$ we have that
    \[
        \qftp_{\Ical}(i_1,\dots,i_k) = \qftp_{\Ical}(j_1,\dots,j_k)\iff \tp(b_{i_1},\dots,b_{i_k}) = \tp(b_{j_1},\dots,b_{j_k}).
    \]	
    \begin{claim}\label{claim:formulas}
        There is some $R\in\Lcal_\Ical\setminus\Lcal_\Jcal$ for which we can find an $\lL_\mathsf{grp}$-formula $\phi_{R}$ such that for and all $i_1,\dots,i_k\in\Ical$ we have that:
        \[
            \Mcal\vDash\phi_{R}(b_{i_1},\dots,b_{i_k}) \text{ if and only if } \Ical\vDash R(i_1,\dots,i_{k}).
        \]
    \end{claim}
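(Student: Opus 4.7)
The plan is to use the non-collapsing of $B$ (established just above) to set up a bijection between quantifier-free $\Lcal_\Ical$-types of $k$-tuples from $\dom(\Ical)$ and the complete $\lL_{\mathsf{grp}}$-types these tuples induce via $B$; then, for any $R\in\Lcal_\Ical\setminus\Lcal_\Jcal$ (this difference is nonempty because $\Jcal$ is a proper reduct of $\Ical$ and we already arranged $\Lcal_\Jcal\subseteq\Lcal_\Ical$), the finiteness of the type-space lets me single $R$ out by a Boolean combination of type-separating formulas.

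More precisely, I would first enumerate the (finitely many, by $\aleph_0$-categoricity and the finite relational language) quantifier-free $\Lcal_\Ical$-types of $k$-tuples from $\dom(\Ical)$ as $q_1,\dots,q_m$. By $\Ical$-indiscernibility, the $\lL_{\mathsf{grp}}$-type $p_i:=\tp(b_{i_1},\dots,b_{i_k})$ for any $(i_1,\dots,i_k)\vDash q_i$ is well-defined, and by non-collapsing the $p_i$'s are pairwise distinct. Fix any $R\in\Lcal_\Ical\setminus\Lcal_\Jcal$ (of arity $k$ after padding) and set $I:=\{i\leq m : q_i(\bar{x})\vdash R(\bar{x})\}$, so that $\Ical\vDash R(i_1,\dots,i_k)$ if and only if $\tp(b_{i_1},\dots,b_{i_k})=p_i$ for some $i\in I$.

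Since the $p_i$'s are distinct, for every pair $i\neq j$ there is a formula $\psi_{ij}(x_1,\dots,x_k)\in\lL_{\mathsf{grp}}$ with $\psi_{ij}\in p_i$ and $\neg\psi_{ij}\in p_j$. Setting $\chi_i:=\bigwedge_{j\notin I}\psi_{ij}$ for each $i\in I$, we get $\chi_i\in p_i$ and $\neg\chi_i\in p_j$ for every $j\notin I$. Finally, $\phi_R:=\bigvee_{i\in I}\chi_i$ is the required formula: every $k$-tuple from $B$ realises exactly one $p_\ell$, and $\phi_R\in p_\ell$ precisely when $\ell\in I$, which yields the desired biconditional.

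The bulk of the work is done by the non-collapsing hypothesis combined with the finiteness of quantifier-free $k$-types in $\Ical$; once these are in hand, the rest is pure bookkeeping. If there is a subtle point to verify, it is that the biconditional displayed immediately before the claim really provides a \emph{genuine bijection} between quantifier-free $\Lcal_\Ical$-types and complete $\lL_{\mathsf{grp}}$-types realised along $B$ (so that the $p_i$'s above are well-defined and pairwise distinct), but this is transparent from how that biconditional is stated.
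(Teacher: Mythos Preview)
Your proposal is correct and follows essentially the same approach as the paper's proof: both arguments use $\aleph_0$-categoricity to enumerate the finitely many quantifier-free $k$-types of $\Ical$, invoke non-collapsing to obtain a bijection with the complete $\lL_{\mathsf{grp}}$-types realised along $B$, separate these finitely many types by formulas, and then assemble $\phi_R$ as the appropriate Boolean combination. Your write-up is in fact slightly cleaner and more explicit than the paper's (which contains a somewhat oddly placed ``If not'' justification), and you correctly observe that the construction works for \emph{any} $R\in\Lcal_\Ical\setminus\Lcal_\Jcal$, which is marginally stronger than the ``some $R$'' of the claim.
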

    \begin{claimproof}
        First, since $\Ical$ is $\aleph_0$-categorical there are finitely many quantifier-free $k$-types in $\Th(\Ical)$, say $p_1,\dots,p_n$. Every relation symbol $R\in\Lcal_\Ical$ belongs to a finite Boolean combination of these types, and we claim that for some $R\in\Lcal_\Ical\setminus\Lcal_\Jcal$ there is a Boolean combination of (isolating formulas of) quantifier-free types $\psi$ such that:
        \[
            \Ical\vDash\psi(i_1,\dots,i_k)\text{ if and only if } \Ical\vDash R(i_1,\dots,i_k).
	\]  
        If not, then we would be able to find a reduct $\Jcal'$ of $\Ical$ which would make $B$ a $\Jcal'$-indiscernible sequence, contradicting our initial assumption.
		
        Now, since $B$ is an uncollapsed $\Ical$-indiscernible, for each quantifier-free $k$-type $p_i$ of $\Ical$ there is a unique $k$-type $q_i$ of $\Mcal$ in $B$, corresponding to the $k$-tuples indexed by $p_i$. For each such type we can find an $\lL_{\mathsf{grp}}$-formula $\phi_i$ separating it from the rest. 

        Recall that since $\Ical$ is $\aleph_0$-categorical for each quantifier-free $n$-type $p\in S_n^\Ical(\emptyset)$, there is a formula $\mathsf{iso}(p)\in p$ isolating $p$. Moreover, for all $n\in\Nbb$, by Ryll-Nardzewski, $|S_n^\Ical(\emptyset)|<\aleph_0$.
        
        Given the above, for a relation symbol $R\in\Lcal_\Ical$, there is a unique Boolean combination of isolating formulas
        \[
            \psi \coloneq \bigwedge\bigvee\mathsf{iso}(p_i)^{\epsilon_i},
        \]
        equivalent to $R$. 
        
        For $\psi$ as above, take $\phi_R$ to be the formula $\bigwedge\bigvee\phi_i^{\epsilon_i}$, i.e. the corresponding Boolean combination of the formulas separating the types $q_i$ in $\Mcal$. \end{claimproof}
	
    Fix $\kappa$ to be $\aleph_0^+$ and $\Ibb\elext\Ical$ be an elementary extension of $\Ical$ with $|\Ibb| = \kappa$. Without loss of generality, we may assume that $\Mcal$ is saturated. Since $\Ical$ is Ramsey, $\Ical$-indiscernibles have the modelling property, so by compactness/saturation we can actually find, in $\Mcal$, an $\Ibb$-indexed indiscernible sequence $A=(a_i:i\in\dom(\Ibb))$ which is based on $B$. Let $\Jbb\elext\Jcal$ be the elementary extension of $\Jcal$ given by taking the $\Lcal_\Jcal$-reduct of $\Ibb$.
    \begin{claim}\label{claim:not-indiscernible}
        The sequence $(a_i:i\in\dom(\Ibb))$ is not $\Jbb$-indiscernible.
    \end{claim}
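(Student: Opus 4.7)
The plan is to transfer a formula witnessing the failure of $\Jcal$-indiscernibility of $B$ to one witnessing the failure of $\Jbb$-indiscernibility of $A$, using local basedness together with the fact that $B$ is non-collapsing. First, since $B$ is $\Ical$-indiscernible but not $\Jcal$-indiscernible, I fix tuples $\bar{i}, \bar{j}$ in $\Ical$ with $\qftp_\Jcal(\bar{i}) = \qftp_\Jcal(\bar{j})$ (and hence $\qftp_\Ical(\bar{i}) \neq \qftp_\Ical(\bar{j})$, since $B$ is $\Ical$-indiscernible) together with an $\lL_\mathsf{grp}$-formula $\phi$ such that $\Mcal \vDash \phi(b_{\bar{i}}) \wedge \neg \phi(b_{\bar{j}})$.

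Next, view $\bar{i}, \bar{j}$ as tuples in $\Ibb$ via the elementary extension $\Ical \preceq \Ibb$; the equality $\qftp_\Jbb(\bar{i}) = \qftp_\Jbb(\bar{j})$ is preserved, since $\Jcal \preceq \Jbb$. Now apply the local basedness of $A$ on $B$ with $\Delta = \{\phi\}$ and the concatenated tuple $(\bar{i}, \bar{j})$ in $\Ibb$: this yields a tuple $(\bar{i}', \bar{j}')$ in $\Ical$ with $\qftp_\Ibb(\bar{i}, \bar{j}) = \qftp_\Ical(\bar{i}', \bar{j}')$ and $\tp^\Delta(a_{\bar{i}}, a_{\bar{j}}) = \tp^\Delta(b_{\bar{i}'}, b_{\bar{j}'})$. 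Since $\Ical \preceq \Ibb$, we have $\qftp_\Ical(\bar{i}, \bar{j}) = \qftp_\Ibb(\bar{i}, \bar{j}) = \qftp_\Ical(\bar{i}', \bar{j}')$; invoking the non-collapsing of $B$ (guaranteed by the specific collapsing hypothesis, as noted just before the claim), this yields $\tp(b_{\bar{i}}, b_{\bar{j}}) = \tp(b_{\bar{i}'}, b_{\bar{j}'})$, and in particular $\Mcal \vDash \phi(b_{\bar{i}'}) \wedge \neg \phi(b_{\bar{j}'})$. Transferring back through local basedness then gives $\Mcal \vDash \phi(a_{\bar{i}}) \wedge \neg \phi(a_{\bar{j}})$, contradicting the putative $\Jbb$-indiscernibility of $A$.

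The proof is essentially bookkeeping across the four structures $\Ical, \Ibb, \Jcal, \Jbb$; the only nontrivial ingredient is the non-collapsing of $B$, which is precisely what prevents local basedness from sending $(\bar{i}, \bar{j})$ to some $(\bar{i}', \bar{j}')$ on which $\phi$ no longer distinguishes the two halves. With non-collapsing in hand, no further ideas are required beyond chasing the relevant type-theoretic definitions.
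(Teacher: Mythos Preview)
Your proof is correct and takes essentially the same approach as the paper's: use local basedness to pull the witnessing tuples back into $\Ical$, then use the indiscernibility of $B$ to conclude. Your version is in fact slightly cleaner, since you reuse $\bar{i},\bar{j}$ directly as elements of $\Ibb$ and apply basedness once to the concatenation $(\bar{i},\bar{j})$, whereas the paper introduces auxiliary tuples $l_1,\dots,l_k$ and $m_1,\dots,m_k$ and applies basedness separately.

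One correction, though: the step where you deduce $\tp(b_{\bar{i}}, b_{\bar{j}}) = \tp(b_{\bar{i}'}, b_{\bar{j}'})$ from $\qftp_\Ical(\bar{i}, \bar{j}) = \qftp_\Ical(\bar{i}', \bar{j}')$ is \emph{not} non-collapsing---it is simply $\Ical$-indiscernibility of $B$. Non-collapsing is the converse implication (equal types in $\Mcal$ force equal $\qftp_\Ical$), and it plays no role in this claim. So your final paragraph, which singles out non-collapsing as ``the only nontrivial ingredient'', misidentifies what is doing the work; the paper's proof likewise uses only $\Ical$-indiscernibility at the corresponding step.
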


    \begin{claimproof}
        Since $(a_i:i\in\dom(\Ical))$ is not $\Jcal$-indiscernible, we can find $i_1,\dots,i_k,j_1,\dots,j_k\in\dom(\Ical)$ such that $\qftp_{\Jcal}(i_1,\dots,i_k) = \qftp_{\Jcal}(j_1,\dots,j_k)$ and $\tp(b_{i_1},\dots,b_{i_k}) \neq \tp(b_{j_1},\dots,b_{j_k})$. Let $\Delta_1\subseteq\tp(b_{i_1},\dots,b_{i_k})$, and $\Delta_2\subseteq\tp(b_{j_1},\dots,b_{j_k})$ be two finite sets of $\lL_{\mathsf{grp}}$-formulas such that $\Delta_1\cup\Delta_2$ is inconsistent. 
		 
        Since $(a_i:i\in\dom(\Ibb))$ is based on $(b_i:i\in\dom(\Ical))$ and $\Ibb\elext\Ical$, given $l_1,\dots,l_k\in\Ibb$ such that $\qftp_{\Ibb}(l_1,\dots,l_k) = \qftp_{\Ical}(i_1,\dots,i_k)$, we can find a tuple $i_1'\dots,i_k'\in\Ical$ such that $\qftp_{\Ibb}(l_1,\dots,l_k) = \qftp_{\Ical}(i_1',\dots,i_k')$ and thus:
        \[
            \tp^{\Delta_1}(a_{l_1},\dots,a_{l_k}) = \tp^{\Delta_1}(b_{i_1'},\dots,b_{i_k'}) =  \tp^{\Delta_1}(b_{i_1},\dots,b_{i_k}).
        \]
        Similarly, we find $m_1,\dots,m_k$ such that $\qftp_{\Ibb}(m_1,\dots,m_k)=\qftp_\Ical(j_1,\dots,j_k)$ and $\tp^{\Delta_2}(a_{m_1},\dots,a_{m_k}) = \tp^{\Delta_2}(b_{j_1},\dots,b_{j_k})$. But, by construction 
        \[
            \qftp_{\Jbb}(l_1,\dots,l_k)=\qftp_{\Jbb}(m_1,\dots,m_k),
        \]
        and thus $(a_i:i\in\dom(\Ibb))$ is not $\Jbb$-indiscernible.
    \end{claimproof}
	
    Let us write $\Mcal$ in the form $\langle X\rangle\times\langle H\rangle$, where $X$ is a transversal and $H\subseteq \Mcal$ a set that is linearly independent over $[\Mcal,\Mcal]$.
	
    After fixing an enumeration of $\dom(\Ibb)$, and rearranging $A$ to be in the form $(a_i:i<\kappa)$, we express the elements in $A$ as $\lL_\mathsf{grp}$-terms built up from elements of $X$ and $H$, as follows: For each $\lambda<\kappa$ let $t_{\lambda}$ be an $\lL_{\mathsf{grp}}$-term, and $\bar x_\lambda$, $\bar h_\lambda$ be finite tuples from $X$ and $H$, respectively, such that $a_\lambda = t_\lambda(\bar x_\lambda,\bar h_\lambda)$. 
	
    By passing to a cofinal subsequence of $A$ of cardinality $\kappa$ (in the fixed enumeration), we can find a term $t\in\Lcal_\mathsf{grps}$ such that for all $\lambda<\kappa$ we have $t_\lambda = t$. Since each $\bar x_\lambda$ is a tuple from $X$, we can assume that it is of the form $\bar x_\lambda^\nu\frown\bar x_\lambda^p\frown\bar x_\lambda^\iota$, where we simply list all elements of $\bar x_\lambda$ of the corresponding types. We may also append the handles of the elements in the tuple $\bar x_\lambda^p$ to the beginning of $\bar x_\lambda^\nu$ (so that the handle of the $j$-th element of $\bar x_\lambda^p$ is the $j$-th element of $\bar x_\lambda^\nu$, and we allow repetition of elements). 
	
    Thus, at this point, after rearranging, we have an $\Ibb$-indiscernible sequence $(t(\bar x_i,\bar h_i):i\in\dom(\Ibb))$. Of course, since $\Ical\elsub\Ibb$, we may actually work with the subsequence $(t(\bar x_i,\bar h_i):i\in\dom(\Ical))$. By construction, this sequence is $\Ical$-indiscernible, and arguing as in \Cref{claim:not-indiscernible}, $(\bar x_i\frown\bar h_i:i\in\dom(\Ical))$ is not $\Jcal$-indiscernible.
	 
    By \Cref{claim:formulas} and the fact that $A$ is based on $B$, we can find some $R\in\Sigma = \Lcal_\Ical\setminus\Lcal_\Jcal$ and some $\lL_\mathsf{grp}$-formula $\phi_R$ such that:
    \[
        \Mcal\vDash\phi_R(a_{i_1},\dots,a_{i_k})\text{ if and only if }\Ical\vDash R(i_1,\dots,i_k).
    \]
    Now, let $\Gamma(\Mcal)$ be a saturated model of $\Th(\gG)$, containing all the elements $(\bar x_i^\nu:i\in \dom(\Ical))$. Since $\Gamma(\Mcal)$ is interpretable in $\Mcal$, this sequence remains $\Ical$-indiscernible, and since $\Th(\gG)$ collapses $\Ical$-indiscernibles to $\Jcal$-indiscernibles this is actually a $\Jcal$-indiscernible in $\Gamma$. Similarly, let $\Hcal$ be a saturated model of $\Th(\langle H\rangle)$, it is easy to see that $(\bar h_i:i\in\dom(\Ical))$ is $\Ical$-indiscernible in $\Hcal$, and since $\Th(\langle H \rangle)$ is stable, from \Cref{lem:collapse-in-stable}, we have that $(\bar h_i:i\in\dom(\Ical))$ is $\Jcal$-indiscernible in $\Hcal$.
	
    Since the sequence $(\bar x_i\frown\bar h_i:i\in\dom(\Ical))$ is not $\Jcal$-indiscernible, there are $i_1,\dots,i_k,j_1,\dots,j_k\in\dom(J)$ such that $\qftp_{\Jcal}(i_1,\dots,i_k) = \qftp_{\Jcal}(j_1,\dots,j_k)$ and $\tp((\bar x_{i_1}\frown\bar h_{i_1}),\dots,(\bar x_{i_k}\frown\bar h_{i_k}))\neq \tp((\bar x_{j_1}\frown\bar h_{j_1}),\dots,(\bar x_{j_1}\frown\bar h_{j_1}))$. Of course, by $\Ical$-indiscernibility of the sequence we must have that $\qftp_{\Ical}(i_1,\dots,i_k) \neq \qftp_{\Ical}(j_1,\dots,j_k)$, so there exists some relation symbol $R\in\Lcal_{\Ical}$ such that:
    \[
        \Ical\vDash R(i_1,\dots,i_{k})\land\lnot R(j_1,\dots,j_k).
    \]
    In particular, we can find an $\lL_{\mathsf{grp}}$-formula $\phi_R$ such that:
    \[
        \Mcal\vDash\phi_R((\bar x_{i_1}\frown\bar h_{i_1}),\dots,(\bar x_{i_k}\frown\bar h_{i_k}))\land\lnot\phi_R((\bar x_{j_1}\frown\bar h_{j_1}),\dots,(\bar x_{j_1}\frown\bar h_{j_1}))
    \]	
    But, observe that at this point we are in the following situation:
    \begin{itemize}
        \item Since $\qftp_\Jcal(i_1,\dots,i_k)=\qftp_\Jcal(j_1,\dots,j_k)$ and the sequences $(\bar x_i^\nu:i\in \dom(\Ical))$ and $(\bar h_i:i\in\dom(\Ical))$ are $\Jcal$-indiscernible we have that:
        \[
            \tp_\Gamma(\bar x_{i_1}^\nu,\dots,\bar x_{i_k}^\nu) = \tp_\Gamma(\bar x_{j_1}^\nu,\dots,\bar x_{j_k}^\nu)\text{ and }\tp_{\langle H\rangle}(\bar h_{i_1},\dots,\bar h_{i_k}) = \tp_{\langle H\rangle}(\bar h_{j_1},\dots,\bar h_{j_k})
        \]
        \item The map sending $\bar x_{i_l}\frown\bar h_{i_l} \mapsto \bar x_{j_l}\frown\bar h_{j_l}$ respects the $1^\nu$-, $p$-, $1^\iota$-parts, and the handles.
    \end{itemize}
    Thus, by applying \Cref{lem:basic}, we can extend this map to an automorphism $\sigma\in\Aut(\Mcal)$ sending $(\bar x_{i_l}\frown\bar h_{i_l})$ to $(\bar x_{j_l}\frown\bar h_{j_l})$ for all $l\in\{1,\dots,k\}$, which is a contradiction.
\end{proof}

\section{Open questions}\label{sec:OpenQuestions}

We suggest some possible transfers for Mekler groups that we try to briefly motivate.

\underline{Beautiful pairs}. Extending \Cref{thm:RSEMekler}, one could try to characterise the existence of beautiful pairs of Mekler groups, in the sense of \cite{CHY23}. In this work, Cubides Kovacsics, Hils and Ye develop a general theory of beautiful pairs, extending the ideas of Poizat. They show, in particular, that the existence of a $\lambda$-saturated $\lambda$-beautiful pair for $\lambda>\vert T\vert ^+$ implies strict pro-definability of the space of definable types. Hence saturated beautiful pairs of Mekler groups would give examples of $2$-nilpotent groups with a strict pro-definable space of definable types.

\underline{Imaginaries}. To complement our result of relative quantifier elimination, one could try to characterise imaginary elements of Mekler groups (up to finite imaginaries). Transfer principles for imaginaries are known in the context of valued fields (e.g. \cite{RV23}). A similar result for Mekler groups could certainly be interesting; we would then have a complete understanding of the first-order structure of Mekler's construction (definable sets, and quotients by definable equivalence relations).

\underline{Domination}. In a given structure, if the tensor product respects the domination relation between invariant types, the set of invariant types modulo the domination equivalence is a monoid (the \emph{domination monoid}), when equipped with the multiplicative law induced by the tensor product. In \cite{HM21}, Mennuni and Hils prove various transfer principles $A\rightarrow B$ for domination: they compute the domination monoid of $A$ in terms of the domination monoid of $B$. A similar computation $\Msf \rightarrow C$ for Mekler groups would give many examples of $2$-nilpotent groups with a well-defined domination monoid.

\textbf{Acknowledgements.} The authors thank Paolo Marimon for reminding them of the results of Abd-Aldaim, Conant, and Terry. We also thank Raf Cluckers and Dugald Macpherson for their constant and precious support. We thank the anonymous referees for their careful reading of the paper and detailed comments, which greatly improved its quality.

\bibliographystyle{alpha}
\bibliography{bibliography}

\begin{thebibliography}{AACT23}

\bibitem[AACT23]{AACT23}
A.~Abd-Aldaim, G.~Conant, and C.~Terry.
\newblock Higher arity stability and the functional order property, 2023.
\newblock arXiv:2305.13111.

\bibitem[Adl]{Adl}
Hans Adler.
\newblock Strong theories, burden, and weight.

\bibitem[Ahn20]{Ahn20}
JinHoo Ahn.
\newblock Mekler's construction and tree properties, 2020.
\newblock arXiv:1903.07087.

\bibitem[AKL22]{AKL22}
JinHoo Ahn, Joonhee Kim, and Junguk Lee.
\newblock On the antichain tree property.
\newblock {\em Journal of Mathematical Logic}, 23(02), December 2022.

\bibitem[Bau02]{Bau02}
Andreas Baudisch.
\newblock Mekler's construction preserves {CM}-triviality.
\newblock {\em Ann. Pure Appl. Logic}, 115(1-3):115--173, 2002.

\bibitem[Bod15]{Bodirsky_2015}
Manuel Bodirsky.
\newblock Ramsey classes: examples and constructions.
\newblock In Artur Czumaj, Agelos Georgakopoulos, Daniel Král, Vadim Lozin, and Oleg Pikhurko, editors, {\em Surveys in Combinatorics 2015}, London Mathematical Society Lecture Note Series, page 1–48. Cambridge University Press, 2015.

\bibitem[CH19]{CH18}
Artem Chernikov and Nadja Hempel.
\newblock Mekler's construction and generalized stability.
\newblock {\em Israel Journal of Mathematics}, 230(2):745--769, March 2019.

\bibitem[CPT19]{CPT19}
Artem Chernikov, Daniel Palacin, and Kota Takeuchi.
\newblock {On $n$-Dependence}.
\newblock {\em Notre Dame Journal of Formal Logic}, 60(2):195 -- 214, 2019.

\bibitem[dMRS25]{EMRS25}
Christian d'Elbée, Isabel Müller, Nicholas Ramsey, and Daoud Siniora.
\newblock Model-theoretic properties of nilpotent groups and lie algebras.
\newblock {\em Journal of Algebra}, 662:640--701, 2025.

\bibitem[GH19]{GH19}
Vincent Guingona and Cameron~Donnay Hill.
\newblock On positive local combinatorial dividing-lines in model theory.
\newblock {\em Arch. Math. Logic}, 58(3-4):289--323, 2019.

\bibitem[GPS23]{GPS23}
Vince Guingona, Miriam Parnes, and Lynn Scow.
\newblock Products of classes of finite structures.
\newblock {\em Notre Dame J. Form. Log.}, 64(4):441--469, 2023.

\bibitem[HM24]{HM21}
Martin Hils and Rosario Mennuni.
\newblock The domination monoid in henselian valued fields, April 2024.

\bibitem[Hod93]{Hod93}
Wilfrid Hodges.
\newblock {\em Model Theory}.
\newblock Cambridge University Press, March 1993.

\bibitem[KHY23]{CHY23}
Pablo~Cubides Kovacsics, Martin Hils, and Jinhe Ye.
\newblock Beautiful pairs, 2023.
\newblock arXiv:2112.00651.

\bibitem[Mek81]{Mek81}
Alan~H. Mekler.
\newblock Stability of nilpotent groups of class 2 and prime exponent.
\newblock {\em The Journal of Symbolic Logic}, 46(4):781--788, 1981.

\bibitem[MP23]{MP23}
Nadav Meir and Aris Papadopoulos.
\newblock Practical and structural infinitary expansions, 2023.
\newblock arXiv:2212.08027.

\bibitem[MPT23]{MPT23}
Nadav Meir, Aris Papadopoulos, and Pierre Touchard.
\newblock Generalised indiscernibles, dividing lines, and products of structures, 2023.
\newblock arXiv:2311.05996.

\bibitem[MS14]{MS2014}
M.~Malliaris and S.~Shelah.
\newblock Regularity lemmas for stable graphs.
\newblock {\em Trans. Am. Math. Soc.}, 2014.

\bibitem[Ne{\v s}05]{Nes05}
Jaroslav Ne{\v s}et{\v r}il.
\newblock Ramsey classes and homogeneous structures.
\newblock {\em Combinatorics, Probability and Computing}, 14(1-2):171–189, 2005.

\bibitem[NR77]{NR1977}
Jaroslav Nešetřil and Vojtěch Rödl.
\newblock Partitions of finite relational and set systems.
\newblock {\em Journal of Combinatorial Theory, Series A}, 22(3):289--312, 1977.

\bibitem[Rid17]{Rid17}
Silvain Rideau.
\newblock Some properties of analytic difference valued fields.
\newblock {\em J. Inst. Math. Jussieu}, 16(3):447--499, 2017.

\bibitem[RKV23]{RV23}
Silvain Rideau-Kikuchi and Mariana Vicaría.
\newblock Imaginaries in equicharacteristic zero henselian fields, 2023.
\newblock arXiv:2311.00657.

\bibitem[Sco15]{Sco15}
Lynn Scow.
\newblock {Indiscernibles, EM-Types, and Ramsey Classes of Trees}.
\newblock {\em Notre Dame Journal of Formal Logic}, 56(3):429 -- 447, 2015.

\bibitem[She90]{She90}
Saharon Shelah.
\newblock {\em Classification theory and the number of nonisomorphic models}, volume~92 of {\em Studies in Logic and the Foundations of Mathematics}.
\newblock North-Holland Publishing Co., Amsterdam, second edition, 1990.

\bibitem[Sim15]{Simon_2015}
Pierre Simon.
\newblock {\em A Guide to NIP Theories}.
\newblock Lecture Notes in Logic. Cambridge University Press, 2015.

\bibitem[Tou23]{Tou23}
Pierre Touchard.
\newblock Stably embedded submodels of henselian valued fields.
\newblock {\em Archive for Mathematical Logic}, pages 1--37, 10 2023.

\bibitem[TZ12]{TZ12}
Katrin Tent and Martin Ziegler.
\newblock {\em A Course in Model Theory}.
\newblock Lecture Notes in Logic. Cambridge University Press, 2012.

\end{thebibliography}
\end{document}